%% file: main.tex
\documentclass[leqno,12pt]{amsart} 
\usepackage{multirow, multicol}
\usepackage{mathrsfs}
\usepackage{comment}
\usepackage{yhmath}
\usepackage{graphicx}
\usepackage[inline]{enumitem}
\usepackage{indentfirst,amscd}
\usepackage{amsthm,amsmath, amssymb, amsfonts, empheq,mathtools,dsfont, stmaryrd}
\usepackage{upgreek}
\usepackage{tikz-cd}
\usepackage{calc}
\newtheorem{mainthm}{Main Theorem}

\theoremstyle{plain} 
\newtheorem{theorem}{\indent\sc Theorem}[section]
\newtheorem{lemma}[theorem]{\indent\sc Lemma}
\newtheorem{corollary}[theorem]{\indent\sc Corollary}
\newtheorem{proposition}[theorem]{\indent\sc Proposition}

\theoremstyle{definition} 
\newtheorem{definition}[theorem]{\indent\sc Definition}
\newtheorem{remark}[theorem]{\indent\sc Remark}
\newtheorem{example}[theorem]{\indent\sc Example}
\newtheorem{notation}[theorem]{\indent\sc Notation}

\usepackage{hyperref}
\hypersetup{
    colorlinks=true,
    linkcolor=black,
    filecolor=black,
    urlcolor=black,
}
\usepackage[capitalize]{cleveref}

\DeclareMathOperator{\ZZ}{\mathbb{Z}}
\DeclareMathOperator{\NN}{\mathbb{N}}

\DeclareMathOperator{\OO}{\mathcal{O}}
\DeclareMathOperator{\FF}{\mathbb{F}}
\DeclareMathOperator{\KK}{\mathbb{K}}
\DeclareMathOperator{\DD}{\mathscr{D}}

\DeclareMathOperator{\Spec}{Spec}

\DeclareMathOperator{\Sch}{\mathbf{Sch}}

\DeclareMathOperator{\Aff}{\mathbf{Aff}}
\DeclareMathOperator{\Mod}{\mathbf{Mod}}
\DeclareMathOperator{\BMod}{\mathbf{BMod}}
\DeclareMathOperator{\Band}{\mathbf{Band}}
\DeclareMathOperator{\Idyll}{\mathbf{Idyll}}
\DeclareMathOperator{\BAff}{\mathbf{BAff}}
\DeclareMathOperator{\BSch}{\mathbf{BSch}}
\DeclareMathOperator{\Comm}{Comm}
\DeclareMathOperator{\BAlg}{\mathbf{BAlg}}
\DeclareMathOperator{\Alg}{\mathbf{Alg}}
\DeclareMathOperator{\Frac}{Frac}

\DeclareMathOperator{\Hom}{Hom}

\DeclareMathOperator{\Sh}{Sh}

\DeclareMathOperator{\coeq}{coeq}
\DeclareMathOperator{\eq}{eq}

\DeclareMathOperator{\im}{Im}

\DeclareMathOperator{\id}{id}
\DeclareMathOperator{\Iso}{Iso}

\DeclareMathOperator{\pr}{pr}

\newcommand{\op}{
  \mathop{
    \vphantom{\bigoplus} 
    \mathchoice
      {\vcenter{\hbox{\resizebox{\widthof{$\displaystyle\bigoplus$}}{!}{$\boxplus$}}}}
      {\vcenter{\hbox{\resizebox{\widthof{$\bigoplus$}}{!}{$\boxplus$}}}}
      {\vcenter{\hbox{\resizebox{\widthof{$\scriptstyle\oplus$}}{!}{$\boxplus$}}}}
      {\vcenter{\hbox{\resizebox{\widthof{$\scriptscriptstyle\oplus$}}{!}{$\boxplus$}}}}
  }\displaylimits 
}

\begin{document}

\title[On the category of modules over bands]{On the category of modules over bands:\\ Relative Schemes, Hyperring schemes and Proto-Exactness} 

\author[Lucas Hamada]{Lucas Hamada} 

\subjclass[2020]{ 
Primary 14A23; Secondary 08A99, 19D23, 18D99, 16Y20.
}
%
\keywords{ 
Bands, Modules, Relative schemes, Hyperrings schemes, Symmetric monoidal category, Proto-exact category, \(\mathbb{F}_1\)-geometry.
}
\address{
Department of Mathematics \endgraf
Institute of Science Tokyo \endgraf
2-12-1, O-okayama, Meguro-ku, Tokyo 152-8551 \endgraf
Japan
}
\email{lucas.h.r.hamada@gmail.com}


\input{abstract}

\maketitle

\tableofcontents

\input{Introduction2}

\input{Preliminaries}

\input{Bands}

\input{Modulesoverbands}

\input{RelativeSchemes}

\input{HyperringasBand}

\input{protoexact}

\end{document}

%% file: abstract.tex
\begin{abstract}
Bands and idylls are algebraic structures introduced recently by M.~Baker, N.~Bowler, T.~Jin, and O.~Lorscheid in the context of matroid theory. Bands generalize hyperrings and provide a new approach to geometry over the field with one element \(\FF_1\).

In the first part of this paper, we develop the theory of modules over a band, establishing several of its fundamental properties. In particular, we prove that the category of modules over a band is a closed symmetric monoidal category that is both complete and cocomplete.

We then apply this theory in two directions. First, we prove that the category of band schemes is equivalent to the category of schemes relative to the category of modules over a band, in the sense of B.~To\"en and M.~Vaqui\'e.

Second, we investigate the relationship between band schemes and the affine hyperring schemes as developed by R.~Procesi-Ciampi, R.~Rota, and J.~Jun. Although bands generalize hyperrings, we see that the corresponding scheme theories are not compatible. Finally, we prove that the category of modules over a band admits a proto-exact structure, generalizing a result of J.~Jun for hypermodules over hyperrings.
\end{abstract}

%% file: Introduction2.tex
\section{Introduction}

The \textit{field with one element} \(\FF_1\) is a hypothetical object proposed by M.~J.~Tits~\cite{Tits}, expected to play the role of a \textit{field of coefficients} for the ring of integers \(\ZZ\), in analogy with the way a finite field \(\FF_p\) serves as the field of coefficients for the polynomial ring \(\FF_p[x]\). Later, motivated by the works of C.~Deninger and N.~Kurokawa, Y.~Manin~\cite{Manin} suggested that ideas from \(\FF_1\)-geometry might provide a framework for a proof of the Riemann hypothesis. This proposal led to the development of several approaches to \(\FF_1\)-geometry. In \cite{Deitmar}, A.~Deitmar introduced the theory of monoid schemes by replacing commutative rings with monoids in the definition of schemes. This approach became a prototype for \(\FF_1\)-geometry and is contained in most subsequent approaches. For a comprehensive survey of existing approaches to \(\FF_1\)-geometry and their interrelations, we refer to \cite{LorscheidSurvey}.

Recently, M.~Baker, T.~Jin, and O.~Lorscheid introduced in \cite{Lorscheid1} a new class of algebraic structures called \emph{bands}, together with their field-like counterparts, called \emph{idylls}. Bands generalize several algebraic structures, such as hyperrings, fuzzy rings, and partial fields. The theory of matroids over idylls, developed by M.~Baker, N.~Bowler, and O.~Lorscheid~\cite{Baker1, Baker2}, generalizes matroids, valuated matroids, oriented matroids, and regular matroids, asserting the importance of this theory not only from the point of view of \(\FF_1\)-geometry.

The main topic of this paper is to study the relation between this new theory of bands and two other important approaches to \(\FF_1\)-geometry: relative algebraic geometry and hyperring schemes.

Relative algebraic geometry was developed by B.~To\"en and M.~Vaqui\'e~\cite{ToenVaquie}, who generalized scheme theory by replacing the category of modules \(\Mod_{\ZZ}\) and the category of commutative rings \(\mathbf{CRings}\) with a general closed symmetric monoidal category that is complete and cocomplete, together with its category of commutative monoids. This framework not only recovers classical scheme theory, but also includes monoid schemes, as shown by A.~Vezzani~\cite{Vezzani}, and non-Archimedean analytic geometry, as shown by O.~Ben-Bassat and K.~Kremnizer~\cite{Bassat}. As the first main result of this paper, we prove that the theory of relative algebraic geometry also recovers the theory of band schemes when we consider the category of band modules as the base symmetric monoidal category.

Hyperrings are generalizations of commutative rings with multivalued addition. They were first considered by M.~Krasner in his study of limits of local fields~\cite{Krasner, Krasner2}. More recently, A.~Connes and C.~Consani showed that the adele class space \(\mathbb{A}_{\mathbb{K}}/\mathbb{K}^\times\) of a global field \(\mathbb{K}\) carries a natural hyperring structure~\cite{Connes1, Connes2}, thereby linking hyperrings to \(\FF_1\)-geometry and renewing interest in the subject.

In the second part of this paper, we generalize several results on hyperrings and affine hyperring schemes to the case of bands. The main result is the generalization of \emph{strict} morphisms of hypermodules to the setting of band modules, together with the proof that the category \(\BMod_B\) of band modules over a band \(B\) is proto-exact, generalizing the corresponding result of J.~Jun~\cite{Jaiung} for hypermodules.

\subsection*{Detailed outline of the paper and the main results} 

Section \ref{sec:preliminaries} consists of two parts. In the first part, we give a brief review of the definitions and basic properties of hyperrings and affine hyperring schemes, following R.~Procesi Ciampi and R.~Rota~\cite{Rota}, and J.~Jun~\cite{Jaiung}. A hyperring is a generalization of commutative rings in which the addition may be multivalued. The algebraic theory of hyperrings has several similarities with the theory of commutative rings. For instance, it has a well-behaved ideal theory, quotient hyperrings, and a localization theory. Unfortunately, it does not have a well-defined tensor product, and its scheme theory does not satisfy some important properties. For instance, let \(H\) be a hyperring and \((\Spec H, \OO_{\Spec H})\) be an affine hyperring scheme. Then J.~Jun~\cite{Jaiung} observed that it is not always true that 
\begin{equation*}
    \OO_{\Spec H}(\Spec H) \cong H.
\end{equation*}

In the second part of this section, we review the definition of proto-exact categories. They were defined by T.~Dyckerhoff and M.~Kapranov in \cite{Kapranov} as a non-additive generalization of Quillen exact categories. We also review the result of J.~Jun~\cite{Jaiung2} concerning the proto-exactness of the category of hypermodules over a hyperring. 

In Section \ref{sec:Bands and Band Schemes}, we give a more detailed review of the theory of bands and band schemes. Briefly speaking, a band is a pointed commutative monoid \(B\) equipped with a suitable ideal \(N_B\), called the \emph{null ideal}, of its associated semiring 
\begin{equation*}
    B^+ \coloneqq \NN[B]/ \left< 0_B \sim \text{empty sum} \right>.
\end{equation*}
We present some important examples, including the initial object \(\FF_1^\pm\), which plays the role of the \emph{field with one element} for bands (although it has three elements). The category of bands is complete and cocomplete. We review the constructions of limits and colimits, localization, and tensor products. After that, we review the theory of \textit{geometric} band schemes and some of its important properties. 

Section \ref{sec:Modules over Bands} is the heart of this paper. In this section, we develop the theory of modules over a band and prove several fundamental properties. In particular, we prove our first main theorem. 

\begin{mainthm}[Theorems \ref{thm:symmetricmonoidalcategory} and \ref{thm:eqofcategories}]\label{main theorem 1}
    Let \(B\) be a band and denote by \(\BMod_B\) the category of modules over \(B\). Then the following hold:
    \begin{enumerate}
        \item[\((1)\)] \((\BMod_B, \otimes_B, B)\) is a cosmos, that is, a complete and cocomplete closed symmetric monoidal category.
        \item[\((2)\)] The category of commutative monoid objects in \(\BMod_B\) is equivalent to the category \(\BAlg_B\) of band algebras over \(B\).
    \end{enumerate}
\end{mainthm}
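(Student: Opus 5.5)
We need to fix the definition of a module over a band before planning. Presumably a \(B\)-module is a pointed set \(M\) with a \(B\)-action (\(0_B\) and the base point act absorbingly), together with a null set \(N_M \subseteq \NN[M]/\langle 0_M\rangle\). This null set is a sub-\(B^+\)-module closed under adding null elements, contains \(1\cdot m + \epsilon m\) for each \(m\), and satisfies \(b\sum m_i\in N_M\) whenever \(\sum b_j\in N_B\) and similar compatibility. Morphisms are pointed \(B\)-equivariant maps whose linear extension sends \(N_M\) into \(N_{M'}\).

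For part \((1)\), I would argue in four steps.

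\textbf{Limits.} I would build them on underlying pointed sets. The product \(\prod M_i\) gets the componentwise action, and a formal sum \(\sum (m^{(i)}_k)_i\) is null iff each projection is null. The equalizer is a subset \(\{m: f(m)=g(m)\}\) with the induced null set \(N_E=N_M\cap \NN[E]\). One checks the null-set axioms directly; this is routine.

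\textbf{Colimits.} The coproduct is the wedge \(\bigvee M_i\). Its null set is generated by the images of the \(N_{M_i}\), where "generated" means the smallest subset satisfying the null-set axioms; an intersection of null sets is again a null set. For a coequalizer of \(f,g\colon M\to M'\), take the quotient pointed set by the smallest \(B\)-stable equivalence relation identifying \(f(m)\sim g(m)\), with the pushed-forward null set. Alternatively, I would exhibit \(\BMod_B\) as the category of models of a suitable essentially algebraic (or relational Horn) theory, which is locally presentable and hence bicomplete. I would mention this as a backup but carry out the explicit constructions.

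\textbf{Tensor product and internal Hom.} I define \(\otimes_B\) by generators and relations. Let \(M\otimes_B N\) be the quotient of \((M\times N)\) by \((bm,n)\sim(m,bn)\), with \((0,n)\) and \((m,0)\) sent to the base point. Its null set is generated by \(\sum_i m_i\otimes n\) for \(\sum m_i\in N_M\), and symmetrically in the other factor. Mirroring the band tensor product reviewed in Section \ref{sec:Bands and Band Schemes}, this represents "bilinear" maps, i.e. maps linear in each variable separately. Let \(\underline{\Hom}_B(N,P)\) be the set of morphisms with pointwise \(B\)-action. Its null set consists of the sums \(\sum f_i\) with \(\sum f_i(n)\in N_P\) for all \(n\). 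I would then verify the bijection
\[
\Hom(M\otimes_B N,P)\cong \Hom(M,\underline{\Hom}_B(N,P)).
\]
The unit, associativity and symmetry isomorphisms follow from the universal property of bilinear maps, and coherence is inherited from pointed sets.

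\textbf{Main obstacle.} The delicate step is the adjunction. One has to check that the null set of \(M\otimes_B N\) generated as above is \emph{exactly} what is needed, i.e. that a map \(M\to\underline{\Hom}(N,P)\) preserves null sets iff the corresponding map \(M\otimes N\to P\) does. The closure operations in "generated" must match the axioms defining null sets. I would handle this by describing the generated null set explicitly as iterated closures and checking each closure step against \(P\).

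For part \((2)\), I would match the data on each side. A commutative monoid \((A,\mu,\eta)\) in \(\BMod_B\) gives a multiplication \(A\times A\to A\) that is bilinear. Together with \(\eta(1)\), this makes \(A\) a pointed commutative monoid with a band-map \(B\to A\), \(b\mapsto b\eta(1)\). Bilinearity makes \(N_A\) an ideal of \(A^+\), so \(A\) is a band algebra. Conversely, a band algebra \(B\to A\) gives an \(A\) that is a \(B\)-module, and its multiplication factors through \(A\otimes_B A\). The two constructions are mutually inverse and functorial. The only point requiring care is that the module null-set axioms coincide with the band null-ideal axioms once multiplication is available, which I expect to be direct.
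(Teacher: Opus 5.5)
\textbf{There is a genuine gap.} Your plan follows the paper's route: explicit limits, the wedge and quotients for colimits, $\otimes_B$ as a quotient of $M\times N$ with null set generated by the sums $\sum_i m_i\otimes n$ and $\sum_i m\otimes n_i$, pointwise $\underline{\Hom}_B$, and monoids versus algebras via bilinearity. The gap sits in the axiom your working definition leaves out. In the paper a null set must satisfy (NS3): each element has a \emph{unique} negative, and by (BA3) that negative is forced to be $(-1)\cdot m$. Your two parts pull in opposite directions on this axiom.
\begin{itemize}
\item Without (NS3), part $(2)$ is false. Take $\FF_1^\pm$ with null set all sums of even length, generated by $1+(-1)$ and $1+1$. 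It is a commutative monoid in your category, but not a band, since $1+1$ and $1+(-1)$ are both null.
\item With (NS3), part $(2)$ is indeed direct, but your colimit and tensor constructions no longer land in $\BMod_B$, because collapsing elements to $*$ can create a second negative.
\end{itemize}

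Two examples of the second failure. First, let $B=\FF_1^\pm$ and $M=\FF_5$ with its ring null set, and take the coequalizer of the inclusion of the submodule $\{0,2,3\}$ with the zero map. Your quotient has classes $\{0,2,3\},\{1\},\{4\}$. The pushed-forward null set contains $[1]+[1]$ (the image of $1+1+3$) and also $[1]+[4]$, with $[1]\neq[4]$. The honest coequalizer is $0$, since $1+2+2$ makes $[1]$ null by itself.

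Second, let $B=\FF_1^\pm[x]$, $M=\ZZ$ with $x$ acting by $3$, and $N=\FF_1^\pm$ with $x$ acting by $0$. Then $(-3)\otimes 1=1\otimes(-x)\cdot 1=*$. So your generators include $1\otimes 1+2\otimes 1$ (from $1+2+(-3)$) and $1\otimes 1+(-1)\otimes 1$, while $2\otimes 1\neq(-1)\otimes 1$ in your pointed set. Hence no null set on that pointed set contains the generators, and the ``generated'' null set you appeal to does not exist. Intersections are not the problem here; the family being intersected is empty. The paper's own quotient, coequalizer and tensor constructions are the same naive ones and have the same defect, so following them more closely does not close the gap.

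\textbf{The repair.} It uses only tools the paper already has.
\begin{itemize}
\item Let $T$ be your naive pointed $B$-set: $N$ for $\coeq(f,g)$, or $M\times N$ modulo your relations for $M\otimes_B N$.
\item Let $J\subseteq T^+$ be the smallest subset closed under axioms $(2)$--$(4)$ of module null ideals. For the coequalizer it should contain $I_N$ and all sums $f(m)+(-1)g(m)$; for the tensor product it should contain your generators.
\item Define the object as $T\sslash J$.
\end{itemize}
Axiom $(4)$ does three things here. It makes $\sim_J$ an equivalence relation. It makes the induced null set independent of representatives, including dropping summands in the class of $*$. And it gives (NS3): if $[t]+[s]$ is null then $t+s\in J$, i.e.\ $s\sim_J(-1)t$.

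For the universal properties, let $h\colon T\to P$ be equivariant and send the generators into $I_P$. Then $\{\sum_i t_i \mid \sum_i h(t_i)\in I_P\}$ again satisfies $(2)$--$(4)$. For $(4)$ one needs that $h(a)+(-1)h(c)\in I_P$ forces $h(a)=h(c)$, and this is exactly (NS3) in $P$. So this set contains $J$, and $h$ factors through $T\sslash J$.

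This is the delicate point, not the adjunction you single out. For the adjunction, checking generators suffices, since preimages of $I_P$ are closed under sums and the $B$-action. With these corrected objects the rest of your argument, including part $(2)$, goes through. Your locally presentable backup also works, provided (NS3) is included as the Horn clause ``$m+n$ null $\Rightarrow n=(-1)m$''. The explicit constructions you commit to do not work as written.
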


This leads to the results of the next section. In Section \ref{sec:Relative Schemes}, we review the theory of schemes relative to a closed symmetric monoidal category that is complete and cocomplete, as defined by B.~To\"en and M.~Vaqui\'e in \cite{ToenVaquie}. By Theorem \ref{main theorem 1}, we obtain the notion of schemes relative to the category \(\BMod_{\FF_1^\pm}\). The main result of this section is the equivalence between the two theories of band schemes. 

\begin{mainthm}[Theorem \ref{thm:bandschemes=relativeschemes}]\label{main theorem 2}
    The category \(\BSch^{\mathrm{geom}}\) of geometric band schemes is equivalent to the category \(\Sch^{\mathrm{rel}}_{\BMod_{\FF_1^\pm}}\) of schemes relative to \(\BMod_{\FF_1^\pm}\).
\end{mainthm}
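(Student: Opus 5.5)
The plan is to follow the strategy Vezzani used for monoid schemes. First identify the affine objects on both sides, then show that the two notions of covering agree, and finally glue.

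By Main Theorem~\ref{main theorem 1}(2), the commutative monoids in \(\BMod_{\FF_1^\pm}\) are the band algebras over \(\FF_1^\pm\). Since \(\FF_1^\pm\) is initial among bands, these are exactly bands. So the category of relative affine schemes \(\Aff_{\BMod_{\FF_1^\pm}} = \BAlg_{\FF_1^\pm}^{\mathrm{op}}\) is equivalent to \(\Band^{\mathrm{op}}\). The latter is equivalent to the category of affine geometric band schemes via \(B \mapsto \Spec B\), using the fact reviewed in Section~\ref{sec:Bands and Band Schemes} that \(\Gamma(\Spec B, \OO_{\Spec B}) \cong B\).

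Next I would show that a morphism \(\Spec C \to \Spec B\) is a Zariski open immersion in the sense of To\"en--Vaqui\'e if and only if it is isomorphic to the inclusion of a finite union of principal opens \(\Spec B[h^{-1}]\). The To\"en--Vaqui\'e conditions are: flatness of \(B \to C\) (the functor \(-\otimes_B C\) on \(\BMod_B\) preserves finite limits), being an epimorphism of monoids, and being of finite presentation.
\begin{enumerate}
\item[(a)] For a localization \(B \to B[h^{-1}]\), I would show that \(M \otimes_B B[h^{-1}] \cong M[h^{-1}]\) is a filtered colimit. Since finite limits in \(\BMod_B\) are computed on underlying pointed sets with the induced null ideal, this filtered colimit commutes with them, which gives flatness. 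Localizations are epimorphisms and finitely presented.
\item[(b)] For the converse, I would use the flatness criterion together with conservativity of a family of base changes to reduce to local bands. Over a local band, a flat epimorphism of finite presentation that is conservative must be an isomorphism or zero, exactly as in the monoid case.
\end{enumerate}
This converse (b) is where I expect the main obstacle. Epimorphisms of bands and flatness in \(\BMod_B\) are less rigid than for rings, because the null ideal can change under pushouts. I would first try to show that a flat finitely presented epimorphism \(B \to C\) is determined by the set of primes of \(B\) whose localizations survive in \(C\), and that this set is open, hence a finite union of principal opens.

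With coverings matched, the Zariski topologies on affines coincide. Relative schemes are then sheaves on \(\Aff\) admitting a Zariski atlas of affines with open-immersion gluing maps. The functor of points \(X \mapsto \Hom(\Spec(-), X)\) sends a geometric band scheme to such a sheaf. It is fully faithful because geometric band schemes are glued from affine opens and morphisms glue locally. It is essentially surjective because a relative scheme's atlas gives gluing data of affine band schemes along opens, which can be glued in locally band-ringed spaces. Compatibility of the gluing with the covering notion follows from the step matching open immersions with unions of principal opens.
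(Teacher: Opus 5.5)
Your overall architecture (identify affines via Main Theorem~\ref{main theorem 1}, match the two Zariski topologies, then glue as in Vezzani) is the one the paper uses. Your part (a), viewing the localization as a filtered colimit that commutes with finite limits, is a legitimate alternative to the paper's direct check in Lemma~\ref{localization is flat}.

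The gap is part (b), which is the real content of the comparison, and as written it cannot work. Conservativity of base change is a property of a covering \emph{family}, not of a single Zariski open immersion. The localization $B\to B[h^{-1}]$ at a non-unit $h$ is an open immersion, but its base-change functor kills the nonzero module $B/(B\setminus B^\times)$, so it is not conservative. Hence conservativity cannot be used to characterize individual open immersions. ``Reducing to local bands'' is vacuous, because every band is already local, with maximal $m$-ideal $B\setminus B^\times$ (Remark~\ref{rmk: band are local}). Your fallback, showing that the primes surviving in $C$ form an open set, imitates the classical ring-theoretic argument, and nothing in the proposal makes it go through for bands. The true answer is also sharper than ``finite unions of principal opens'': Zariski open immersions of affines are exactly single principal opens $B\to B[h^{-1}]$.

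The missing idea is to prove directly that a flat morphism $f\colon B\to C$ is the localization at $S=f^{-1}(C^\times)$ (Proposition~\ref{prop: localization = flat}).
\begin{enumerate}
\item[(i)] \emph{Surjectivity} of $f_S\colon S^{-1}B\to C$ comes from flatness on finite products. We have $(B\times B)\otimes_B C\cong C\times C$, and every element on the left is a pure tensor $(a,a')\otimes d$. Writing $(1,c)=(f(a)d,f(a')d)$ forces $f(a)\in C^\times$ and $c=f(a')f(a)^{-1}$.
\item[(ii)] \emph{Injectivity} comes from flatness on equalizers. If $f(at)=f(bs)$, then $\eq(g_{at},g_{bs})\otimes_B C\cong C$, which produces an $l$ in the equalizer with $f(l)\in C^\times$.
\item[(iii)] The band-specific step, which you flagged but did not resolve: a bijective morphism of bands need not be an isomorphism, since the null set can grow, as in $\FF_1^\pm\to\FF_3$. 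So one needs that a bijective \emph{flat} morphism is an isomorphism (Lemma~\ref{flat bijection}).
\end{enumerate}
Finite presentation (Proposition~\ref{prop: characterization of fp}) then makes $S$ finitely generated, so $C\cong B[h^{-1}]$ (Corollary~\ref{corollary main}).

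For coverings, locality does the work. If every $h_j$ in the conservative finite subfamily were a non-unit, each $-\otimes_B B[h_j^{-1}]$ would kill $B/(B\setminus B^\times)$, contradicting conservativity. So some member of a Zariski covering is an isomorphism (Proposition~\ref{lemma: zariski covering isomorphism}). This matches the geometric fact that any principal covering of $\Spec^{\mathrm{geo}}B$ contains $\Spec^{\mathrm{geo}}B$ itself. With this in hand, your gluing step runs as in Vezzani. It uses that band spaces are cocomplete and that open immersions into $h_{\Spec B}$ are represented by open subschemes (Lemma~\ref{proposition 35}).
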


In the second half of the paper, we generalize several results on hyperrings to the case of bands. In Section \ref{sec: hyperring schemes}, we first characterize the subcategory of bands that is equivalent to the category of hyperrings. Then, we study the generalization of affine hyperring schemes to the case of bands. We see that although bands generalize hyperrings, their scheme theories are not compatible. The reason is that, on one hand, geometric band schemes use the notion of prime \(m\)-ideals, while on the other hand, affine hyperring schemes use prime ideals corresponding to prime \(k\)-ideals. This leads to several differences. The main result of this section concerns recovering the band \(B\) from the structure sheaf of the affine \(k\)-scheme \(\Spec^k B\), which is defined using the notion of prime \(k\)-ideals. 

\begin{mainthm}[Theorem \ref{main theorem hypbands}]\label{main theorem 3}
    Let \(B\) be a band satisfying the following conditions:
    \begin{enumerate}
    \item[\((D1)\)] If \(b\cdot \left(\sum_i a_i \right) \in N_B\) and \(b\neq 0\), then \(\sum_i a_i \in N_B\), and
    \item[\((D2)\)] \(\langle x \rangle_k = B \cdot x\) for every \(x \in B\),
\end{enumerate}
    and let \(X = \Spec^k(B)\) and \(\OO^k_X\) be the sheaf of bands as in Definition \ref{k-sheaf}. Then, \(\OO^k_X(D^k(f))\) is isomorphic to the band \(B_f\). In particular, if \(f = 1\), we have
        \begin{equation*}
             \OO^k_X(X) \cong B.
        \end{equation*}
\end{mainthm}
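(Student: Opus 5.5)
We follow the classical argument that $\OO_{\Spec A}(D(f)) \cong A_f$, adapted to prime $k$-ideals. Conditions (D1) and (D2) replace the ring-theoretic facts that fail for general bands.

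First we reduce to the case $f=1$. Under (D1) and (D2) we check that the localization $B_f$ again satisfies (D1) and (D2). We also check that $D^k(f)\subseteq \Spec^k B$ is homeomorphic to $\Spec^k(B_f)$ via $\mathfrak p\mapsto \mathfrak p B_f$, and that this homeomorphism identifies $\OO^k_X|_{D^k(f)}$ with $\OO^k_{\Spec^k B_f}$. Both checks are the standard correspondence between prime $k$-ideals avoiding $f$ and prime $k$-ideals of $B_f$, using that the $k$-closure commutes with localization. It then suffices to show that the canonical map $\varphi\colon B\to \OO^k_X(X)$, $b\mapsto (b/1)_{\mathfrak p}$, is an isomorphism of bands.

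\textbf{Injectivity.} Suppose $a/1=b/1$ in $B_{\mathfrak p}$ for every prime $k$-ideal $\mathfrak p$. For each $\mathfrak p$ there is $s\notin\mathfrak p$ with $sa=sb$. Set $I=\{s\in B: sa=sb\}$, which is a $k$-ideal of $B$ not contained in any prime $k$-ideal. We need a prime avoidance/Zorn statement that a proper $k$-ideal lies in a prime $k$-ideal, presumably established in Section~\ref{sec: hyperring schemes}. This gives $1\in I$, hence $a=b$.

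\textbf{Surjectivity.} Let $\sigma$ be a global section. By definition it is locally of the form $a_i/f_i$ on a cover $\{D^k(f_i)\}$. The key step is quasi-compactness: if $\bigcup D^k(f_i)=X$, then the $k$-ideal generated by the $f_i$ is all of $B$, and by (D2) together with the structure of $k$-closures, $1$ lies in the $k$-ideal generated by finitely many $f_i$. On overlaps we get $f_j^{n}f_i^{n}(a_if_j^{m}-a_jf_i^{m})=0$, so after the standard rescaling we may assume $f_ja_i=f_ia_j$ (with $a_i$ replaced by suitable multiples). Next, from $1\in\langle f_1,\dots,f_r\rangle_k$ we extract a relation in $N_B$ of the form $1-\sum_i c_if_i\in N_B$ (or the appropriate null-sum expression). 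We multiply it by each $a_j$ and use $f_ia_j=f_ja_i$ to produce $a:=\sum c_ia_i$. However, $\sum c_ia_i$ lives in $B^+$, not in $B$. This is exactly where (D2) is needed: it lets us represent the element by an honest $a\in B$ with $a f_j = a_j f_j$ locally. Finally, (D1) cancels the nonzero multiplier $f_j$ in null-sum relations, which shows $a/1=a_j/f_j$ in each $B_{f_j}$.

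We also verify that $\varphi$ is bijective on null sets: a sum $\sum b_i$ whose image is null in every stalk is null in $B$. This uses the same gluing plus (D1).

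We expect the main obstacle to be surjectivity. A band has only a monoid multiplication and a null ideal, not a genuine addition, so "$\sum c_ia_i$" must be replaced by an element furnished by the $k$-ideal hypothesis (D2). We would first try to show that (D2) forces every finitely generated $k$-ideal containing $1$ to contain it via a single generator multiple, which would collapse the partition-of-unity argument to a one-term computation.
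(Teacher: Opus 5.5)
\textbf{Verdict: genuine gap in the surjectivity step.} Your outline follows the paper's adaptation of Jun's argument. The reduction to $f=1$ is a harmless variant: it needs $k$-ideals to localize to $k$-ideals, so that (D1) and (D2) pass to $B_f$, whereas the paper simply runs the argument with $f^n$ in place of $1$. The problem is surjectivity, which is the real content of the theorem.

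From $1\in\langle f_1,\dots,f_r\rangle_k$ you cannot in general extract a relation $1-\sum_i c_if_i\in N_B$. By \cite[Lemma~1.26]{Lorscheid3}, the $k$-closure is the union of levels $\langle S\rangle_k^{(n)}$. An element of level $n$ is only null-related to $B$-multiples of elements of level $n-1$, not of $S$ itself. For a general band, which need not satisfy the associativity condition (H4) of hyperrings, nothing collapses such a chain to a single null sum. (D2), which only concerns principal $k$-ideals, does not collapse it either. So there is no partition of unity to multiply by $a_j$. Your description of how (D2) then yields an honest $a\in B$ has nothing to act on; note also that you need $af_j=a_j$, not $af_j=a_jf_j$. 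Your proposed fallback is false: $\ZZ$, viewed as a band, satisfies (D1) and (D2), and $\langle 2,3\rangle_k=\ZZ$, yet $1\notin\ZZ\cdot 2\cup\ZZ\cdot 3$.

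\textbf{The missing idea} is the paper's Lemma~\ref{technical lemma 2}. Instead of forming $\sum_i c_ia_i$, push the compatibility through the levels of the closure.
\begin{itemize}
\item By induction on the level, every $s\in\langle f_1,\dots,f_r\rangle_k$ satisfies $sa_j=\beta_{s,j}f_j$ for some $\beta_{s,j}\in B$ and every $j$.
\item The base case is $f_ia_j=a_if_j$.
\item In the inductive step, suppose $s-\sum_i b_is_i\in N_B$ with the $s_i$ of lower level. Multiplying by $a_j$ gives $sa_j-\sum_i b_i\beta_{s_i,j}f_j\in N_B$, hence $sa_j\in\langle f_j\rangle_k=Bf_j$ by (D2).
\end{itemize}
Thus (D2) is applied at every level, not once at the end.

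For $s=1$ (or $s=f^n$ without your reduction) this gives $a_j=\beta_jf_j$. Discarding any $f_i=0$, you get $\beta_if_if_j=a_if_j=a_jf_i=\beta_jf_if_j$. The cancellation law implied by (D1) then gives $\beta_i=\beta_j$: if $c\neq0$ and $ca=cb$, then $c\cdot(a+(-b))\in N_B$, so $a+(-b)\in N_B$ and $a=b$. This common value is the preimage of the section.

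The same cancellation law simplifies your other steps:
\begin{itemize}
\item It gives $a_if_j=a_jf_i$ on overlaps without rescaling. Your subtraction formula has to be read as an equality in $B$ anyway.
\item It makes injectivity immediate from a single prime, since $s\notin\mathfrak p$ forces $s\neq0$. Your claim that $\{s\mid sa=sb\}$ is a $k$-ideal is asserted without proof and is not needed.
\end{itemize}
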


Finally, in Section \ref{sec: protoexact}, we generalize the notion of \emph{strict} morphisms of hypermodules to the case of morphisms of band modules.

\begin{definition}
    A morphism of \(B\)-modules \(f : M \to N\) is called \emph{strict} if for every sum \(\sum m_i \in M^{+}\) such that \(\sum f(m_i) \in I_N\), there exists a finite sequence \((c_j)_j\) of elements of \(\ker(f)\) such that \(\sum m_i + \sum c_j \in I_M\). Denote by \(\mathfrak{M}_{\BMod_B}\) (resp. \(\mathfrak{E}_{\BMod_B}\)) the class of strict monomorphisms (resp. strict epimorphisms) in \(\BMod_B\).
\end{definition}

The main result of this section is the generalization of the corresponding result of J.~Jun~\cite{Jaiung2} for the category of hypermodules over a hyperring.

\begin{mainthm}[Corollary \ref{main thm protoexactness}]\label{main theorem D}
      \((\BMod_B, \mathfrak{M}_{\BMod_B}, \mathfrak{E}_{\BMod_B})\) is a proto-exact category.
\end{mainthm}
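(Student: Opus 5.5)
We verify the axioms of a proto-exact category in the sense of Dyckerhoff--Kapranov for $(\BMod_B, \mathfrak{M}_{\BMod_B}, \mathfrak{E}_{\BMod_B})$. The axioms are:
\begin{enumerate}
\item[(i)] $\BMod_B$ is pointed;
\item[(ii)] every morphism $0\to M$ lies in $\mathfrak{M}$ and every morphism $M\to 0$ lies in $\mathfrak{E}$;
\item[(iii)] $\mathfrak{M}$ and $\mathfrak{E}$ contain all isomorphisms and are closed under composition;
\item[(iv)] a commutative square with two parallel arrows in $\mathfrak{M}$ and the other two in $\mathfrak{E}$ is a pullback if and only if it is a pushout;
\item[(v)] pullbacks of strict epimorphisms along strict monomorphisms exist and remain strict epimorphisms, and dually for pushouts.
\end{enumerate}

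For (i), the zero module $\{0\}$ with $I=\{0\}$ is both initial and terminal, which follows from the constructions of limits and colimits in $\BMod_B$ (Theorem~\ref{thm:symmetricmonoidalcategory}). Axiom (ii) is immediate from the definition of strictness. Indeed, for $M\to 0$ the kernel is all of $M$, so any sum $\sum m_i$ can be completed by $c_j=-m_i$, since $m-m\in I_M$. For $0\to M$ the condition is vacuous. Isomorphisms are strict because $f^{-1}$ transports null sums back.

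The first real step is characterising the two classes concretely. I expect a strict monomorphism to be an injective map $f:M\hookrightarrow N$ with $I_M = \{\sum m_i : \sum f(m_i)\in I_N\}$, that is, a subobject carrying the induced null set, since $\ker f=0$ forces $\sum m_i \in I_M$. A strict epimorphism should be, up to isomorphism, the quotient $M\to M/K$ by its kernel $K$, with null set generated by $I_M$ and $K$. Here I would use the construction of quotients and cokernels from Section~\ref{sec:Modules over Bands}. With these descriptions, closure under composition is checked directly. For monomorphisms, induced null sets compose. For epimorphisms, given $\sum m_i$ with $\sum gf(m_i)\in I$, I first correct $\sum f(m_i)$ by elements of $\ker g$, lift those corrections through the surjection $f$, and then correct by $\ker f$. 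This lifting requires strict epimorphisms to be surjective, which I would prove first. I expect surjectivity because a sum $n + (-n)\in I_N$ pulls back via the strictness condition; failing that, I would prove it from the cokernel description.

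For (iv) and (v), I would follow Jun's argument for hypermodules. Given a strict mono $i:N\to M$ and a strict epi $p:M'\to M$, the pullback is $p^{-1}(N)\subseteq M'$ with the induced null set. I then check two things: the projection to $N$ is surjective and strict, by correcting via $\ker p\subseteq p^{-1}(N)$; and the inclusion is a strict mono. Dually, the pushout of a strict mono $N\to M$ along a strict epi $N\to N/K$ is $M/K$, using the quotient construction. For the biCartesian condition in (iv), a square $A\to B$, $A\to C$, $B\to D$, $C\to D$ with the monos $A\to B$, $C\to D$ and the epis $A\to C$, $B\to D$ is shown to be a pullback if and only if $\ker(A\to C)=\ker(B\to D)$ inside $A\subseteq B$ and $C = \mathrm{im}(A)$ in $D$. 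The same condition characterises pushouts, using the explicit colimits of Theorem~\ref{thm:symmetricmonoidalcategory}.

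The main obstacle is the null-set bookkeeping. Null sets live in $M^+$ and are not determined by elements, so verifying that the induced null sets on pullbacks and pushouts coincide with the limit and colimit null sets requires care. The key is to show that the null set of a quotient $M/K$ is exactly $\{\sum \bar m_i : \sum m_i+\sum c_j\in I_M,\ c_j\in K\}$, which is where the strictness definition is tailored to work. I would establish this lemma first and then reduce every other verification to it.
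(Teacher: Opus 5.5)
\paragraph{The pullback criterion in (iv) is wrong, and no proof can fix it.} In your square ($A\hookrightarrow B$, $A\twoheadrightarrow C$, $C\hookrightarrow D$, $p\colon B\twoheadrightarrow D$), the condition $C=\im(A)$ holds automatically, because $A\to C$ is surjective and the square commutes. So your criterion reduces to $\ker p\subseteq A$.

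That condition does give the pushout property. The induced map $D\to Q$ is well defined because strictness of $p$ produces corrections lying in $\ker p\subseteq A$, and these are killed by the cocone.

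It does not give the pullback property. The pullback is $p^{-1}(C)$ with the induced null set, so you would need $b\in A$ whenever $p(b)=p(a)$ with $a\in A$. Strictness only yields $b+(-a)+\sum c_j\in I_B$ with $c_j\in\ker p$. A strict submodule is merely a subset stable under the action (Remark~\ref{inclusion strict}), so unlike a subhypermodule in Jun's setting it need not contain $b$.

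\paragraph{A counterexample to (PE3) and (PE5).} Work over $\FF_1^{\pm}$, and regard $\ZZ$ and $\ZZ/2\ZZ$ as modules whose null sets are the formal sums of total $0$. Set:
\begin{itemize}
\item $M=\ZZ$ and $N=2\ZZ\cup 3\ZZ$, with $I_N=I_{\ZZ}\cap N^+$;
\item $M'=N'=\ZZ/2\ZZ$;
\item $j'$ the reduction map, $j=j'|_N$, $i$ the inclusion, and $i'=\id$.
\end{itemize}
Then $i$ and $i'$ are strict monomorphisms. Also $j$ and $j'$ are strict epimorphisms: if $\sum m_k$ has even total, correct it by $c=-\sum m_k\in 2\ZZ$, which lies in both kernels and in $N$. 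Moreover $\ker j=\ker j'=2\ZZ$.

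The square is coCartesian. Take a cocone $(\beta,\alpha)$; then $\beta$ kills $2\ZZ\subseteq N$. Apply $\beta$ to the null sum $1+(n-1)+(-n)$, where $\beta(n-1)=*$; this gives $\beta(-n)=-\beta(1)$. Hence $\beta(n)=\beta(1)=\beta(3)=\alpha(\bar 1)$ for all odd $n$, so $\beta=\alpha\circ j'$.

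The square is not Cartesian, since the pullback of $j'$ along $\id$ is $\ZZ\neq N$. So (PE3) fails. Since pushouts are unique up to isomorphism, (PE5) also fails for $M\hookleftarrow N\twoheadrightarrow N'$.

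Hence, with strict monomorphisms and strict epimorphisms as admissible classes, $\BMod_{\FF_1^{\pm}}$ is not proto-exact. No amount of null-set bookkeeping will complete (iv) or the pushout half of (v). The paper's argument does not catch this either: it only shows that the squares it builds are biCartesian, and never treats an arbitrary square as in (PE3).

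\paragraph{The quotient lemma also fails.} Take the naive quotient $M/K$, which collapses $K$ to $*$. Your set $\{\sum\bar m_i:\sum m_i+\sum c_j\in I_M,\ c_j\in K\}$ is exactly $\pi^+(I_M)$, and it violates the uniqueness in (NS3). For $M=\ZZ$ and $K=2\ZZ$, both $\bar 1+\overline{-1}$ and $\bar 1+\overline{-3}$ (the image of $1+(-3)+2$) are null, although $\overline{-1}\neq\overline{-3}$.

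So $M/K$ is not a module. A strict epimorphism such as $\ZZ\to\ZZ/2\ZZ$, which has infinite fibres, is not of that form. What is true is that a strict epimorphism $f$ is isomorphic to $M\to M\sslash\langle I_M\cup\ker f\rangle$. The paper's $M/N$ and Proposition~\ref{thm:isomorphism} have the same defect.

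\paragraph{What does work.} The following parts of your plan are correct:
\begin{itemize}
\item your proof of (PE1);
\item your direct proof that strict epimorphisms compose, provided you use that epimorphisms are surjective rather than your heuristic about $n+(-n)$;
\item your construction $p^{-1}(N)$ for (PE4), which is the paper's Lemma~\ref{lemma:PE4}; that pullback square is indeed also a pushout.
\end{itemize}
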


This provides another example of a proto-exact category, contributing to the growing list of such structures studied in the literature~\cite{Jaiung3, Jaiung1, Jaiung2}.

\subsection*{Acknowledgments}
The author is especially grateful to Yuichiro Taguchi and Naganori Yamaguchi for their constant support during the writing of this article. He is also grateful to all the members of the laboratory for attending his seminars.

%% file: Preliminaries.tex
\section{Preliminaries}\label{sec:preliminaries}

\subsection{Hyperrings and hyperring schemes}

Let \(H\) be a nonempty set, and denote by \(\mathcal{P}(H)^*\) the collection of all nonempty subsets of \(H\). A \emph{hyperoperation} on \(H\) is a map
\begin{equation*}
    \boxplus \colon H \times H \longrightarrow \mathcal{P}(H)^*.
\end{equation*}

For nonempty subsets \(A, B \subseteq H\), we define their sum by
\begin{equation*}
A \boxplus B \coloneqq \bigcup_{a \in A,\; b \in B} (a \boxplus b).
\end{equation*}

We adopt the convention of writing \(A \boxplus z\) in place of \(A \boxplus \{z\}\). Moreover, if \(x \boxplus y\) is a singleton \(\{z\}\), we simply write \(x \boxplus y = z\).

\begin{definition}[Hyperrings]
A \emph{hyperring} is a tuple \(R = (R, \boxplus, \cdot, 0_R, 1_R)\), where \(R\) is a nonempty set, \(\boxplus\) is a hyperoperation on \(R\), \(\cdot\) is a binary operation on \(R\), and \(0_R, 1_R \in R\), satisfying the following conditions:
\begin{enumerate}
    \item \((R, \cdot, 1_R)\) is a commutative monoid.
    \item \(x \boxplus y = y \boxplus x\) for every  \(x, y \in R\).
    \item \((x \boxplus y) \boxplus z = x \boxplus (y \boxplus z)\) for every \(x, y, z \in R\).
    \item \(0_R\) is the unique element satisfying \(x \boxplus 0_R = x\) for every \(x \in R\).
    \item For every \(x \in R\), there exists a unique element \(y_x \in R\) such that \(0_R \in x \boxplus y_x\). We denote \(y_x\) by \(-x\).
    \item For every \(x, y, z \in R\) the following holds: \[x \in y \boxplus z\;\text{ if and only if }\; z \in x \boxplus (-y).\]
    \item For every \(x, y, z \in R\) the following holds:
    \[
        x \cdot (y \boxplus z) = (x \cdot y) \boxplus (x \cdot z)
        \quad \text{and} \quad
        (x \boxplus y) \cdot z = (x \cdot z) \boxplus (y \cdot z).
    \]
    \item  \(x \cdot 0_R = 0_R\) for every \(x \in R\).
    \item \(0_R \neq 1_R\).
\end{enumerate}
A hyperring \(R\) is called a \emph{hyperfield} if \((R \setminus \{0_R\}, \cdot, 1_R)\) is an abelian group.
\end{definition}

\begin{definition}[Morphisms of hyperrings]
Let \((R_1, \boxplus_1, \cdot_1)\) and \((R_2, \boxplus_2, \cdot_2)\) be hyperrings. A map \(f \colon R_1 \to R_2\) is called a \emph{morphism of hyperrings} if, for every \(a, b \in R_1\), the following conditions hold:
\begin{enumerate}
    \item \(f(a \boxplus_1 b) \subseteq f(a) \boxplus_2 f(b)\),
    \item \(f(a \cdot_1 b) = f(a) \cdot_2 f(b)\),
    \item \(f(0_{R_1}) = 0_{R_2}\) and \(f(1_{R_1}) = 1_{R_2}\).
\end{enumerate}
A morphism \(f\) is said to be \emph{strict} if, for every \(a, b \in R_1\), one has
\[
f(a \boxplus_1 b) = f(a) \boxplus_2 f(b).
\]
\end{definition}

We denote by \(\mathbf{HypRings}\) the category of hyperrings with morphisms as defined above.

\begin{definition}[Category of Hypermodules]
    Let \(R\) be a hyperring. A hypermodule over \(R\) is a tuple \((M, \oplus, 0_M)\), where \(M\) is a nonempty set, \(\oplus\) is a hyperoperation on \(M\) and \(0_M \in M\), together with a map  
    \begin{align*}
        R \times M &\longrightarrow M\\
        (a, m) &\longmapsto a m
    \end{align*}
    satisfying the following properties:
    \begin{enumerate}
        \item \(1m = m\) and \(0_Hm = 0_M\) for every \(m \in M\).
        \item \((a\cdot b)m = a(bm)\) for every \(a,b \in R\) and \(m \in M\).
        \item \((a \boxplus b)m = am \oplus bm\) for every \(a,b \in R\) and \(m \in M\).
        \item \(a(m \oplus n) = am \oplus  an\) for every \(a \in R\) and \(m,n \in M\).
    \end{enumerate}

    A map \(f: M \to N\) between hypermodules over \(R\) is a morphism if it satifies, for every \(a \in R\) and \(m, n \in M\), the following conditions:
    \begin{enumerate}
        \item \(f(m \oplus_M n) \subseteq f(m) \oplus_N f(n)\).
        \item \(f(am) = af(m)\).
    \end{enumerate}
    A morphism \(f\) is said to \emph{strict} if, for every \(m,n \in M\), one has
    \begin{equation*}
        f(m \oplus_M n) = f(m) \oplus_N f(n).
    \end{equation*}
\end{definition}

As an important example, we present a construction of a hyperring as a quotient of a commutative ring by a subgroup of its group of units.

\begin{example}[Proposition 2.6 of \cite{Connes1}]\label{ex: quotient hyperring}
    Let \(A\) be a commutative ring, and denote by \(A^\times\) its group of units. Let \(G \subseteq A^\times\) be a subgroup, and let
    \[
        A/G \coloneqq \{\, aG \mid a \in A \,\}
    \]
    be the set of multiplicative cosets. The \emph{quotient hyperring} of \(A\) by \(G\) is the hyperring \((A/G, \odot, \boxplus)\), where the operations are defined as follows:
    \begin{align*}
        \odot \colon A/G \times A/G &\longrightarrow A/G,\\
        (xG, yG) &\longmapsto xyG,
    \end{align*}
    and
    \begin{align*}
        \boxplus \colon A/G \times A/G &\longrightarrow \mathcal{P}(A/G) \setminus \{\emptyset\},\\
        (xG, yG) &\longmapsto \{\, qG \mid q = xt + ys \text{ for some } t,s \in G \,\}.
    \end{align*}
\end{example}

\begin{remark}
    Let \(\mathbb{K}\) be a global field and let \(\mathbb{A}_{\mathbb{K}}\) denote its ring of ad\`eles. Since \(\mathbb{K}^\times\) embeds as a subgroup of the group of units \(\mathbb{A}_{\mathbb{K}}^\times\), it follows from Example~\ref{ex: quotient hyperring} that the quotient \(\mathbb{A}_{\mathbb{K}}/\mathbb{K}^\times\) carries a natural structure of a hyperring.
\end{remark}

The theory of hyperrings admits a notion of ideals, called \emph{hyperideals}, which satisfy properties analogous to those in classical ring theory. 

\begin{definition}[Hyperideals]\label{hyperideals}
Let \(R\) be a hyperring. A nonempty subset \(I \subseteq R\) is called a \emph{hyperideal} if it satisfies the following conditions:
\begin{enumerate}
    \item \(a \boxplus b \subseteq I\) for every \(a, b \in I\).
    \item \(r \cdot a \in I\) for every \(r \in R\) and every \(a \in I\).
\end{enumerate}

A proper hyperideal \(\mathfrak{p} \subsetneq R\) is called \emph{prime} if, whenever \(x \cdot y \in \mathfrak{p}\), one has \(x \in \mathfrak{p}\) or \(y \in \mathfrak{p}\).

A proper hyperideal \(\mathfrak{m} \subsetneq R\) is called \emph{maximal} if it is maximal among proper hyperideals of \(R\), that is, if \(\mathfrak{m} \subseteq J \subsetneq R\) for a hyperideal \(J\), then \(J = \mathfrak{m}\).
\end{definition}

\begin{remark}
    B.~Davvaz and A.~Salasi proved in \cite[Propositions 2.12 and 2.13]{Davvaz} that every maximal hyperideal is prime, and that for every hyperring \(R\) and every proper hyperideal \(I \subsetneq R\), there exists a maximal hyperideal of \(R\) containing \(I\).
\end{remark}

\subsubsection{Localization}

Let \(R\) be a hyperring and let \(S \subseteq R\) be a multiplicative subset, that is, \(1 \in S\) and \(x\cdot y \in S\) whenever \(x,y \in S\). We define the \emph{localization} \(S^{-1}R\) as the hyperring \(((R \times S)/\sim, \oplus, \odot)\), where \(\sim\) is the equivalence relation defined by
\[
    (r_1, s_1) \sim (r_2, s_2) \;\Longleftrightarrow\; \text{there exists } x \in S \text{ such that } x\cdot r_1\cdot s_2 = x\cdot r_2\cdot s_1,
\]
and denote the equivalence class of \((r,s)\) by \(\frac{r}{s}\). The hyperaddition \(\boxplus\) on \(S^{-1}R\) is defined by
\[
    \frac{r_1}{s_1} \oplus \frac{r_2}{s_2}
    \coloneqq
    \left\{ \frac{y}{s_1\cdot s_2} \;\middle|\; y \in (r_1\cdot  s_2) \boxplus (r_2\cdot s_1) \right\},
\]
and the multiplication is defined by
\[
    \frac{r_1}{s_1} \cdot \frac{r_2}{s_2}
    \coloneqq
    \frac{r_1\cdot r_2}{s_1\cdot s_2}.
\]

It satisfies properties analogous to those of the localization of commutative rings.

\begin{proposition}[Proposition 2.14 of \cite{Jaiung}]
    Let \(R\) be a hyperring and let \(S \subseteq R\) be a multiplicative subset. Then the following hold:
    \begin{enumerate}
        \item For any hyperideal \(I \subseteq R\), the set
        \[
            S^{-1}I \coloneqq \left\{ \frac{i}{s} \;\middle|\; i \in I,\ s \in S \right\}
        \]
        is a hyperideal of \(S^{-1}R\).
        
        \item If \(\mathfrak{p}\) is a prime hyperideal of \(R\) such that \(\mathfrak{p} \cap S = \emptyset\), then \(S^{-1}\mathfrak{p}\) is a prime hyperideal of \(S^{-1}R\).
        
        \item If \(S = R \setminus \mathfrak{p}\) for some prime hyperideal \(\mathfrak{p}\) of \(R\), then \(S^{-1}R\) has a unique maximal hyperideal, namely \(S^{-1}\mathfrak{p}\).
    \end{enumerate}
\end{proposition}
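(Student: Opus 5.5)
We verify the three claims in turn, using only the hyperring axioms, the definition of hyperideals, and the construction of \(S^{-1}R\). Throughout, the key identity is that hyperaddition of fractions reduces to a common denominator: for \(\frac{a}{s}, \frac{b}{t} \in S^{-1}R\), every element of \(\frac{a}{s} \oplus \frac{b}{t}\) has the form \(\frac{y}{st}\) with \(y \in at \boxplus bs\).

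\textbf{Claim (1).} Let \(\frac{i}{s}, \frac{j}{t} \in S^{-1}I\). Since \(I\) is closed under multiplication by \(R\), both \(it\) and \(js\) lie in \(I\). Hence \(it \boxplus js \subseteq I\), and every \(\frac{y}{st}\) in the hypersum has numerator \(y \in I\). For absorption, \(\frac{r}{u}\cdot\frac{i}{s} = \frac{ri}{us}\) with \(ri \in I\). The set \(S^{-1}I\) is nonempty because it contains \(\frac{0}{1}\).

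One subtlety must be handled here. Membership \(\frac{x}{s} \in S^{-1}I\) means that \((x,s) \sim (i,s')\) for some \(i \in I\), not that \(x \in I\). In part (1) we never need to test membership of an arbitrary representative, so this causes no trouble there. It does matter in (2).

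\textbf{Claim (2), properness.} Suppose \(\frac{1}{1} \in S^{-1}\mathfrak{p}\). Then \(\frac11 = \frac{i}{s}\) with \(i \in \mathfrak{p}\), so there is \(x \in S\) with \(xs = xi\). The left side lies in \(S\) and the right side lies in \(\mathfrak{p}\), contradicting \(\mathfrak{p} \cap S = \emptyset\).

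\textbf{Claim (2), primality.} Suppose \(\frac{a}{s}\cdot\frac{b}{t} = \frac{i}{u}\) with \(i \in \mathfrak{p}\). Then \(x\,ab\,u = x\,i\,st \in \mathfrak{p}\) for some \(x \in S\). Since \(\mathfrak{p}\) is prime and \(x, u \notin \mathfrak{p}\), we get \(ab \in \mathfrak{p}\), so \(a \in \mathfrak{p}\) or \(b \in \mathfrak{p}\). Thus \(\frac{a}{s}\) or \(\frac{b}{t}\) lies in \(S^{-1}\mathfrak{p}\).

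More generally, this argument shows that \(\frac{a}{s} \in S^{-1}\mathfrak{p}\) if and only if \(a \in \mathfrak{p}\). This characterization is the main technical point of the proof.

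\textbf{Claim (3).} Put \(S = R \setminus \mathfrak{p}\). By (2), \(S^{-1}\mathfrak{p}\) is a proper hyperideal. Every element \(\frac{a}{s}\) outside it has \(a \notin \mathfrak{p}\), so \(a \in S\), and then \(\frac{s}{a}\) is an inverse: \(\frac{a}{s}\cdot\frac{s}{a} = \frac{as}{as} = \frac11\).

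A proper hyperideal cannot contain a unit. Indeed, if \(u\) is a unit in \(J\), then for every \(r\), \(r = (ru^{-1})u \in J\). Hence every proper hyperideal is contained in the set of non-units, which is exactly \(S^{-1}\mathfrak{p}\). Therefore \(S^{-1}\mathfrak{p}\) is the unique maximal hyperideal. It is maximal because any hyperideal strictly containing it contains a unit and so equals \(S^{-1}R\).

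\textbf{Well-definedness of \(S^{-1}R\).} The statement presupposes that \(S^{-1}R\) is a hyperring, which should be checked or cited alongside the three claims. The main point is that \(\sim\) is an equivalence relation; transitivity uses the extra factor \(x \in S\). One must also check that the hypersum of fractions does not depend on the chosen representatives. Passing to the common denominator \(xs_1s_2\) and using distributivity \(x(a \boxplus b) = xa \boxplus xb\) gives this. All of these verifications are routine.
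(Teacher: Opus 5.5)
Your proof is correct. The paper gives no argument for this statement; it is quoted from Jun as background in the preliminaries. What you write is the standard transcription of the commutative-ring proof, and each step goes through:

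\begin{itemize}
\item You choose representatives with numerators in \(I\). This is legitimate once the hypersum is known to be independent of representatives, which you rightly flag; that independence uses distributivity as an equality of sets.
\item You use that \(\frac{a}{s}\in S^{-1}\mathfrak{p}\) if and only if \(a\in\mathfrak{p}\) when \(\mathfrak{p}\) is prime and disjoint from \(S\).
\item When \(S=R\setminus\mathfrak{p}\), you show every element outside \(S^{-1}\mathfrak{p}\) is a unit.
\end{itemize}

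One caveat concerns the presupposition, not your argument. \(S^{-1}R\) satisfies the hyperring axiom \(0\neq 1\) only when \(0\notin S\), because \(\frac{0}{1}=\frac{1}{1}\) exactly when some \(x\in S\) equals \(0\). In parts (2) and (3) this holds automatically, since \(0\in\mathfrak{p}\) and \(\mathfrak{p}\cap S=\emptyset\). In part (1) it has to be assumed, so your closing remark that all the hyperring verifications are routine needs this qualification.
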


\begin{definition}[Hyperdomain, {\cite[Definition 4.21]{Jaiung}}]
    A hyperring \(R\) is called a \emph{hyperdomain} if \(R\) does not have zero-divisors, that is, for \(x, y \in R\), if \(x\cdot y = 0\) then either \(x = 0\) or \(y = 0\).
\end{definition}

Let \(R\) be a hyperdomain. We denote by \(\Frac(R)\) the localization of \(R\) at the multiplicative set \(R \setminus \{0\}\). For every multiplicative subset \(S \subseteq R\), the canonical morphisms
\[
    R \longrightarrow S^{-1}R \quad \text{and} \quad S^{-1}R \longrightarrow \Frac(R)
\]
are strict and injective. In particular, we may view \(S^{-1}R\) as a hyperring extension of \(R\) contained in \(\Frac(R)\).

Moreover, if \(S_f \coloneqq \{1, f, f^2, \dots \}\) for some \(f \in R\), the hyperring \(S_f^{-1}R\) is denoted by \(R_f\). For every prime hyperideal \(\mathfrak{p}\) of \(R\), the hyperring \((R \setminus \mathfrak{p})^{-1}R\) is denoted by \(R_{\mathfrak{p}}\).

\subsubsection{Affine hyperring schemes}

 We follow \cite[Section 4.2]{Jaiung} for the definition of affine schemes of hyperrings.

\begin{definition}[Spectrum of a hyperring]\label{spec hyperring}
    Let \(R\) be a hyperring. The \emph{spectrum} of \(R\), denoted by \(\Spec R\), is the set of prime hyperideals of \(R\), endowed with the \emph{Zariski topology}, defined as follows: a subset \(A \subseteq \Spec R\) is closed if and only if
    \[
        A = V(I) \coloneqq \left\{ \mathfrak{p} \in \Spec R \;\middle|\; I \subseteq \mathfrak{p} \right\}
    \]
    for some hyperideal \(I \subseteq R\). 
\end{definition}

\begin{remark}
    For every \(f \in R\), denote by \(D(f)\) the set of prime hyperideals \(\mathfrak{p}\) of \(R\) not containing \(f\). As in the case of commutative rings, the family \(\{D(f)\}_{f \in R}\) forms a basis for the Zariski topology of \(\Spec R\).
\end{remark}

\begin{definition}[Structural sheaf]\label{hyperrings, structural sheaf}
    Let \(R\) be a hyperring and let \(X = \Spec R\). For an open subset \(U \subseteq X\), we define
    \[
        \OO_X(U) \coloneqq \left\{\, s : U \to \bigsqcup_{\mathfrak{p} \in U} R_{\mathfrak{p}} \;\middle|\; (\star_1), (\star_2) \,\right\},
    \]
    where \((\star_1)\) and \((\star_2)\) are the following conditions:
    \begin{enumerate}
        \item[\((\star_1)\)] For every \(\mathfrak{p} \in U\), one has \(s(\mathfrak{p}) \in R_{\mathfrak{p}}\).
        \item[\((\star_2)\)] For every \(\mathfrak{p} \in U\), there exist an open neighborhood \(V \subseteq U\) of \(\mathfrak{p}\) and elements \(a,f \in R\) such that \(f \notin \mathfrak{q}\) for all \(\mathfrak{q} \in V\), and
        \[
            s(\mathfrak{q}) = \frac{a}{f} \in R_{\mathfrak{q}} \quad \text{for all } \mathfrak{q} \in V.
        \]
    \end{enumerate}
    The multiplication on \(\OO_X(U)\) is defined pointwise:
    \[
        (s \cdot t)(\mathfrak{p}) \coloneqq s(\mathfrak{p}) \cdot t(\mathfrak{p}).
    \]
    The hyperaddition is defined by
    \[
        s \boxplus t \coloneqq \left\{\, r \in \OO_X(U) \;\middle|\; r(\mathfrak{p}) \in s(\mathfrak{p}) \boxplus t(\mathfrak{p}) \text{ for all } \mathfrak{p} \in U \,\right\}.
    \]
\end{definition}

\begin{remark}
    As pointed out by J.~Jun in \cite[Remark 4.20]{Jaiung}, it is not known whether \(\OO_X(U)\) is a hyperring for every \(U \subseteq X\).
\end{remark}

\begin{proposition}[Theorem 4.23 of \cite{Jaiung}]\label{main theorem hyperrings}
    Let \(R\) be a hyperdomain, \(K = \Frac(R)\), and \(X = \Spec(R)\). Let \(\OO_X\) be the structural sheaf of \(X\) as defined in Definition \ref{hyperrings, structural sheaf}. Then, the following hold:
    \begin{enumerate}
        \item \(\OO_X(D(f))\) is a hyperring isomorphic to \(R_f\). In particular, if \(f = 1\), we have 
        \begin{equation*}
           \OO_X(X) \cong R.
        \end{equation*}
        \item For each open subset \(U \subseteq X\), \(\OO_X(U)\) is a hyperring. Moreover, by considering the canonical map \(R_f \hookrightarrow K \), we have 
        \begin{equation*}
            \OO_X(U) \cong \bigcap_{D(f) \subseteq U} \OO_X(D(f)).
        \end{equation*}
        \item For each \(\mathfrak{p} \in X\), the stalk \(\OO_{X,\mathfrak{p}}\) exists and is isomorphic to \(R_\mathfrak{p}\).
    \end{enumerate}
\end{proposition}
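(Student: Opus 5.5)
We work inside the fraction hyperfield \(K=\Frac(R)\). Since \(R\) is a hyperdomain, the canonical maps \(R\to R_f\to R_{\mathfrak p}\to K\) are strict and injective, so every localization may be viewed as a sub-hyperring of \(K\). This reduces the problem to comparing subsets of \(K\). The plan is to prove (1) first, deduce (2) by the usual gluing, and obtain (3) as a colimit statement.

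\textbf{Step 1: reformulating sections.} For \(s\in\OO_X(U)\), condition \((\star_2)\) gives locally \(s(\mathfrak q)=a/f\) inside \(R_{\mathfrak q}\subseteq K\). On overlaps of two such neighborhoods, the two representatives define the same element of \(K\), because \(R_{\mathfrak q}\to K\) is injective. If \(X\) is irreducible, the local elements all coincide, and \(s\) is the constant function with value some \(\alpha\in K\) satisfying \(\alpha\in R_{\mathfrak q}\) for all \(\mathfrak q\in U\). Irreducibility holds because \(\{0\}\) is a prime hyperideal contained in every prime, so it is the generic point and lies in every nonempty open set. Conversely, such an \(\alpha\) defines a section: write \(\alpha=a/g\) with \(g\notin\mathfrak q\), and use \(V=D(g)\cap U\). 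Hence
\[
\OO_X(U)\cong \bigcap_{\mathfrak q\in U} R_{\mathfrak q}\subseteq K .
\]
The hyperaddition transported from Definition~\ref{hyperrings, structural sheaf} is exactly \(\alpha\boxplus\beta\) computed in \(K\), intersected with this set. This works because pointwise membership at the generic point already determines it, and \(R_{\mathfrak q}\) is a strict sub-hyperring of \(K\).

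\textbf{Step 2: \(\bigcap_{\mathfrak q\in D(f)}R_{\mathfrak q}=R_f\).} This is the main obstacle. The inclusion \(\supseteq\) is clear. For \(\subseteq\), take \(\alpha\) in the intersection and form the conductor-type set \(J=\{r\in R : r\alpha\in R\}\). First I would check that \(J\) is a hyperideal: if \(r,r'\in J\) and \(t\in r\boxplus r'\), then \(t\alpha\in r\alpha\boxplus r'\alpha\subseteq R\) by distributivity and strictness in \(K\); closure under multiplication by \(R\) is clear. Since \(\alpha\in R_{\mathfrak q}\), we can write \(\alpha=a/s\) with \(s\notin\mathfrak q\), and then \(s\in J\). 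So \(J\not\subseteq\mathfrak q\) for every \(\mathfrak q\in D(f)\), which means \(V(J)\subseteq V(f)\).

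It then remains to conclude \(f^n\in J\) for some \(n\), i.e.\ \(\alpha\in R_f\). This requires a radical statement: the intersection of the primes containing \(J\) is \(\sqrt J\). I would prove this as in ring theory. If \(f^n\notin J\) for all \(n\), localize at \(S_f\). Then \(S_f^{-1}J\) is a proper hyperideal by part (1) of the localization proposition; properness needs checking. Choose a maximal hyperideal containing it (Davvaz–Salasi); it is prime. Pull it back to \(R\), which gives a prime containing \(J\) and missing \(f\), a contradiction. The delicate points are that contraction of a prime hyperideal is a prime hyperideal and that properness is preserved. These I would verify directly from the definitions, using the description of \(\boxplus\) in \(S^{-1}R\).

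\textbf{Step 3: conclusions.} Step 2 with Step 1 gives (1); taking \(f=1\) gives \(\OO_X(X)\cong R\). For (2), write \(U=\bigcup_{D(f)\subseteq U}D(f)\). Then \(\bigcap_{\mathfrak q\in U}R_{\mathfrak q}=\bigcap_{D(f)\subseteq U}R_f\), and this is the claimed formula. It is a hyperring because an intersection of strict sub-hyperrings of \(K\) that contain \(0,1\) and are closed under negation is again one; the hyperring axioms are inherited from \(K\) once the sum is shown nonempty. The sum is nonempty because \(\alpha\boxplus\beta\) lies in every \(R_{\mathfrak q}\), each of which is strict in \(K\). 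For (3), the stalk is the colimit of the \(\OO_X(U)\) over \(U\ni\mathfrak p\), computed inside \(K\). By the basis property it equals \(\bigcup_{f\notin\mathfrak p}R_f=R_{\mathfrak p}\).
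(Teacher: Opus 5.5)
Your argument is correct, but it takes a different route from the one the paper relies on. The paper only cites this result. Its band analogue, Theorem~\ref{main theorem hypbands}, reproduces Jun's proof, which is the classical Hartshorne-style argument:
\begin{enumerate}
\item define $R_f\to\OO_X(D(f))$ and get injectivity from cancellation in a hyperdomain;
\item for surjectivity, cover $D(f)$ by basic opens $D(h_i)$ on which $s=a_i/h_i$;
\item extract a finite subcover from $f^n\in\langle h_i\rangle$, using the radical statement and a finiteness lemma such as Lemma~\ref{k-ideal fg};
\item prove $a_ih_j=a_jh_i$, then write $f^na_j=\beta_jh_j$ using an explicit description of the hyperideal generated by the $h_j$ (the role of Lemma~\ref{technical lemma 2});
\item finally show $\beta_i=\beta_j$.
\end{enumerate}

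You instead use the generic point $(0)$ to identify each section with a single element of $K$. This makes the gluing and the compatibilities $a_ih_j=a_jh_i$ automatic, and gives $\OO_X(U)\cong\bigcap_{\mathfrak q\in U}R_{\mathfrak q}$ for every $U$ at once. You then replace the finite subcover and the explicit combination of the $h_j$ by the conductor $J=\{r\in R : r\alpha\in R\}$, followed by the radical argument. $J$ is a hyperideal exactly because $R\hookrightarrow K$ is strict. The points you flag as delicate, properness of $S_f^{-1}J$ and primality of the contraction, are immediate from the definitions.

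Your route gives a uniform proof in which (2) and (3) are nearly formal, and it never needs a description of generated hyperideals. The paper's route works with local representatives rather than with closure of $R$ under hypersums inside $K$. That is why it is the version the paper can transplant to bands, under $(D1)$ and $(D2)$.

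The one step you should write out is in (3). ``Computed inside $K$'' has to be backed by checking that $R_{\mathfrak p}=\bigcup_{f\notin\mathfrak p}R_f\subseteq K$, with the hyperaddition of $K$, has the universal property of the colimit in the category of hyperrings. That is the content of ``the stalk exists,'' since such colimits are not automatic. The check is routine: the restriction maps are inclusions in $K$, and any two elements $a/g,\,b/h$ of $R_{\mathfrak p}$ lie in the common $\OO_X(D(gh))$ with $gh\notin\mathfrak p$.
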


\begin{remark}
    As pointed out by J. Jun in \cite[Example 4.24]{Jaiung}, the property 
    \begin{equation}\label{12345}
        \OO_{\Spec R}(\Spec R) \cong R
    \end{equation}
    do not hold for every hyperring \(R\). As an open problem we have the question if \eqref{12345} holds for every connected hyperring \(R\).
\end{remark}

\subsection{Proto-exact categories}

Proto-exact categories were introduced by T.~Dyckerhoff and M.~Kapranov in \cite{Kapranov} as a generalization of Quillen exact categories, allowing one to define \(K\)-theory for non-additive categories. Before giving the definition, we fix the following notation. Let \(\mathscr{E}\) be a category. A commutative square in \(\mathscr{E}\)
\begin{equation*}
    \begin{tikzcd}
        N \arrow[r] \arrow[d] & M \arrow[d] \\
        N' \arrow[r] & M'
    \end{tikzcd}
\end{equation*}
is said to be \textit{bi-Cartesian} if it is both Cartesian and co-Cartesian.

\begin{definition}[Proto-exact category {\cite[Definition 2.4.2]{Kapranov}}]
    A \textit{proto-exact category} is a category \(\mathscr{E}\) equipped with two classes of morphisms \(\mathfrak{M}\) and \(\mathfrak{E}\) whose elements are called \textit{admissible monomorphisms} and \textit{admissible epimorphisms} such that the following conditions are satisfied:
    \begin{enumerate}
        \item[(PE1)] \(\mathscr{E}\) has a zero object \(0\). Every morphisms \(0 \rightarrow M\) is in \(\mathfrak{M}\) and every morphism \(M \rightarrow 0\) is in \(\mathfrak{E}\).

        \item[(PE2)] The classes \(\mathfrak{M}\) and \(\mathfrak{E}\) contain every isomorphism and are closed under composition.

        \item[(PE3)] A commutative square in \(\mathscr{E}\) as 
        \begin{equation}\label{diag:bicartesian0}
            \begin{tikzcd}
            N \arrow[r, hook, "i"] \arrow[d, two heads,swap, "j"] & M \arrow[d, two heads, "j'"]\\
            N' \arrow[r, hook, "i'"] & M'
        \end{tikzcd}
        \end{equation}
        with \(i,i'\) in \(\mathfrak{M}\) and \(j,j'\) in \(\mathfrak{E}\) is Cartesian if and only if it is coCartesian.

        \item[(PE4)] Any diagram in \(\mathscr{E}\)
        \[
        \begin{tikzcd}
             & M \arrow[d, two heads, "j'"]\\
            N' \arrow[r, hook, "i'"] & M'
        \end{tikzcd}
        \]
        with \(i' \in \mathfrak{M}\) and \(j' \in \mathfrak{E}\), can be completed to a biCartesian square (\ref{diag:bicartesian0}) with \(i \in \mathfrak{M}\) and \(j \in \mathfrak{E}\).

        \item[(PE5)] Any diagram in \(\mathscr{E}\)
        \[
        \begin{tikzcd}
            N \arrow[r, hook, "i"] \arrow[d, two heads,swap, "j"] & M\\
            N'  &
        \end{tikzcd}
        \]
        with \(i \in \mathfrak{M}\) and \(j \in \mathfrak{E}\), can be completed to a biCartesian square (\ref{diag:bicartesian0}) with \(i' \in \mathfrak{M}\) and \(j' \in \mathfrak{E}\).
    \end{enumerate}
\end{definition}

\begin{example}
    \begin{enumerate}
        \item Every Quillen exact category is proto-exact. 
        \item C.Eppolito, J. Jun, and M. Szczesny proved in \cite{Jaiung3} that the category of pointed matroids is proto-exact.
        \item J. Jun, M. Szczesny, J. Tolliver proved in \cite{Jaiung1} that both the category of modules over a semiring and hypermodules over a hyperring are proto-exact. 
        \item J. Jun, A. Sistko, and C. Wright proved in \cite{Jaiung2} that the category of matroids over a perfect idyll is proto-exact.
    \end{enumerate}
\end{example}

Let \((\mathscr{E}, \mathfrak{M}_\mathscr{E}, \mathfrak{E}_\mathscr{E})\) be a proto-exact category and for every \(n \in \ZZ_{\geq 1}\), let \(\mathcal{W}_n(\mathcal{E})\) be the category where the objects are diagrams in \(\mathcal{E}\)

\begin{equation}\label{diagram K group}
    \begin{tikzcd}
        0 \arrow[r, hook] & A_{0,1} \arrow[r, hook] \arrow[d, two heads] & A_{0,2} \arrow[r, hook] \arrow[d, two heads] & \cdots \arrow[r, hook] \arrow[d, two heads] & A_{0,n-1} \arrow[r, hook] \arrow[d, two heads] & A_{0,n} \arrow[d, two heads]\\
        & 0 \arrow[r, hook] & A_{1,2} \arrow[r, hook] \arrow[d, two heads] & \cdots \arrow[r, hook] \arrow[d, two heads] & A_{1,n-1} \arrow[r, hook] \arrow[d, two heads] & A_{1,n} \arrow[d, two heads]\\
        & & 0 \arrow[r, hook] & \ddots \arrow[r, hook] \arrow[d, two heads] & \cdots \arrow[r, hook] \arrow[d, two heads] & \vdots \arrow[d, two heads] \\
        & & & 0 \arrow[r, hook] & A_{n-1, n-1} \arrow[d, two heads] \arrow[r, hook] & A_{n-1, n} \arrow[d, two heads]\\ 
        & & & & 0 \arrow[r, hook] & A_{n-1,n} \arrow[d, two heads]\\
        & & & & & 0.
    \end{tikzcd}
\end{equation}
 satisfying the following conditions:
\begin{enumerate}
    \item[\((W1)\)] All horizontal morphisms are in \(\mathfrak{M}_\mathscr{E}\) and all vertical morphisms are in \(\mathfrak{E}_\mathscr{E}\).
    \item[\((W2)\)] Each square in the diagram is bi-Cartesian.
\end{enumerate}
and the morphisms of \(\mathcal{W}_n(\mathcal{E})\) are morphisms of diagrams. Let \(\mathcal{S}_n(\mathcal{E})\) be the subcategory of \(\mathcal{W}_n(\mathcal{E})\) consisting of all objects and their isomorphisms. The family \(\{\mathcal{S}_n(\mathcal{E})\}_n\) forms a simplicial object in the category of groupoids, together with the \emph{face} and \emph{degeneracy} functors
\begin{equation*}
    \partial_i : \mathcal{S}_n(\mathcal{E}) \to \mathcal{S}_{n-1}(\mathcal{E}),\quad 
    s_i : \mathcal{S}_n(\mathcal{E}) \to \mathcal{S}_{n+1}(\mathcal{E}) \quad (i = 0, \dots, n).
\end{equation*}
Here, \(\partial_i\) is obtained by omitting the \(i\)-th row and the \(i\)-th column of diagram~\eqref{diagram K group}, while \(s_i\) is obtained by inserting a new row between the \(i\)-th and \((i+1)\)-st rows, identical to the \(i\)-th row and connected by identity morphisms, and similarly inserting a new column between the \(i\)-th and \((i+1)\)-st columns, also identical to the \(i\)-th column and connected by identity morphisms.

\begin{definition}[\(K\)-group of proto-exact categories]
    The \(n\)-th \(K\)-group of \(\mathscr{E}\) is defined by
    \begin{equation*}
        K_n(\mathscr{E}) \coloneqq \pi_{n+1}(|\mathcal{S}_\bullet (\mathcal{E})|),
    \end{equation*}
    where \(|\mathcal{S}_\bullet (\mathcal{E})|\) denotes the geometric realization of \(\mathcal{S}_\bullet (\mathcal{E})\) (see \cite[Section 1.2]{Kapranov}).
\end{definition}

There is an explicit description of the group \(K_0(\mathscr{E})\) in terms of admissible sequences.

\begin{definition}[Admissible sequence]
An \emph{admissible sequence} in a proto-exact category \((\mathscr{E}, \mathfrak{M}, \mathfrak{E})\) is a bi-Cartesian diagram
\begin{equation}\label{diag:bicartesian0}
\begin{tikzcd}
N \arrow[r, hook, "i"] \arrow[d, two heads] & M \arrow[d, two heads, "j'"]\\
0 \arrow[r, hook] & M'
\end{tikzcd}
\end{equation}
with \(i \in \mathfrak{M}\) and \(j' \in \mathfrak{E}\). We simply denote such a diagram by
\[
N \hookrightarrow M \twoheadrightarrow M'.
\]
\end{definition}

Let \(\Iso(\mathscr{E})\) be the set of isomorphism classes of objects of \(\mathscr{E}\). Then \(K_0(\mathscr{E})\) is isomorphic to the quotient of the free group generated by the symbols \([M]\), where \(M \in \Iso(\mathscr{E})\), modulo the relations \([M] = [N][M']\) for every admissible sequence \(N \hookrightarrow M \twoheadrightarrow M'\).

Moreover, when \(\mathscr{E}\) has finite coproducts and admits split admissible sequences of the form
\[
N \hookrightarrow N \oplus M' \twoheadrightarrow M',
\]
the group \(K_0(\mathscr{E})\) is abelian and isomorphic to the quotient
\[
\mathbb{Z}[\Iso(\mathscr{E})]/\sim,
\]
where \(\sim\) is the equivalence relation generated by the relations \([M] \sim [N] + [M']\) for every admissible sequence \(N \hookrightarrow M \twoheadrightarrow M'\).

%% file: Bands.tex
\section{Bands and Band Schemes}\label{sec:Bands and Band Schemes}

In this section, we review the notions from the theory of bands and band schemes that are needed in the sequel. We begin by presenting the definition and basic properties of bands. We then recall the geometric definition of band schemes, that is, a pair of a topological space together with a sheaf of bands that locally is isomorphic to some affine band schemes. For a complete treatment, we refer the reader to the original source \cite{Lorscheid3}.

\subsection{Bands}

    The idea of a band is to consider a multiplicative monoid equipped with a suitable ideal of formal sums, encoding additive relations. In this way, bands provide a framework in which additive relations can be studied without requiring the existence of an actual additive operation.

\begin{definition}
A \textit{pointed monoid} \(M\) is a commutative monoid, written multiplicatively, together with a distinguished element \(0_M\) (called the zero of \(M\)) such that \(x \cdot 0_M = 0_M\) for all \(x \in M\).
For every pointed monoid \(M\), we denote by \(M^{+}\) the semiring
\[
M^{+} \coloneq \NN[M]/\langle 0_M \sim \text{the empty sum} \rangle,
\]
that is, the semiring whose elements are finite formal sums \(\sum a_i\) with \(a_i \in M\). Then, \(0_M\) is the identity of the addition of \(M^{+}\).
\end{definition}

\begin{definition}[The Category of Bands] \mbox{}
    \begin{enumerate}
        \item A \textit{band} is a pair \(B=(B,N_B)\) consisting of a pointed monoid \(B\) and an ideal \(N_B\) of \(B^{+}\), called the \emph{null set} of \(B\), such that for every \(x \in B\) there exists a unique \(y_x \in B\) with \(x + y_x \in N_B\). We denote \(y_x\) by \(-x\).
        \item Let \(B\) and \(C\) be bands. A map \(f \colon B \to C\) is called a \textit{morphism of bands} if it satisfies:
\begin{enumerate}
\item \(f(ab) = f(a)\, f(b)\) for all \(a,b \in B\);
\item \(f(0_B)=0_C\) and \(f(1_B)=1_C\);
\item If\/ \(\sum_i b_i \in N_B\), then\/ \(\sum_i f(b_i) \in N_C\).
\end{enumerate}
    \end{enumerate}
We denote by \(\Band\) the category of bands with the above morphisms.
\end{definition}

\begin{definition}[The Category of Idylls]
An \textit{idyll} is a band \(B=(B,N_B)\) such that \(B \setminus \{0_B\}\) is a group and \(0_B \neq 1_B\). A \textit{morphism of idylls} is a morphism of bands. We denote this category by \(\Idyll\).
\end{definition}

\begin{example} \mbox{}
\begin{enumerate}
\item The zero band \(0=\{*\}\), for which \(*=1\), is the terminal object in the category of bands.

\item The \textit{regular partial field} \(\FF_1^{\pm}=\{0,1,-1\}\) is the idyll with the natural multiplication table (same as \(\FF_3\)) and null set
\[
N_{\FF_1^{\pm}}
\coloneq
\{\,0,\; 1+(-1),\; 1+(-1)+1+(-1),\; 1+(-1)+1+(-1)+1+(-1),\; \dots \,\}.
\]
It is the initial object in the categories of bands and of idylls.

\item The \textit{Krasner hyperfield} \(\KK=\{0,1\}\) is the idyll with the natural multiplication and null set
\[
N_{\KK}
\coloneq
\{\,0,\; 1+1,\; 1+1+1,\; \dots \,\}.
\]
It is the terminal object in the category of idylls.
\end{enumerate}
\end{example}

\begin{remark}\label{rmk:null-ideal-not-unique}
\begin{enumerate}
    \item For every pointed monoid \(M\), one can define a null set in \(M^{+}\) to make \(M\) into a band. Namely, one may take the null set of \(M\) generated by the element \(1+1 \in M^+\).  
    \item One issue that will arise repeatedly in this paper is that the null set of a pointed monoid is not uniquely determined. As a consequence, there may exist bijective morphisms between bands that are not isomorphisms. As a simple example, consider the bijective map \(f \colon \FF_1^{\pm} \to \FF_3\) between the regular partial field \(\FF_1^{\pm}\) and the field \(\FF_3\), which is considered as an idyll with null set
\[
N_{\FF_3}
\coloneq
\left\{
\sum a_i \in (\FF_3)^+
\;\middle|\;
\sum a_i = 0 \in \FF_3
\right\},
\]
where the first \(\sum\) is the formal addition in \((\FF_3)^+\) and the second \(\sum\) is the ordinary addition in \(\FF_3\). Observe that the element \(1+1+1\) belongs to \(N_{\FF_3}\) but not to \(N_{\FF_1^{\pm}}\). Hence, the inverse map of \(f\) is not a morphism of idylls.
\end{enumerate}
\end{remark}

\subsubsection{Examples}
In this subsection, we present two examples of algebraic structures that will be used throughout this paper and that naturally give rise to bands. We note that there are many more examples, such as partial fields, pastures, and fuzzy rings. We refer the interested reader to the original paper \cite{Lorscheid3} for more examples.

\begin{example}[Rings]\label{Ex: ring as band} Let \(R\) be a commutative ring. As in the case of \(\FF_3\) in Remark \ref{rmk:null-ideal-not-unique}, \(R\) becomes a band with its original multiplicative structure and with null set defined by the additive structure of \(R\), namely 
\begin{equation*}
    N_R \coloneq \left\{ \sum_i r_i \in R^+ \;\middle|\;\sum_i r_i = 0 \text{ in } R  \right\},
\end{equation*}
where the first \(\sum\) is the formal addition in \(R^+\) and the second \(\sum\) is the ordinary addition in \(R\). Every ring homomorphism is a morphism of bands and, conversely, every band morphism between commutative rings is a ring homomorphism. In this way, we obtain a fully-faithful functor from the category of commutative rings to the category of bands.
\end{example}

\begin{example}[Hyperrings]\label{Ex: hyperring as band}
Let \((H, \boxplus,\cdot)\) be a commutative hyperring in the sense of Krasner. Then \(H\) becomes a band with its original multiplicative structure and with null set defined by
\begin{equation*}
    N_H \coloneq \left\{ \sum_i a_i \in H^+ \;\middle|\;0 \in \op_i a_i \right\}.
\end{equation*}
Every morphism of hyperrings is a morphism of bands and, conversely, every band morphism between hyperrings is a morphism of hyperrings. In this way, we obtain a fully-faithful functor from the category of hyperrings to the category of bands.
\end{example}

\subsubsection{Algebras and free algebras} 
    
    Let \(k\) be a band. A \emph{\(k\)-algebra} is a band \(B\) together with a morphism of bands \(\alpha_B \colon k \to B\). Given two \(k\)-algebras \(\alpha_B \colon k \to B\) and \(\alpha_C \colon k \to C\), a \emph{morphism of \(k\)-algebras} is a morphism of bands \(\varphi \colon B \to C\) such that \(\varphi \circ \alpha_B = \alpha_C\). In this way, we obtain the category \(\mathbf{BAlg}_k\) of \(k\)-algebras.

    Let \(B\) be a band and let \(I\) be an index set. As in the case of monoids, a \textit{free \(B\)-algebra} in indeterminants \(\{x_i\; |\; i \in I \}\) is defined as follows: Its underlying pointed monoid is the free pointed monoid
    \begin{equation*}
        B[x_i]_{i \in I} \coloneq  \left\{ a \cdot \prod_{i \in I} x_i^{n_i}\; \middle|\; a\in B, (n_i)_{i\in I} \in \bigoplus_{i \in I} \mathbb{N}\right\}/\sim
    \end{equation*}
    where \(\sim\) is the equivalence relation generated by identifying \(0\cdot\prod_{i \in I} x_i^{n_i}\) for all \((n_i)_{i \in I} \in \bigoplus_{i \in I} \mathbb{N}\). To define its null set, consider the map \(\iota : B \to B[x_i]_{i \in I}\) defined by \(\iota(a) = a\cdot\prod x_i^0\), which extends naturally to a morphism of semirings \(\iota^+ : B^+ \to B[x_i]_{i \in I}^+\). Using this morphism, we define the null set of \(B[x_i]_{i \in I}\) as the null set generated by \(\iota^+(N_B)\).
    \begin{equation*}
        N_{B[x_i]_{i \in I}} \coloneq  \left<\iota^+(N_B) \right>_{B[x_i]_{i \in I}}.
    \end{equation*}
    Observe that, by definition, the map \(\iota\) is a morphism of bands. 
    
    Finally, we note that, by \cite[Proposition 1.8]{Lorscheid3}, a morphism of bands \(B[x_i]_{i \in I} \to C\) is completely determined by a morphism of bands \(B \to C\) and the image of each \(x_i\) in \(C\).

\subsubsection{Null ideals and quotients} To define a notion of quotient for bands, one requires that the ideal with respect to which one takes the quotient remains a null set in the quotient. This motivates the notion of a null ideal.

\begin{definition}
A \textit{null ideal} of a band \(B\) is an ideal \(I\) of \(B^{+}\) that contains the null set \(N_B\) and satisfies the following condition:
\[
\text{If } a + (- c) \in I \text{ and } c + \sum_j b_j \in I,\text{ then } a + \sum_j b_j \in I.
\]
\end{definition}

It follows from the definition that a null ideal \(I\) of \(B\) induces an equivalence relation \(\sim_I\) on \(B\) as following:
\begin{equation*}
    a \sim_I b \text{ if and only if } a + (-b) \in I.
\end{equation*}

Furthermore, we have that \(\sim_I\) is a congruence relation on \(B\), that is, if \(a \sim_I b\) and \(c \in B\), then \(c \cdot a \sim_I c \cdot b\). Consequently, the quotient \(B/\!\sim_I\) is a pointed monoid, and there is a natural morphism \(\pi_I : B \to B/\!\sim_I\) of pointed monoids, which extends to a morphism of semirings \(\pi_I^+ : B^+ \to (B/\!\sim_I)^+\).

\begin{definition}
    Let \(B\) be a band and let \(I \subseteq B^+\) be a null ideal. The \textit{quotient} of \(B\) by \(I\) is the band \(B\sslash I\) consisting of the pointed monoid \(B/\!\sim_I\) together with the null set 
    \begin{equation*}
        N_{B\sslash I} \coloneq  \pi_I^+(N_B).
    \end{equation*}
\end{definition}

With this definition, \(\pi_I\) becomes a morphism of bands.

\subsubsection{Base extension to \(\ZZ\)}
We recall how to construct a functor from the category of bands to the category of commutative rings.

\begin{definition}
Let \(B\) be a band with null ideal \(N_B\). The \textit{universal ring} of \(B\) is the quotient
\[
B_{\ZZ}^{+} \coloneq  \ZZ[B]/\langle N_B \rangle,
\]
where \(\langle N_B \rangle\) denotes the ideal generated by \(N_B\). It is equipped with a multiplicative map
\[
\rho_{\ZZ}^{+} \colon B \to B_{\ZZ}^{+}.
\]
\end{definition}

By \cite[Proposition~1.20]{Lorscheid3}, the category \(\mathbf{CRing}\) is a full subcategory of \(\Band\), and the functor
\(
(-)_{\ZZ}^{+} \colon \Band \to \mathbf{CRing}
\)
is left adjoint to the inclusion functor.

\subsubsection{Localization}

    Let \(B\) be a band and let \(S \subseteq B\) be a multiplicative set of \(B\), that is, a subset that is closed under multiplication and contains the identity element \(1\).  The \textit{localization} of \(B\) at \(S\), denoted by \(S^{-1}B\), is defined as follows: Its underlying pointed monoid is the quotient 
    \begin{equation*}
        S^{-1}B \coloneq  (S \times B)/\sim_S
    \end{equation*}
    where \(\sim_S\) is the equivalence relation defined by
    \[
    (s,a) \sim_S (s',a') \text{ if and only if there exists } t \in S \text{ such that } t s a' = t s' a.
    \] 
    We write \(a/s\) for the equivalence class of \((s,a)\).

    Let \(\iota_S : B \to S^{-1}B\) be the morphism of pointed monoids defined by \(\iota_S(a) = a/1\) for every \(a \in B\). It extends to a morphism of semirings \(\iota_S^+ : B^+ \to (S^{-1}B)^+\). Using this morphism, we define the null set of \(S^{-1}B\) as 
    \begin{equation*} 
        N_{S^{-1}B} \coloneq  \left<\iota_S^+(N_B) \right>_{S^{-1}B}.
    \end{equation*}
    By construction, \(\iota_S\) is a morphism of bands and it satisfies an universal property analogous to that of the localization of rings (see \cite[Proposition 1.22]{Lorscheid3}).

     \begin{definition}[Localization map]\label{def: localization map}
    A morphism of bands \(f: B \to C\) is a \textit{localization map} if there exists a multiplicative subset \(S \subseteq B\) and an isomorphism \(\varphi : S^{-1}B \overset{\sim}{\to} C\) such that \(f = \varphi \circ \iota_S\). That is, we have the following commutative diagram.
    \begin{equation*}   
    \begin{tikzcd}
        B \arrow[r, "f"] \arrow[d, swap, "\iota_S"] & C \\
        S^{-1}B \arrow[ru, swap, "\varphi"] & 
    \end{tikzcd}
    \end{equation*}
    If \(S\) is finitely generated, then we say that \(f\) is a \textit{finite localization map}.
\end{definition}

\subsubsection{Limits and colimits}

 The category of bands is known to be complete and cocomplete. Here, we recall the definition of some of the basic limits and colimits.

 \begin{definition}[Product]
     Let \((B_i)_{i \in I}\) be a family of bands. Their \textit{product}, denoted by \(\prod_{i \in I} B_i\), consists of the pointed monoid \(\prod_{i \in I} B_i\) together with the null set 
     \begin{equation*}
         N_{\prod_i B_i} \coloneq  \left\{\sum_{j} (a_{j,i})_{i \in I} \in \left(\prod_{i \in I}B_i \right)^+ \;\middle|\; \sum_j a_{j,i} \in N_{B_i} \text{ for every } i \in I \right\}.
     \end{equation*}
     Denote by \(\pr_k : \prod_j B_j \to B_k\) the projection maps for each coordinate \(k \in I\).
 \end{definition}

 \begin{definition}[Equalizer]
     Let \(f: B \to C\) and \(g: B \to C\) be morphisms of bands. Their \textit{equalizer}, denoted by \(\eq(f,g)\), consists of the pointed monoid \(\{ b \in B\; |\; f(b) = g(b)\}\) together with the null set 
     \begin{equation*}
         N_{\eq(f,g)} \coloneq  \{ b \in B\; |\; f(b) = g(b)\}^+ \cap N_B.
     \end{equation*}
 \end{definition}

 \begin{definition}[Tensor product]\label{tensor product algebras}
     Let \(k\) be a band and let \((B_i)_{i \in I}\) be a family of \(k\)-algebras with structure map \(\alpha_i : k \to B_i\) for each \(i \in I\). The \textit{tensor product of \((B_i)_{i \in I}\) over \(k\)}, denoted by \(\otimes_{k} B_i\), is defined as follows: Its underlying pointed monoid is the quotient
     \begin{equation*}
         \bigotimes_k B_i \coloneq  \left\{(a_i)_{i \in I} \in \prod_{i \in I}B_i \;\middle|\; a_i = 1 \text{ for all but finitely many } i \in I \right\}/\sim_k \;,
     \end{equation*}
     where \(\sim_k\) is the equivalence relation generated by the relations of the type: \((a_i)_{i \in I} \sim (b_i)_{i \in I}\) if there exist \(c \in k\) and \(i, j \in I\) such that 
     \begin{equation*}
         a_i = \alpha_i(c)\cdot b_i \text{ and } b_j = \alpha_j(c) \cdot a_j
     \end{equation*}
     and \(a_l = b_l\) for all \(l \in I\setminus\{i,j\}\). Denote by \(\otimes a_i\) the equivalence class of \((a_i)_{i\in I}\).

     For each \(l \in I\), let \(\iota_l : B_l \to \bigotimes_k B_i\) be the canonical map into the \(l\)-th coordinate. It extends naturally to a morphism of semirings \(\iota_l^+ : B_l^+ \to \left(\bigotimes_k B_i \right)^+\). Using these morphisms, we define the null set of \(\bigotimes_k B_i\) as
     \begin{equation*}
         N_{\bigotimes_k B_i} \coloneq  \left<\bigcup_{l \in I} \iota_l^+(N_{B_l})\right>_{\bigotimes_k B_i}.
     \end{equation*}
 \end{definition}

 By definition of \(\sim_k\), the morphism of bands \(\iota_l \circ \alpha_l : k \to \otimes_k B_i\) does not depends on \(l \in I\), making \(\otimes_k B_i\) into a \(k\)-algebra.

 By \cite[Proposition 1.42]{Lorscheid3}, we know that the tensor product \(\bigotimes_k B_i\) with the morphisms \(\iota_l\) is the coproduct in \(\textbf{BAlg}_k\) of the family of \(k\)-algebras \((B_i)_{i \in I}\). 

    Additionally, if \(f : B \to C\) and \(g : B \to C\) are morphisms of bands, then \(C \otimes_B C\), where the first \(C\) is viewed as a \(B\)-algebra by \(f\) and the second is viewed as a \(B\)-algebra by \(g\) is the \textit{coequalizer} of \(f\) and \(g\).

\subsection{Band schemes}

   In this subsection, we recall the definition of band schemes as band spaces locally isomorphic to affine band schemes. We refer to them as \textit{geometric} band schemes to distinguish them from the \textit{relative} band schemes defined in Section \ref{sec:Relative Schemes}.

\subsubsection{Prime ideals and the spectrum of a band}

    In the theory of bands, there are three notions of ideals. The first is that of a \textit{null ideal}, defined as an ideal of the semiring \(B^+\). In addition, there are the notions of \(m\)-ideals and \(k\)-ideals, which are ideals of the pointed monoid \(B\).

    \begin{definition}
        Let \(B\) be a band. A subset \(I \subseteq B\) is an \emph{\(m\)-ideal} if it satisfy the following: 
        \begin{enumerate}
            \item \(0 \in I\).
            \item \(B \cdot I \subseteq I\).
        \end{enumerate}
        A \emph{\(k\)-ideal} is an \(m\)-ideal satisfying the following additional condition:
        \begin{enumerate}
            \item[\((3)\)] If \(a + \sum_i b_i \in N_B\) and \(b_i \in I\), then \(a \in I\).
        \end{enumerate}
        Let \(S \subseteq B\). We denote by \(\left< S\right>_m\) (resp. \(\left< S\right>_k\)) the \(m\)-ideal (resp. \(k\)-ideal) of \(B\) generated by \(S\).
    \end{definition}

    As only the notion of \(m\)-ideals is required in the definition of band schemes, we restrict our exposition to \(m\)-ideals. The notion of \(k\)-ideals will be used in Section \ref{sec: hyperring schemes}, where we develop the relation between hyperrings and bands.

    We know that \(m\)-ideals have some similarities with ideals of rings. For example, they have the following properties (see Lemma 1.29 and Proposition 1.30 of \cite{Lorscheid3}):
    \begin{enumerate}
        \item Let \(f : B \to C\) be a morphism of bands. If \(I \subseteq C\) is an \(m\)-ideal, then \(f^{-1}(I)\) is also an \(m\)-ideal.
        \item Let \(B\) be a band, \(S \subseteq B\) a multiplicative subset and \(\iota_S : B \to S^{-1}B\) the localization map. Let \(I \subseteq B\) and \(J \subseteq S^{-1}B\) be \(m\)-ideals. Then, \(S^{-1}I\) is an \(m\)-ideal of \(S^{-1}B\) and \(S^{-1}(\iota_S^{-1}(J)) = J\). 
    \end{enumerate}

    We also have the notions of prime \(m\)-ideals, which leads to the notion of spectrum of a band.

    \begin{definition}
        Let \(B\) be a band.
        \begin{enumerate}
            \item An \(m\)-ideal \(\mathfrak{p}\) is prime if \(B \setminus \mathfrak{p}\) is a multiplicative subset of \(B\). 
            \item The (\textit{geometric}) \textit{prime spectruem} of \(B\), denoted by \(\Spec^{\mathrm{geo}} B\), is the set of all prime \(m\)-ideals of \(B\) together with the topology generated by the open subsets 
            \begin{equation*}
                U_h \coloneq  \{\mathfrak{p} \in \Spec^{\mathrm{geo}} B\; |\; h \not\in \mathfrak{p} \},\quad \text{for all } h \in B.
            \end{equation*}
        \end{enumerate}
    \end{definition}

    As in the case of rings, a morphism of bands \(f: B \to C\) induces a continuous map \(\Spec(f) : \Spec^{\mathrm{geo}} C \to \Spec^{\mathrm{geo}} B\). We also have a correspondence between primes ideals \(\mathfrak{p}\) of \(B\) such that \(\mathfrak{p} \cap S = \emptyset\) with prime ideals of \(S^{-1}B\), where \(S\) is a multiplicative subset of \(B\). 

    \begin{remark}\label{rmk: band are local}
        Every band \(B\) is ``local", that is, it has a unique maximal \(m\)-ideal \(\mathfrak{m} = B \setminus B^\times\), which is also a prime \(m\)-ideal. This leads, in particular, to the useful fact that for every covering of \(\Spec^{\mathrm{geo}} B\) by principal open subsets \(\{ U_{h_i} \}_{i \in I}\), there exists a \(j \in I\) such that \(U_{h_j} = \Spec^{\mathrm{geo}} B\) (see Proposition 2.1 of \cite{Lorscheid3}). 
    \end{remark}    

    \begin{definition}
        A morphism of bands \(f: B \to C\) is said to be \emph{local} if it satisfies \(f(\mathfrak{m}_B) \subseteq \mathfrak{m}_C\), where \(\mathfrak{m}_B\) and \(\mathfrak{m}_C\) are the maximal \(m\)-ideal of \(B\) and \(C\), respectively. 
    \end{definition}

    By Remark \ref{rmk: band are local}, we can define the structure sheaf of \(\Spec^{\mathrm{geo}} B\) as the sheaf of bands \(\OO_{\Spec^{\mathrm{geo}} B}\) satisfying 
    \begin{equation*}
        \OO_{\Spec^{\mathrm{geo}} B}(U_h) \coloneq  B[h^{-1}]
    \end{equation*}
    for every \(h \in B\), where \(B[h^{-1}]\) is the localization of \(B\) by multiplicative subset \(\{1, h, h^2, \cdots \}\).

\subsubsection{Band spaces and band schemes}

The analoge of a locally ringed space for the case of bands is a \textit{band space}, which is a pair \((X, \OO_X)\) of a topological space \(X\) and a sheaf of bands \(\OO_X\). The basic example of band space is the (\textit{geometric}) \textit{affine band scheme} \((\Spec B, \OO_{\Spec^{\mathrm{geo}} B})\). The stalk of \(\OO_X\) at \(x \in X\) is defined as the band 
\begin{equation*}
    \OO_{X,x} \coloneqq  \varinjlim_{x \in U \subseteq X \text{ open}} \OO_X(U).
\end{equation*}

 A morphism of band spaces \((f, f^\#) : (X, \OO_X) \to (Y, \OO_Y)\) is a pair of a continuous map \(f : X \to Y\) and a morphism of sheaves \(f^\# : \OO_Y \to f_* \OO_X\) such that for every \(x \in X\), the induced map \(f_x^\# : \OO_{Y,f(x)} \to \OO_{X,x}\) satisfies \(f_x^\#(\mathfrak{m}_{ \OO_{Y,f(x)}}) \subseteq \mathfrak{m}_{\OO_{X,x}}\), where \(\mathfrak{m}_B = B \setminus B^\times\) is the maximal \(m\)-ideal of the band \(B\). We denote by \textbf{BS} the category of band spaces.

\begin{proposition}\label{proposition 5}
    The category \(\textbf{BS}\) is cocomplete.
\end{proposition}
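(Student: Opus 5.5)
It suffices to construct arbitrary coproducts and coequalizers in \(\textbf{BS}\), since every small colimit is a coequalizer of two maps between coproducts. We follow the standard argument for locally ringed spaces (or for monoidal and sesquiad spaces). The category \(\Band\) is complete, with products and equalizers computed as recalled above. So sheaves of bands can be formed as limits of sheaves, and \(\OO\)-pushforwards make sense.

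\emph{Coproducts.} For a family \((X_i,\OO_{X_i})_{i\in I}\) we take \(X=\bigsqcup_i X_i\) with the disjoint union topology. We define \(\OO_X(U)\coloneqq\prod_i \OO_{X_i}(U\cap X_i)\), using the product of bands. This is a sheaf because products commute with the equalizer diagrams that define the sheaf condition. The stalk at \(x\in X_i\) is \(\OO_{X_i,x}\), so the inclusions are morphisms of band spaces with local stalk maps. The universal property then reduces to gluing on the clopen pieces \(X_i\): a family of morphisms \(X_i\to Y\) determines a map of sheaves \(\OO_Y\to f_*\OO_X\) into the product, and locality is checked stalkwise on each \(X_i\).

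\emph{Coequalizers.} Let \(f,g\colon X\rightrightarrows Y\) be morphisms of band spaces. Let \(q\colon Y\to Z\) be the topological coequalizer, with the quotient topology. Define the sheaf of bands
\[
\OO_Z(U)\coloneqq \eq\bigl(\OO_Y(q^{-1}U)\rightrightarrows \OO_X(f^{-1}q^{-1}U)\bigr),
\]
where the two arrows are \(f^\#\) and \(g^\#\), composed with the identification \(f^{-1}q^{-1}U=g^{-1}q^{-1}U\). This is again a sheaf since equalizers commute with limits, and \((q,q^\#)\) comes with the inclusion \(q^\#\colon\OO_Z\to q_*\OO_Y\). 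Given \(h\colon Y\to W\) with \(hf=hg\), the continuous map \(Z\to W\) exists topologically. The sheaf map \(\OO_W\to \OO_Z\) factors through the equalizer because \(h^\#\) equalizes. This gives the universal property at the level of band-valued ringed spaces.

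\emph{Main obstacle: locality.} We must check that \(q\) and the induced maps are local on stalks, i.e.\ that the result lies in \(\textbf{BS}\). Unlike in the case of schemes, \(\textbf{BS}\) as defined requires local stalk maps, and stalks of \(\OO_Z\) are colimits of equalizers, which need not be simply described. The key tool is Remark \ref{rmk: band are local}: each band has the unique maximal \(m\)-ideal \(B\setminus B^\times\), so locality of a map \(\varphi\) just means \(\varphi^{-1}(C^\times)\subseteq B^\times\). For a germ \(s\in\OO_{Z,z}\) with \(q^\#_y(s)\) invertible in \(\OO_{Y,y}\) for some \(y\in q^{-1}(z)\), we must show \(s\) is invertible. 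The plan is the standard trick: replace \(Z\) at this point by the same construction where the sheaf is restricted to sections \(s\) whose germs are units at all points of a fiber exactly when they are units at one point. Concretely, we pass to the subsheaf of \(\OO_Z\) consisting of sections whose pullbacks have this compatible unit behavior; the unit condition propagates along \(f\) and \(g\) because those are local. We then verify that inverses of such sections, which exist locally on \(q^{-1}U\) and agree by uniqueness of inverses, glue and still equalize \(f^\#,g^\#\), since band morphisms preserve inverses. With this, \(q\) is local. The induced map to \(W\) is local because its composite with \(q\) equals \(h\), which is local, and units are detected after pulling back along \(q\). This completes cocompleteness.
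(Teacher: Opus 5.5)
Your constructions of coproducts and of the coequalizer \((Z,\OO_Z)\) are the same as the paper's. The difference lies in how you prove that \(q^\#_y\colon\OO_{Z,q(y)}\to\OO_{Y,y}\) is local.

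The paper (whose \(p\) is your \(q\)) argues as follows. It takes an inverse \(t\in\OO_Y(U)\) of the germ of \(s\) at \(y\) and asserts that some \(q^{-1}(W')\ni y\), with \(W'\subseteq Z\) open, fits inside \(U\). It then uses only cancellation to get \(f^\#(t)=g^\#(t)\). That assertion is not justified, since a neighborhood of \(y\) need not contain a saturated one. Moreover, the paper never uses that \(f\) and \(g\) are local, and the conclusion fails without this. Here is a counterexample:
\begin{itemize}
\item Take \(Y=\{\eta,\mathfrak m\}\) with opens \(\emptyset,\{\eta\},Y\), \(\OO_Y(Y)=R\) a discrete valuation ring, and \(\OO_Y(\{\eta\})=K=\Frac R\).
\item Let \(X\) be a point with \(\OO_X=K\).
\item Let \(f\) hit \(\eta\) with the identity of \(K\) on stalks, and let \(g\) hit \(\mathfrak m\) via \(R\subseteq K\). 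Note that \(g\) is not local.
\item Then \(Z\) is a point, \(\OO_Z(Z)=R\), and \(R\to K\) is not local.
\end{itemize}
Your observation is exactly the ingredient that is missing: for \(s\in\OO_Z(W)\), the germ \(s_{f(x)}\) is a unit if and only if \(s_{g(x)}\) is, because \(f^\#_x\) and \(g^\#_x\) reflect units and \(f^\#s=g^\#s\). So your route is the sound one.

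You should tighten the write-up in two places.
\begin{itemize}
\item Drop the passage to a subsheaf. Your own propagation argument shows that every section of \(\OO_Z\) already has this fiberwise unit behavior, so the ``subsheaf'' is \(\OO_Z\) itself. A genuinely smaller sheaf would change the candidate coequalizer and force you to redo the universal property.
\item Say explicitly where the inverse lives, since this is the crux. The unit locus
\(V=\{y'\in q^{-1}(W)\mid s_{y'}\in\OO_{Y,y'}^\times\}\) is open in \(Y\) and, by the above, saturated. Hence \(q(V)\) is an open neighborhood of \(q(y)\) in the quotient topology, with \(q^{-1}(q(V))=V\). The local inverses glue on \(V\) by uniqueness of inverses, and they equalize \(f^\#\) and \(g^\#\) by cancellation. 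This gives an inverse of \(s\) in \(\OO_Z(q(V))\).
\end{itemize}
A minor point: for locality of the induced map \(Z\to W\), you only need that \(q^\#_y\) preserves units, which is automatic, together with locality of \(h\). You do not need that \(q\) detects units.
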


\begin{proof}
    We adapt the proof of \cite[I.1.1.6]{Demazure} and show that \(\textbf{BS}\) has arbitrary coproducts and coequalizers. 
    
\textbf{Coproducts:} Let \((X_i,\OO_{X_i})_{i \in I}\) be a family of band spaces. Their coproduct \((S,\OO_S)\) consists of the topological space \(S \coloneqq \coprod_{i \in I} X_i\) together with the sheaf of bands \(\OO_S\), defined as follows: for every open subset \(U = \coprod_{i \in I} U_i\), where \(U_i\) is an open subset of \(X_i\), we define \(\OO_S(U)\) as the product, in the category of bands, of the bands \(\OO_{X_i}(U_i)\). Moreover, for every \(x \in X_i \subseteq S\), we have an isomorphism
\begin{equation*}
    \OO_{S,x} \cong \OO_{X_i,x}.
\end{equation*}
This shows that \((S,\OO_S)\), together with the natural morphisms \(\iota_i: (X_i,\OO_{X_i}) \to (S,\OO_S)\), is indeed the coproduct in \(\textbf{BS}\) of the family \((X_i,\OO_{X_i})_{i \in I}\).
    
\textbf{Coequalizers:} Let \((f,f^\#)\) and \((g,g^\#)\) be two morphisms in \(\textbf{BS}\) from \((X,\OO_X)\) to \((Y,\OO_Y)\). Define the band space \((Z,\OO_Z)\) as follows:
\begin{itemize}
    \item \(Z\) is the coequalizer, in the category of topological spaces, of the continuous maps \(f\) and \(g\); that is, \(Z\) is the quotient space \(Y/\sim\), where \(\sim\) is the smallest equivalence relation such that \(f(x) \sim g(x)\) for all \(x \in X\).
    
    \item Let \(p \colon Y \to Z\) be the canonical projection, and let \(W\) be an open subset of \(Z\). Observe that if \(V = p^{-1}(W)\), then \(f^{-1}(V) = g^{-1}(V)\). Denote this common set by \(U\). We define \(\OO_Z\) by
\begin{equation*}
    \OO_Z(W) \coloneqq \left\{ s \in \OO_Y(V)\;\middle|\; f^\#_U(s) = g^\#_U(s) \in \OO_X(U) \right\}.
\end{equation*}
For open subsets \(W' \subseteq W\) of \(Z\), the restriction map \(\OO_Z(W) \to \OO_Z(W')\) is induced by the restriction maps of \(\OO_Y\), and the morphism of presheaves \(p^\# \colon \OO_Z \to p_*\OO_Y\) is defined, for every open subset \(W \subseteq Z\), as the inclusion \(\OO_Z(W) \hookrightarrow \OO_Y(p^{-1}(W))\). The sheaf axioms for \(\OO_Z\) follow from those for \(\OO_Y\).
    
\end{itemize}
    Finally, we prove that for every \(y \in Y\), the morphism
    \begin{align*}
        p^\#_{y}: \OO_{Z,p(y)} &\longrightarrow \OO_{Y,y}\\
        [(s,W)] &\longmapsto [(s,p^{-1}(W))],
    \end{align*}
     where \((s,W)\) is a pair of an open subset \(W \subseteq Z\) and a section \(s \in \OO_Z(W)\), is a local morphism of bands. Indeed, suppose that there exists \([(t, U)] \in \OO_{Y,y}\) such that 
     \begin{equation*}
         [(s,p^{-1}(W))]\cdot [(t, U)] = [(s|_{p^{-1}(W) \cap U} \cdot t|_{p^{-1}(W) \cap U}, p^{-1}(W) \cap U)] = [(1, p^{-1}(W) \cap U)].
     \end{equation*}
    Then, there exists an open subset \(W' \subseteq Z\) such that \(y \in p^{-1}(W') \subseteq p^{-1}(W) \cap U\) and 
    \begin{equation}\label{identityasdf}
         s|_{p^{-1}(W')} \cdot t|_{p^{-1}(W')} = 1.
    \end{equation}
    Since \(s \in \OO_Z(W)\), we have
    \[
        f^\#(s|_{p^{-1}(W')}) = g^\#(s|_{p^{-1}(W')}).
    \]
    Applying \(f^\#\) and \(g^\#\) to the identity \eqref{identityasdf} and using cancellation, we obtain
    \[
        f^\#(t|_{p^{-1}(W')}) = g^\#(t|_{p^{-1}(W')}).
    \]
    Hence \(t|_{p^{-1}(W')} \in \OO_Z(W')\), which shows that \([(t|_{p^{-1}(W')}, p^{-1}(W'))]\) defines an element of \(\OO_{Z,p(y)}\) which is the inverse of \( [(s,W)]\). Therefore, \(p^\#_y\) is a local morphism of bands.
    \end{proof}

\begin{definition}
    A (\textit{geometric}) \textit{band scheme} is a band space covered by open subspaces, each of which is isomorphic to a geometric affine band scheme. A morphism of band schemes is a morphism of band spaces.
\end{definition}

We write \(\BAff^{\mathrm{geo}}\) and \(\BSch^{\mathrm{geo}}\) for the categories of geometric affine band schemes and geometric band schemes, respectively. The category of geometric band schemes shares several similarities with the category of schemes and the category of monoid schemes.

\begin{proposition}[Theorem 2.9 of \cite{Lorscheid3}]
    Let \(\Gamma : \BSch^{\mathrm{geo}} \to \Band\) be the functor assigning for each geometric band scheme \(X\), its global section \(\Gamma(X) \coloneqq  \OO_X(X)\). Then, \(\Gamma\) is the right adjoint of the fully faithful functor \(\Spec : \Band \to \BSch^{\mathrm{geo}}\). In particular, \(\Spec : \Band \to \BAff^{\mathrm{geo}}\) is an antiequivalence of categories where \(\Gamma|_{\BAff^{\mathrm{geo}}}\) is its inverse.
\end{proposition}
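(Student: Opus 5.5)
The plan is to exhibit, for every band space \((X,\OO_X)\) (in particular every geometric band scheme) and every band \(B\), a bijection
\[
\Hom_{\mathbf{BS}}\bigl(X,\Spec^{\mathrm{geo}} B\bigr)\;\cong\;\Hom_{\Band}\bigl(B,\OO_X(X)\bigr),
\]
natural in \(X\) and \(B\). The adjunction \(\Spec \dashv \Gamma\) is then formal. Full faithfulness of \(\Spec\) follows from the unit \(B\to\Gamma(\Spec^{\mathrm{geo}} B)\) being an isomorphism. The antiequivalence with \(\BAff^{\mathrm{geo}}\) is then immediate, since \(\BAff^{\mathrm{geo}}\) is by definition the essential image of \(\Spec\).

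\textbf{Preliminary facts about \(\Spec^{\mathrm{geo}} B\).} I first record three facts, all coming from Remark~\ref{rmk: band are local}.

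First, \(U_1=\Spec^{\mathrm{geo}} B\), so \(\Gamma(\Spec^{\mathrm{geo}} B)=B[1^{-1}]=B\); this gives the unit isomorphism.

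Second, whenever \(U_h\subseteq U_g\), the element \(h\) avoids every prime of \(B[g^{-1}]\), hence lies outside its maximal \(m\)-ideal. So \(h\) is a unit in \(B[g^{-1}]\), and the universal property of localization \cite[Proposition 1.22]{Lorscheid3} gives canonical restriction maps \(B[h^{-1}]\to B[g^{-1}]\) in the appropriate direction. This also shows \(B[h^{-1}]\cong B[g^{-1}]\) when \(U_h=U_g\), so the sheaf on the basis is well defined.

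Third, the stalk at a prime \(\mathfrak p\) is \(\varinjlim_{h\notin\mathfrak p}B[h^{-1}]=B_{\mathfrak p}\). It is a local band whose maximal \(m\)-ideal pulls back to \(\mathfrak p\) in \(B\), using the correspondence between primes of \(B\) and of its localizations.

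\textbf{Construction of the bijection.} Given \(\varphi\colon B\to\OO_X(X)\), I define \(f\colon X\to\Spec^{\mathrm{geo}} B\) by
\[
f(x)\coloneqq\varphi_x^{-1}(\mathfrak m_{\OO_{X,x}}),
\]
where \(\varphi_x\) is the composite \(B\to\OO_X(X)\to\OO_{X,x}\). This is a prime \(m\)-ideal, being the preimage of a prime \(m\)-ideal.

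Continuity follows from
\[
f^{-1}(U_h)=\{x\in X : \text{the germ of }\varphi(h)\text{ at }x\text{ is invertible}\},
\]
which is open because an invertible germ is invertible on a neighbourhood. On \(f^{-1}(U_h)\) the section \(\varphi(h)\) is invertible, since it is invertible locally and inverses glue. The universal property of localization therefore yields a map \(B[h^{-1}]\to\OO_X(f^{-1}U_h)\), compatible with the restrictions above; this defines \(f^\#\) on the basis and hence on \(\OO_{\Spec B}\). The induced map on stalks is \(B_{f(x)}\to\OO_{X,x}\), and it is local by the very definition of \(f(x)\).

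Conversely, a morphism \((f,f^\#)\) is sent to its map on global sections, \(B=\Gamma(\Spec^{\mathrm{geo}} B)\to\OO_X(X)\).

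\textbf{The two composites are inverse.} Starting from \(\varphi\), one recovers \(\varphi\) because \(f^\#\) on \(U_1\) is \(\varphi\) itself.

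Starting from \((f,f^\#)\), one must show that it is determined by its global map \(\psi\). Locality of \(f^\#_x\colon B_{f(x)}\to\OO_{X,x}\), together with the third fact above, gives
\[
f(x)=\psi_x^{-1}(\mathfrak m_{\OO_{X,x}}),
\]
so the point map is recovered. On each \(U_h\), the map \(f^\#\) is the unique extension of \(\psi\) through \(B\to B[h^{-1}]\), by the uniqueness in the universal property of localization. Naturality in \(X\) and \(B\) is a direct check.

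\textbf{Main obstacle.} I expect the most care to be needed in the sheaf-on-a-basis bookkeeping. Concretely, one must show that the assignment \(U_h\mapsto B[h^{-1}]\) with these restriction maps really forms a sheaf on the basis, and that morphisms out of it are determined on the basis. Both points reduce to the observation that any cover of a \(U_h\) by basic opens contains \(U_h\) itself (Remark~\ref{rmk: band are local} applied to \(B[h^{-1}]\)), which makes the gluing conditions trivial.
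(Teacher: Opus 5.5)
The paper gives no proof of this statement; it is quoted from \cite{Lorscheid3}. Your argument is the standard one carried over from schemes and monoid schemes, and it does prove the statement for geometric band schemes. Two points need correcting.

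\textbf{The restriction maps.} In your second preliminary fact the roles of \(h\) and \(g\) are interchanged. From \(U_h\subseteq U_g\) you get that \(g\) lies in no prime \(m\)-ideal of \(B[h^{-1}]\), since these correspond to the primes of \(B\) in \(U_h\). Hence \(g/1\in B[h^{-1}]^\times\), and the universal property gives the restriction map \(B[g^{-1}]\to B[h^{-1}]\), from the larger open to the smaller one. What you wrote would require \(U_g\subseteq U_h\). The conclusion \(B[h^{-1}]\cong B[g^{-1}]\) when \(U_h=U_g\) is unaffected.

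\textbf{Band spaces with a zero stalk.} Your opening claim that the bijection holds for every band space is false. If some stalk \(\OO_{X,x}\) is the zero band, then \(\mathfrak m_{\OO_{X,x}}=\emptyset\). This is not an \(m\)-ideal, so your \(f(x)\) is not a point of \(\Spec^{\mathrm{geo}}B\).

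Concretely, take \(X\) a point with structure sheaf \(0\). There is no morphism \(X\to\Spec^{\mathrm{geo}}B\) at all, since a stalk map \(B_{\mathfrak p}\to 0\) would have to send \(0\in\mathfrak m_{B_{\mathfrak p}}\) into \(\emptyset\). On the other hand, \(\Hom_{\Band}(B,0)\) is a singleton.

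So restrict to band spaces all of whose stalks are nonzero. This covers every geometric band scheme, because its stalks are localizations \(B_{\mathfrak p}\). These are nonzero: \(1/1=0/1\) would force some \(t\notin\mathfrak p\) with \(t=0\), contradicting \(0\in\mathfrak p\).

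With these two fixes, the rest of your argument goes through as written. That covers continuity, the construction of \(f^\#\) on principal opens, locality of the stalk maps, and recovering \((f,f^\#)\) from its global map. It also covers the sheaf-on-a-basis check via the triviality of principal covers of \(U_h\).
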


\begin{proposition}[Theorem 2.16 of {\cite{Lorscheid3}}]
    The category \(\BSch^{\mathrm{geo}}\) is closed under finite limits and arbitrary coproducts. In particular, \(\Spec^{\mathrm{geo}}(\FF_1^{\pm})\) is terminal and \(\Spec^{\mathrm{geo}}(0)\) is initial, there exists the fiber product of \(X\) and \(Y\) over \(Z\), denoted by \(X \times_Z Y\), and the disjoint union \(\coprod X_i\) is the coproduct of a family of geometric band schemes \(X_i\).
\end{proposition}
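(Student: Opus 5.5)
The proof reduces everything to the affine case using the adjunction $\Spec \dashv \Gamma$ (Theorem 2.9 of \cite{Lorscheid3} above), and then glues as in the classical construction of fiber products of schemes. Since finite limits are generated by a terminal object and fiber products, it suffices to treat the terminal object, the initial object, coproducts and fiber products separately.

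\emph{Terminal and initial objects.} For a geometric band scheme $X$, the adjunction gives
\[
\Hom(X,\Spec^{\mathrm{geo}}\FF_1^\pm)\cong \Hom_{\Band}(\FF_1^\pm,\OO_X(X)).
\]
The right-hand side is a singleton because $\FF_1^\pm$ is initial in $\Band$, so $\Spec^{\mathrm{geo}}(\FF_1^\pm)$ is terminal. For the zero band $0$ we have $0=1$, so every $m$-ideal contains $1$. The complement of any prime $m$-ideal would have to be a multiplicative set containing $1$, which is impossible. Hence $\Spec^{\mathrm{geo}}(0)=\emptyset$, and the empty band space is initial.

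\emph{Coproducts.} Take the coproduct $(\coprod X_i,\OO_S)$ constructed in the proof of Proposition~\ref{proposition 5}. Each $X_i$ is open in $\coprod X_i$ and the sheaf restricts to $\OO_{X_i}$ on it. So the union of the affine covers of the $X_i$ is an affine open cover, and $\coprod X_i$ is a geometric band scheme. Since $\BSch^{\mathrm{geo}}$ is a full subcategory of $\textbf{BS}$, it is also the coproduct there.

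\emph{Affine fiber products.} For $D\to B$ and $D\to C$, I set $P=\Spec^{\mathrm{geo}}(B\otimes_D C)$. For any geometric band scheme $T$, the adjunction together with the fact that the tensor product is the pushout in $\Band$ (\cite[Proposition 1.42]{Lorscheid3}) gives
\[
\Hom(T,P)\cong\Hom(B\otimes_D C,\Gamma T)\cong \Hom(B,\Gamma T)\times_{\Hom(D,\Gamma T)}\Hom(C,\Gamma T).
\]
This is the fiber product of the corresponding $\Hom$-sets into the affine schemes, so $P$ is the fiber product in all of $\BSch^{\mathrm{geo}}$, not just among affines.

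\emph{General fiber products.} I follow Hartshorne II.3.3 step by step.
\begin{enumerate}
\item If $X\times_Z Y$ exists and $U\subseteq X$ is open, then the open subscheme $p^{-1}(U)$ is $U\times_Z Y$. This is a formal check using that morphisms into an open subscheme are the morphisms landing in it.
\item A gluing lemma for band schemes: band spaces and sheaves of bands glue along isomorphisms satisfying the cocycle condition, and being locally affine is preserved.
\item For $Z$ affine and $Y$ affine, cover $X$ by affines $X_i$ and glue the $X_i\times_Z Y$ along the canonical isomorphisms between the fiber products of overlaps given by step 1. Then repeat with a cover of $Y$.
\item For general $Z$, cover $Z$ by affines $Z_i$, form $f^{-1}(Z_i)\times_{Z_i} g^{-1}(Z_i)$, and glue.
\end{enumerate}
The fact that opens in affine band schemes are covered by affines $U_h\cong\Spec^{\mathrm{geo}} B[h^{-1}]$ ensures that overlaps are themselves band schemes.

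The main obstacle is the gluing step. I must check that the universal properties patch together: morphisms agreeing locally glue, and the locality condition on stalk maps is local on $X$. I must also check that the identifications on overlaps satisfy the cocycle condition, which I would deduce from the uniqueness part of the universal property.
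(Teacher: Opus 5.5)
Your proposal is correct. The paper gives no proof of its own here and simply quotes the result from \cite[Theorem 2.16]{Lorscheid3}. Your route is the standard argument behind that citation: the terminal object and affine fibre products via the adjunction $\Hom(X,\Spec B)\cong\Hom(B,\OO_X(X))$ with the tensor product as pushout, $\Spec^{\mathrm{geo}}(0)=\emptyset$ for the initial object, disjoint unions for coproducts, and Hartshorne-style gluing of the affine pieces for general fibre products.
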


\subsubsection{Open immersions}

 In this subsection, we collect some results on open subschemes and open immersions that will be useful in the subsequent sections.

\begin{definition}[Zariski topology]\label{Zariski topology geom}
    Let \(X\) be a geometric band scheme. 
    \begin{enumerate}
        \item An \textit{open subscheme} of \(X\) is an open subset \(U \subseteq X\) together with the restriction \(\OO_U \coloneqq \OO_X|_U\) of the structure sheaf of \(X\) to \(U\).
        \item An \textit{open immersion} is a morphism of schemes \(\iota: Y \to X\) that restricts to an isomorphism of \(Y\) with an open subscheme of \(X\). 
        \item A family \(\{\varphi_i: X_i \to X\}_{i \in I}\) of open immersions is a \emph{Zariski covering} if it is globally surjective on the underlying topological spaces, that is, \(\bigcup_{i \in I}\varphi_i(X_i) = X\).
        \item The \emph{Zariski topology} on \(X\) is the Grothendieck pretopology formed by the Zariski coverings.
    \end{enumerate}
\end{definition}

\begin{proposition}[Section 2.2.1 and 2.2.6 of \cite{Lorscheid3}]\label{lemma 34}
     A morphism of geometric band schemes \(f: X \to Y\) is an open immersion if and only if for every affine band scheme \(\Spec B\) over \(Y\), the induced morphism \(X \times_Y \Spec B \to \Spec B\) is an open immersion.
 \end{proposition}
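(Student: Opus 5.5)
The claim is Proposition \ref{lemma 34}: a morphism \(f\colon X \to Y\) is an open immersion if and only if every base change \(X \times_Y \Spec B \to \Spec B\) along affine \(\Spec B \to Y\) is one. The plan is to prove the two implications separately. The necessity direction reduces to stability of open immersions under base change, and the sufficiency direction is a gluing argument over an affine cover of \(Y\). Throughout, fiber products exist by Theorem 2.16 of \cite{Lorscheid3}, and we use the antiequivalence \(\Spec \dashv \Gamma\) on affine band schemes.

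For necessity, suppose \(f\) identifies \(X\) with an open subscheme \(U \subseteq Y\), and let \(g\colon \Spec B \to Y\) be a morphism. Set \(V = g^{-1}(U)\) with the restricted sheaf \(\OO_{\Spec B}|_V\). I will show directly that \(V\), with the inclusion into \(\Spec B\) and the restriction \(g|_V\colon V \to U \cong X\), satisfies the universal property of \(X \times_Y \Spec B\). Take a band space \(T\) with morphisms \(a\colon T \to \Spec B\) and \(b\colon T \to X\) such that \(g a = f b\). Then \(a\) factors set-theoretically through \(V\). On sheaves, the factorization is the restriction of \(a^\#\) to open subsets of \(V\), and locality of stalk maps is inherited since stalks of \(V\) agree with those of \(\Spec B\). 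Uniqueness holds because \(V \to \Spec B\) is a monomorphism of band spaces. Hence the fiber product is isomorphic to \(V \hookrightarrow \Spec B\), which is an open immersion.

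For sufficiency, cover \(Y\) by open affines \(Y_i \cong \Spec B_i\) with inclusions \(g_i\). By hypothesis each \(X_i \coloneqq X \times_Y Y_i \to Y_i\) is an open immersion onto some open \(U_i \subseteq Y_i\). The necessity argument, applied to the open immersion \(g_i\), shows that \(X_i\) is the open subscheme \(f^{-1}(Y_i)\) of \(X\). So the \(f^{-1}(Y_i)\) cover \(X\), and \(f\) restricts over each \(Y_i\) to an isomorphism \(f^{-1}(Y_i) \to U_i\). Let \(U = \bigcup_i U_i\), which is open in \(Y\). Then \(f\colon X \to U\) is a bijective local homeomorphism, hence a homeomorphism. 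The map \(f^\#\colon \OO_Y|_U \to f_*\OO_X\) is an isomorphism on each \(U_i\), so it is an isomorphism of sheaves of bands by the sheaf property. For this step I use that isomorphisms of bands are exactly bijective morphisms whose inverses preserve null sets (Remark \ref{rmk:null-ideal-not-unique}), and that this condition is local on each \(U_i\). Therefore \(f\) is an open immersion.

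The main obstacle is the fiber product identification in the necessity step. The fiber product in \(\BSch^{\mathrm{geo}}\) is constructed in \cite{Lorscheid3} by gluing affine pieces, so I must make sure the universal property is checked in band schemes rather than only in band spaces. Since \(V\) is itself a geometric band scheme (open subsets of \(\Spec B\) are covered by principal opens \(U_h \cong \Spec B[h^{-1}]\)), verifying the property in band spaces suffices.
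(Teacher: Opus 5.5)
Your argument is correct. The paper gives no proof of its own here: it simply defers to Sections 2.2.1 and 2.2.6 of \cite{Lorscheid3}, mirroring Lemma 34 of \cite{Vezzani} for monoid schemes. You instead supply the standard self-contained argument. Stability of open immersions under base change comes from exhibiting \(g^{-1}(U)\) as the fiber product via its universal property in band spaces. Locality on the target comes from gluing over an affine cover of \(Y\).

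Your route isolates the two band-specific inputs that the citation hides:
\begin{enumerate}
\item Open subsets of band schemes are again band schemes, since they are covered by the \(U_h \cong \Spec B[h^{-1}]\). So the universal property may be checked in band spaces.
\item A morphism of sheaves of bands that is an isomorphism over each member of an open cover is an isomorphism.
\end{enumerate}
For the second point, the operative fact is not Remark \ref{rmk:null-ideal-not-unique}. It is that the sheaf condition is taken in \(\Band\), where products carry the componentwise null set and equalizers the restricted one. Hence a formal sum of sections over \(W\) is null if and only if its restrictions to the members of a cover of \(W\) are null, and so the inverse of the bijection \(f^\#_W\) preserves null sets.

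Two small omissions are easily filled:
\begin{enumerate}
\item In the necessity step, you should also check that the induced map \(T \to V\), followed by \(V \to U \cong X\), equals \(b\). This holds because \(f\) is a monomorphism and \(fb = ga\).
\item In the sufficiency step, you apply that argument to the open immersion \(g_i\) with non-affine source \(X\). This is legitimate, since affineness of the source was never used.
\end{enumerate}
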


\begin{proposition}\label{proposition 9}
    The Zariski topology of geometric band schemes is subcanonical. Also, the category of affine geometric band schemes is dense in the category of band schemes, in the sense that every geometric band scheme is a colimit of a diagram contained in the subcategory of affine geometric band schemes.
\end{proposition}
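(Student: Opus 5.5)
We must prove Proposition \ref{proposition 9}: the Zariski topology on geometric band schemes is subcanonical, and every geometric band scheme is a colimit of affine ones.

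\textbf{Subcanonicity.} We must show that for every band scheme \(Y\), the representable presheaf \(\Hom(-,Y)\) is a sheaf for Zariski coverings. Let \(\{\varphi_i : X_i \to X\}_{i\in I}\) be a Zariski covering. Each \(\varphi_i\) identifies \(X_i\) with an open subscheme \(U_i\). The fiber product \(X_i\times_X X_j\) is then the open subscheme \(U_i\cap U_j\); this uses the description of open immersions in Proposition \ref{lemma 34}. We are given morphisms \(g_i : U_i \to Y\) that agree on the overlaps \(U_i\cap U_j\), and must glue them to a unique \(g: X\to Y\).

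\begin{enumerate}
\item The continuous maps glue to a continuous map \(g: X\to Y\), because the \(U_i\) form an open cover.
\item The sheaf maps \(g_i^\# : \OO_Y \to (g_i)_*\OO_{U_i}\) agree on overlaps. Since \(\OO_X\) is a sheaf, they glue to \(g^\#: \OO_Y \to g_*\OO_X\), section by section over \(g^{-1}(V)=\bigcup_i g_i^{-1}(V)\).
\item Locality of \(g^\#\) on stalks is checked pointwise. At a point of \(U_i\) the stalk map of \(g\) is the stalk map of \(g_i\), which is local by hypothesis.
\item Uniqueness is immediate: a morphism of band spaces is determined by its restrictions to an open cover.
\end{enumerate}

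This is the standard gluing of morphisms of locally ringed spaces. Equivalently, this part is the statement that \(\textbf{BS}\) glues along open covers, which follows from the colimit construction of Proposition \ref{proposition 5}.

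\textbf{Density.} Let \(X\) be a geometric band scheme. Consider the diagram of all open immersions \(\Spec^{\mathrm{geo}} B \to X\), together with all morphisms between them over \(X\). Alternatively, take an affine open cover \(\{V_i\}\) and cover each intersection \(V_i\cap V_j\) by affine opens \(W_{ijk}\); then \(X\) is the coequalizer of \(\coprod W_{ijk} \rightrightarrows \coprod V_i\).

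We compute this colimit in \(\textbf{BS}\) using the explicit constructions of Proposition \ref{proposition 5}.
\begin{itemize}
\item The underlying space of the coequalizer is the quotient of \(\coprod V_i\) by the relation identifying points in the overlaps. This quotient is homeomorphic to \(X\), since the \(V_i\) are open and the identification is exactly along the overlaps.
\item The structure sheaf of the coequalizer consists of families of sections \(s_i\in\OO_{V_i}(V_i\cap W)\) with equal restrictions to each \(W_{ijk}\). Because the \(W_{ijk}\) cover \(V_i\cap V_j\) and \(\OO_X\) is a sheaf, these families are exactly compatible families on \(V_i\cap V_j\). They therefore correspond to \(\OO_X(W)\) by the sheaf axiom.
\end{itemize}
Hence the coequalizer is \(X\) in \(\textbf{BS}\). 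It then remains a colimit in the full subcategory \(\BSch^{\mathrm{geo}}\), and the diagram consists of affine schemes.

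\textbf{Main obstacle.} The delicate point is the sheaf identification in the density argument. Proposition \ref{proposition 5} builds the coequalizer sheaf as a subset of \(\OO_Y(p^{-1}W)\) equalized by the two maps. Here \(p^{-1}(W)=\coprod_i (V_i\cap W)\), and \(\OO\) of this disjoint union is a product of bands, including their null sets. We must check that the resulting null set agrees with that of \(\OO_X(W)\). This holds because restriction into a product of bands detects null sums coordinatewise. We must also check that the intersections of affines are covered by affines, which holds because principal opens \(U_h\) of an affine band scheme are again affine, with \(\OO(U_h)=B[h^{-1}]\).
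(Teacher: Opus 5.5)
Your proof is correct. For the density half it follows the paper's argument: the paper also writes \(X\) as the coequalizer of \(\coprod_{(i,j)} U_{i,j} \rightrightarrows \coprod_i U_i\) for an affine open cover. However, the paper only asserts the isomorphism, and for subcanonicity it simply cites \cite[Section 2.2.6]{Lorscheid3}.

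You improve on this in two ways. First, covering each overlap by affine opens \(W_{ijk}\) is not cosmetic. In a non-separated band scheme \(U_i\cap U_j\) need not be affine; the plane with doubled origin is an example, since the punctured plane \(U_x\cup U_y\subseteq\Spec^{\mathrm{geo}}\FF_1^{\pm}[x,y]\) has two closed points while an affine band scheme has only one. So the paper's displayed diagram is not literally a diagram of affines. Yours is, once the coproducts are unpacked into the diagram with objects \(V_i\), \(W_{ijk}\) and the two inclusions \(W_{ijk}\to V_i\), \(W_{ijk}\to V_j\).

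Second, you verify the identification through the explicit constructions of Proposition \ref{proposition 5}, null sets included, and you prove subcanonicity directly. As you observe, that gluing statement is the same colimit computation applied to the cover \(\{U_i\}\) with overlaps \(U_i\cap U_j\). So both halves rest on a single fact, and your write-up is self-contained where the paper defers to \cite{Lorscheid3}.

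Two small fixes are needed. The identification \(X_i\times_X X_j\cong U_i\cap U_j\) is a direct universal-property check, not a consequence of Proposition \ref{lemma 34}. In step 2 you should also state that the glued \(g^\#_V\) preserves null sums. This uses the same local detection of null sets in \(\OO_X\) that you invoke in the density part.
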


\begin{proof}
    The first part follows from \cite[Section 2.2.6]{Lorscheid3}. Let \(X\) be a band scheme, \(\{U_i \to X\}_{i \in I}\) an affine open covering and \(U_{i,j} \coloneqq U_i \cap U_j\). Then, the following isomorphism holds.
    \begin{equation*}
        \varinjlim\left(\coprod_{(i,j)\in I^2} U_{i,j} \rightrightarrows \coprod_{i\in I} U_i  \right) \overset{\sim}{\longrightarrow} X.
    \end{equation*}

\end{proof}

%% file: Modulesoverbands.tex
\section{Modules over Bands}\label{sec:Modules over Bands}

    This section is the core of this paper. We develop the theory of modules over bands, as introduced by M.~Jarra, O.~Lorscheid, and E.~Vital in \cite{Lorscheid4} in their study of matroid bundles, although their work did not include a detailed investigation of its properties.

    The main goal is to prove that the category of modules over a band is a closed symmetric monoidal category that is complete and cocomplete, and for that we need to verify the existence of several basic constructions, such as limits, colimits and tensor product.

\subsection{Definitions and examples}

\begin{definition}[Null set]
Let \((M,*)\) be a pointed set. A \textit{null set} on \((M,*)\) is a subset \(I\) of the free monoid
\[
M^+ \coloneqq \NN[M]/\langle \text{empty sum} \sim *\rangle
\]
satisfying the following properties:
\begin{enumerate}
    \item[(NS1)] \(* \in I\);
    \item[(NS2)] If \(x,y \in I\), then \(x+y \in I\);
    \item[(NS3)] For every \(m \in M\), there exists a unique \(n \in M\), denoted by \(-m\), such that \(m+n \in I\).
\end{enumerate}
\end{definition}

\begin{definition}[Band module]
Let \(B=(B,N_B)\) be a band. A (\textit{band}) \(B\)-\textit{module} is a pointed set \((M,*)\) equipped with a null set \(I_M\) and a \(B\)-action
\[
B \times M \longrightarrow M,\qquad (b,m)\longmapsto b\cdot m,
\]
satisfying the following properties, for all \(a, b, a_i \in B\) and \(m, m_j \in M\):
\begin{enumerate}
    \item[(BA1)] \(1\cdot m = m,\;\; 0\cdot m = *,\;\; a\cdot * = *\);
    \item[(BA2)] \((ab)\cdot m = a\cdot (b\cdot m)\);
    \item[(BA3)] If \(\sum_i a_i \in N_B\) or \(\sum_j m_j \in I_M\), then
    \(\sum_{i,j} a_i m_j \in I_M\).
\end{enumerate}
\end{definition}

\begin{remark}
Axiom~(BA3) is adapted from the corresponding axiom in the definition of modules over blueprints \cite{Lorscheid1}. Its purpose is to ensure that the induced map
\[
B^+ \times M^+ \longrightarrow M^+
\]
preserves, in an appropriate sense, the relations determined by the null sets. In the case of modules over rings, this axiom recovers the usual distributive laws:
\[
a\cdot (m_1+m_2)=a\cdot m_1+a\cdot m_2,\qquad
(a_1+a_2)\cdot m=a_1\cdot m+a_2\cdot m.
\]
\end{remark}

We now recall the definition of morphisms of \(B\)-modules.

\begin{definition}\label{def:bandmodule}
Let \(B=(B,N_B)\) be a band, and let \((M,I_M)\) and \((N,I_N)\) be \(B\)-modules. A morphism of \(B\)-modules is a map
\(f\colon M\to N\) satisfying the following conditions:
\begin{enumerate}
    \item \(f(a\cdot m)=a\cdot f(m)\) for all \(a\in B\) and \(m\in M\);
    \item If \(\sum m_i\in I_M\), then \(\sum f(m_i)\in I_N\).
\end{enumerate}
We denote by \(\Hom_B(M,N)\) the set of such morphisms.
\end{definition}

From now on, we denote by \(\BMod_B\) the category of band \(B\)-modules as defined above.

\subsection{Examples}

    We now present some straightforward examples of band modules that will be useful in the rest of the paper.

    \begin{example}[Band as module] Let \(A\), \(B\), and \(C\) be bands.
    \begin{enumerate}
    \item If \(f \colon A \to B\) is a morphism of bands, then \(B\) carries a natural structure of an \(A\)-module, with null set \(N_B\).

    \item If \(g \colon B \to C\) is a morphism of \(A\)-algebras, then \(g\) is also a morphism of \(A\)-modules.
    \end{enumerate}
    \end{example}

    \begin{example}[Zero module]
    The singleton \(0 = \{*\}\), viewed as a pointed set, admits a unique null set \(I = \{*\}\).  Equipped with the trivial \(B\)-action, it becomes a \(B\)-module, denoted by \(0\). This module is called the \emph{zero module} and is the zero object (\textit{i.e.}, both the initial and final object) of the category \(\BMod_B\).
    \end{example}

    Next, we define submodules and strict submodules of modules over bands.

    \begin{definition}[Submodules]\label{def:submodules}
        Let \(M\) be band \(B\)-module. A \(B\)-\emph{submodule} of \(M\) is an injective morphism of \(B\)-modules \(f: N \to M\). 
    \end{definition}

    \begin{definition}[Strict submodules]\label{def:strict submodules}
    Let \(M\) be a band \(B\)-module. A \emph{strict} \(B\)-sub-module of \(M\) is a \(B\)-submodule \(f:N \to M\) satisfying the following additional condition:
    \[
        \sum_i n_i \in I_N \iff \sum_i f(n_i) \in I_M.
    \]
    \end{definition}

    \begin{remark}\label{inclusion strict}
    Every strict submodule of a \(B\)-module \(M\) corresponds to a subset \(N \subseteq M\) satisfying the following conditions
    \begin{enumerate}
    \item \(*_M \in N\), and
    \item For every \(b \in B\) and \(n \in N\), we have \(b \cdot n \in N\),
    \end{enumerate}
    together with null set \(I_N \coloneqq  I_M \cap N^{+}\).
\end{remark}

     As an concrete example, we observe that hypermodules over a hyperring form a special case of band modules.

     \begin{example}[Hypermodules as band modules]
    Let \(H\) be a hyperring and let \(M\) be a hypermodule over \(H\). View \(H\) as a band with null set \(N_H\) as in Example~\ref{Ex: hyperring as band}. Define a null set on \(M\) by
    \[
    I_M \coloneqq  \left\{ \sum_i m_i \;\middle|\; 0 \in \op_i m_i \right\},
    \]
    where \(\boxplus\) denotes the hyperaddition on \(M\).
    Then, \((M,I_M)\) becomes a band \(H\)-module. Moreover, every morphism of hypermodules over \(H\) is a morphism of the corresponding band \(H\)-modules.
    \end{example}

\subsection{Quotients}

Let \(M\) be a \(B\)-module over a band \(B\). Recall that a \emph{categorical quotient of} \(M\) is an epimorphism \(\phi_N : M \to N\), considered up to equivalence, where two epimorphisms \(\phi_N : M \to N\) and \(\phi_{N'} : M \to N'\) are equivalent if there exists an isomorphism \(\iota : N \to N'\) such that \(\iota \circ \phi_N = \phi_{N'}\). As in the case of bands, there is a correspondence between the categorical quotients of a \(B\)-module \(M\) and a certain family of subsets of \(M^{+}\), which we call (\emph{module}) null ideals.

\begin{definition}[Module null ideal]
    Let \(B\) be a band, \(M\) a \(B\)-module, and \(I \subseteq M^+\) a subset. Then, \(I\) is a \textit{module null ideal} if it satisfies the following properties: 
    \begin{enumerate}
        \item[\((1)\)] \(I_M \subseteq I\);
        \item[\((2)\)] \(B\cdot I \subseteq I\);
        \item[\((3)\)] \(I + I \subseteq I\);
        \item[\((4)\)] If \(a- c \in I\) and \(a + \sum_i c_i \in I\), then \(c + \sum_i c_i \in I\).
    \end{enumerate}
\end{definition}

\begin{definition}[Quotient by null ideals]
    Let \(M\) be a \(B\)-module and \(I\) be a module null ideal. The quotient of \(M\) by \(I\) is the \(B\)-module \(M\sslash I\) consisting of the pointed set \(M/\sim_I\), where \(\sim_I\) is the equivalence relation defined as: \(x \sim_I y\) if and only if \(x - y \in I\). Additionally, the null set of \(M \sslash I\) is defined as 
    \begin{equation*}
        I_{M\sslash I} \coloneqq  \left\{ \sum_i [m_i] \in \left(M/\sim_I \right)^+\;\middle|\; \sum_i m_i \in I \right\}.
    \end{equation*}
    With this structure, the projection map \(\pi_I \colon M \to M\sslash I\) becomes a surjective morphism of band \(B\)-modules.
\end{definition}

As in the case of band, the following two propositions also holds for modules.

\begin{proposition}[Adapted from {\cite[Proposition 1.13]{Lorscheid3}}]
    Let \(f: M \to N\) be a morphism of \(B\)-modules and let \(J \subseteq M^+\) be a module null set of \(M\). If \(\sum_i f(m_i) \in I_N\) for all \(\sum_i m_i \in J\), then there exists a unique morphism of \(B\)-modules \(f_J : M\sslash J \to N\) such that \(f = \pi_J \circ f_J\).
\end{proposition}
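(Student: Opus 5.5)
Here $M\sslash J$ is the quotient of $M$ by $J$, and $\pi_J$ is the projection. The composite in the statement should be read as $f = f_J\circ\pi_J$. The natural plan is to define $f_J$ on equivalence classes by $f_J([m]) \coloneqq f(m)$ and then check, in order, that it is well defined, $B$-linear, and preserves null sets. Uniqueness comes for free, since $\pi_J$ is surjective on underlying pointed sets.

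\emph{Well-definedness.} Suppose $[m]=[m']$ in $M/\!\sim_J$, that is, $m+(-m')\in J$. By hypothesis $f(m)+f(-m')\in I_N$. Since $f$ is a morphism and $m'+(-m')\in I_M$, we get $f(m')+f(-m')\in I_N$. By uniqueness of additive inverses in (NS3), $f(-m') = -f(m')$. Hence both $f(m)$ and $f(m')$ are elements $x$ with $x+(-f(m'))\in I_N$. Applying (NS3) once more, now to $-f(m')$, which has the unique inverse $f(m')$ because $-(-y)=y$, we conclude $f(m)=f(m')$.

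\emph{Module structure and null sets.} The $B$-action on $M\sslash J$ is $b\cdot[m]=[b m]$; this is well defined because $B\cdot J\subseteq J$. So $f_J(b[m]) = f(bm) = b f(m) = b f_J([m])$. For the basepoint, $f_J([*])=f(*)=*$. If $\sum_i [m_i]\in I_{M\sslash J}$, then by definition $\sum_i m_i\in J$, so by hypothesis $\sum_i f(m_i)\in I_N$. Since $\sum_i f_J([m_i])=\sum_i f(m_i)$, this sum lies in $I_N$. One should note that membership in $I_{M\sslash J}$ does not depend on the chosen representatives; this follows from the description of $I_{M\sslash J}$ as the image of $J$ together with axiom (4).

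\emph{Main obstacle.} The only delicate point is the well-definedness step. It hinges on showing $f(-m)=-f(m)$ and on the involutivity $-(-y)=y$, and both follow from the uniqueness in (NS3) together with the commutativity of $M^+$. Everything else is a direct unwinding of the definitions.
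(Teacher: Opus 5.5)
Your proof is correct. The paper writes out no proof here and only cites \cite[Proposition 1.13]{Lorscheid3}, and your argument is the standard one behind that result: define $f_J([m]) = f(m)$, get well-definedness from uniqueness of additive inverses in $I_N$, read off $B$-linearity and preservation of null sets from the definitions, and get uniqueness from surjectivity of $\pi_J$. You are also right that the factorization must be read as $f = f_J\circ\pi_J$, and your well-definedness step can be shortened by applying (NS3) directly to $f(-m')$, of which both $f(m)$ and $f(m')$ are inverses.
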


\begin{proposition}[Adapted from {\cite[Proposition 1.14]{Lorscheid3}}]
    The association \(I \to M \sslash I\) stablishes a bijection:
    \begin{equation*}
        \Phi: \left\{\text{ module null ideals of } M\; \right\} \to \left\{ \text{ categorical quotients of } M\; \right\}.
    \end{equation*}
    whose inverse \(\Psi\) sends a quotient \([\pi: M \twoheadrightarrow N]\) to the following module null ideal of \(M\)
    \begin{equation*}
        \Psi([\pi: M \twoheadrightarrow N]) \coloneqq \left\{\sum_i m_i \in M^+\;\middle|\; \sum_i \pi(m_i) \in I_N  \right\}.
    \end{equation*}
\end{proposition}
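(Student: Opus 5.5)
The plan is to imitate the proof of \cite[Proposition 1.14]{Lorscheid3}: check that \(\Phi\) and \(\Psi\) are well defined, then that they are mutually inverse.

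\textbf{Well-definedness.} For \(\Phi\), we have to see that \(\pi_I\colon M\to M\sslash I\) is an epimorphism. It is surjective, so this is clear. For \(\Psi\), let \(\pi\colon M\to N\) be an epimorphism and set \(J\coloneqq\Psi([\pi])\). We check the four axioms of a module null ideal.
\begin{enumerate}
\item[\((1)\)] \(I_M\subseteq J\), because \(\pi\) is a morphism.
\item[\((2)\)] \(B\cdot J\subseteq J\), by (BA3) in \(N\) together with \(B\)-linearity of \(\pi\).
\item[\((3)\)] \(J+J\subseteq J\), by (NS2) in \(N\).
\item[\((4)\)] Suppose \(\pi(a)-\pi(c)\in I_N\) and \(\pi(a)+\sum\pi(c_i)\in I_N\). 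By uniqueness in (NS3), \(\pi(a)=\pi(c)\), and substituting gives \(\pi(c)+\sum\pi(c_i)\in I_N\).
\end{enumerate}
Equivalent epimorphisms clearly give the same \(J\), since an isomorphism \(\iota\) satisfies \(\sum\iota(n_i)\in I_{N'}\) if and only if \(\sum n_i\in I_N\).

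\textbf{\(\Psi\circ\Phi=\mathrm{id}\).} We need \(\Psi([\pi_I])=I\). By definition, \(\sum[m_i]\in I_{M\sslash I}\) if and only if \(\sum m_i\in I\). We must also check that this condition depends only on the classes \([m_i]\); this is the role of axiom \((4)\). If \(m_1\sim_I m_1'\), then \(m_1'-m_1\in I\), and axiom \((4)\) lets us replace \(m_1\) by \(m_1'\) in any sum lying in \(I\). Along the way we check that \(\sim_I\) is an equivalence relation and a \(B\)-congruence:
\begin{enumerate}
\item[] reflexivity holds since \(m-m\in I_M\subseteq I\);
\item[] symmetry and transitivity follow from \((4)\), using \(-(-m)=m\);
\item[] compatibility with the action follows from \((2)\).
\end{enumerate}
The null set axioms for \(I_{M\sslash I}\) are inherited from \(I\), with uniqueness in (NS3) coming from \((4)\) again. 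The preceding proposition, the universal property of \(M\sslash J\), is assumed to hold.

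\textbf{\(\Phi\circ\Psi=\mathrm{id}\).} This is the main obstacle. Given an epimorphism \(\pi\colon M\to N\) with \(J=\Psi([\pi])\), the universal property gives \(\bar\pi\colon M\sslash J\to N\) with \(\pi=\bar\pi\circ\pi_J\). We must show \(\bar\pi\) is an isomorphism.

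First, \(\bar\pi\) is injective: \(\pi(x)=\pi(y)\) implies \(\pi(x)-\pi(y)\in I_N\), hence \(x-y\in J\). Second, \(\bar\pi\) reflects null sums by the very definition of \(J\).

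The difficulty is surjectivity. An epimorphism in \(\BMod_B\) need not be surjective a priori, so this must be proved by a test argument. I would take \(N'\) to be the image \(\pi(M)\), a strict submodule of \(N\) by Remark \ref{inclusion strict}. I would then build a module \(N\sqcup_{N'}N\) by gluing two copies of \(N\) along \(N'\), giving it the null set generated by the two copies of \(I_N\). The two inclusions \(N\rightrightarrows N\sqcup_{N'}N\) agree after composing with \(\pi\), so they must be equal since \(\pi\) is an epimorphism. This forces \(N'=N\).

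The delicate point is checking (NS3) in this glued module. The negative of an element in either copy exists because \(N'\) is closed under negation. Uniqueness needs care: I would define the null set as sums whose images under both retractions to \(N\) lie in \(I_N\). These are the two maps \(N\sqcup_{N'}N\to N\) that are the identity on one copy and also send the other copy identically to \(N\).

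If this fails, the fallback is to use the free module on one generator together with quotients to separate points. Once \(\bar\pi\) is bijective and reflects null sums, it is an isomorphism, and so \(\Phi\) and \(\Psi\) are mutually inverse.
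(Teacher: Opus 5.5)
The paper states this proposition without proof; it only attributes it to \cite[Proposition 1.14]{Lorscheid3}. So your argument has to stand on its own, and most of it does. The following parts are correct:
\begin{itemize}
\item the verification that \(\Psi([\pi])\) satisfies \((1)\)--\((4)\);
\item the saturation of \(I\) under \(\sim_I\) via axiom \((4)\), which makes \(M\sslash I\) a module and gives \(\Psi\circ\Phi=\id\);
\item the reduction of \(\Phi\circ\Psi=\id\) to surjectivity of \(\bar\pi\), including injectivity of \(\bar\pi\) and reflection of null sums.
\end{itemize}

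\textbf{The gap.} It lies in the step you yourself flag as delicate. The null set you finally commit to on \(Q=N\sqcup_{N'}N\) consists of sums whose images under the ``retractions'' lie in \(I_N\). Both maps you describe are the fold map \(\nabla\colon Q\to N\), so this null set is just the preimage of \(I_N\) under \(\nabla\). With it, for \(x\in N\setminus N'\), both \(\iota_1(-x)\) and \(\iota_2(-x)\) are negatives of \(\iota_1(x)\). They are distinct because \(-x\notin N'\). So (NS3) fails and \(Q\) is not a \(B\)-module. In general, enlarging a null set can only destroy uniqueness of negatives, never restore it.

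Your fallback does not rescue this step. Maps out of the free module \(B\) detect elements, which is how one proves monomorphisms injective; they cannot detect failure of surjectivity. Quotient-type test objects run into the same (NS3) problem described below.

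\textbf{The fix.} Use the null set you first wrote down: let \(I_Q\) be the submonoid of \(Q^+\) generated by \(\iota_1^+(I_N)\cup\iota_2^+(I_N)\). Then (NS1), (NS2), existence of negatives and (BA3) are immediate.

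For uniqueness, first note that one-term null sums are trivial. If \(z\in I_N\), then \(*+z\in I_N\) and \(*+*\in I_N\), so \(z\) and \(*\) are both negatives of \(*\), whence \(z=*\). Hence no generator contributes exactly one non-base-point term. So if \(q\neq *\) and \(q+q'\in I_Q\), then \(q+q'=\iota_k^+(n+n')\) for a single \(k\) and some \(n+n'\in I_N\), and therefore \(q'=\iota_k(-n)\). This does not depend on \(k\), because \(N'=\pi(M)\) is closed under negation (\(\pi(-m)=-\pi(m)\)).

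With this \(Q\), you have \(\iota_1\circ\pi=\iota_2\circ\pi\) but \(\iota_1\neq\iota_2\) unless \(N'=N\), and your proof is complete.

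\textbf{Do not cite the paper's later lemma instead.} That lemma says epimorphisms are surjective, but its proof tests against \(M/\im(f)\), whose pushed-forward null set can violate (NS3). For example, take \(\ZZ\) as an \(\FF_1^\pm\)-module and \(N'=\{0,\pm1\}\). The class \([2]\) is itself null, because \(2+(-1)+(-1)\in I_{\ZZ}\), so \([0]\) and \([-2]\) are both negatives of \([2]\). Your two-copy test object, with the generated null set, is the right replacement.
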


    We presente two particular cases that will be important in the rest of this thesis. The first is the quotient by an equivalence relation and the second is the quotient by a stric sub module.

    \begin{definition}[Quotients by equivalence relations]\label{def: quotient}
    Let \(M\) be a band \(B\)-module, and let \(\sim\) be an equivalence relation on \(M\) satisfying the following compatibility condition:
    \[
    m \sim m' \;\Longrightarrow\; b \cdot m \sim b \cdot m'
    \quad \text{for all } b \in B.
    \]
    Let \(\pi_{\sim} \colon M \to M/\!\sim\) be the canonical projection map. It induces a morphism of monoids
    \(\pi_{\sim}^{+} \colon M^{+} \to (M/\!\sim)^{+}\).
    Using this morphism, we define the null set of \(M/\!\sim\) by
    \[
    I_{\sim} \coloneqq  \pi_{\sim}^{+}(I_M).
    \]
    The action given by \(b \cdot [m] = [b \cdot m]\) is well-defined and makes \(M/\!\sim\) into a band \(B\)-module. With this structure, the projection map \(\pi_{\sim} \colon M \to M/\!\sim\) becomes a surjective morphism of band \(B\)-modules.
    \end{definition}

    \begin{definition}[Quotient by a submodule]\label{def:quotient submod}
        Let \(M\) be a band \(B\)-module and \(N \subseteq M\) a sub-\(B\)-module. Consider the following equivalence relation \(\sim_N\) on \(M\):
        \begin{equation*}
            m \sim_N m' \text{ if and only if } m = m' \text{ or } m,m' \in N.
        \end{equation*}
        Since this relation preserves the action of \(B\), it defines a quotient module, which we denote by \(M/N\).
    \end{definition}

\subsection{Basic constructions}

     The main motivation of this section is to prove the following proposition.

    \begin{proposition}\label{thm:bicomplete}
        The category \(\BMod_B\) is complete, cocomplete, and for every pair of \(B\)-modules \(M\) and \(N\), the set \(\Hom_B(M,N)\) has a natural structure of \(B\)-module.
    \end{proposition}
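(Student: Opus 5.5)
We prove completeness by constructing products and equalizers, cocompleteness by constructing coproducts and coequalizers, and then put an internal $B$-module structure on $\Hom_B(M,N)$. Two small facts will be used throughout.

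\emph{Fact 1.} In any $B$-module we have $-m=(-1)\cdot m$. Indeed $1+(-1)\in N_B$, so (BA3) gives $m+(-1)m\in I_M$, and (NS3) forces $(-1)m=-m$.

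\emph{Fact 2.} A one-term sum $m$ lies in $I_M$ only if $m=*$. Indeed $*\in I_M$ gives $*+m=m$, and also $*+*=*\in I_M$, so uniqueness in (NS3) gives $m=*$.

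Consequently every morphism satisfies $f(-m)=-f(m)$, and (BA3) applied to the one-element sum $a$ shows $a\cdot I_M\subseteq I_M$.

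\textbf{Limits.} For a family $(M_i)$, take the product of pointed sets with the coordinatewise action. Declare $\sum_j (m_{j,i})_i$ to be null if each coordinate sum lies in $I_{M_i}$. Axioms (NS1)--(NS3) and (BA1)--(BA3) are checked coordinatewise; uniqueness of $-m$ holds because it holds in each factor.

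For morphisms $f,g\colon M\to N$, take $E=\{m\mid f(m)=g(m)\}$ with $I_E=I_M\cap E^+$, as in Remark \ref{inclusion strict}. Closure under the action is clear. For (NS3) in $E$, Fact 1 gives $f(-m)=-f(m)=-g(m)=g(-m)$, so $-m\in E$. The universal properties are then immediate, since a map into $E$ is a map into $M$ landing in $E$. Products and equalizers together give all small limits.

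\textbf{Colimits.} For coproducts, take the wedge $\bigvee_i M_i$, i.e.\ the disjoint union with basepoints identified. Its null set consists of sums $\sum_i x_i$ with $x_i\in I_{M_i}$, only finitely many nonzero. For (NS3), suppose $m\in M_i$ satisfies $m+n\in I$ with $n\in M_j$ and $j\neq i$. Then $m$ alone would have to lie in $I_{M_i}$, so $m=*$ by Fact 2. Hence the negative of $m$ must come from $M_i$ and is unique there. The universal property holds because morphisms out of the wedge are exactly families of morphisms.

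For the coequalizer of $f,g\colon M\to N$, let $J\subseteq N^+$ be the intersection of all module null ideals containing $I_N$ and all elements $f(m)+(-g(m))$. This family is nonempty because $N^+$ itself is one, and the defining conditions (1)--(4) are stable under intersection, so $J$ is a module null ideal. Take $N\sslash J$. In the quotient, the negative of $[m]$ is $[-m]$, and it is unique: if $m+n\in J$, then $n\sim_J -m$ because $n-(-m)=n+m$. Any $h\colon N\to P$ with $hf=hg$ pulls $I_P$ back to a module null ideal containing these generators, hence containing $J$. The adapted form of \cite[Proposition 1.13]{Lorscheid3} then factors $h$ uniquely through $N\sslash J$.

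\textbf{Internal Hom.} Make $\Hom_B(M,N)$ a pointed set with base point the zero morphism and action $(bf)(m)=b\,f(m)$. The map $bf$ is again a morphism: it is $B$-linear by commutativity of $B$, and it preserves null sums by (BA3). Declare $\sum_i f_i$ to be null iff $\sum_i f_i(m)\in I_N$ for every $m\in M$. Then (NS1), (NS2) and (BA1)--(BA3) follow pointwise. For (NS3), put $-f=(-1)f$, which is a morphism. If $f+g$ is null, then $g(m)=-f(m)$ for every $m$ by uniqueness in $N$, so $g=-f$.

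The main obstacle is the coequalizer step. One must verify that the intersection of module null ideals is again one, in particular condition (4), and that the quotient really satisfies (NS3) with $B$ acting well. The rest is bookkeeping, with Facts 1 and 2 handling every uniqueness-of-negatives check.
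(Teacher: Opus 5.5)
Your proof is correct. It agrees with the paper on products, equalizers, coproducts and the internal Hom. Your Facts~1 and~2 supply checks the paper leaves implicit: that the equalizer is closed under negatives, that $-f=(-1)f$ is again a morphism, and that negatives in the wedge are unique.

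The genuine difference is the coequalizer.
\begin{itemize}
\item The paper takes $N/\!\sim$ for the equivalence relation generated by $f(m)\sim g(m)$. Its null set is the image $\pi^{+}_{\sim}(I_N)$, as in the definition of quotients by equivalence relations.
\item You take $N\sslash J$, where $J$ is the smallest module null ideal containing $I_N$ and the sums $f(m)+(-g(m))$.
\end{itemize}

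The paper's route is more explicit on underlying sets, but $\pi^{+}_{\sim}(I_N)$ need not satisfy (NS3). A null sum may contain terms whose classes become $*$; once these are dropped, the remaining classes need not be negatives of each other. Here is an example over $\FF_1^{\pm}$:
\begin{itemize}
\item Let $N=\mathbb{Z}/5$ with null set the formal sums adding to $0$. Let $f\colon \FF_1^{\pm}\to N$ be the inclusion and $g=0$.
\item The generated relation has classes $\{0,\pm1\}$, $\{2\}$ and $\{3\}$.
\item Both $\pi^{+}(2+2+1)=[2]+[2]$ and $\pi^{+}(2+3)=[2]+[3]$ are null, so $[2]$ has two negatives and the paper's quotient is not a $B$-module.
\end{itemize}
Condition~(4) of a module null ideal is what repairs this. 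It makes $J$ saturated for $\sim_J$: a term may be replaced by a $\sim_J$-equivalent one, and terms $\sim_J *$ may be dropped or added using~(3). This saturation is what justifies reading $[m]+[n]\in I_{N\sslash J}$ as $m+n\in J$ for arbitrary representatives in your uniqueness argument, and you should state it explicitly.

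In the example your construction gives $N\sslash J=0$, which is indeed the cokernel. Any $h$ with $h(1)=*$ sends $2$ and $3$ to $*$, by applying $h$ to the null sums $2+1+1+1$ and $3+1+1$ and then using your Fact~2. So your route is the one that produces the coequalizer in general.
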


    To prove this, it suffices to verify the existence of products, coproducts, equalizers, and coequalizers in \(\BMod_B\), and to show that the pointed set \(\Hom_B(M,N)\) admits a natural structure of a \(B\)-module. We follow the ideas of the corresponding constructions for \(A\)-sets in \cite{Lorscheid1}.

\subsubsection{The module \(\Hom_B(M,N)\)}
  
    Consider \((\Hom_B(M,N), 0)\) as a pointed set, where \(0\) is the zero morphism, and define its null set as 
    \begin{equation*}
        I_{\Hom_B(M,N)} \coloneqq  \left\{\sum_i f_i \in (\Hom_B(M,N))^+\;\middle|\; \sum f_i(m) \in I_N\text{ for all } m \in M \right\}.
    \end{equation*}
    Additionally, define the action of \(b \in B\) on \(f \in \Hom_B(M,N)\) by
    \[
        (b \cdot f)(m) \coloneqq  b \cdot f(m)
        \;\; \text{for all } m \in M.
    \]
    This defines a structure of module over \(B\) on \(\Hom_B(M,N)\). 

    \begin{remark}
        The unit object of \(\BMod_B\) is the \(B\)-module \(B\), \emph{i.e.}, \(\Hom_B(B,M)\) is isomorphic to \(M\) for every \(B\)-module \(M\).
    \end{remark}

    \begin{remark}
        Let \(M\) be a \(B\)-module. Then the \(B\)-module \(\Hom_B(M,M)\) with the composition as a multiplication is a (noncommutative) band, and the natural map \(B \rightarrow \Hom_B(M,M)\) is a morphism of (noncommutative) bands. This explains why several of the subsequent results are similar to the ones obtained for \(A\)-sets in \cite{Lorscheid2}.
    \end{remark}

\subsubsection{Product}

    We define the product of a family of \(B\)-modules \((M_i, *_i, I_i)_{i \in \Lambda}\) as the pointed set
    \(
        \left( \prod_{i \in \Lambda} M_i ,\, (*_i)_i \right),
    \)
    with null set defined by
    \begin{equation*}
        I_{\prod_{i \in \Lambda} M_i} \coloneqq  \left\{\sum_j (m_{i,j})_i \in \left( \prod_i M_i \right)^{+}\:\middle|\; \sum_j m_{i,j} \in I_i\text{ for all } i \in I  \right\}
    \end{equation*}
    
    The \(B\)-action is defined by
    \[
        b \cdot (m_i)_i \coloneqq  (b \cdot m_i)_i,
    \]
    which makes \(\prod M_i\) into a \(B\)-module.

    Finally, by construction, the projections \(p_k : \prod M_i \to M_k\) are morphisms of \(B\)-modules, and for every family of morphisms of \(B\)-modules \((f_k : N \to M_k)_{k \in \Lambda}\), define
    \[
        q : N \longrightarrow \prod M_i, \;\; q(n) = (f_k(n))_k.
    \]
    This map is the unique morphism of \(B\)-modules satisfying \(f_k = p_k \circ q\) for all \(k\).

\subsubsection{Coproduct}

    The coproduct of a family of \(B\)-modules \((M_i,*_i,I_i)_{i \in \Lambda}\) consists of the pointed set
    \(
        \left( \bigvee_{i \in \Lambda} M_i ,\, * \right),
    \)
    where \(\bigvee_{i \in \Lambda} M_i\) is the quotient of the disjoint union \(\bigsqcup_{i \in \Lambda} M_i\) by the equivalence relation generated by identifying all base points \(*_i\), and we denote by \(*\) the equivalence class of these base points.

    Let \(\iota_j : M_j \to \bigvee_i M_i\) be the inclusion map for every \(j \in \Lambda\). It extends to a morphism of monoids \(\iota_j^+ : (M_j)^+ \to (\bigvee_i M_i)^+\). Using this morphism, we define the null set of \(\bigvee_i M_i\) by 
    \begin{equation*}
        I_{\bigvee_i M_i} \coloneqq  \left<\bigcup_{j \in \lambda} \iota_j^+(I_{M_j}) \right>_{(\bigvee_i M_i)^+}.
    \end{equation*}
    The \(B\)-action on \(\bigvee M_i\) is defined by extending the \(B\)-actions on each \(M_i\), which makes \(\bigvee M_i\) into a \(B\)-module.

    By construction, each inclusion map
    \[
        \iota_j : M_j \longrightarrow \bigvee M_i
    \]
    is a morphism of \(B\)-modules.

    Finally, given any family of morphisms of \(B\)-modules \((f_i : M_i \to N)_{i \in \Lambda}\), there exists a unique morphism of \(B\)-modules
    \[
        \varphi : \bigvee M_i \longrightarrow N
    \]
    such that \(\varphi \circ \iota_i = f_i\) for all \(i\). Explicitly, define \(\varphi(m) = f_i(m)\) whenever \(m \in M_i\); this is well-defined since all base points are mapped to the same element of \(N\).

\subsubsection{Equalizer}

    Given two morphisms of \(B\)-modules \(f, g : M \to N\), their equalizer is the submodule
    \(
        (\operatorname{eq}(f,g), *_M),
    \)
    of \(M\), where
    \[
        \operatorname{eq}(f,g) = \{\, m \in M \mid f(m) = g(m) \,\}.
    \]
    Next, let \(\phi: P \rightarrow M\) be a morphism of \(B\)-modules such that \(f \circ \phi = g \circ \phi\). The image of \(\phi\) in \(M\) is contained in \(\eq(f,g)\), hence we can think of it as a morphism of \(B\)-modules \(\tilde{\phi}: P \rightarrow \eq(f,g)\).

\subsubsection{Coequalizer}

    Given two morphisms of \(B\)-modules \(f, g : M \to N\), their coequalizer \(\operatorname{coeq}(f,g)\) is the quotient of \(N\) by the equivalence relation generated by the following relation:
    \[
        n \sim n' \quad \text{if there exists } m \in M \text{ such that } n = f(m) \text{ and } n' = g(m).
    \]
    By construction, this equivalence relation preserves the action of \(B\), hence \(\coeq(f,g)\) is a \(B\)-module and the canonical projection \(\pi : N \to \operatorname{coeq}(f,g)\) is a morphism of \(B\)-modules.

    Next, suppose that \(h: N \to L\) is a morphism of \(B\)-modules such that \(h \circ f = h \circ g\). Then, for any \(n \sim n'\), we have \(h(n) = h(n')\). Hence, the map
    \[
        \varphi : \operatorname{coeq}(f,g) \to L, \qquad \varphi([n]) = h(n)
    \]
    is well defined and is the unique morphism of \(B\)-modules satisfying \(\varphi \circ \pi = h\).

\subsection{Monomorphisms and epimorphisms}
    
    We adopt the standard categorical definitions of monomorphisms and epimorphisms. In our setting, we show that these notions are equivalent to injectivity and surjectivity, respectively.

\begin{lemma}
    Let \(f: N \rightarrow M\) be a morphism of \(B\)-modules. Then, \(f\) is a monomorphism (resp. epimorphism) if and only if it is injective (resp. surjective).
\end{lemma}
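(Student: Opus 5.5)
The plan is to prove the four implications separately. The "if" directions are formal, and the "only if" directions come from testing against suitable modules.

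\emph{Injective implies mono, surjective implies epi.} If \(f\) is injective and \(f\circ g = f\circ h\) for \(g,h\colon P\to N\), then \(f(g(p))=f(h(p))\) gives \(g(p)=h(p)\) for every \(p\in P\), so \(g=h\). Dually, if \(f\) is surjective and \(g\circ f=h\circ f\), then \(g\) and \(h\) agree on \(f(N)=M\). The null sets play no role here, since morphisms are determined by their underlying maps of sets.

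\emph{Mono implies injective.} I would test against the free module on one generator, namely \(B\) itself with null set \(N_B\). For \(x\in N\), I claim the map \(\varphi_x\colon B\to N\), \(b\mapsto b\cdot x\), is a morphism of \(B\)-modules. Linearity follows from (BA2), and \(\varphi_x(0)=*\) follows from (BA1). Null sets are preserved because if \(\sum a_i\in N_B\), then \(\sum a_i x\in I_N\) by (BA3) applied with the single element \(x\) (taking \(j\) to range over one index). Now suppose \(f(x)=f(y)\). Then \(f\circ\varphi_x=\varphi_{f(x)}=\varphi_{f(y)}=f\circ\varphi_y\). Since \(f\) is a monomorphism, \(\varphi_x=\varphi_y\), and evaluating at \(1\) gives \(x=y\). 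One subtle point should be checked: (BA3) with a one-term sum \(m_1=x\) is legitimate even when \(x\notin\) any null sum, since only the hypothesis \(\sum_i a_i\in N_B\) is used.

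\emph{Epi implies surjective.} This is the main obstacle, since one must construct two distinct maps out of \(M\) that agree on \(f(N)\). I would use the quotient by the submodule \(f(N)\). By (BA1) and the \(B\)-equivariance of \(f\), the image \(f(N)\) contains \(*\) and is closed under the \(B\)-action. So by Remark~\ref{inclusion strict} it is a strict submodule, and Definition~\ref{def:quotient submod} gives the quotient \(\pi\colon M\to M/f(N)\), which collapses \(f(N)\) to one point. Let \(0\colon M\to M/f(N)\) be the zero morphism. Then \(\pi\circ f=0\circ f\), so \(\pi=0\), which means every \(m\in M\) is identified with \(*\), that is, \(m\in f(N)\).

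The step to verify carefully is that \(\pi\) and the zero map are genuine morphisms and that the quotient does not collapse more than \(f(N)\). The relation \(\sim_{f(N)}\) is exactly "equal, or both in \(f(N)\)", so the class of \(m\) equals the class of \(*\) if and only if \(m\in f(N)\). The null set of the quotient is \(\pi^+(I_M)\) by Definition~\ref{def: quotient}, so \(\pi\) preserves null sets by construction. The zero morphism preserves null sets by (NS1) and (NS2). The remaining check is that \(\pi^+(I_M)\) satisfies (NS3), which I expect Definition~\ref{def: quotient} already asserts. If it did not, I would replace this test object with the cokernel-type coequalizer of \(f\) and the zero map, which the paper has already constructed.
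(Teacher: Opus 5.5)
\textbf{Gap in epi \(\Rightarrow\) surjective.} Your proofs of the two easy implications and of mono \(\Rightarrow\) injective (testing against \(b\mapsto b\cdot x\) on \(B\)) are correct and agree with the paper. The gap is in epi \(\Rightarrow\) surjective, at exactly the point you flagged. The assertion in Definition~\ref{def: quotient} that \(\pi^+(I_M)\) is a null set is false in general. So \(M/f(N)\) need not be a \(B\)-module, and \(\pi\) need not be a morphism. The paper's own proof uses this same step, so the defect is inherited.

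For example, take \(B=\FF_1^{\pm}\), \(M=\mathbb{Z}\) with \(I_M=\{\sum_i n_i \mid \sum_i n_i=0 \text{ in } \mathbb{Z}\}\), and \(L=\{0,1,-1\}\). In \(M/L\) the class of \(\pm1\) is the base point, i.e.\ the empty sum, so \(\pi^+(2+(-1)+(-1))=[2]\). Hence both \([2]+*\) and \([2]+[-2]\) lie in \(\pi^+(I_M)\), which violates (NS3).

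Your fallback does not rescue the argument. The paper's coequalizer is the same set-theoretic quotient with the same null set. Worse, a genuine cokernel cannot detect non-surjectivity here. Let \(\varphi\) be any morphism with \(\varphi(\pm1)=*\), and fix \(n\ge 2\). Then \(\varphi\) sends the null sum \(n+(-1)+\cdots+(-1)\) (with \(n\) copies of \(-1\)) to the one-term sum \(\varphi(n)\). By (NS3) applied to \(*\), the only one-term null sum is \(*\). Hence \(\varphi(n)=*\) and \(\varphi(-n)=(-1)\cdot\varphi(n)=*\). So the cokernel of \(L\hookrightarrow\mathbb{Z}\) is \(0\), even though this inclusion is not an epimorphism (see below).

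\textbf{The fix.} The missing idea is to test against a module that does not kill \(f(N)\). Let \(L=f(N)\), and let \(P\) be two copies of \(M\) glued along \(L\), with inclusions \(\iota_1,\iota_2\colon M\to P\). Give \(P\) the action \(b\cdot\iota_k(m)=\iota_k(bm)\), which is well defined because \(L\) is \(B\)-stable, and the null set \(I_P\coloneqq\iota_1^+(I_M)+\iota_2^+(I_M)\). Axioms (NS1), (NS2) and (BA1)--(BA3) follow at once from those of \(M\).

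For (NS3), suppose \(x+y=\iota_1^+(\tau_1)+\iota_2^+(\tau_2)\) with \(\tau_1,\tau_2\in I_M\).
\begin{itemize}
\item If \(x=*\), the total length is at most \(1\), so both \(\tau_k\) are empty and \(y=*\).
\item If \(x\neq *\), the lengths of \(\tau_1,\tau_2\) add up to at most \(2\), and neither can equal \(1\). So \(y\neq *\), and one \(\tau_k\) is a two-term null sum \(m+(-m)\) with \(\iota_k(m)=x\), while the other is empty.
\item If moreover \(x=\iota_1(m)\) with \(m\notin L\), then \(k=1\) is forced and \(y=\iota_1(-m)\).
\item If \(x\in L\), then \(y=-x\in L\).
\end{itemize}

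Thus \(P\) is a \(B\)-module, and \(\iota_1,\iota_2\) are morphisms with \(\iota_1\circ f=\iota_2\circ f\). But \(\iota_1(m)\neq\iota_2(m)\) for every \(m\notin f(N)\). So if \(f\) is an epimorphism, then \(f\) is surjective.
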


\begin{proof}
Since a morphism of \(B\)-modules is completely determined by the images of the underlying elements, every injective (resp.\ surjective) morphism is necessarily a monomorphism (resp.\ epimorphism).

Conversely, suppose that \(f\) is a monomorphism and that \(f(n)=f(n')\) for some \(n,n' \in N\). Consider the maps
\(g,h \colon B \to N\) defined by \(g(b)=b\cdot n\) and \(h(b)=b\cdot n'\) for all \(b \in B\).
Both maps are compatible with the action of \(B\). Moreover, if \(\sum_i b_i \in N_B\), then by definition both
\(\sum_i b_i \cdot n\) and \(\sum_i b_i \cdot n'\) belong to \(I_N\).
Hence \(g\) and \(h\) are well-defined morphisms of \(B\)-modules satisfying
\(f \circ g = f \circ h\).
Since \(f\) is a monomorphism, we conclude that \(g=h\), and therefore \(n=n'\).
Thus \(f\) is injective.

Finally, suppose that \(f\) is an epimorphism, and let
\(\pi \colon M \to M/\mathrm{im}(f)\) be the canonical projection.
By construction, we have \(\pi \circ f = 0 \circ f\).
Since \(f\) is an epimorphism, this implies that \(\pi = 0\).
Consequently, \(M/\mathrm{im}(f)=0\), and hence \(f\) is surjective.
\end{proof}

\subsection{Kernel and cokernel}

    We give a characterization of the kernel and cokernel of morphisms of modules over a band.

\begin{remark}
    Since $\BMod_B$ has a zero object, the kernel of a morphism $f : M \to N$ of $B$-modules is given by the equalizer of $f$ and the zero morphism.
\end{remark}

\begin{lemma}\label{lemma:cokernel}
    Let $f : N \to M$ be a morphism of $B$-modules. Then the cokernel of $f$ is given by the canonical projection \(\pi : M \longrightarrow M / \im(f)\).
\end{lemma}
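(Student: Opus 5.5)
The plan is to check directly that the canonical projection $\pi\colon M \to M/\im(f)$ of Definition \ref{def:quotient submod} satisfies the universal property of the cokernel. In a category with zero object, the cokernel of $f$ is the coequalizer of $f$ and the zero morphism $0\colon N \to M$. We could appeal to the coequalizer construction above, but it is cleaner to verify the universal property by hand.

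First we note that $\im(f)$ is a sub-$B$-module of $M$. It contains $*_M = f(*_N)$, and $b\cdot f(n) = f(b\cdot n)$ shows it is stable under the $B$-action. So Definition \ref{def:quotient submod} applies. The relation $\sim_{\im(f)}$ identifies all elements of $\im(f)$ with $*$ and is otherwise trivial. By Definition \ref{def: quotient}, the null set of the quotient is $\pi^+(I_M)$, and $\pi$ is a surjective morphism of $B$-modules. Clearly $\pi\circ f = 0$, since every $f(n)$ is sent to the base point $[*]$.

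Next, the universal property. Let $h\colon M \to L$ be a morphism of $B$-modules with $h\circ f = 0$. Define $\varphi\colon M/\im(f) \to L$ by $\varphi([m]) = h(m)$. This is well defined, because the only nontrivial identifications are between elements of $\im(f)$, and $h$ sends all of these to $*_L$.

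$B$-linearity of $\varphi$ follows from that of $h$ and the definition $b\cdot[m] = [b\cdot m]$.

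Preservation of null sets uses the description $I_{M/\im(f)} = \pi^+(I_M)$. Any null sum in the quotient has the form $\sum_i [m_i]$ with $\sum_i m_i \in I_M$. Its image is $\sum_i \varphi([m_i]) = \sum_i h(m_i)$, which lies in $I_L$ because $h$ is a morphism.

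Uniqueness follows from surjectivity of $\pi$, which makes it an epimorphism by the preceding lemma (or trivially, since maps are determined on elements).

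The only point needing care is the null set check. One must make sure that the null set of $M/\im(f)$ is exactly the image $\pi^+(I_M)$, rather than some larger closure such as a generated null ideal. Otherwise one would have to check that $\varphi$ respects the additional generated sums, which would require an induction over the generating operations. Since Definition \ref{def: quotient} defines it literally as $\pi^+(I_M)$, the check is immediate. If a generated closure were needed, one would instead observe that $\{\sum x_i : \sum \varphi(x_i)\in I_L\}$ is closed under the relevant operations and contains $\pi^+(I_M)$.
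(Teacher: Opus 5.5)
Your argument is the same as the paper's. You define $\varphi([m])=h(m)$, check well-definedness, $B$-linearity and the null-set condition via $I_{M/\im(f)}=\pi^+(I_M)$, and get uniqueness from surjectivity of $\pi$. If $M/\im(f)$ is an object of $\BMod_B$, all of these steps are fine.

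The gap, which the paper's proof shares, is exactly that assumption. You take from Definition~\ref{def: quotient} that $\pi^+(I_M)$ is a null set on the collapsed pointed set. But axiom (NS3), uniqueness of $-[m]$, can fail. You flagged the null set as the delicate point, but you checked the wrong thing. The issue is not whether $\varphi$ respects $\pi^+(I_M)$; it is whether $\pi^+(I_M)$ is a legitimate null set at all. (NS1), (NS2) and (BA1)--(BA3) do pass to the quotient, so (NS3) is the only obstruction.

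Concretely, take $B=M=\mathbb{Z}$ as in Example~\ref{Ex: ring as band}, and let $f$ be the inclusion of the strict submodule $2\mathbb{Z}$. Then $M/\im(f)=\{[0]\}\cup\{[m] : m \text{ odd}\}$, and:
\begin{itemize}
\item $[1]+[1]=\pi^+(1+1+(-2))\in\pi^+(I_M)$, because $[-2]$ is the base point, i.e.\ the empty sum;
\item $[1]+[-1]\in\pi^+(I_M)$;
\item $[1]\neq[-1]$.
\end{itemize}
So $[1]$ has two inverses, $M/\im(f)$ is not a band module, and the lemma fails as stated.

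The actual cokernel is genuinely different. Since $\Hom_{\mathbb{Z}}(\mathbb{Z},L)\cong L$ via $h\mapsto h(1)$, the cokernel corepresents $L\mapsto\{\ell\in L : 2\ell=*\}$. Applying (BA3) to the relations $1+1+(-2)$, $1+(-1)$ and $a+(-1)+(1-a)$ in $\mathbb{Z}$ forces $\ell+\ell\in I_L$, hence $-\ell=\ell$, and $a\ell=\ell$ for every odd $a$. From this one checks that the cokernel is reduction $\mathbb{Z}\to\mathbb{F}_2=\{0,1\}$, where $\mathbb{F}_2$ has null set the sums containing an even number of $1$'s. This is a two-element module, not the infinite collapse.

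So in general the cokernel must identify elements outside $\im(f)$; here $1\sim -1\sim 3\sim\cdots$. It is therefore a quotient of the $\sslash$ type by a module null ideal, not $M/\im(f)$. Your verification of the universal property proves the lemma only under an extra hypothesis guaranteeing that $\pi^+(I_M)$ satisfies (NS3).
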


\begin{proof}
    Let $\sim$ be the equivalence relation defining $M / \im(f)$, and let $\phi : M \to P$ be a morphism of $B$-modules such that $\phi \circ f = 0$.

    If $m \sim m'$, then $\phi(m) = \phi(m')$, so the map
    \[
        \tilde{\phi} : M / \im(f) \longrightarrow P, 
        \qquad \tilde{\phi}([m]) = \phi(m),
    \]
    is well defined. By the definition of the null set of $M / \im(f)$, we see that $\tilde{\phi}$ is a morphism of $B$-modules. Moreover, $\tilde{\phi}$ is the unique morphism of $B$-modules satisfying
    \(\tilde{\phi} \circ \pi = \phi\).
\end{proof}

\subsection{Localization of modules}

Let \(B\) be a band, \(M\) a module over \(B\) and  \(S \subseteq B\) a multiplicative subset. The \textit{localization} of \(M\) by \(S\) is the \(S^{-1}B\)-module defined in the following way: Its underlying pointed set is the quotient
 \begin{equation*}
     S^{-1}M = (S \times M)/\sim_S\;,
 \end{equation*}
 where the equivalent relation \(\sim_S\) is defined as: \((s,m) \sim_S (s',m')\) if and only if there exists a \(t\in S\) such that \(tsm' = ts'm\). We denote the class of \((s,m)\) by \(m/s\). 
 
 The action of \(S^{-1}B\) on \(S^{-1}M\) is defined as
 \begin{align*}
     S^{-1}B \times S^{-1}M &\longrightarrow S^{-1}M\\
     \left(\frac{b}{s}, \frac{m}{t}\right) &\longmapsto \frac{bm}{st}.
 \end{align*}
 
  Let \(\iota_S : M \to S^{-1}M\) be the map defined by \(\iota_S(m) \coloneqq  m/1\) for every \(m \in M\). It extends to a morphism of monoids \(\iota_S^+ : M^+ \to (S^{-1}M)^+\). Using this morphism, we define the null set of \(S^{-1}M\) as 
 \begin{equation*}
     I_{S^{-1}M} \coloneqq  \left< \iota_S^+(I_M) \right>_{S^{-1}B}.
 \end{equation*}
 This makes \(S^{-1}M\) into a \(S^{-1}B\)-module, and the map \(\iota_S\) into a morphism of \(B\)-modules.

 As in the case of rings, we have the following relation between localization and tensor product.

 \begin{lemma}\label{localization tensor}
    Let \(B\) be a band, \(M\) a \(B\)-module and \(S \subseteq B\) a multiplicative subset. Then, the following map is an isomorphism of \(B\)-modules
    \begin{align*}
        \Phi: M \otimes_B S^{-1}B &\overset{\sim}{\longrightarrow} S^{-1}M\\
        m \otimes \frac{b}{s} &\longmapsto \frac{b\cdot m}{s}.
    \end{align*}
\end{lemma}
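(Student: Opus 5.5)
The plan is to write down an explicit inverse to \(\Phi\) and check that both maps respect the null sets. The tensor product \(M \otimes_B S^{-1}B\) is not defined in the text above, so I assume the standard construction. Its underlying pointed set is the smash product \(M \wedge S^{-1}B\) modulo the balancing relation \((b m)\otimes x \sim m \otimes (b x)\). Its null set is generated, as a \(B\)-stable additive submonoid (made to satisfy the module null ideal axioms), by sums \(\sum_i m_i \otimes x\) with \(\sum_i m_i \in I_M\) and \(m \otimes \sum_j x_j\) with \(\sum_j x_j \in N_{S^{-1}B}\). Note that \(M\otimes_B S^{-1}B\) is naturally an \(S^{-1}B\)-module via the second factor, and all maps below will also be \(S^{-1}B\)-linear.

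\emph{Step 1: \(\Phi\) is well defined and \(B\)-linear.} First, \(\Phi\) does not depend on the representative \(b/s\). If \(t s' b = t s b'\) with \(t\in S\), then \(t s'(bm) = t s (b'm)\), so \(bm/s = b'm/s'\) in \(S^{-1}M\). Second, \(\Phi\) respects the balancing relation: \(\Phi((cm)\otimes b/s) = cbm/s = \Phi(m\otimes cb/s)\). Third, basepoints go to the basepoint. Linearity for \(B\), and indeed for \(S^{-1}B\), is immediate from the formula.

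\emph{Step 2: the inverse.} Define \(\Psi: S^{-1}M \to M\otimes_B S^{-1}B\) by \(\Psi(m/s) = m\otimes (1/s)\). To see that \(\Psi\) is well defined, suppose \(tsm' = ts'm\) with \(t \in S\). Then
\[
m\otimes \tfrac{1}{s} = m \otimes \tfrac{ts'}{tss'} = (ts'm)\otimes \tfrac{1}{tss'} = (tsm')\otimes\tfrac{1}{tss'} = m'\otimes \tfrac{ts}{tss'} = m'\otimes \tfrac{1}{s'}.
\]
The two compositions are identities: \(\Phi\Psi(m/s)=m/s\), and \(\Psi\Phi(m\otimes b/s) = bm\otimes 1/s = m\otimes b/s\). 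Hence \(\Phi\) is a bijection of pointed sets, compatible with the actions.

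\emph{Step 3: null sets (the main obstacle).} Both null sets are only given as generated objects, so I first check generators and then argue that the maps preserve the generation process. The key fact is that a map which is linear for the relevant band and is additive on formal sums sends the closure of a set into the closure of its image.

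For \(\Phi\), consider first a generator \(\sum_i m_i\otimes b/s\) with \(\sum_i m_i\in I_M\). It maps to \((b/s)\cdot\iota_S^+(\sum_i m_i)\), which lies in \(I_{S^{-1}M}\) because that null set is \(S^{-1}B\)-stable. Next consider a generator \(m\otimes\sum_j b_j/s_j\) with \(\sum_j b_j/s_j \in N_{S^{-1}B}\). It maps to \(\sum_j b_j m/s_j\), and I claim this lies in \(I_{S^{-1}M}\). To prove the claim, show that the set of \(\sum_j x_j \in (S^{-1}B)^+\) with \(\sum_j x_j\cdot(m/1)\in I_{S^{-1}M}\) contains \(\iota_S^+(N_B)\), by (BA3) for \(M\), and is an ideal containing \(0\). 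It therefore contains \(N_{S^{-1}B}=\langle \iota_S^+(N_B)\rangle\). This is essentially axiom (BA3) for \(S^{-1}M\), so it also serves as the verification that \(S^{-1}M\) is an \(S^{-1}B\)-module.

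For \(\Psi\), the generators of \(I_{S^{-1}M}\) are of the form \((c/s)\cdot\sum_i m_i/1\) with \(\sum_i m_i\in I_M\). Such a generator maps to \(\sum_i m_i\otimes c/s\), which is a generator of the tensor null set.

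Finally, closure under sums and under the action is transported because \(\Phi\) and \(\Psi\) are additive on formal sums and \(S^{-1}B\)-linear. Hence both maps are morphisms of modules, and \(\Phi\) is an isomorphism.
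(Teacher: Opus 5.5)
Your proposal is correct and follows the same route as the paper: show that \(\Phi\) is a bijective \(B\)-linear map, then compare the null sets on generators. The paper explicitly checks only that the generators \(\sum_j m_j/1\) of \(I_{S^{-1}M}\) are images of \(\sum_j m_j\otimes 1\); your version fills in the steps it leaves to ``usual arguments'' by writing down the inverse \(\Psi\), handling the multiples \((c/s)\cdot\sum_i m_i/1\) directly, and using the ideal-closure argument to show that generators \(m\otimes\sum_j x_j\) with \(\sum_j x_j\in N_{S^{-1}B}\) land in \(I_{S^{-1}M}\).
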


\begin{proof}
    By usual arguments, we see that \(\Phi\) is a well-defined bijective morphism of \(B\)-modules. It remaines for us to verify the correspondence between their null sets. By definition, \(I_{S^{-1}M}\) is generated by formal sums \(\sum_j m_j/1 \in (S^{-1}M)^+\) with \(\sum_j m_j \in I_M\). Since each \(m_j/1\) is the image of \(m_j \otimes 1\), we conclude, by the definition of null sets of tensor product, that \(\sum_j m_j/1\) is in the image of \(I_{M \otimes_B S^{-1}B }\) by \(\Phi^+\).
\end{proof}

\subsection{Free and projective modules}

    We collect some results on free and projective modules over bands. We show that the two usual definitions of projectivity are equivalent for modules over idylls, where we also prove that projectivity is equivalent to freeness. 

    \begin{definition}
Let \(B\) be a band. A \(B\)-module \(M\) is called \emph{free} if there exists a set \(I\) and an isomorphism of \(B\)-modules
\[
M \;\cong\; \bigvee_{i \in I} B,
\]
where the right-hand side denotes the coproduct of \(I\) copies of \(B\).
\end{definition}

\begin{remark}
Internally, a \(B\)-module \(M\) is free if and only if there exists a family of elements \((e_i)_{i \in I}\) of \(M\) such that:
\begin{enumerate}
    \item For every \(m \in M\), there exist unique elements \(i \in I\) and \(b \in B\) with \(m = b \cdot e_i\), and
    \item The null set of \(M\) is of the form
    \[
I_M = \left\langle \sum_j b_j e_i \;\middle|\; \sum_j b_j \in N_B,\; i \in I \right\rangle_{B^{+}}.
\]
\end{enumerate}
\end{remark}

We now turn to the notion of projectivity. 

\begin{definition}
    A \(B\)-module \(P\) is called \emph{projective} if for every epimorphism of band modules \(\pi: N \rightarrow M\) and every morphism of band modules \(f: P \rightarrow M\), there exists a morphism \(\varphi: P \rightarrow N\) such that \(\pi \circ \varphi = f\), that is, the following diagram is commutative.
    \begin{equation*}
        \begin{tikzcd}
            & P \arrow[d, "f"] \arrow[dl, dotted, swap, "\varphi"] \\
            M \arrow[r, two heads, "\pi"] & N.
        \end{tikzcd}
    \end{equation*}
\end{definition}

\begin{proposition}\label{direct summand = free}
    Let \(B\) be a band and \(P\) be a \(B\)-module. Then, the following are equivalent:
    \begin{enumerate}
        \item \(P\) is free.
        \item There exists a \(B\)-module \(Q\) such that \(P \vee Q\) is free.
    \end{enumerate}
\end{proposition}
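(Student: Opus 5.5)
The implication \((1)\Rightarrow(2)\) is immediate: take \(Q = 0\), the zero module. Since \(0\) is the zero object of \(\BMod_B\), we have \(P \vee 0 \cong P\), which is free. The content of the proposition is therefore \((2)\Rightarrow(1)\). I will prove it by working inside \(P\vee Q\) with the internal characterization of freeness from the preceding remark.

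\textbf{Setup.} Fix an isomorphism \(P\vee Q \cong \bigvee_{i\in I} B\) and let \((e_i)_{i\in I}\) be the corresponding basis elements. I first dispose of the degenerate case. If some \(e_i\) equals the base point \(*\), then \(* = 0\cdot e_i = 1\cdot e_i\), and uniqueness of the coefficient forces \(0=1\) in \(B\). Then \(B\) is the zero band, every \(B\)-module is \(0\) by (BA1), and there is nothing to prove. So assume every \(e_i \neq *\).

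\textbf{Splitting the basis.} The underlying pointed set of \(P\vee Q\) is the wedge, \(P\) and \(Q\) meet only in \(*\), and each of them is stable under the \(B\)-action, since the action on the coproduct is defined componentwise. Let \(I_P = \{ i \in I \mid e_i \in P\}\).
\begin{itemize}
\item If \(i \in I_P\), then \(B\cdot e_i \subseteq P\).
\item Conversely, let \(p\in P\) with \(p\neq *\), and write \(p = b\,e_i\). If \(e_i\) lay in \(Q\), then \(p = b\,e_i\) would lie in \(Q\cap P=\{*\}\), which is impossible. So \(e_i\in P\), i.e. \(i \in I_P\).
\end{itemize}
Hence \(P = \bigcup_{i\in I_P} B\cdot e_i\). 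The uniqueness of the pair \((i,b)\) with \(p = b\, e_i\) is inherited from \(P\vee Q\). This gives condition (1) of the remark for the family \((e_i)_{i\in I_P}\).

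\textbf{Comparing null sets (the main obstacle).} By definition of the coproduct, \(I_{P\vee Q}\) is the additive closure of \(\iota_P^+(I_P)\cup \iota_Q^+(I_Q)\). The sets \(I_P\) and \(I_Q\) are already closed under addition and under the \(B\)-action, so every element of \(I_{P\vee Q}\) has the form \(x + y\) with \(x\in I_P\) and \(y \in I_Q\). If such a sum lies in \(P^+\), then \(y\) lies in \(P^+\cap Q^+\). That intersection consists of sums of base points only, and these equal \(*\) in \(M^+\). Hence \(x+y = x \in I_P\), which shows
\[
I_{P\vee Q}\cap P^+ = I_P .
\]
On the free side, \(I_{\bigvee B}\) is additively generated by the relations \(\sum_j b_j e_i\) with \(\sum_j b_j\in N_B\), and each such generator involves a single basis element. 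Applying the same argument to an element of \(I_{\bigvee B}\) lying in \(P^+\), the generators attached to basis elements outside \(I_P\) can only contribute base points. Therefore \(I_P\) is generated by the relations \(\sum_j b_j e_i\) with \(i\in I_P\) and \(\sum_j b_j \in N_B\).

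\textbf{Conclusion.} By the remark, the family \((e_i)_{i\in I_P}\) exhibits \(P\cong \bigvee_{i\in I_P} B\), so \(P\) is free. The null-set comparison is the step I expect to need the most care: one must check that generation of \(I_{P\vee Q}\) is really only additive closure, so that no mixed sum can land in \(P^+\) without its \(Q\)-part being trivial.
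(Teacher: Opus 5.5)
Your proof is correct and takes essentially the same route as the paper's. Both split the basis of $P\vee Q$ according to whether $e_i$ lies in $P$, and show that the sub-wedge $\bigvee_{e_i\in P}B$ maps isomorphically onto $P$, null sets included.

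Your version is more careful than the paper's in two places. You restrict the surjectivity step to $p\neq *$. You also describe the null set of $P$ as the additive closure of the single-index relations $\sum_j b_j e_i$, whereas the paper lists only the relations themselves.

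One small fix is needed in your degenerate case $e_i=*$. It should be justified by injectivity of $b\mapsto b\cdot e_i$ on the $i$-th copy of $B$. The uniqueness clause of the remark cannot be used there, because read literally it fails for $m=*$ as soon as $\#I\geq 2$.
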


\begin{proof}
    \((1) \implies (2)\) is trivial. Suppose \((2)\) holds and let \(Q\) be a \(B\)-module such that there exists an isomorphism 
    \begin{equation*}
        \varphi:  \bigvee_{i \in I} B\cdot e_i \overset{\sim}{\longrightarrow} P \vee Q.
    \end{equation*}
    Let \(J\) be the set of indexes \(j \in I\) such that \(\varphi(e_j) \in P\). We claim that the map 
    \begin{equation*}
        \varphi_J: \bigvee_{j \in J} B\cdot e_j \longrightarrow P,\quad \varphi_J(e_j) \coloneqq  \varphi(e_j) \text{ for every } j \in J
    \end{equation*}
    is a well defined isomorphism of \(B\)-modules. Indeed, for every \(j \in J\) and \(b \in B\), we have \(\varphi(b\cdot e_j) = b\cdot \varphi(e_j) \in P\), hence \(\varphi_J\) is well define. Now, for every \(m \in P\), there exists an \(i \in I\) and an \(b \in N\) such that \(\varphi(b \cdot e_i) = m\). If, \(i \not\in J\), then we would have \(\varphi(b \cdot e_i) = b \cdot \varphi(e_i) \in Q\), which is a contradiction. So \(\varphi_J\) is surjective. The injectivity of \(\varphi_J\) follows from that of \(\varphi\). 

    Finally, we just need to verify the correspondence of the null sets. By the isomorphism \(\varphi\), and the definition of null set of \(P \vee Q\), we see that the null set of \(P\) is 
    \begin{equation*}
        I_P = \left\{\sum_{l} b_l \cdot \varphi(e_j)\;\middle|\; \sum_{l} b_l \in N_B,\; j \in J \right\}.
    \end{equation*}
    From this, we conclude that \(\varphi_I\) is an isomorphism.
\end{proof}

\begin{remark}
    For a module \(M\) over a commutative ring \(R\), \(M\) is projective if and only if it is a direct summand of a free \(R\)-module. This does not hold in the category of modules over bands. For instance, Let \(F\) be an idyll considered as an \(F \times F\)-module (observe that \(F \times F\) is not an idyll) with the folowwing action:
    \begin{equation*}
        (F \times F) \times F \longrightarrow F,\quad ((a,b),c) \longmapsto a \cdot c.
    \end{equation*}
    Then, \(F\) is a projective \(F \times F\)-module but, by comparing cardinalities, it is not a free \(F \times F\)-module, hence, by proposition \ref{direct summand = free}, it is not a direct summand of a free \(F \times F\)-module.
\end{remark}

For modules over idylls, we can prove the equivalence between the two usual definitions of projectivity.

\begin{proposition}\label{prop: idyll proj = free}
    Let \(F\) be an idyll and \(P\) be an \(F\)-module. Then, the following are equivalent:
    \begin{enumerate}
        \item \(P\) is projective.
        \item There exists an \(F\)-module \(Q\) such that \(P \vee Q\) is free.
        \item \(P\) is free.
    \end{enumerate} 
\end{proposition}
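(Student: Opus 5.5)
The plan is to prove the cycle \((3)\Rightarrow(1)\Rightarrow(3)\) and use Proposition~\ref{direct summand = free} for the equivalence \((2)\Leftrightarrow(3)\), which holds over any band.

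For \((3)\Rightarrow(1)\), I first show that a free module \(\bigvee_{i\in I} B e_i\) is projective over any band. Let \(\pi\colon N\to M\) be surjective and \(f\colon P\to M\) a morphism. For each \(i\), choose \(n_i\in N\) with \(\pi(n_i)=f(e_i)\), and define \(\varphi(b e_i)\coloneqq b\cdot n_i\). This is well defined because each element has a unique expression \(b e_i\). It is \(B\)-linear. It preserves null sets: by the internal description of free modules, \(I_P\) is generated by sums \(\sum_j b_j e_i\) with \(\sum_j b_j\in N_B\), and by (BA3) the sum \(\sum_j b_j n_i\) lies in \(I_N\). Sums of such generators go to sums in \(I_N\) by (NS2). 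Since \(f\) is also determined by the values \(f(e_i)\), we get \(\pi\circ\varphi=f\).

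For \((1)\Rightarrow(3)\), where the idyll hypothesis is essential, let \(P\) be projective over the idyll \(F\). Form the free module \(L=\bigvee_{m\in P\setminus\{*\}} F e_m\) and the surjection \(\pi\colon L\to P\), \(e_m\mapsto m\). Projectivity applied to \(\id_P\) gives a section \(s\colon P\to L\) with \(\pi\circ s=\id_P\). So \(s\) is injective and \(P\) is isomorphic to its image with null set pulled back along \(s\).

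Next I use that \(F^\times\) is a group. Because \(s\) is \(F\)-linear and injective, the orbits \(F^\times\cdot m\) of \(P\) correspond to orbits in \(L\), and each nonzero orbit of \(L\) is \(F^\times e_m\). I pick a set of representatives \(\{p_k\}\) of the nonzero \(F^\times\)-orbits of \(P\). The candidate isomorphism is \(\bigvee_k F e_k\to P\), \(e_k\mapsto p_k\). Surjectivity holds by construction. For injectivity I need the stabilizer of \(p_k\) in \(F^\times\) to be trivial. This holds because \(s(p_k)=c\,e_m\) with \(c\neq 0\), and the stabilizer of \(e_m\) in \(F^\times\) is trivial. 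One also checks \(s(p)\neq *\) for \(p\neq *\), so nonzero elements of \(F\) do not kill \(p_k\).

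The main obstacle is identifying the null sets. Here I must show that \(I_P\) equals the null set generated by \(\{\sum_j b_j p_k : \sum_j b_j\in N_F\}\). One inclusion is (BA3). For the converse, take \(\sum_l m_l\in I_P\). Applying \(s\) gives \(\sum_l s(m_l)\in I_L\), and \(I_L\) is generated by single-index relations. I would first regroup the sum by orbits, so that within each orbit index \(k\) the subsum has the form \(\sum_j b_j p_k\). The delicate point is that \(s\) might send two distinct orbits of \(P\) into the same summand \(F e_m\) of \(L\), which would mix their coefficients. To handle this, I would apply \(\pi\) after \(s\) and use \(\pi\circ s=\id\), together with the fact that \(\pi\) restricted to \(F e_m\) is determined by \(e_m\mapsto m\), to separate the orbits. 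If that fails, I would instead replace \(L\) by a free module indexed by the orbit representatives \(p_k\) and rerun the argument.
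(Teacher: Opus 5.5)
Your argument is correct. Its skeleton matches the paper's: take a section of the canonical surjection from a free module, then use that $F^\times$ is a group to keep one basis vector per orbit. The difference is how you close the argument.

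\textbf{How the paper finishes.} The paper chooses the canonical representatives $\tilde P=\{p\in P \mid \varphi(p)=e_p\}$ and checks that $\tilde\pi\colon\bigvee_{p\in\tilde P}F e_p\to P$ is a bijective morphism. It then invokes projectivity of $P$ a second time. A morphism $\phi$ with $\tilde\pi\circ\phi=\id_P$ must be the set-theoretic inverse of $\tilde\pi$, so $\tilde\pi$ is an isomorphism and no analysis of $I_P$ is needed. You could end your proof the same way, since your map is already a bijective morphism.

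\textbf{How your route compares.} Your direct comparison of null sets is longer, but it gives an explicit description of $I_P$ and shows exactly where invertibility is used.

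\textbf{The ``delicate point'' is vacuous.} If $s(p)=c\,e_m$ with $c\neq 0$, then $p=\pi(s(p))=c\,m$. So each summand $F e_m$ meets only the orbit of $m$, and your fallback of re-indexing $L$ is unnecessary.

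\textbf{One step to state explicitly.} When you split $\sum_l s(m_l)\in I_L$ summand by summand, the terms $m_l=b_l p_k$ with $s(p_k)=c\,e_m$ give $\sum_l b_l c\in N_F$. Say explicitly that multiplying by $c^{-1}$, which exists because $F$ is an idyll, gives $\sum_l b_l\in N_F$. That is the generator form you need.
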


\begin{proof}
    The equivalence \((2) \iff (3)\) is the Proposition \ref{direct summand = free}, and the proof of \((2) \implies (1)\) is the same as the case of rings. We prove \((1) \implies (3)\). Suppose that \(P\) satisfies \((1)\) and consider the following epimorphism of \(F\)-modules:
    \begin{equation*}
        \pi: N\coloneqq  \bigvee_{p \in P}F\cdot e_p \longrightarrow P,\quad \pi(e_p) \coloneqq  p\text{ for every }p \in P.
    \end{equation*}
    Then, by hypothesis, there exists a morphism \(\varphi : P \rightarrow N\) such that \(\pi \circ \varphi = \id_P\). Consider the following subset of \(P\):
    \begin{align*}
        \Tilde{P} &\coloneqq  \left\{p \in P\;\middle|\; \text{There exists a } q \in P \text{ and } b \in F \text{ such that } \varphi(q) = b\cdot e_p  \right\}\\
        & \;= \left\{p \in P\;\middle|\; \text{There exists a } q \in P \text{ such that } \varphi(q) = e_p \right\}\\
        & \;= \left\{p \in P\;\middle|\; \varphi(p) = e_p \right\},
    \end{align*}
    where the second equality follows from the properties of the idyll \(F\), and the last equality follows from \(\pi \circ \varphi = \id_P\). We claim that 
    \[
    \tilde{\pi}: \tilde{N} \coloneqq  \bigvee_{p \in \Tilde{P}} F\cdot e_p \longrightarrow P,\quad \tilde{\pi}(e_p) \coloneqq  p \text{ for every } p \in \tilde{P},
    \]
    is an isomorphism. Since \(\tilde{\pi}\) is the composition of the inclusion \(\bigvee_{p \in \Tilde{P}} F\cdot e_p \hookrightarrow \bigvee_{p \in P}F\cdot e_p\) with \(\pi\), it is a morphism of \(F\)-modules. 
    
    Now, we prove the surjectivity. For every \(m \in P\), there exists a \(b \in F\) and \(q \in P\) such that \(\varphi(m) = b \cdot e_q\). By definition, \(q \in \tilde{P}\) and \(\tilde{\pi}(b\cdot e_q) = \pi(b\cdot e_q) = (\pi \circ \varphi)(m) = m\), hence \(\tilde{\pi}\) is surjective.

    Next, we prove the injectivity. Suppose \(\tilde{\pi}(b\cdot e_p) = \tilde{\pi}(c\cdot e_q)\) for some \(b,c \in F\) and \(p,q \in \tilde{P}\). Using that \(\varphi(e_p) = e_p\), \(\varphi(e_q) = e_q\) and \(\pi\circ \varphi = \id_P\), we conclude that \(b\cdot e_p = c \cdot e_q\), hence \(\tilde{\pi}\) is injective.
    
    From surjectiviry of \(\tilde{\pi}\) and the projectivity of \(P\), there exists a morphism \(\phi : P \rightarrow \tilde{N}\) such that \(\tilde{\pi} \circ \phi = \id_P\). This implies that \(\tilde{\pi}\) is an isomorphism.
\end{proof}

Using a similar idea from the second half of the proof of Proposition \ref{prop: idyll proj = free}, we get the following useful corollary:

\begin{corollary}\label{cor: idyll dim}
    Let \(F\) be an idyll, Then if there exists an isomorphism of \(F\)-modules 
    \begin{equation*}
        \bigvee_{i \in I} F\cdot e_i \cong \bigvee_{j \in J}F\cdot e_j,
    \end{equation*}
    then \(\# I = \# J\).
\end{corollary}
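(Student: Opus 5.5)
The idea is to reuse the second half of the proof of Proposition \ref{prop: idyll proj = free}: over an idyll, every element of a free module lies in a unique ``line'' $F\cdot e_i$, and these lines are exactly the orbits of nonzero elements. We will show that an isomorphism must induce a bijection between the sets of lines on the two sides. This gives $\# I = \# J$ with no cardinality arithmetic, so the argument also covers infinite index sets.

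Let $\varphi \colon \bigvee_{i \in I} F\cdot e_i \to \bigvee_{j \in J} F\cdot e'_j$ be an isomorphism of $F$-modules. We use the internal description of a free module from the Remark following the definition of free modules: every element is of the form $b\cdot e_i$ with $i$ and $b$ uniquely determined. Since $0\cdot e_i = *$ for every $i$, this uniqueness must be read for elements different from $*$, where $b \neq 0$.

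For $m \neq *$ in $\bigvee_{i\in I} F\cdot e_i$, define $\lambda(m)$ to be the unique index $i$ with $m \in F\cdot e_i$. Because $F\setminus\{0\}$ is a group, the nonzero elements of $F\cdot e_i$ form a single orbit $F^\times\cdot e_i$. Hence the map $i \mapsto F^\times e_i$ is a bijection from $I$ onto the set of nonzero $F^\times$-orbits of the pointed set $\bigvee_{i\in I} F\cdot e_i$. The analogous statement holds for $J$ and the $e'_j$.

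Now $\varphi$ is a bijection that commutes with the $F$-action and sends $*$ to $*$. It therefore sends $F^\times$-orbits to $F^\times$-orbits bijectively, and the nonzero orbit $F^\times\cdot e_i$ goes to the nonzero orbit $F^\times\cdot\varphi(e_i)$. Concretely, define $\sigma \colon I \to J$ by $\sigma(i) = \lambda(\varphi(e_i))$, and define $\tau\colon J\to I$ in the same way using $\varphi^{-1}$.

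It remains to check that $\sigma$ and $\tau$ are mutually inverse. Write $\varphi(e_i) = c\cdot e'_{\sigma(i)}$ with $c\neq 0$, so that $c \in F^\times$. Then
\[
e_i = \varphi^{-1}(c\cdot e'_{\sigma(i)}) = c\cdot\varphi^{-1}(e'_{\sigma(i)}),
\]
which gives $\varphi^{-1}(e'_{\sigma(i)}) = c^{-1}e_i$. Therefore $\tau(\sigma(i)) = i$, and $\sigma\tau = \id_J$ by symmetry. Thus $\# I = \# J$.

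The only delicate point is that the idyll hypothesis is genuinely used, and in the right place. We need $\varphi(e_i) \neq *$, which holds by injectivity of $\varphi$. We also need the coefficient $c$ to be invertible in order to run the computation backwards. Over a general band this fails, which is consistent with the preceding Remark on $F\times F$-modules. Note also that the null sets play no role in the argument: a bijection of pointed $F$-sets already suffices.
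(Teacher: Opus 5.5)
Your proof is correct and is essentially the argument the paper intends: the paper gives no details beyond pointing to the normalization-by-units step in the second half of the proof of Proposition~\ref{prop: idyll proj = free}, and your maps $\sigma$ and $\tau$ make exactly that step precise. One correction to your closing remark: invertibility of $c$ comes for free, because writing $\varphi^{-1}(e'_{\sigma(i)}) = d\cdot e_{i'}$ gives $e_i = cd\cdot e_{i'}$, which forces $i'=i$ (and $cd=1$) over any band with $0\neq 1$; so the idyll hypothesis is not actually needed here, and the Remark on $F\times F$-modules is about projective versus free modules rather than rank invariance.
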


The following result will be useful in Section~\ref{sec: protoexact}.

\begin{lemma}
    The category \(\BMod_F^{\mathrm{fin, proj}}\) of finitely generated projective modules over an idyll \(F\) is closed by coproduct, kernel, and cokernel.
\end{lemma}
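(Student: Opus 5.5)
The plan is to use Proposition~\ref{prop: idyll proj = free}: a finitely generated projective \(F\)-module is the same as a finite free module \(\bigvee_{i=1}^n F\cdot e_i\) (finite generation forces a finite basis, by Corollary~\ref{cor: idyll dim}). So it suffices to show that coproducts, kernels and cokernels of morphisms between finite free \(F\)-modules are again finite free.

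\textbf{Coproducts.} For coproducts we use the explicit description of the coproduct as a wedge. We have \(\bigl(\bigvee_{i\in I}F e_i\bigr)\vee\bigl(\bigvee_{j\in J}F e_j\bigr)\cong\bigvee_{I\sqcup J}F e_k\). The null sets agree, since both are generated by the images of the null sets of the summands.

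\textbf{Structure of a morphism.} Let \(f\colon M=\bigvee_{i=1}^n F e_i\to N=\bigvee_{j=1}^m F e'_j\). Each \(f(e_i)\) is either \(*\) or of the form \(c_i e'_{\sigma(i)}\) with \(c_i\in F^\times\). Then \(f(b e_i)=b\,f(e_i)\). Because \(F\setminus\{0\}\) is a group, \(f(be_i)=*\) if and only if \(b=0\) or \(f(e_i)=*\).

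\textbf{Kernels.} The kernel is the equalizer with \(0\), namely the subset \(\bigvee_{i\in K}Fe_i\) where \(K=\{i : f(e_i)=*\}\). It carries the induced null set \(I_M\cap(\ker f)^+\). We must check that this equals the free null set on \(K\). The key observation is that every element of \(I_M\), being in the \(F^+\)-closed additive span of the generators \(\sum_l b_l e_i\) with \(\sum_l b_l\in N_F\), splits as a sum of index-wise pieces each lying in \(I_M\). A sum supported on \(K\) therefore decomposes into generators with indices in \(K\).

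\textbf{Cokernels.} By Lemma~\ref{lemma:cokernel}, the cokernel is \(N/\im(f)\). Using invertibility of \(c_i\), every \(b e'_{\sigma(i)}\) equals \(f(b c_i^{-1} e_i)\). Hence \(\im(f)=\bigvee_{j\in J}Fe'_j\) with \(J=\sigma(\{1,\dots,n\}\setminus K)\). The quotient identifies exactly these summands with \(*\), so as a pointed set with \(F\)-action \(N/\im(f)\cong\bigvee_{j\notin J}Fe'_j\). Its null set is \(\pi^+(I_N)\). Generators of \(I_N\) supported on indices in \(J\) map to sums of \(*\), and those on indices outside \(J\) map to the free generators. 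Since \(\pi^+\) preserves the index-wise decomposition, \(\pi^+(I_N)\) is exactly the free null set.

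\textbf{Main obstacle.} The delicate point in all three cases is the null-set bookkeeping. In particular, we must verify the index-wise splitting of elements of \(I_M\) in a free module, which relies on the description of \(I_M\) in the remark following the definition of free modules. Once that splitting is in place, each case reduces to restricting to, or discarding, a subset of the basis.
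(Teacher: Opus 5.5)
Your proposal is correct and follows essentially the paper's route: reduce to finite free modules via Proposition~\ref{prop: idyll proj = free}, then use invertibility of nonzero scalars in \(F\) to see that the kernel and image are sub-wedges of basis summands, so the cokernel is the complementary wedge. The paper argues once for an arbitrary strict submodule \(N\subseteq M\) (via \(I'=\{i : e_i\in N\}\)) instead of through the explicit form of \(f\), and it leaves implicit the index-wise null-set bookkeeping that you spell out; note also that finiteness of the basis follows directly from finite generation, whereas Corollary~\ref{cor: idyll dim} only gives uniqueness of the rank.
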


\begin{proof}
    Let \(M\) be a finitely generated projective module over \(F\). By Proposition \ref{prop: idyll proj = free} and Corollary \ref{cor: idyll dim}, \(M\) is isomorphic to a free module of the form 
    \begin{equation*}
        \phi: M \overset{\sim}{\longrightarrow} \bigvee_{i = 1}^n F\cdot e_i\;,
    \end{equation*}
    with a uniquely determined \(n\). Hence, closedness by coproduct follows directly. Now, suppose \(N\) is a strict submodule of \(M\), and consider the set \(I'\) defined as
    \begin{equation*}
        I' \coloneqq \left\{i \in I\;\middle|\; e_i \in N \right\}.
    \end{equation*}
    Then, since \(F\) is an idyll, the restriction of \(\phi\) to \(N\) gives an isomorphism from \(N\) to the coproduct \(\bigvee_{i \in I'}F \cdot e_{i'}\). Moreover, the quotient \(M/N\) is isomorphic to the coproduct 
    \begin{equation*}
        M/N \overset{\sim}{\longrightarrow} \bigvee_{i \in I\setminus I'} F\cdot e_i.
    \end{equation*}
    Finally, since, by definition, kernel is a strict submodule and, by Lemma \ref{lemma:cokernel}, cokernel is a quotient by a strict submodule, we conclude that \(\BMod_F^{\mathrm{fin, proj}}\) is also closed by kernel and cokernel.
\end{proof}

\subsection{Base extension to \(\ZZ\)}

\begin{definition}
    Let \(M\) be a \(B\)-module. Consider the abelian group 
    \begin{equation*}
        M_{\ZZ}^+ = B_{\ZZ}^+[M]/\left< I_M \right>,
    \end{equation*}
    where \(\left< I_M \right>\) is the subgroup generated by \(I_M\). Then, \(M_{\ZZ}^+\) is a \(B_{\ZZ}^+\)-module with the action defined as 
    \begin{align*}
        B_{\ZZ}^+ \times M_{\ZZ}^+ &\longrightarrow M_{\ZZ}^+\\
         (\sum_i b_i + \left<N_B \right>, \sum_j m_j + \left<I_M \right>) &\longmapsto \sum_{i,j} b_im_j + \left<I_M \right>.
    \end{align*}
    Let \(f : M \rightarrow N\) be a morphism of \(B\)-modules. Then, we define the corresponding morphism of \(B_{\ZZ}^+\)-modules as
    \begin{align*}
        f_{\ZZ}^+ : M_{\ZZ}^+ &\longrightarrow N_{\ZZ}^+\\
        \sum_j m_j + \left<I_M \right> &\longmapsto \sum_j f(m_j) + \left<I_M \right>.
    \end{align*}
    This defines a base extension functor \((-)_{\ZZ}^+\) from the category \(\BMod_B\) to the category \(\Mod_{B_{\ZZ}^+}\). 
\end{definition}

\begin{remark}
    \begin{enumerate}
        \item Observe that, since we have \(\left<N_B \right>\cdot \left<I_M \right> \subseteq \left<I_M \right>\) by the item (3) of the definition of \(B\)-modules, the definition of the \(B_{\ZZ}^+\)-action is well-defined. Analogously, the well-definitness of the morphism \(f_{\ZZ}^+\) follows from the definition of morphisms of \(B\)-modules.
        \item Attached to this extension, there is a morphism of \(B\)-modules \(\rho_{M,\ZZ}^+ : M\rightarrow M_{\ZZ}^+\) where we consider \(M_{\ZZ}^+\) as a \(B\)-module via the multiplicative map \(\rho_{B,\ZZ}^+ : B \rightarrow B_{\ZZ}^+\).
        \item \(M_{\ZZ}^+\) is a module over the ring \(B_{\ZZ}^+\) in the usual sense. It is important to note that the base extension \(M \otimes_B B_{\ZZ}^+\), where the tensor product is defined in Section \ref{subsec:tensor product}, is a \emph{band module} over \(B_{\ZZ}^+\).
    \end{enumerate}
\end{remark}

\begin{proposition}
    Let \(M\) be an object of \(\BMod_B\), \(P\) an object of \(\Mod_{B_{\ZZ}^+}\) and \(\varphi : M \rightarrow P\) be a morphism of \(B\)-modules, where \(P\) is being considered as an object of \(\BMod_B\). Then there exists a unique morphism \(\tilde{\varphi} \in \Hom_{\Mod_{B_{\ZZ}^+}}(M_{\ZZ}^+, P)\) such that \(\Tilde{\varphi} \circ \rho_{M,\ZZ}^+ = \varphi\).
    \[
    \begin{tikzcd}
        M \arrow[r, "\rho_{M,\ZZ}^+"] \arrow[d, "\varphi"] & M_{\ZZ}^+ \arrow[ld, dotted, "\Tilde{\varphi}"] \\
        P & 
    \end{tikzcd}
    \]
\end{proposition}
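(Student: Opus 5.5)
The natural strategy is to write down $\tilde{\varphi}$ by the only formula that can work, and then check well-definedness, linearity and uniqueness. Every element of $M_{\ZZ}^+ = B_{\ZZ}^+[M]/\langle I_M\rangle$ is the class of a finite formal sum $\sum_j \beta_j [m_j]$ with $\beta_j \in B_{\ZZ}^+$ and $m_j \in M$. Since $\rho_{M,\ZZ}^+(m) = [m]$ must go to $\varphi(m)$ and $\tilde{\varphi}$ must be $B_{\ZZ}^+$-linear, we are forced to define
\[
\tilde{\varphi}\Bigl(\sum_j \beta_j [m_j] + \langle I_M\rangle\Bigr) \coloneqq \sum_j \beta_j\cdot \varphi(m_j),
\]
where the right-hand side is computed in the genuine $B_{\ZZ}^+$-module $P$. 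Uniqueness is then immediate: the elements $[m]$ generate $M_{\ZZ}^+$ as a $B_{\ZZ}^+$-module, so any two linear maps agreeing on them coincide.

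For existence, first define $\Phi \colon B_{\ZZ}^+[M] \to P$ on the free $B_{\ZZ}^+$-module by $[m] \mapsto \varphi(m)$, extended linearly. It then suffices to show that $\Phi$ kills the subgroup $\langle I_M\rangle$, since $\Phi$ will then descend to the quotient. One point needs care: the quotient must also be a $B_{\ZZ}^+$-module, so I would read $\langle I_M\rangle$ as the submodule generated by $I_M$. By (BA3), $b\cdot I_M \subseteq I_M$, and this makes the subgroup and the submodule agree, because the image of $B$ generates $B_{\ZZ}^+$ additively.

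Thus everything reduces to the key step: showing that $\sum_j \varphi(m_j) = 0$ in $P$ whenever $\sum_j m_j \in I_M$. To see this, recall that $P$, viewed as an object of $\BMod_B$, carries the null set
\[
I_P = \Bigl\{\sum p_j \;\Bigm|\; \sum p_j = 0 \text{ in } P\Bigr\},
\]
as in Example \ref{Ex: ring as band}, with $B$ acting through $\rho_{B,\ZZ}^+$. Because $\varphi$ is a morphism of $B$-modules, $\sum_j m_j \in I_M$ gives $\sum_j \varphi(m_j) \in I_P$, which is exactly the vanishing we need. Linearity of $\Phi$ then gives vanishing on all $B_{\ZZ}^+$-combinations of such sums.

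It remains to check the compatibilities. The identity $\tilde{\varphi}\circ\rho_{M,\ZZ}^+ = \varphi$ holds by construction, and $B_{\ZZ}^+$-linearity also holds by construction. I would add a remark that the $B$-action on $P$ coincides with the restriction of the $B_{\ZZ}^+$-action, so that $\tilde{\varphi}([bm]) = \varphi(bm) = b\varphi(m)$; this is consistent with the relation $[bm] = \rho(b)[m]$ in $M_{\ZZ}^+$.

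The main obstacle I expect is not any computation but getting the identification of $P$ as a band $B$-module right. In particular, one has to confirm that the implicit null set on $P$ is the additive one above, and that the relation $b[m] \sim [bm]$ (if the definition intends it) lies in the kernel of $\Phi$; the latter holds because $\varphi(bm) = b\varphi(m)$.
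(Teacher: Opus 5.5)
Your proof is correct and is essentially the paper's argument. You define \(\tilde{\varphi}\) by sending the class of a formal sum \(\sum_j m_j\) to \(\sum_j \varphi(m_j)\), and you get well-definedness from the fact that the null set of \(P\) is the additive one, so that \(\varphi\) sends every sum in \(I_M\) to \(0\). You also spell out uniqueness (the classes \([m]\) generate \(M_{\ZZ}^+\)), which the paper leaves implicit. The only slip is your aside that the subgroup and the submodule generated by \(I_M\) coincide: this holds only once the relation \([bm]=\rho_{\ZZ}^+(b)[m]\) is imposed, but your argument never uses it, since linearity of \(\Phi\) already kills the whole submodule.
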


\begin{proof}
    Define the map \(\Tilde{\varphi}\) as 
    \begin{equation*}
        \tilde{\varphi} \left( \sum m_i + \left<I_M \right>\right) \coloneqq  \sum f(m_i),
    \end{equation*}
    for every \(\sum m_i + \left<I_M \right> \in M_{\ZZ}^+\). Observe that, since we are considering \(P\) as an object of \(\BMod_B\), its null set  is  
    \begin{equation*}
        I_P = \left\{ \sum p_i \in P^+ : \sum p_i = 0\text{ in } P \right\}
    \end{equation*}
    Hence, by the definition of morphisms of \(B\)-modules, for every \(\sum m_i \in I_M\), we have \(\sum f(m_i) = 0\) in \(N\), proving the well-definess of \(\tilde{\varphi}\). 

    From the definition of \(\tilde{\varphi}\), it follows that it preserves both the sum and the action of \(B_{\ZZ}^+\).
\end{proof}

\subsection{Tensor product}\label{subsec:tensor product}

     We already have the notion of tensor product of \(B\)-algebras. Here, we generalize this concept to the case of \(B\)-modules. Our approach adapts the definition of tensor product of \(A\)-sets given in \cite{Lorscheid2}.

     \begin{definition}[Tensor product of modules]
    Let \(B\) be a band and let \(M\) and \(N\) be two \(B\)-modules. We define the tensor product \(M \otimes_B N\) as the \(B\)-module whose underlying pointed set is the quotient of the coproduct \(\bigvee_{M \times N} B\) by the equivalence relation \(\sim\) defined as follows: Denote the element \(b\) of \(\bigvee_{M \times N} B\) in the component \(B\) at \((m,n) \in M \times N\) by \(b_{(m,n)}\). Then, the equivalence relation \(\sim\) is generated by
    \[
        b_{(c \cdot m, n)} \sim b_{(m, c \cdot n)} \quad \text{and} \quad b_{(m, c \cdot n)} \sim (c b)_{(m,n)}
    \]
    for all \(b, c \in B\) and \((m,n) \in M \times N\). Observe that \(\sim\) is compatible with the action of \(B\), i.e.,
    \[
        b_{(m,n)} \sim b'_{(m',n')} \implies c \cdot b_{(m,n)} \sim c \cdot b'_{(m',n')} \quad \text{for all } c \in B.
    \]
    so the quotient \(\bigvee_{M \times N} B / \sim\) is, a priori, already a \(B\)-module by Definition~\ref{def: quotient}. Although, for the tensor product, we equip this quotient with a different null set. Let \(I\) be the null set in \(\left(\bigvee_{M \times N} B/\sim\right)^+\) generated by
    \[
        \left\{ \sum_i 1_{m_i,  n} \;\middle|\; \sum_i m_i \in I_M \right\} \cup 
        \left\{ \sum_i 1_{m, n_i} \;\middle|\; \sum_i n_i \in I_N \right\}.
    \]
    The projection map is defined by
    \[
        \otimes : M \times N \longrightarrow M \otimes_B N, \quad (m,n) \mapsto 1_{(m,n)},
    \]
    and we denote the image \(1_{(m,n)}\) simply by \(m \otimes n\). Observe that, since \(b_{(m,n)} \sim 1_{(b \cdot m, n)}\), every element of \(M \otimes_B N\) can be expressed as a pure tensor.
\end{definition}

    \begin{theorem}[Universal property of tensor product]\label{tensor prod universal property}
    The map 
    \[
        \otimes : M \times N \longrightarrow M \otimes_B N
    \]
    is a bilinear morphism of \(B\)-modules, i.e., it is a morphism of \(B\)-modules in each variable. Moreover, it satisfies the following universal property: for every bilinear morphism of \(B\)-modules \(\varphi : M \times N \to P\), there exists a unique morphism of \(B\)-modules \(\tilde{\varphi} : M \otimes_B N \to P\) such that 
    \(\tilde{\varphi} \circ \otimes = \varphi\).
\end{theorem}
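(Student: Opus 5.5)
The plan is to verify bilinearity directly from the construction and then obtain the universal property by descending a map defined on the coproduct through the quotient. Throughout, a \emph{bilinear morphism} \(\varphi : M\times N\to P\) means that for each fixed \(n\) the map \(\varphi(-,n)\) is a morphism of \(B\)-modules, and likewise for each fixed \(m\). Concretely, this gives three properties:
\[
\varphi(cm,n)=c\varphi(m,n)=\varphi(m,cn), \qquad \sum_i m_i\in I_M\Rightarrow \sum_i\varphi(m_i,n)\in I_P, \qquad \sum_j n_j\in I_N\Rightarrow \sum_j\varphi(m,n_j)\in I_P.
\]

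\textbf{Bilinearity of \(\otimes\).} The generating relations give
\[
(cm)\otimes n = 1_{(cm,n)} \sim 1_{(m,cn)} \sim c_{(m,n)} = c\cdot 1_{(m,n)}.
\]
Hence \(\otimes\) is compatible with the action in each variable. The null-set condition in each variable holds by construction, since the generators \(\sum_i 1_{(m_i,n)}\) with \(\sum_i m_i\in I_M\) (and symmetrically in \(n\)) lie in \(I\).

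\textbf{Existence of \(\tilde\varphi\).} Given a bilinear \(\varphi\), I would first use the universal property of the coproduct \(\bigvee_{M\times N}B\). Each component \(B\to P\), \(b\mapsto b\varphi(m,n)\), is a morphism of \(B\)-modules, by the argument in the monomorphism lemma via (BA3). These components assemble to a morphism \(\Phi:\bigvee_{M\times N}B\to P\) with \(\Phi(b_{(m,n)})=b\varphi(m,n)\).

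Next, \(\Phi\) respects the generating relations of \(\sim\):
\[
\Phi(b_{(cm,n)}) = b\varphi(cm,n) = bc\,\varphi(m,n) = \Phi(b_{(m,cn)}) = \Phi((cb)_{(m,n)}).
\]
So \(\Phi\) factors set-theoretically, and \(B\)-equivariantly, through the quotient, giving \(\tilde\varphi(b_{(m,n)}) = b\varphi(m,n)\).

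\textbf{Null sets (the main obstacle).} It remains to check that \(\tilde\varphi^+\) maps the null set \(I\) into \(I_P\). The difficulty is that \(I\) is only described as the null set \emph{generated} by two families, so I must make the closure operation explicit. I would take it to mean closure under (NS1), finite sums (NS2), and the \(B\)-action, including the coproduct's null sums \(\sum_i b_i\cdot 1_{(m,n)}\) with \(\sum_i b_i\in N_B\). With that reading, I would show that
\[
J=\Bigl\{\textstyle\sum_k x_k \;\Big|\; \sum_k\tilde\varphi(x_k)\in I_P\Bigr\}
\]
is closed under these same operations. Sums are fine because \(I_P\) satisfies (NS2). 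The action is fine because \(B\cdot I_P\subseteq I_P\), which is (BA3) applied with the sum \(c\). Finally, \(J\) contains the generators by bilinearity, and contains the coproduct null sums by (BA3) again. If the intended closure also includes stability under the equivalence relation on representatives, this is automatic, since \(\tilde\varphi\) is well defined on classes.

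\textbf{Uniqueness.} This is immediate: every element of \(M\otimes_BN\) is a pure tensor \(m\otimes n\), as observed after the definition, so \(\tilde\varphi\) is forced by \(\tilde\varphi(m\otimes n)=\varphi(m,n)\).
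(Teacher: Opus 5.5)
Your proposal is correct and follows the paper's own argument. Both proofs define \(\tilde\varphi\) on pure tensors, check that it respects the generating relations, check that the generating null sums land in \(I_P\), and deduce uniqueness from the fact that every element of \(M\otimes_B N\) is a pure tensor. Your version is a little more careful than the paper's: you build the map through the coproduct \(\bigvee_{M\times N}B\), and you verify explicitly that the preimage of \(I_P\) is closed under the operations that generate the null set, a closure step the paper leaves implicit.
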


    \begin{proof}
    Bilinearity of \(\otimes_B\) follows directly from the defining relations of \(M \otimes_B N\):
    \[
        (b \cdot m) \otimes n = b \cdot (m \otimes n), \qquad
        m \otimes (b \cdot n) = b \cdot (m \otimes n),
    \]
    and from the definition of its null set.

    Let \(\varphi : M \times N \to P\) be a bilinear morphism of \(B\)-modules. The map
    \[
        \tilde{\varphi} : M \otimes_B N \longrightarrow P, \qquad \tilde{\varphi}(m \otimes n) \coloneqq  \varphi(m,n),
    \]
    is well-defined by the defining relations of the tensor product, the fact that every element of \(M\otimes_BN\) is a pure tensor, and the bilinearity of \(\varphi\). Let \(\sum_i m_i \otimes n \in I_{M\otimes_B N}\) such that \(\sum_i m_i \in I_M\). Then, by the bilinearity of \(\varphi\), we have \(\sum_i \varphi(m_i,n) \in I_P\). Analogously, if \(\sum_i m \otimes n_i \in I_{M\otimes_B N}\) such that \(\sum_i n_i \in I_N\), we also have \(\sum_i \varphi(m,n_i) \in I_P\). This proves that \(\varphi\) is a morphism of \(B\)-modules and is uniquely determined by the condition \(\tilde{\varphi} \circ \otimes = \varphi\).
\end{proof}

As in the case of modules over commutative rings, we get the following corollary.

\begin{corollary}\label{tensor product closedness}
    Let \(B\) be a band and \(M\) be a \(B\)-module. Then, the functor \(\Hom_B(M,-): \BMod_B \to \BMod_B\) is the right adjoint of the functor \(-\otimes_B M\).
\end{corollary}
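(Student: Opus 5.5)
The plan is to exhibit a bijection
\[
\Hom_B(N \otimes_B M, P) \;\cong\; \Hom_B(N, \Hom_B(M,P)),
\]
natural in \(N\) and \(P\), and then to invoke the universal property of Theorem \ref{tensor prod universal property}. That theorem identifies the left side with the set of bilinear morphisms \(N \times M \to P\). So the statement reduces to showing that bilinear morphisms \(\varphi: N\times M \to P\) correspond bijectively to morphisms \(\psi: N \to \Hom_B(M,P)\). The correspondence is the usual currying, \(\psi(n) \coloneqq \varphi(n,-)\), with inverse \(\varphi(n,m) \coloneqq \psi(n)(m)\).

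The key steps, in order, are as follows.

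\begin{enumerate}
\item For a bilinear \(\varphi\) and fixed \(n\), the map \(\varphi(n,-)\colon M \to P\) is a morphism of \(B\)-modules; this is linearity in the second variable. Hence \(\psi\) lands in \(\Hom_B(M,P)\).
\item The map \(\psi\) is \(B\)-equivariant: \(\psi(bn)(m) = \varphi(bn,m) = b\varphi(n,m) = (b\cdot\psi(n))(m)\), using the \(B\)-action on \(\Hom_B(M,P)\) defined pointwise.
\item The map \(\psi\) preserves null sets. If \(\sum_i n_i \in I_N\), then linearity in the first variable gives \(\sum_i \varphi(n_i,m) \in I_P\) for every \(m\). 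By the definition of \(I_{\Hom_B(M,P)}\), this says exactly that \(\sum_i \psi(n_i) \in I_{\Hom_B(M,P)}\). Also \(\psi(*)\) is the zero morphism, since \(\varphi(*,m)=\varphi(0\cdot *,m)=0\cdot\varphi(*,m)=*\).
\item Conversely, given \(\psi\), define \(\varphi(n,m)=\psi(n)(m)\). Linearity in \(m\) holds because each \(\psi(n)\) is a morphism. Linearity in \(n\) comes from equivariance of \(\psi\) together with the fact that \(\psi\) sends \(I_N\) into \(I_{\Hom_B(M,P)}\), evaluated at each \(m\).
\item The two constructions are mutually inverse by construction.
\item Naturality in \(N\) and \(P\) is a routine check: precomposing with \(N'\to N\), or postcomposing with \(P\to P'\), commutes with currying. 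The same holds for the identification in Theorem \ref{tensor prod universal property}, because \(f\otimes\id_M\) is induced by the bilinear map \((n',m)\mapsto f(n')\otimes m\).
\end{enumerate}

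The adjunction \(-\otimes_B M \dashv \Hom_B(M,-)\) then follows.

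The only step needing care is the exact matching between null-set conditions in \(\Hom_B(M,P)\) and bilinearity. Here we must use the precise meaning of ``morphism in each variable'' from Theorem \ref{tensor prod universal property}: for fixed \(m\), the map \(\varphi(-,m)\) must send \(I_N\) to \(I_P\). This matches exactly the pointwise definition of \(I_{\Hom_B(M,P)}\), which quantifies over all \(m\in M\).

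It is also worth noting the tensor order. Theorem \ref{tensor prod universal property} is stated for \(M\otimes_B N\), whereas here the module \(N\) is tensored on the left by \(M\). If one prefers \(N\otimes_B M\) versus \(M\otimes_B N\), a symmetry isomorphism \(m\otimes n\mapsto n\otimes m\) is clearly well defined. It respects the generating relations and the generating null sums, so either convention works.
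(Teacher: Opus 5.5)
Your proposal is correct and is the argument the paper intends: the paper gives no separate proof, only noting that the corollary follows from the universal property (Theorem \ref{tensor prod universal property}) as for modules over commutative rings, i.e.\ by the currying bijection between bilinear maps $N\times M\to P$ and morphisms $N\to\Hom_B(M,P)$, with the null-set condition matching the pointwise definition of $I_{\Hom_B(M,P)}$ exactly as you check. Your closing remark on the tensor order is harmless but unnecessary, since the theorem applies to any ordered pair of modules, so you can simply apply it to the pair $(N,M)$.
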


\begin{lemma}\label{tensor product lemmas}
    Let \(B\) be a band and let \(M,N\) and \(P\) be \(B\)-modules. Then, we have the following isomorphisms.
    \begin{enumerate}
        \item[\((1)\)] \(\alpha: (M \otimes_B N) \otimes_B P \cong M \otimes_B (N \otimes_B P)\).
        \item[\((2)\)] \(\gamma: M \otimes_B N \cong_B N \otimes_B M\).
        \item[\((3)\)] \(\mu: B \otimes_B M \cong_B M \).
    \end{enumerate}
    Moreover, the isomorphisms are functorial in \(M\), \(N\) and \(P\).
\end{lemma}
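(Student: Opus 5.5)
The strategy is to build each isomorphism on pure tensors, using the universal property of Theorem~\ref{tensor prod universal property}, and then check that the maps send null sets onto null sets. Every element of \(M\otimes_B N\) is a pure tensor \(m\otimes n\), so a map defined on pure tensors is determined. To see that it is well defined we only have to check the generating relations \((c m)\otimes n = m\otimes (c n) = c(m\otimes n)\). The isomorphisms are:
\begin{itemize}
\item \(\gamma(m\otimes n) = n\otimes m\);
\item \(\mu(b\otimes m) = b m\), with inverse \(m\mapsto 1\otimes m\);
\item \(\alpha((m\otimes n)\otimes p) = m\otimes(n\otimes p)\).
\end{itemize}

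For \(\gamma\), the map \((m,n)\mapsto n\otimes m\) is bilinear. It is \(B\)-equivariant in each variable by the defining relations. It preserves null sets in each variable because the generators of \(I_{N\otimes_B M}\) are exactly the swapped generators of \(I_{M\otimes_B N}\). Theorem~\ref{tensor prod universal property} therefore gives \(\gamma\), and applying the same argument to \(N,M\) gives its inverse.

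For \(\mu\), the map \((b,m)\mapsto bm\) is bilinear by (BA2) and (BA3): if \(\sum b_i\in N_B\) then \(\sum b_i m\in I_M\), and if \(\sum m_j\in I_M\) then \(\sum b m_j\in I_M\). Hence \(\mu\) is a morphism. The inverse \(\nu(m) = 1\otimes m\) is \(B\)-linear because \(1\otimes bm = b\otimes m\). It preserves null sets because a generator \(\sum m_j\in I_M\) maps to \(\sum 1\otimes m_j\), which is a generator of \(I_{B\otimes_B M}\). The composites \(\mu\nu\) and \(\nu\mu\) are the identity on elements, since \(b\otimes m = 1\otimes bm\).

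For \(\alpha\), we fix \(p\). The map \((m,n)\mapsto m\otimes(n\otimes p)\) is bilinear, and the map \(n\mapsto n\otimes p\) is a morphism \(N\to N\otimes_B P\). So for fixed \(p\) we obtain a morphism \(\alpha_p\colon M\otimes_B N\to M\otimes_B(N\otimes_B P)\). The assignment \((x,p)\mapsto \alpha_p(x)\) is then bilinear. Linearity in \(x\) is clear. In \(p\) we need: if \(\sum p_k\in I_P\), then \(\sum m\otimes(n\otimes p_k)\) is null. This holds because \(\sum n\otimes p_k\) is a generator of \(I_{N\otimes_B P}\), and \(m\otimes -\) is a morphism. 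The universal property then yields \(\alpha\), and the symmetric construction yields its inverse. The two composites are the identity on pure tensors.

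Functoriality follows because all maps are given by the same formula on pure tensors. For morphisms \(f,g,h\), both ways around each naturality square send a pure tensor to the same element, for example \(g(n)\otimes f(m)\) for \(\gamma\).

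The main obstacle is the null-set verification for \(\alpha\). The null set of a tensor product is only the ideal generated by the two families of generators, so we must make sure that \(\alpha_p\) sends that whole ideal, and not just the generators, into the null set. The plan is to reduce this to generators. A map that is \(B\)-linear and additive on formal sums, and that sends each generator into a null set, automatically sends the generated null set into the target null set, because that null set is closed under sums and the \(B\)-action.

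We also need the observation that \(x\mapsto x\otimes p\) is itself a morphism \(M\otimes_B N\to (M\otimes_B N)\otimes_B P\). Here we use that its image of a null element of \(M\otimes_B N\) is null, which holds by the definition of the null set of the outer tensor product.
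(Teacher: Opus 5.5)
Your proposal is correct and takes the same route as the paper. The paper's proof simply invokes the universal property of Theorem~\ref{tensor prod universal property} and transfers the standard ring-theoretic arguments, including the fix-one-variable construction of \(\alpha\); you have written out the null-set checks that the paper leaves implicit.
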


\begin{proof}
    Since we have a universal property (Theorem \ref{tensor prod universal property}), we can use the same proof as for modules over commutative rings.
\end{proof}

    In the next lemma, we relate our definition of tensor product of \(B\)-modules to Definition \ref{tensor product algebras}. 

\begin{lemma}
    Let \(B\) be a band and let \(C\) and \(D\) be two \(B\)-algebras. Then, their tensor product as \(B\)-modules, denoted by \((C \otimes_B D)_{\mathrm{mod}}\), has a structure of \(B\)-algebra isomorphic to their tensor product as \(B\)-algebras, denoted by \((C \otimes_B D)_{\mathrm{alg}}\).
\end{lemma}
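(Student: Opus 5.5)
We construct an explicit bijection between the underlying pointed sets, define the multiplication on the module side, and then compare null sets. Write \(\alpha_C\colon B\to C\) and \(\alpha_D\colon B\to D\) for the structure maps. By Definition \ref{tensor product algebras}, \((C\otimes_B D)_{\mathrm{alg}}\) is the quotient of \(C\times D\) by the relation generated by \((\alpha_C(b)c,d)\sim(c,\alpha_D(b)d)\). Its null set is generated by \(\iota_C^+(N_C)\cup\iota_D^+(N_D)\), as an ideal of \((C\otimes_B D)^+\).

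On the module side, every element of \((C\otimes_B D)_{\mathrm{mod}}\) is a pure tensor \(c\otimes d\). The defining relations are \(b_{(b'c,d)}\sim b_{(c,b'd)}\) and \(b_{(c,d)}\sim 1_{(bc,d)}\). These identify \(c\otimes d\) modulo exactly the same relation as on the algebra side.

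The key steps are as follows.

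\textbf{Step 1: a bijection.} The map \(C\times D\to (C\otimes_B D)_{\mathrm{alg}}\), \((c,d)\mapsto c\otimes d\), is \(B\)-bilinear. It sends the generating sums \(\sum c_i\otimes d\) with \(\sum c_i\in N_C\) into \(\iota_C^+(N_C)\cdot(1\otimes d)\subseteq N_{\mathrm{alg}}\), and symmetrically in the other variable. Theorem \ref{tensor prod universal property} therefore gives a morphism of \(B\)-modules \(\Phi\colon (C\otimes_B D)_{\mathrm{mod}}\to (C\otimes_B D)_{\mathrm{alg}}\). Conversely, the assignment \(c\otimes d\mapsto c\otimes d\) is well defined on underlying sets, because the generating relations agree. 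So \(\Phi\) is bijective.

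\textbf{Step 2: the algebra structure.} Transport the multiplication \((c\otimes d)(c'\otimes d')=cc'\otimes dd'\) to the module side, together with the unit \(1\otimes 1\) and the structure map \(b\mapsto \alpha_C(b)\otimes 1\). This requires checking that the multiplication respects the generating relations, which is immediate. The resulting commutative pointed monoid is a band only once we know the null set is an ideal of the associated semiring. That is exactly what Step 3 establishes.

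\textbf{Step 3: comparing null sets.} This is the main obstacle. We must show \(\Phi^+(I_{\mathrm{mod}})=N_{\mathrm{alg}}\). The inclusion \(\subseteq\) is Step 1. For \(\supseteq\), note first that \(N_{\mathrm{alg}}\) is generated as an ideal by \(\iota_C^+(N_C)\) and \(\iota_D^+(N_D)\). Its elements are therefore sums of terms of the form \((c\otimes d)\cdot\sum_i(c_i\otimes 1)\) with \(\sum c_i\in N_C\), and of the symmetric form. Each such term equals \(\sum_i cc_i\otimes d\). Since \(N_C\) is an ideal of \(C^+\), we have \(\sum cc_i\in N_C\). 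Hence each term lies among the module-side generators \(\sum_i m_i\otimes n\) with \(\sum m_i\in I_C=N_C\). Finally, \(I_{\mathrm{mod}}\) is closed under sums by (NS2).

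The subtle point is that the module-side generators are stated only for fixed second factor \(n\), that is, with \(1\) as coefficient. The \(B\)-action together with pure-tensor representatives lets us absorb the coefficients \(b_{(m,n)}\sim 1_{(bm,n)}\), so nothing is lost. Along the way we must also confirm that \(I_{\mathrm{mod}}\) is closed under multiplication by pure tensors. This follows from the same computation applied to generators, because multiplying \(\sum c_i\otimes d\) by \(c'\otimes d'\) yields \(\sum c'c_i\otimes dd'\), which is again a generator.

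\textbf{Conclusion.} With Steps 1–3, \(\Phi\) is a bijective multiplicative map preserving unit and structure maps whose inverse also preserves null sets. It is therefore an isomorphism of \(B\)-algebras.
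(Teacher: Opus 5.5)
Your proposal is correct and takes essentially the same route as the paper: both put the multiplication \(cc'\otimes dd'\) on the module tensor product and show that \(c\otimes d\mapsto c\otimes d\) is a bijection whose null sets match. Your Step 3 writes elements of \(N_{\mathrm{alg}}\) as sums of terms \((c\otimes d)\cdot\iota_C^+(\sigma)\) with \(\sigma\in N_C\), together with the analogous \(D\)-terms, to obtain the reverse inclusion; this spells out the null-set comparison that the paper dispatches with ``by the definitions''.
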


\begin{proof}
    First, we prove that \((C \otimes_B D)_{\mathrm{mod}}\) has a structure of \(B\)-algebra. For this, cosider the map 
    \begin{align*}
        \Phi: (C \otimes_B D)_{\mathrm{mod}} \times (C \otimes_B D)_{\mathrm{mod}} &\longrightarrow (C \otimes_B D)_{\mathrm{mod}}\\
        (c \otimes d, c' \otimes d')\;\;\;\;\;\;\;\;\;\; &\longmapsto (cc' \otimes dd').
    \end{align*}
    This map is a well-defined bilinear morphism of \(B\)-modules. Indeed, it clearly preserves the action of \(B\) and for \(\sum c_i \otimes d \in I_{(C \otimes_B D)_{\mathrm{mod}}}\) with \(\sum_i c_i \in I_C\), we have \(\sum c_ic' \in I_C\), hence \(\sum c_ic' \otimes dd' \in I_{(C \otimes_B D)_{\mathrm{mod}}}\). Since \(I_{(C \otimes_B D)_{\mathrm{mod}}}\) is generated by the sums \(\sum c_i \otimes d\) and \(\sum c\otimes d_i\) with \(\sum c_i \in I_C\) and \(\sum d_i \in I_D\), we conclude that \(\Phi\) is a bilinar morphism.

    This map makes the underlying set of \((C \otimes_B D)_{\mathrm{mod}}\) into a pointed monoid, with identity \(1 \otimes 1\) and zero \(0 \otimes 0\). Now, we verify that \(I_{(C \otimes_B D)_{\mathrm{mod}}}\) is a null set for this pointed monoid. Since we already have some properties from the null set as modules, we only need to check that it is closed by multiplication of elements of \((C \otimes_B D)_{\mathrm{mod}}\), which follows from the bilinarity proved above. Hence \((C \otimes_B D)_{\mathrm{mod}}\) is a band, and when we consider the morphism \(B \rightarrow (C \otimes_B D)_{\mathrm{mod}}\) sending \(b \in B\) to \(b \otimes 1\), we see that it is a \(B\)-algebra. 

    Finally, consider the map 
    \begin{align*}
        \Psi: (C \otimes_B D)_{\mathrm{mod}} &\longrightarrow (C \otimes_B D)_{\mathrm{alg}}\\
         c \otimes b &\longmapsto c \otimes b.
    \end{align*}
    Then, by the definitions of the underlying pointed monoids of both sides and their null sets, we conclude that \(\Psi\) is an isomorphism of \(B\)-algebras.
\end{proof}

Finally, we prove the first main result of this chapter.

\begin{theorem}\label{thm:symmetricmonoidalcategory}
    The category \(\BMod_B\) is a closed symmetric monoidal category that is complete and cocomplete.
\end{theorem}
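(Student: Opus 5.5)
The plan is to assemble the theorem from the pieces already established in this section, checking only the coherence conditions that are not yet explicit. Completeness and cocompleteness are Proposition~\ref{thm:bicomplete}: products, equalizers, coproducts and coequalizers exist, hence all small limits and colimits exist. The monoidal structure is \((\BMod_B,\otimes_B,B)\), with associator \(\alpha\), symmetry \(\gamma\) and left unitor \(\mu\) from Lemma~\ref{tensor product lemmas}. The right unitor is \(\mu\circ\gamma\colon M\otimes_B B\to M\). Functoriality of \(\otimes_B\) in both variables comes from the universal property, Theorem~\ref{tensor prod universal property}. Given \(f\colon M\to M'\) and \(g\colon N\to N'\), the map \((m,n)\mapsto f(m)\otimes g(n)\) is bilinear, because \(f\) and \(g\) preserve the action and the null sets. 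It therefore factors uniquely through \(M\otimes_B N\), and uniqueness gives compatibility with identities and composition.

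The next step is the coherence axioms: Mac~Lane's pentagon, the triangle identity, the hexagon identities, and \(\gamma_{N,M}\circ\gamma_{M,N}=\id\). Since every element of \(M\otimes_B N\) is a pure tensor, \((M\otimes_B N)\otimes_B P\) consists of elements \((m\otimes n)\otimes p\). The same holds for iterated tensor products of four modules. Each structure isomorphism is given explicitly on such elements, for instance \(\alpha((m\otimes n)\otimes p)=m\otimes(n\otimes p)\), \(\gamma(m\otimes n)=n\otimes m\) and \(\mu(b\otimes m)=b\cdot m\). So each coherence diagram becomes an identity between two maps of sets that agree on pure tensors, and these maps are therefore equal. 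The identities are the familiar ones from modules over a commutative ring, and no null-set condition has to be checked, because the maps are already known to be morphisms.

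Closedness is Corollary~\ref{tensor product closedness}: \(-\otimes_B M\) is left adjoint to \(\Hom_B(M,-)\), where \(\Hom_B(M,-)\) carries the \(B\)-module structure from Proposition~\ref{thm:bicomplete}. To make sure this is an internal adjunction and not just an adjunction on underlying sets, I would check that the bijection
\[
\Hom_B(L\otimes_B M,N)\cong\Hom_B(L,\Hom_B(M,N)),\qquad \varphi\mapsto\bigl(l\mapsto\varphi(l\otimes -)\bigr),
\]
respects null sets in both directions.

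I expect the main obstacle to be this last check, together with the verification that the tensor product of morphisms is a morphism. Take \(\varphi\in\Hom_B(L\otimes_B M,N)\). Its adjoint \(l\mapsto\varphi(l\otimes-)\) sends a sum \(\sum_i l_i\in I_L\) to a null sum in \(\Hom_B(M,N)\). This holds because, for each fixed \(m\), the sum \(\sum_i l_i\otimes m\) is among the generators of \(I_{L\otimes_B M}\). Conversely, take \(\psi\in\Hom_B(L,\Hom_B(M,N))\) and the map \(l\otimes m\mapsto\psi(l)(m)\). Applying \(\psi\) to a sum \(\sum_i l_i\in I_L\) gives \(\sum_i\psi(l_i)\in I_{\Hom_B(M,N)}\), so \(\sum_i\psi(l_i)(m)\in I_N\) for every \(m\). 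A sum \(\sum_j m_j\in I_M\) is mapped into \(I_N\) because each \(\psi(l)\) is a morphism. Hence the generators of \(I_{L\otimes_B M}\) go into \(I_N\). Since \(I_N\) is closed under sums and the \(B\)-action, the whole generated null set goes into \(I_N\).

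Naturality of the bijection in \(L\) and \(N\) is formal. This completes the proof that \(\BMod_B\) is a cosmos.
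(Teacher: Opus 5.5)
Your proposal is correct and takes the same route as the paper, whose proof simply assembles Proposition~\ref{thm:bicomplete}, Lemma~\ref{tensor product lemmas} and Corollary~\ref{tensor product closedness}; your checks of functoriality and coherence on pure tensors, and of the null-set conditions for the hom--tensor transposes, only make explicit what the paper delegates to ``the same proof as for modules over commutative rings''. One caveat applies to both arguments: each takes for granted that the set generated by the sums \(\sum_i m_i\otimes n\) and \(\sum_i m\otimes n_i\) satisfies (NS3), and this can fail (a null sum \(m_1+m_2+(-m_2)\in I_M\) with \(m_2\otimes n\) equal to the base point but \(m_1\otimes n\) not yields a one-term null sum), so strictly one should pass to the quotient by the module null ideal these sums generate, after which your argument goes through with only cosmetic changes.
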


\begin{proof}
    Completeness and cocompleteness follows from Proposition \ref{thm:bicomplete}. The tensor product \(\otimes_B\) makes \(\BMod_B\) into a closed symmetric monoidal category by Lemma \ref{tensor product lemmas} and Corollary \ref{tensor product closedness}. 
\end{proof}

\subsection{Commutative monoids}

    In this last subsection, we prove that a commutative monoid in the symmetric monoidal category \(\BMod_B\) is exactly a \(B\)-algebra. This proves that the category \(\BMod_B\) is indeed the right choice as the category of modules over \(B\).
 
    \begin{theorem}\label{thm:eqofcategories}
        The category \(\BAlg_B\) of algebras over \(B\) coincides with the category of commutative monoids in \(\BMod_B\).
    \end{theorem}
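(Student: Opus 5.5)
We construct functors in both directions and check that they are mutually inverse. The key bookkeeping tool is the universal property of the tensor product (Theorem \ref{tensor prod universal property}). By it, a morphism of \(B\)-modules \(\mu\colon A\otimes_B A\to A\) is the same thing as a bilinear map \(A\times A\to A\). Likewise, a morphism \(\eta\colon B\to A\) is the same thing as an element \(1_A=\eta(1_B)\in A\), via \(\Hom_B(B,A)\cong A\).

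\emph{From algebras to monoids.} Let \(\alpha\colon B\to A\) be a \(B\)-algebra. Then \(A\) is a \(B\)-module with null set \(N_A\). Multiplication \(A\times A\to A\) is bilinear:
\begin{itemize}
\item it is compatible with the \(B\)-action, since \((\alpha(b)a)a'=\alpha(b)(aa')\);
\item it preserves null sets in each variable, since \(N_A\) is an ideal of \(A^+\).
\end{itemize}
It therefore induces \(\mu\colon A\otimes_B A\to A\). We take \(\eta=\alpha\), which is \(B\)-linear. Associativity, unitality and commutativity of \((A,\mu,\eta)\) follow from those of the monoid \(A\). Here we use the isomorphisms \(\alpha,\gamma,\mu\) of Lemma \ref{tensor product lemmas}, evaluated on pure tensors, and recall that every element of a tensor product is a pure tensor. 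A morphism of \(B\)-algebras is multiplicative, unital and \(B\)-linear, so it is a monoid morphism.

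\emph{From monoids to algebras.} Let \((A,\mu,\eta)\) be a commutative monoid in \(\BMod_B\). Set \(aa'\coloneqq\mu(a\otimes a')\) and \(1_A\coloneqq\eta(1_B)\). The monoid axioms make \((A,\cdot,1_A)\) a commutative monoid. The base point \(*\) is absorbing, because \(a\otimes *=0\cdot(a\otimes *)\) is the base point of \(A\otimes_B A\). Next we check that \(I_A\) is an ideal of \(A^+\). It is closed under addition by (NS2). It is closed under multiplication by elements of \(A\) because \(\mu\) is a module morphism: if \(\sum a_i\in I_A\), then \(\sum a_i\otimes a'\) lies in the null set of \(A\otimes_BA\), so \(\sum a_ia'\in I_A\). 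Together with (NS3), this makes \((A,I_A)\) a band.

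Define \(\alpha\coloneqq\eta\colon B\to A\). It is multiplicative because
\[\eta(bc)=bc\cdot\eta(1)=\mu(\eta(b)\otimes\eta(c)),\]
which follows from the unit axiom and \(B\)-linearity. It preserves null sets because it is a module morphism. Finally, the original module action agrees with the one induced by \(\alpha\), namely \(b\cdot a=\eta(b)a\), again by the unit axiom. Monoid morphisms are thus \(B\)-algebra morphisms.

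The two constructions are visibly inverse on objects and are the identity on morphisms, which gives the asserted coincidence of categories.

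The main obstacle is the null-set bookkeeping on the monoid-to-algebra side. We need to confirm that the null set of \(A\otimes_BA\) really contains the sums \(\sum a_i\otimes a'\) for \(\sum a_i\in I_A\); by the definition of the generators of the null set this holds for \(a'\) of the form \(1_{(\cdot,\cdot)}\), i.e.\ for all pure tensors. We also need the null set to be an ideal, not merely an \(A\)-stable additive subset. For the latter we must check that multiplying a sum \(\sum a_i\in I_A\) by a sum \(\sum a'_j\) stays in \(I_A\). This follows from closure under addition once each term \(\sum_i a_ia'_j\) is in \(I_A\).
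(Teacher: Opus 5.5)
Your proposal is correct and takes the same route as the paper. In one direction, bilinearity of the multiplication together with the universal property of \(\otimes_B\) yields \(\mu\colon A\otimes_B A\to A\), with unit \(b\mapsto b\cdot 1_A=\alpha(b)\). In the other direction, you set \(m\cdot m'\coloneqq\mu(m\otimes m')\) and \(1\coloneqq\eta(1_B)\). You also supply checks the paper leaves implicit: that \(I_A\) is an ideal of \(A^+\), that the base point is absorbing, and that \(\eta\) is multiplicative and recovers the original \(B\)-action.
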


    \begin{proof}
    We produce two functors which are inverse to each other. First, let \(A\) be a $B$-algebra, and regard \(A\) as an \(B\)-module. The multiplication
\[
\mu_A : A\times A \longrightarrow A,\qquad (a,a')\mapsto a\cdot_A a',
\]
is a bilinear morphism of \(B\)-modules. By Theorem \ref{tensor prod universal property}, there is a unique morphism of $B$-modules
\[
\widetilde{\mu}_A : A\otimes_B A \longrightarrow A
\]
such that $\widetilde{\mu}_A(a\otimes a')=\mu_A(a,a')$ for all $a,a'\in A$. The identity $1_A$ defines a morphism of $B$-modules
\[
\eta_A : B \longrightarrow A,\qquad b\mapsto b\cdot_A 1_A. 
\]
This makes \((A,\tilde{\mu}_A,\eta_A)\) into a commutative monoid in the category \(\BMod_B\).

Conversely, let  $(M,\mu,\eta)$ be a commutative monoid in $\BMod_B$, i.e. $M$ is a $B$-module, $\mu:M\otimes_B M\to M$ and $\eta:B\to M$ are morphisms in $\BMod_B$. Define a multiplication on the underlying \(B\)-module $M$ by
\[
m\cdot_M m' \coloneqq  \mu(m\otimes m')\qquad(m,m'\in M),
\]
and define the unit element $1_M \coloneqq  \eta(1_B)$. This makes \(M\) into a \(B\)-algebra.

The two functors are inverse to each other, proving the isomorphism of categories.

\end{proof}

%% file: RelativeSchemes.tex
\section{Relative Band Schemes}\label{sec:Relative Schemes}

This section is devoted to the proof of Theorem~\ref{main theorem 2}. We begin by recalling the necessary definitions from the theory of schemes relative to a complete and cocomplete closed symmetric monoidal category. Our approach follows that of A.~Vezzani in \cite{Vezzani} for monoid schemes, and O.~Lorscheid in \cite{Lorscheid1} for semiring schemes and blue schemes.

\subsection{Review of relative algebraic geometry}

     Our presentation follows the original work by B. T\"oen et M. Vaqui\'e in \cite{ToenVaquie}. We also recomend \cite{Timothy}. 

    Let \(\mathscr{C} = (\mathscr{C}, \otimes, e)\) be a symmetric monoidal category, where \(\otimes: \mathscr{C} \times \mathscr{C} \to \mathscr{C}\) is the bifunctor and \(e\) is the unit object. We denote by \(\Comm(\mathscr{C})\) the category of commutative monoids in \(\mathscr{C}\). For every commutative monoid \(A \in \Comm(\mathscr{C})\), we denote by \(A\)-\(\Mod\) the category of pairs \((M,\sigma)\) consisting of an object \(M\) of \(\mathscr{C}\) and a morphism \(\sigma : A \otimes M \to M\) making the usual diagrams commutative (see \cite[Definition 1.3.2.7]{Timothy}). For each commutative monoid \(A\) of \(\mathscr{C}\), denote by \(\Alg_A\) the coslice category \(A/\Comm(\mathscr{C})\) and call its objects \(A\)-algebras. For simplicity, we fix the following nomenclature. 

     \begin{definition}[Cosmos]
    A \emph{cosmos} is a complete and cocomplete closed symmetric monoidal category.
    \end{definition}

    From now on, let \(\mathscr{C}\) be a fixed cosmos. Due to its cocompleteness, we can define the tensor product \(M \otimes_A N\) of two objects \(M\) and \(N\) of \(A\)-\(\Mod\) for a commutative monoid \(A \in \Comm(\mathscr{C})\).

    \begin{definition}[Tensor product in \(A\)-\(\Mod\), {\cite[Definition 11]{Vezzani}}]
        Let \(A\) be a commutative monoid in \(\mathscr{C}\), and let \((M,\sigma_M)\) and \((N,\sigma_N)\) be two objects of \(A\)-\(\Mod\). The tensor product of \(M\) and \(N\) over \(A\), denoted by \(M \otimes_A N\), is the coequalizer of the diagram 
        \begin{equation*}
            \begin{tikzcd}
                A \otimes M \otimes N \arrow[r, shift left, "\sigma_N \otimes N"] \arrow[r, shift right, swap, "\sigma_M \otimes M"] & M \otimes N
            \end{tikzcd}
        \end{equation*}
        It has a natural \(A\)-module structre.
    \end{definition}

    \begin{remark}
        The triple \((A \text{-}\Mod, \otimes_A, A)\) is itself a cosmos.
    \end{remark}

    \begin{example}
        The principal example of a cosmos is the category \(\Mod_{\ZZ}\) of modules over \(\ZZ\) with the usual tensor product \(\otimes_{\ZZ}\) and unit object \(\ZZ\). Here, the category \(\Comm(\Mod_{\ZZ})\) of commutative monoids is the category of commutative rings and for each commutative ring \(R\), the category \(R\)-\(\Mod\) is exactly the category \(\Mod_R\) of modules over \(R\).
    \end{example}

    \begin{example}
        From the results of Section \ref{sec:Modules over Bands}, the category \(\BMod_{\FF_1^\pm}\) is a cosmos and the category of commutative monoids in it is exactly the category of bands. If \(B\) be a band, then the category \(B\)-\(\Mod\) is exactly the category \(\BMod_B\) of band modules over \(B\).
    \end{example}

    The following basic properties of the tensor product also holds in \(\mathscr{C}\).

    \begin{proposition}[Proposition 12 of \cite{Vezzani}]
        Let \(A\) and \(B\) be two commutative monoids in \(\mathscr{C}\), and let \(f: A \to B\) be a morphism in \(\Comm(\mathscr{C})\). Then, the following holds.
        \begin{enumerate}
            \item There is a natural pullback functor \(B \text{-}\Mod \to A\text{-}\Mod\) that sends an object \(N\) to \(N\) itself, considered as a \(A\)-module with the action defined as the composite
            \begin{equation*}
                A \otimes N \to B \otimes N \to N.
            \end{equation*}
        In particular, the map \(f\) defines a natural structure of \(A\)-module on \(B\), with the action defined as above.
        \item The pullback functor has a left adjoint, indicated by \(-\otimes_A B\), which sends an \(A\)-module \(M\) to \(M \otimes_A B\), with a natural \(B\)-action.
        \item The pullback functor has a right adjoint, indicated by \(\Hom_{A\text{-}\Mod}(B,-)\), which sends an \(A\)-module \(M\) to \(\Hom_{A\text{-}\Mod}(B, M)\), with a natural \(B\)-action.
        \item The pushout in \(\Comm(\mathscr{C})\) of a diagram \(B \leftarrow A \rightarrow C\) is isomorphic as \(A\)-module to \(B \otimes_A C\).
        \end{enumerate} 
     \end{proposition}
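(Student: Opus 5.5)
The plan is to work entirely inside the cosmos \(\mathscr{C}\), using only its closed symmetric monoidal structure and the existence of coequalizers. The proof parallels the classical one for commutative rings. The basic tool throughout is that \(-\otimes X\) has a right adjoint, so it preserves colimits, and in particular coequalizers. I will take the four items in order, with (4) as the only substantial step.

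For (1), the action on \(N\) is defined as the composite \(A\otimes N\xrightarrow{f\otimes N}B\otimes N\xrightarrow{\sigma_N}N\). Associativity and unitality then follow by pasting the module diagrams for \(N\) with the fact that \(f\) is a monoid morphism. Morphisms of \(B\)-modules are sent to themselves, which gives the functor; applying this to \(N=B\) with its multiplication gives the \(A\)-module structure on \(B\).

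For (2), let \(M\) be an \(A\)-module. The object \(M\otimes_A B\) is the coequalizer of the two maps \(A\otimes M\otimes B\rightrightarrows M\otimes B\). It inherits a \(B\)-action from \(\mu_B\) acting on the right factor. This action descends to the coequalizer because \(-\otimes B\) preserves coequalizers, so \((M\otimes_A B)\otimes B\) is again a coequalizer. For the adjunction, a \(B\)-linear map \(M\otimes_A B\to N\) restricts along \(M\cong M\otimes_A A\to M\otimes_A B\) to an \(A\)-linear map \(M\to N\). Conversely, an \(A\)-linear map \(g\colon M\to N\) extends to \(M\otimes B\xrightarrow{g\otimes B}N\otimes B\xrightarrow{\sigma_N}N\). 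This extension equalizes the two maps from \(A\otimes M\otimes B\), since the \(A\)-action on \(N\) factors through \(f\). The two constructions are mutually inverse by the unit axioms.

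For (3), I would first form the internal hom \(\underline{\Hom}_A(B,M)\) as the equalizer in \(\mathscr{C}\) of the two maps \(\underline{\Hom}(B,M)\rightrightarrows\underline{\Hom}(A\otimes B,M)\) encoding \(A\)-linearity; such equalizers exist because \(\mathscr{C}\) is complete. The \(B\)-action comes from precomposition with multiplication by \(B\). The adjunction is the tensor--hom adjunction restricted to these equalizers and coequalizers, using (2) to identify \(N\otimes_B B\cong N\).

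For (4), the key step is to equip \(B\otimes_A C\) with the multiplication
\[
(B\otimes_A C)\otimes(B\otimes_A C)\longrightarrow B\otimes_A C
\]
induced from \(\mu_B\otimes\mu_C\) combined with the symmetry isomorphism. The unit is \(e\to A\to B\otimes_A C\). The main obstacle is showing that this multiplication is well defined on the coequalizer. My approach is to present the source as a coequalizer of \(B\otimes C\otimes B\otimes C\) by tensoring the presentation of \(B\otimes_A C\) with itself; this is a coequalizer because \(\otimes\) preserves colimits in each variable separately. Commutativity of \(B\) and \(C\) then shows that the relations from \(A\) are respected.

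With the multiplication in place, the maps \(B\to B\otimes_A C\leftarrow C\) are monoid maps agreeing on \(A\). Given monoid maps \(u\colon B\to D\) and \(v\colon C\to D\) agreeing on \(A\), the morphism \(\mu_D\circ(u\otimes v)\) coequalizes the two maps from \(A\otimes B\otimes C\). It therefore factors uniquely through \(B\otimes_A C\), and this factorization is multiplicative because \(D\) is commutative. This proves the universal property of the pushout, and the resulting identification is \(A\)-linear by construction.
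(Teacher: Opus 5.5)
Your proposal is correct. The paper, however, gives no argument of its own to compare it against: it states this result as Proposition~12 of \cite{Vezzani} and uses it as a black box for the cosmos \(\BMod_{\FF_1^\pm}\) once Theorem~\ref{thm:symmetricmonoidalcategory} is available. Your route is the standard direct verification inside an arbitrary cosmos. What it adds over the citation is self-containedness and transparency about which structure is used. Parts (2) and (4) need only the existence of coequalizers and the fact that \(X\otimes-\) is a left adjoint, hence preserves coequalizers. Part (3) needs completeness and closedness, to form the equalizer defining \(\underline{\Hom}_A(B,M)\).

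A few compressed steps deserve to be written out, though none is a gap.
\begin{enumerate}
\item In (4), \((B\otimes_A C)^{\otimes 2}\) is not a coequalizer of a single parallel pair. It is the colimit of two parallel pairs into \(B\otimes C\otimes B\otimes C\), or equivalently a coequalizer out of a coproduct. Your argument goes through unchanged with this description.
\item Associativity, unitality and commutativity of the descended multiplication hold because the tensor powers of \(\pi\colon B\otimes C\to B\otimes_A C\) are again epimorphisms.
\item For uniqueness of the induced monoid map you also need \(\pi=\mu_{B\otimes_A C}\circ(i_B\otimes i_C)\). This gives that any monoid map \(\varphi\) with \(\varphi i_B=u\) and \(\varphi i_C=v\) satisfies \(\varphi\circ\pi=\mu_D\circ(u\otimes v)\). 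Uniqueness of the factorization through the coequalizer does not by itself give this.
\item In (3), ``restricting the tensor--hom adjunction'' should be unpacked as follows. \(B\)-linear maps \(N\to\underline{\Hom}_A(B,M)\) correspond to maps \(N\otimes B\to M\) that are \(A\)-linear in the \(B\)-variable and \(B\)-balanced. These are the same as maps \(N\cong N\otimes_B B\to M\), where the isomorphism comes from a split coequalizer, or from (2) applied to \(\id_B\). The \(A\)-linearity of the resulting map \(N\to M\) is checked by evaluating at \(1\).
\end{enumerate}
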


     This allows us to define the Zariski topology on the category of affine schemes.

     \begin{definition}[Affine schemes]
         Let \(\mathscr{C}\) be a cosmos. We denote by \(\Aff_\mathscr{C}\) the opposite category of \(\Comm(\mathscr{C})\) and call its objects by \emph{affine schemes relative to} \(\mathscr{C}\). We denote by \(\Spec A\) the object of \(\Aff_\mathscr{C}\) corresponding to the commutative monoid \(A\) in \(\Comm(\mathscr{C})\).
     \end{definition}

\begin{definition}[Zariski open immersion  {\cite[Definition 2.9]{ToenVaquie}}]\label{definition2.9}
    Let \(A\) and \(B\) be commutative monoids in \(\mathscr{C}\) and let \(f: A \to B\) be a morphism in \(\Comm(\mathscr{C})\).
\begin{enumerate}
    \item \(f\) is \textit{flat} if the induced functor \(- \otimes C: A\text{-}\Mod \to B\text{-}\Mod\) preserves finite limits;
    \item \(f\) is of \textit{finite presentation} if for every directed system \((C_\lambda)_{\lambda \in \Lambda}\) of \(A\)-algebras, the natural morphism 
    \begin{equation*}
        \varinjlim_{\lambda \in \Lambda} \Hom_{\Alg_A}(B,C_\lambda) \longrightarrow \Hom_{\Alg_A}(B, \varinjlim_{\lambda \in \Lambda}C_\lambda)
    \end{equation*}
    is an isomorphism.
    \item The induced morphism \(\Spec f : \Spec B \to \Spec A\) is called a \textit{Zariski open immersion} if \(f\) is a flat epimorphism of finite presentation.
\end{enumerate}
\end{definition}

\begin{definition}[Zariski topology  {\cite[Definition 2.10]{ToenVaquie}}]\label{definition2.10}
\begin{enumerate}
    \item A family of morphisms \[\{\varphi_i : \Spec(A_i) \to \Spec(A) \}_{i \in I}\] in \(\Aff_\mathscr{C}\) is a \textit{fpqc covering} if all the morphisms \(\varphi_i\) are flat and if there is a finite subset \(J \subseteq I\) such that the functor 
    \begin{equation*}
        \Phi : \prod_{j \in J} (- \otimes_A A_j) : A\text{-}\Mod \longrightarrow \prod_{j \in J} A_j\text{-}\Mod
    \end{equation*}
    is conservative (that is, \(f: M \to N\) is an isomorphism if and only if \(\Phi(f)\) is an isomorphism).
    \item A family of morphisms
    \begin{equation*}
        \left\{ \varphi_i:  X_i \longrightarrow X\right\}_{i \in I}
    \end{equation*}
    in \(\Aff_\mathscr{C}\) is a \emph{Zariski covering} if it is a fpqc covering and if every morphism \(\varphi_i\) is a Zariski open immersion.
    \item The \textit{Zariski topology} of \(\Aff_\mathscr{C}\) is the Grothendieck pretopology defined by Zariski coverings.
\end{enumerate}
\end{definition}

 We denote by \(\Pr(\Aff_\mathscr{C})\) the category of presheaves of sets over \(\Aff_\mathscr{C}\) and by \(\Sh(\Aff_\mathscr{C})\) its subcategory of sheaves in the Zariski topology. By the Yoneda lemma we have a fully faithful functor 
    \begin{align*}
        h_{-}: \Aff_\mathscr{C} &\longrightarrow \Pr(\Aff_\mathscr{C})\\
        X &\longmapsto (h_X: Y \mapsto \Hom_{\Aff_\mathscr{C}}(Y,X)).
    \end{align*}
    So, from now on, we consider \(\Aff_\mathscr{C}\) as the subcategory of representable functors in \(\Pr(\Aff_\mathscr{C})\). Furthermore, by \cite[Corollary 2.11]{ToenVaquie}, the image of \(h_{-}\) is contained in \(\Sh(\Aff_\mathscr{C})\).

    \begin{definition}[Definition 2.12 of \cite{ToenVaquie}]\label{definition2.12}
    \begin{enumerate}
        \item Let \(X \in \Aff_\mathscr{C}\) be an affine scheme and let \(F\) be a sub-sheaf of \(h_X\). We say that \(F\) is a \emph{Zariski open} of \(h_X\) if there exists a family of Zariski open immersions \(\{X_i \to X \}_{i \in I}\) such that \(F\) is the image of the morphism of sheaves
        \begin{equation*}
            \coprod_{i \in I} h_{X_i} \longrightarrow h_X.
        \end{equation*}
        \item A morphism \(f: F \to G\) in \(\Sh(\Aff_\mathscr{C})\) is said to be a \emph{Zariski open immersion} if for every affine scheme \(X\) and every morphism \(h_X \to G\), the induced morphism 
        \begin{equation*}
            F \times_G h_X \longrightarrow h_X
        \end{equation*}
        is a monomorphism such that its image is a Zariski open of \(h_X\).
    \end{enumerate}  
    \end{definition}

    \begin{remark}
        By \cite[Lemma 2.14]{ToenVaquie}, A morphism \(f: Z \to Y\) of affine schemes is a Zariski open immersion in the sense of Definition \ref{definition2.9} if and only if it is a Zariski open in the sense of Definition \ref{definition2.12}.
    \end{remark}

    Finally, we can define the notion of schemes relative to \(\mathscr{C}\).

  \begin{definition}[Relative schemes, {\cite[Definition 2.15]{ToenVaquie}}]
        A sheaf \(F \in \Sh(\Aff_\mathscr{C})\) is said to be a \textit{scheme relative to} \(\mathscr{C}\) if there exits an \emph{affine Zariski covering} of \(F\), that is, a family of affine schemes \(X_i\) and a morphism 
        \begin{equation*}
            p: \coprod_{i \in I}h_{X_i} \longrightarrow F
        \end{equation*}
        satisfying the following conditions:
        \begin{enumerate}
            \item \(p\) is an epimorphism of sheaves.
            \item For every \(i \in I\), the morphism \(h_{X_i} \to F\) is a Zariski open immersion.
        \end{enumerate}
        \end{definition}
        The category of schemes relative to \(\mathscr{C}\), denoted by \(\Sch^{\text{rel}}_\mathscr{C}\), is the full subcategory of \(\Sh(\Aff_\mathscr{C})\) formed by the schemes relative to \(\mathscr{C}\) as defined above.

\subsection{Flat morphisms of bands}

      We adapt Definition \ref{definition2.9} for the case of bands.

\begin{definition}[Adapted from Definition \ref{definition2.9}]
    Let \(f: A \to B\) be a morphism of bands. Then, \(f\) is \emph{flat} if and only if the induced functor
    \[ - \otimes_A B : \BMod_A \rightarrow \BMod_B\] 
    preserves finite limits. 
\end{definition} 

The main result of this section is the following characterization of flat band algebras as localization maps.

\begin{proposition}\label{prop: localization = flat}
    Let \(f: B \to C\) be a morphism of bands. Then, \(f\) is flat if and only if \(f\) is a localization map (see Definition \ref{def: localization map}).
\end{proposition}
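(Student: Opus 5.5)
The plan is to follow the strategy of A.~Vezzani for monoids \cite{Vezzani}, using the canonical multiplicative set
\[
S \coloneqq f^{-1}(C^\times).
\]
Both directions reduce to comparing $-\otimes_B C$ with localization. The extra work compared to the monoid case is keeping track of null sets throughout.

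\emph{Localization $\Rightarrow$ flat.} By Lemma~\ref{localization tensor}, we may replace $-\otimes_B S^{-1}B$ by the functor $M\mapsto S^{-1}M$. It suffices to show that this functor preserves the zero object, binary products and equalizers.
\begin{itemize}
\item For the zero object, $S^{-1}0=0$ is clear.
\item For products, the map $S^{-1}(M\times N)\to S^{-1}M\times S^{-1}N$ sends $(m,n)/s\mapsto (m/s,n/s)$. It is bijective, using a common denominator and a common witness $t\in S$. For null sets, a sum that is null in each component can be brought to a common denominator. After multiplying by a suitable $t\in S$, each component lies in $\iota_S^+(I_M)$ resp.\ $\iota_S^+(I_N)$, so the sum lies in the null set generated by $\iota_S^+(I_{M\times N})$.
\item For equalizers of $g,h\colon M\to N$, take an element $m/s$ with $g(m)/s=h(m)/s$. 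Then $tg(m)=th(m)$ for some $t\in S$, so $m/s=tm/ts$ with $tm\in\eq(g,h)$. Hence $S^{-1}\eq(g,h)\to\eq(S^{-1}g,S^{-1}h)$ is surjective, and injectivity is immediate. The null sets agree by the same clearing-denominators argument, since the null set of an equalizer is the restricted one.
\end{itemize}

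\emph{Flat $\Rightarrow$ localization.} Let $\varphi\colon S^{-1}B\to C$ be the map $b/s\mapsto f(b)f(s)^{-1}$. It is well defined by the universal property of localization.
\begin{itemize}
\item \emph{Surjectivity.} Flatness gives $(B\times B)\otimes_B C\cong C\times C$. Every element of the left side is a pure tensor $(b_1,b_2)\otimes c$, so every element of $C\times C$ has the form $(f(b_1)c,f(b_2)c)$. Applying this to $(1,c')$ gives $f(b_1)c=1$. Hence $b_1\in S$, and $c'=f(b_2)f(b_1)^{-1}=\varphi(b_2/b_1)$.
\item \emph{Injectivity.} Suppose $f(b)=f(b')$, and consider the two $B$-module maps $B\to B$ given by multiplication by $b$ and by $b'$. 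Their equalizer $E\subseteq B$ is preserved by $-\otimes_B C$. The equalizer of multiplication by $f(b)$ and $f(b')$ on $C$ is all of $C$, so $E\otimes_B C\to C$ is surjective. Thus $1=f(e)c$ for some $e\in E$, which means $e\in S$ and $eb=eb'$. Hence $b/1=b'/1$, and the general case $b/s$ versus $b'/s'$ reduces to this one.
\end{itemize}

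\emph{Null sets, the expected main obstacle.} It remains to show that $\varphi$ reflects null sets, i.e.\ $N_C\subseteq\varphi^+(N_{S^{-1}B})$. This is where I expect the real difficulty, because of the phenomenon in Remark~\ref{rmk:null-ideal-not-unique}. I would first try to exploit that product preservation is an isomorphism of modules, including null sets. Take $\sum_i c_i\in N_C$ with $c_i=f(b_i)f(s)^{-1}$ after choosing a common denominator. View this as a null sum in the suitable factor of a finite product $C^n\cong B^n\otimes_B C$. The null set of a tensor product is generated by sums coming from $N_B$ (or $I_{B^n}$) in one variable. Pulling such a generating relation back, and using the injectivity argument above to absorb the ambiguity by elements of $S$, should show that $\sum_i b_i/s$ lies in $N_{S^{-1}B}$.

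If this fails in general, the fallback is to use preservation of equalizers in place of products. The plan there is to realise $\sum_i c_i\in N_C$ via an equalizer of two maps out of a free module $\bigvee B$ into $B$ with an enlarged null set, and to transport the relation back along $f$ as in the injectivity step.
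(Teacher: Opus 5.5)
\textbf{There is a genuine gap: the null-set step is never proved.}

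Your direction ``localization $\Rightarrow$ flat'' matches the paper's argument. So do your proofs that $\varphi\colon S^{-1}B\to C$ is surjective (pure tensors in $(B\times B)\otimes_B C$) and injective (equalizer of the two multiplication maps). The gap is the step you flag yourself: you never show that $\varphi$ reflects null sets. Without it you only have a bijective morphism of bands, which by Remark~\ref{rmk:null-ideal-not-unique} need not be an isomorphism.

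Your sketch does not close this gap. The null set of $B^n\otimes_B C$ is generated by two kinds of sums: those coming from $I_{B^n}$ in the first variable, and those of the form $\sum_i m\otimes c_i$ with $\sum_i c_i\in N_C$ in the second variable. If you place $\sum_i c_i$ in one factor of $C^n$ and pull back, the preimage can be a single generator of the second kind. That merely restates $\sum_i c_i\in N_C$, so nothing is transported into $N_{S^{-1}B}$. Absorbing ambiguity by elements of $S$ does not address this circularity. The equalizer fallback is not carried out at all.

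\textbf{The missing ingredient is the paper's Lemma~\ref{flat bijection}:} a flat bijective morphism of bands is an isomorphism.

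\emph{Reduction.} To apply the lemma, first observe that $\varphi$ is itself flat. For an $S^{-1}B$-module $M$ one has $M\otimes_{S^{-1}B}C\cong M\otimes_B C$, and finite limits of $S^{-1}B$-modules are computed in $\BMod_B$. So you may assume $B=C$ as pointed monoids, with $N_B\subseteq N_C$.

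\emph{Setup.} Choose $\sigma=x_1+\dots+x_n\in N_C\setminus N_B$ of minimal length. Pair it with an auxiliary relation $y_1+\dots+y_n\in N_B$ made of alternating signs $\pm1$, with $y_n=0$ when $n$ is odd. Then $\mu=\sum_i (x_i,y_i)$ is null in $C\times C$. By product preservation it is null in $(B\times B)\otimes_B C$, so it decomposes into generators of the two kinds above.

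\emph{Case analysis.}
\begin{itemize}
\item Generators of the first kind project to elements of $N_B$.
\item A generator of the second kind covering only part of $\mu$ projects to a shorter element of $N_C$. By minimality, that element lies in $N_B$.
\item A single generator of the second kind covering all of $\mu$ has all its terms of the form $(c_ix,c_iy)$. For odd $n$, the term $(x_n,0)$ forces $x_n=0$, which is excluded. For even $n$, it forces $x_i=y_iy^{-1}x$, whence $\sigma=y^{-1}x\sum_i y_i\in N_B$.
\end{itemize}
In every case $\sigma\in N_B$, a contradiction.

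So your proposal lacks two ideas: the minimal-counterexample argument, and the rigidifying second coordinate. Without them, pulling back along the product isomorphism proves nothing.
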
 

Before we start the proof of the proposition, we prove two necessary lemmas.

\begin{lemma}\label{flat bijection}
    Let \(f : B \to C\) be a bijective morphism of bands. Then, \(f\) is flat if and only if it is an isomorphism.
\end{lemma}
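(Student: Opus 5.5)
If \(f\) is an isomorphism, then \(-\otimes_B C\) is naturally isomorphic to the equivalence \(\BMod_B \to \BMod_C\) given by transport of structure along \(f\). An equivalence preserves all limits, so \(f\) is flat. The content of the lemma is the converse. Since \(f\) is bijective, we may identify \(C\) with the pointed monoid \(B\) carrying a possibly larger null set \(N_C \supseteq f^+(N_B)\). By Remark \ref{rmk:null-ideal-not-unique}, the only way \(f\) can fail to be an isomorphism is that this inclusion is strict. The plan is to detect this with a single finite limit, the equalizer in the proof of Proposition \ref{thm:bicomplete}, which is just a strict submodule (Remark \ref{inclusion strict}).

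\emph{Step 1: for a \(B\)-module \(M\), compute \(M\otimes_B C\) when \(f\) is bijective.} Every element of \(M\otimes_B C\) is a pure tensor, and \(m\otimes c = m\otimes f(b) = bm\otimes 1\). Hence \(m\mapsto m\otimes 1\) is surjective. It is also injective: the bijectivity of \(f\) makes \(C\) act through \(B\), so \(M\) itself (with \(C\) acting via \(f^{-1}\) on elements) carries a bilinear map \(M\times C\to M\), \((m,c)\mapsto f^{-1}(c)m\), which gives a left inverse on underlying sets. The null set of \(M\otimes_B C\) is then generated by \(I_M\) together with the sums \(\sum_i c_i m\) for \(\sum_i c_i\in N_C\). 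So, as a pointed set with action, \(M\otimes_B C \cong M\), but with the enlarged null set
\[
I_M^C \coloneqq \Bigl\langle I_M \cup \bigl\{ \textstyle\sum_i f^{-1}(c_i)\, m \;\big|\; \sum_i c_i\in N_C,\ m\in M\bigr\}\Bigr\rangle .
\]

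\emph{Step 2: choose a finite limit that distinguishes \(N_B\) from \(N_C\).} Suppose \(\sum_i c_i\in N_C\) but \(\sum_i f^{-1}(c_i)\notin N_B\). Consider a free module \(F=B\vee B\) with basis \(e_1,e_2\), and two morphisms \(g,h\colon F\to B\) whose equalizer \(E\) is exactly the copy \(B\cdot e_1\), carrying the restricted null set \(I_F\cap E^+\). Flatness gives that \(E\otimes_B C\) is the equalizer of \(g\otimes C\) and \(h\otimes C\) computed in \(\BMod_C\). By Step 1, \(E\otimes_B C\) has null set \(I_E^C\), while the equalizer of \(g\otimes C\) and \(h\otimes C\) has null set \(I_F^C\cap E^+\). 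Comparing them only tells us whether the enlarged null sets are compatible with restriction, so this test may be too weak. That is why the harder case should be treated as follows.

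\emph{Main obstacle: producing a limit where the null-set discrepancy is visible.} A more promising test object is the equalizer of two maps \(M\to N\) where \(M\) is \(B\) with the trivial null set generated by \(1+(-1)\) (the smallest null set). In that case \(M\otimes_B C\) acquires all of \(N_C\). If that does not work, I would try kernels of maps out of \(B\) into a quotient in which \(\sum_i f^{-1}(c_i)\) becomes relevant, for example \(B\to B\sslash\langle N_B\rangle\) compared with \(C\). The key claim to establish is: the canonical map from \(\eq(g,h)\otimes_B C\) to \(\eq(g\otimes C,h\otimes C)\) is bijective, but it is an isomorphism only when \(f^{-1}\) carries \(N_C\) into \(N_B\). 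Once this claim holds for a suitable pair \(g,h\), flatness forces \(f^{-1}\) to be a band morphism, so \(f\) is an isomorphism. I expect the choice of this pair of morphisms \(g,h\) to be the real difficulty.
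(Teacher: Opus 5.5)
Your easy direction and Step 1 are sound. Identifying $M\otimes_B C$ with $M$ carrying the enlarged null set $I_M^C$ is exactly the computation the paper's proof uses implicitly. Here the elements of $I_M^C$ are the sums $\alpha+\sum_k\beta_k$ with $\alpha\in I_M$ and $\beta_k=\sum_i f^{-1}(c_i)\,m_k$, where $\sum_i c_i\in N_C$. For injectivity, check directly that $b_{(m,c)}\mapsto b\,f^{-1}(c)\,m$ is constant on the generating relations of $\sim$. The universal property with target $(M,I_M)$ is not available, because $(m,c)\mapsto f^{-1}(c)m$ need not be a morphism in the $C$-variable.

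The genuine gap is that you never produce a finite limit that $-\otimes_B C$ fails to preserve. Moreover, the limits you plan to search among, equalizers and kernels, cannot work in general. Take the identity $\FF_1^\pm\to\FF_3$ and any $g,h\colon M\to N$ with $E=\eq(g,h)$. An element of $I_M^C\cap E^+$ has the form $\alpha+\sum_k\beta_k$ with all terms in $E$, so $\alpha\in I_M\cap E^+$. Every nontrivial block $\beta_k=\sum_i c_im_k$ has a coefficient $c_i=\pm1$, which is a unit, so $m_k=c_i^{-1}(c_im_k)\in E$. Hence $I_M^C\cap E^+=I_E^C$. Thus $-\otimes_{\FF_1^\pm}\FF_3$ preserves every equalizer (and the terminal object $0$), so no pair $g,h$ can witness your key claim, even though the lemma says this map is not flat.

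The discrepancy is visible only in binary products, and that is what the paper exploits. Take $\sigma=x_1+\cdots+x_n\in N_C\setminus N_B$ of minimal length, so all $x_k\neq0$. Tag it with a rigid second coordinate: $y_k=(-1)^{k-1}$ for $k<n$ and $y_n=r_n\in\{-1,0\}$, chosen so that $\sum_k y_k\in N_B$. Then $\mu=\sum_k(x_k,y_k)$ lies in the null set of $(B\otimes_BC)\times(B\otimes_BC)$. Preservation of this product forces $\mu\in I^C_{B\times B}$, i.e. $\mu=\alpha+\sum_k\beta_k$ with $\alpha\in N_{B\times B}$ and $\beta_k=\sum_i c_i(a_k,b_k)$, $\sum_ic_i\in N_C$.
\begin{itemize}
\item A block with fewer than $n$ terms has both coordinate projections in $N_C$ of length less than $n$. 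By minimality these lie in $N_B$, so the block lies in $N_{B\times B}$.
\item Suppose instead a single block $\sum_ic_i(a,b)$ exhausts $\mu$. The term matched with $(x_1,y_1)$ gives $c_{i_1}b=1$, so $b\in B^\times$ and $c_{i_k}=y_kb^{-1}$ for all $k$. If $n$ is odd this gives $x_n=0$, a contradiction. If $n$ is even it gives $\sigma=(b^{-1}a)\sum_ky_k\in N_B$.
\end{itemize}
In every case $\sigma\in N_B$, contradicting the choice of $\sigma$. For $\FF_1^\pm\to\FF_3$ the witness is $(1,1)+(1,-1)+(1,0)$. It lies in the null set of $\FF_3\times\FF_3$ but not in that of $(\FF_1^\pm\times\FF_1^\pm)\otimes_{\FF_1^\pm}\FF_3$.
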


\begin{proof}
    Every isomorphism is flat, so we just prove the converse. Suppose \(f\) is flat.  Assume, without loss of generality, that \(B = C\) as pointed monoids and \(f\) is the identity map. In this case, we have \(N_B \subseteq N_C\). We claim that \(N_B = N_C\). Suppose, by contradiction, that \(N_B \neq N_C\) and consider the formal sum 
    \begin{equation*}
        \sigma \coloneqq  x_1+ \cdots + x_n \in N_C\setminus N_B \quad (x_i \in C)
    \end{equation*}
    with minimal length. In particular, \(x_i \neq 0\) for every \(i\). Now, consider the following formal sums in \(N_{B\otimes_B C}\):
    \begin{align*}
        \tau&\coloneqq 1\otimes x_1 + \cdots + 1\otimes x_{n-1} + 1\otimes x_n &\in N_{B \otimes_B C}\\
        \nu&\coloneqq (-1)^0 \otimes 1 + \cdots (-1)^{n-2}\otimes1 + r_n \otimes 1 &\in N_{B \otimes_B C},
    \end{align*}
    where \(r_n = -1\) if \(n\) is even and \(r_n = 0\) if \(n\) is odd, that is, the sum \((-1)^0 + \cdots + (-1)^{n-2} + r_n \in N_B\). Observe that, since \(B = C\) as sets, we have 
    \begin{equation*}
        \tau = x_1 \otimes 1 + \cdots x_n \otimes 1.
    \end{equation*}
    Now, since \(f\) is flat, we have an isomorphism of \(C\)-modules 
    \begin{equation*}
      \Phi:  \left(B \times B\right) \otimes_B C \overset{\sim}{\longrightarrow} \left(B \otimes_B C \right) \times \left(B \otimes_B C \right);\quad (m_1, m_2) \otimes c \mapsto (m_1 \otimes c, m_2 \otimes c).
    \end{equation*}
    Observe that, by the definition of the null set of tensor product, the formal sum 
    \begin{align*}
        \mu = (x_1 \otimes 1, (-1)^0 \otimes 1) + (x_2 \otimes 1, &(-1)^1 \otimes 1) + \cdots\\ &\cdots + (x_{n-1} \otimes 1, (-1)^{n-2} \otimes 1) + (x_n \otimes 1, r_n\otimes 1),
    \end{align*}
    belongs to \(N_{\left(B \otimes_B C \right) \times \left(B \otimes_B C \right)}\). Hence, its pre-image by \(\Phi^+\)
    \begin{align*}
      (\Phi^+)^{-1}(\mu) = (x_1, (-1)^0)\otimes 1 + (x_2, (-1)^1)\otimes 1 + &(x_3, (-1)^2)\otimes 1 + \cdots\\
      &\cdots + (x_{n-1}, (-1)^{n-2}) + (x_n, r_n)\otimes 1
    \end{align*}
    belongs to \(N_{\left(B \times B\right) \otimes_B C}\). Now, by the definition of null set of tensor product, \((\Phi^+)^{-1}(\mu)\) is a \(B\)-linear combination of formal sums of the following two forms:
    \begin{align}
        \sum_i (m_1^i,m_2^i) &\otimes 1, \text{ with } \sum_i (m_1^i,m_2^i) \in N_{B \times B}, \text{ or }\\
        \sum_i(m_1, m_2) &\otimes c_i, \text{ with } \sum_i c_i \in N_C.
    \end{align} 
    Suppose the formal sums of type \((2)\) appears as subsums of \(\sigma\). Since \(\sum_i c_im_1 \in N_C\), the minimality of \(\sigma\) implies that \(\sum_i c_im_1 \in N_B\). Therefore, this implies that \(\sigma \in N_B\), a contradiction.
\end{proof}

\begin{lemma}\label{localization is flat}
    Let \(B\) be a band and \(S \subseteq B\) a multiplicative subset. Then, the localization map \(\iota_S: B \to S^{-1}B\) is flat.
\end{lemma}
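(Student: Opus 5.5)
The plan is to replace the functor \(-\otimes_B S^{-1}B\) by localization of modules, using Lemma \ref{localization tensor}. That lemma gives a natural isomorphism \(M \otimes_B S^{-1}B \cong S^{-1}M\). Naturality in \(M\) follows from the explicit formula \(m\otimes b/s \mapsto bm/s\). So it suffices to show that \(M \mapsto S^{-1}M\), viewed as a functor \(\BMod_B \to \BMod_{S^{-1}B}\), preserves finite limits. Finite limits are built from a terminal object, binary products and equalizers, so I check these three in turn, using the explicit constructions of Section \ref{sec:Modules over Bands}.

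\textbf{Terminal object.} We have \(S^{-1}0 = 0\), which is immediate.

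\textbf{Equalizers.} Let \(f,g\colon M\to N\) and \(E=\eq(f,g)\), a strict submodule of \(M\). The induced map \(S^{-1}E\to S^{-1}M\) is injective: if \(e/s=e'/s'\) in \(S^{-1}M\), then \(tse'=ts'e\) for some \(t \in S\), and this identity already holds in \(E\). Its image is exactly \(\eq(S^{-1}f,S^{-1}g)\). Indeed, if \(f(m)/s=g(m)/s\), then \(tf(m)=tg(m)\) for some \(t\in S\), so \(tm\in E\) and \(m/s=tm/(ts)\) lies in the image.

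\textbf{Binary products.} The map \(S^{-1}(M\times N)\to S^{-1}M\times S^{-1}N\), \((m,n)/s\mapsto(m/s,n/s)\), is surjective after passing to common denominators. It is injective because two witnesses \(t,t'\in S\) can be replaced by their product \(tt'\).

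At the level of underlying pointed sets this is the standard argument. The real content, and what I expect to be the main obstacle, is comparing null sets. The null set of \(S^{-1}M\) is the one generated by \(\iota_S^+(I_M)\) under the \(S^{-1}B\)-action and sums. Concretely, one shows that a formal sum \(\sum m_i/s_i\) lies in \(I_{S^{-1}M}\) if and only if, after rewriting over a common denominator \(s\), there is \(t\in S\) with \(\sum t\,s\,(m_i/s_i)\in I_M\), computed in \(M\). So the first step is a lemma of this form: \(I_{S^{-1}M}\) consists of the sums \(\sum m_i/s\) such that \(\sum t m_i\in I_M\) for some \(t\in S\). One must check that this set is closed under sums and under the action, using (BA3) and multiplicativity of \(S\).

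With that description in hand, the two null-set comparisons go as follows.

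\begin{itemize}
\item \textbf{Equalizers.} Suppose \(\sum e_i/s\) is null in \(S^{-1}M\). Then \(\sum te_i\in I_M\cap E^+=I_E\), so the sum is null in \(S^{-1}E\). This is compatible with strictness (Remark \ref{inclusion strict}).
\item \textbf{Products.} Suppose \(\sum (m_i,n_i)/s\) has both components null. This gives two witnesses \(t_1,t_2\in S\). Their product \(t_1t_2\) is a single witness making \(\sum t_1t_2(m_i,n_i)\) null in \(M\times N\), because \(I_M\) is stable under the \(B\)-action. The reverse inclusions are automatic, since all the maps involved are morphisms.
\end{itemize}

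Finally, transporting these isomorphisms through the natural isomorphism of Lemma \ref{localization tensor} shows that \(-\otimes_B S^{-1}B\) preserves finite limits. Hence \(\iota_S\) is flat.
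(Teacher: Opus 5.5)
Your proposal is correct and follows the paper's route: reduce via Lemma \ref{localization tensor} to the functor $S^{-1}(-)$, then check that products and equalizers are preserved, including the comparison of null sets. Your explicit description of $I_{S^{-1}M}$ (sums $\sum m_i/s$ with $\sum t m_i \in I_M$ for some $t \in S$) is a useful refinement: it supplies exactly what the paper's chains of equalities between generated null sets tacitly need, such as the common witness $t_1t_2$ in the product case and the step $\sum t e_i \in I_M \cap E^+ = I_E$ for equalizers.
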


\begin{proof}
    We need to check that \(- \otimes_B S^{-1}B\) preserves finite products and equalizers. Then, from Lemma \ref{localization tensor}, we conclude that \(\iota_S\) is flat. 
    
    \textbf{Finite products:} Let \((M_i)_{i = 1}^n\) be a family of \(B\)-modules and consider the following map 
    \begin{align*}
        \Phi: S^{-1}\left(\prod_{i =1}^n M_i\right) &\longrightarrow \prod_{i =1}^n S^{-1}M_i\\
        (m_i)_{i = 1}^n /s &\longmapsto \left(m_i/s \right)_{i = 1}^n.
    \end{align*}
    The proof that \(\Phi\) is well-defined and bijective follows by usual arguments. We check the correspondece of their null sets. By definition, we have the following equalities:
    \begin{align*}
        I_{S^{-1}(\prod M_i)} &= \left< \iota_S^+(I_{\prod M_i}) \right>_{S^{-1}B}\\
            &= \left<\sum_j \frac{(m_{i,j})_i}{1} \;\middle|\; \sum_j (m_{i,j})_i \in I_{\prod M_i} \right>_{S^{-1}B}\\
            &= \left<\sum_j \frac{(m_{i,j})_i}{1} \;\middle|\; \sum_j m_{i,j} \in I_{M_i} \text{ for all } i \right>_{S^{-1}B}.
    \end{align*}
    Hence, the image of \(I_{S^{-1}(\prod M_i)}\) by \(\Phi^+\) is 
    \begin{align*}
        \Phi^+(I_{S^{-1}(\prod M_i)}) &= \left<\sum_j \left(\frac{m_{i,j}}{1}\right)_i \;\middle|\; \sum_j m_{i,j} \in I_{M_i} \text{ for all } i \right>_{S^{-1}B}\\
        &=  \left<\sum_j \left(\frac{m_{i,j}}{1}\right)_i \;\middle|\; \sum_j \frac{m_{i,j}}{1} \in I_{S^{-1}M_i} \text{ for all } i \right>_{S^{-1}B}\\
        & = I_{\prod S^{-1}M_i}.
    \end{align*}
    This proves that \(\Phi\) is an isomorphism.

    \textbf{Equalizers:} Let \(f\) and \(g\) be morphisms of \(B\)-modules from \(M\) to \(N\). Then, we have the following bijection that preserves the action of \(B\).
    \begin{align*}
        \Psi: S^{-1}\eq(f,g) &\longrightarrow \eq(S^{-1}f, S^{-1}g)\\
        x/s &\longmapsto x/s
    \end{align*}
    We check the correspondence of their null sets. Observe that for every \(m/s \in \eq(S^{-1}f, S^{-1}g)\), there exists \(m' \in \eq(f,g)\) and \(s' \in S\) such that \(m/s = m'/s'\) in \(S^{-1}M\). With this in mind, we have the following equalities:
    \begin{align*}
        I_{\eq(S^{-1}f, S^{-1}g)} &= \left\{ \sum_i \frac{m_i}{s_i} \in I_{S^{-1}M}\;\middle|\; \frac{m_i}{s_i} \in \eq(S^{-1}f, S^{-1}g) \right\}\\
        &= \left\{ \sum_i \frac{m'_i}{s'} \in I_{S^{-1}M}\;\middle|\; m'_i \in \eq(f,g) \right\}\\
        &= \left< \sum_i \frac{m'_i}{1} \in I_{S^{-1}M} \;\middle|\; m'_i \in \eq(f,g) \right>_{S^{-1}B}\\
        &= I_{S^{-1}\eq(f,g)}.
    \end{align*}
    This proves that \(\Psi\) is an isomorphism.
\end{proof}

\begin{proof}[Proof of Proposition \ref{prop: localization = flat}]
    We follow the idea of the proof of \cite[Proposition 6.1]{Lorscheid1}. By Lemma \ref{localization is flat}, every localization map is flat, so it remains to prove the converse. Let \(f: B \to C\) be a flat morphism of bands and define \(S \coloneqq f^{-1}(C^\times)\). By \cite[Proposition 1.22]{Lorscheid3}, there exists a unique morphism \(f_S: S^{-1}B \to C\) making the diagram
    \begin{equation*}
        \begin{tikzcd}
            B \arrow[rr, "f"] \arrow[rd, swap, "\iota_S"] & & C \\
            & S^{-1}B \arrow[ru, swap, "f_S"]
        \end{tikzcd}
    \end{equation*}
    commute. We claim that \(f_S\) is an isomorphism.

    Since every element of a tensor product of band modules is a \emph{pure tensor}, the proof that \(f_S\) is surjective is exactly the same as in \cite[Proposition 6.1]{Lorscheid1}. Hence, we focus on the proof of injectivity. Let \(a/s\) and \(b/t\) be elements of \(S^{-1}B\) such that \(f_S(a/s) = f_S(b/t)\). By the definition of \(f_S\), this means that
    \begin{equation}\label{sonma}
        f(at) = f(bs).
    \end{equation}

    For each \(\alpha \in B\), let \(g_\alpha : B \to B\) be the morphism of \(B\)-modules defined by \(g_\alpha(x) = \alpha x\) for all \(x \in B\). Since \(f\) is flat, we have the following isomorphism of \(C\)-modules:
    \begin{align*}
        \eq(g_{at}, g_{bs}) \otimes_B C &\overset{\sim}{\longrightarrow} \eq(g_{at} \otimes_B \id_C, g_{bs} \otimes_B \id_C),\\
        x \otimes c &\longmapsto x \otimes c.
    \end{align*}
    Observing that, by \eqref{sonma}, \(g_{at} \otimes_B \id_C = g_{bs} \otimes_B \id_C\), we obtain the following isomorphism of \(C\)-modules:
    \begin{align*}
        \eq(g_{at}, g_{bs}) \otimes_B C &\overset{\sim}{\longrightarrow} C,\\
        x \otimes c &\longmapsto f(x)\, c.
    \end{align*}
    Hence, there exists \(l \in \eq(g_{at}, g_{bs})\) such that \(f(l) \in C^\times\), that is, \(l \in S\). Therefore, we have \(l a t = l b s\) and \(a/s = b/t\) in \(S^{-1}B\).

    Finally, since \(f_S\) is a flat bijective morphism of bands, Lemma~\ref{flat bijection} implies that \(f_S\) is an isomorphism.
\end{proof}

\subsection{Finite presentation of band algebras} \label{subsec: finite presentation}

    In this section, we focus on finitely presented morphisms of bands. We recall the definition used in relative algebraic geometry. 
        
    \begin{definition}[Adapted from Definition \ref{definition2.9}]\label{def: fp}
    A morphism \(f: B \rightarrow C\) of bands is of \textit{finite presentation} if for all directed systems of \(B\)-algebras \(\DD = (D_i)_{i \in \Lambda}\), the natural morphism 
    \begin{equation*}\label{canonical map}
        \Psi_{\DD} : \varinjlim_{i \in \Lambda} \Hom_B(C,D_i) \longrightarrow \Hom_B(C, \varinjlim_{i \in \Lambda} D_i)
    \end{equation*}
    is an isomorphism.
    \end{definition}

        Let \((\Lambda,\leq)\) be a directed set and let \(\DD = (D_i, g_{i,j})_{i,j \in \Lambda}\) be a directed system of \(B\)-algebras \(D_i\), with morphisms of \(B\)-algebras \(g_{i,j} : D_i \to D_j\) for every \(i \leq j\). Observe that, as in the case of blueprints \cite[Proof of Proposition 5.1]{Lorscheid1}, the colimit of \(\DD\) can be represented by the following \(B\)-algebra: Its underlying pointed monoid is the colimit of \(\DD\) as pointed monoids, that is, the quotient 
        \begin{equation}\label{colimit of algebras}
            \varinjlim \DD \coloneqq \left(\coprod_{i \in \Lambda} D_i \right)\slash \sim\;,
        \end{equation}
        where \(\sim\) is the equivalence relation defined as: For \(a_i \in D_i\) and \(b_j \in D_j\), we have \(a_i \sim b_j\) if and only if there is a \(k \in \Lambda\) such that \(i,j \leq k\) and \(g_{i,k}(a_i) = g_{j,k}(b_j)\).
        
        Let \(\iota_j: D_j \to \varinjlim \DD\) be the canonical map for every \(j \in \Lambda\). It naturally induces a morphism of semirings \(\iota_j^+ : D_j^+ \to (\varinjlim \DD)^+\). Using this morphism, we define the null set of \(\varinjlim \DD\) as  
        \begin{equation}\label{null set colimit}
            N_{\varinjlim \DD} \coloneqq  \bigcup_{j \in \Lambda} \iota_j^+(N_{D_j}).
        \end{equation}
        By definition, \(\iota_j\) is a morphism of \(B\)-algebras. 
        
        Now, let \(f: B \to C\) be a morphism of bands. Applying the above construction to the directed system \((\Hom_B(C,D_i))_{i\in \Lambda} \), we see that every morphism in \(\varinjlim_{i \in \Lambda} \Hom_B(C,D_i)\) is representable by a morphism in \(\Hom_B(C, D_i)\) for some \(j \in \Lambda\). This leads to the following explicit representation of \(\Psi_{\DD}\):
        \begin{align}\label{canonical map2}
        \begin{split}
        \Psi_{\DD} : \varinjlim_{i \in \Lambda} \Hom_B(C,D_i) &\longrightarrow \Hom_B(C, \varinjlim \DD)\\
        [\varphi_j \in \Hom_B(C,D_j)] &\longmapsto \iota_j \circ \varphi_j.
        \end{split}
        \end{align}

    \begin{remark}\label{rmk: colimit of free algebras}
        Let \(B\) be a band and consider the quotient of a free \(B\)-algebra \(D\coloneqq B[x_i]_{i \in I}\sslash \left<S\right>\). For every pair of finite subsets \(J \subseteq I\) and \(T \subseteq S\), where all formal sums in \(T\) involve only \(x_i\) with \(i \in J\), define the \(B\)-algebra
 \begin{equation*}
     D_{J,T} \coloneqq B[x_i]_{i \in J}\sslash \left<T \right>.
 \end{equation*}
 Additionally, for every such pairs of finite subsets \(J \subseteq J' \subseteq I\) and \(T \subseteq T' \subseteq S\), we have a natural morphism of \(B\)-algebras \(g_{(J,T), (J',T')} : D_{J,T} \to D_{J', T'}\). This defines a directed system of \(B\)-algebras \(\DD \coloneqq (D_{J,T})_{J,T}\) and its colimit \(\varinjlim \DD\), by the representation \eqref{colimit of algebras}, is isomorphic to \(B[x_i]_{i \in I}\sslash \left<S\right>\).
    \end{remark}

    \begin{proposition}\label{prop: characterization of fp}
    Let \(f : B \to C\) be a morphism of bands. Then, the following are equivalent:
    \begin{enumerate}
        \item[\((i)\)] \(f\) is of finite presentation;
        \item[\((ii)\)] There exists an isomorphism of \(B\)-algebras 
    \begin{equation}\label{finite presentation}
        \xi: B[x_l]_{l \in I}\sslash \left<S\right> \longrightarrow C\;,
    \end{equation}
    where \(\left<S \right>\) is the null ideal generated by \(S \subseteq B[x_l]_{l \in I}^+\), such that \(I\) and \(S\) are finite sets. 
    \end{enumerate}
    \end{proposition}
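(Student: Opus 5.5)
We follow the classical argument for rings, as adapted by O.~Lorscheid for blueprints in \cite[Proposition 5.1]{Lorscheid1}. Throughout we use the explicit colimit \eqref{colimit of algebras}, whose null set is \eqref{null set colimit}, and the explicit form \eqref{canonical map2} of \(\Psi_{\DD}\).

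For \((ii)\Rightarrow(i)\), fix a directed system \(\DD=(D_i)\) and replace \(C\) by \(B[x_l]_{l\in I}\sslash\langle S\rangle\). By the universal properties of free algebras \cite[Proposition 1.8]{Lorscheid3} and of quotients, a morphism of \(B\)-algebras \(C\to D\) is the same as a tuple \((d_l)_{l\in I}\) in \(D\) such that every \(s\in S\), evaluated at \((d_l)\), lies in \(N_D\).

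Surjectivity of \(\Psi_{\DD}\) goes as follows. Let \(\varphi\colon C\to\varinjlim\DD\) be a morphism. Since \(I\) is finite and \(\Lambda\) is directed, all the \(\varphi(x_l)\) lift to a single \(D_j\). The evaluation of each \(s\in S\) then lies in \(N_{\varinjlim\DD}=\bigcup_k\iota_k^+(N_{D_k})\). For each \(s\), this means that after passing to some index \(k_s\ge j\), the image of the evaluated sum equals an element of \(N_{D_{k_s}}\). The equality holds termwise in \(\varinjlim\DD\), so after enlarging \(k_s\) it holds in \(D_{k_s}\) itself. As \(S\) is finite, we can choose one \(k\) above all the \(k_s\), and then the lifted tuple defines a morphism \(C\to D_k\) that maps to \(\varphi\).

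Injectivity is easier. If two morphisms \(C\to D_j\) and \(C\to D_{j'}\) agree after composing to the colimit, then their values on the finitely many \(x_l\) agree in some common \(D_k\). Hence the two morphisms already agree in \(\varinjlim_i\Hom_B(C,D_i)\).

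For \((i)\Rightarrow(ii)\), first write \(C\) as a quotient \(B[x_c]_{c\in C}\sslash\langle S\rangle\) of a free algebra. We take \(S\) to be the set of all formal sums that map into \(N_C\), together with the relations of the form \(x_cx_{c'}-x_{cc'}\), \(x_{\alpha(b)}-b\) and \(x_1-1\), each encoded as a null sum using \(-\). Remark~\ref{rmk: colimit of free algebras} then gives \(C\cong\varinjlim\DD\) with \(\DD=(D_{J,T})\) and every \(D_{J,T}\) finitely presented. Applying \((i)\) to \(\id_C\), we obtain that \(\id_C\) factors as \(C\xrightarrow{\psi}D_{J,T}\xrightarrow{\iota}C\) for some finite \(J,T\). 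The same holds after composing \(\psi\) with any transition map \(D_{J,T}\to D_{J',T'}\).

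Next we show that \(\iota\circ\psi=\id_C\) forces \(\psi\circ\iota\) to agree with the identity of \(D_{J,T}\) after passing to a larger index. The two maps \(\psi\circ\iota\) and \(\id\) are both morphisms \(D_{J,T}\to D_{J,T}\), and they have the same image in \(\Hom_B(D_{J,T},C)\). The system \(\Hom_B(D_{J,T},D_{J',T'})\) has colimit \(\Hom_B(D_{J,T},C)\), because \(D_{J,T}\) is finitely presented by the first half of the proof. Injectivity of \(\Psi\) for this system therefore gives \((J',T')\) such that the two compositions into \(D_{J',T'}\) coincide. Replacing \((J,T)\) by \((J',T')\), we conclude that \(\iota\colon D_{J',T'}\to C\) is an isomorphism with inverse the transported \(\psi\).

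\textbf{Main obstacle.} The key step is the surjectivity argument in \((ii)\Rightarrow(i)\). It relies on \(N_{\varinjlim\DD}\) being the plain union in \eqref{null set colimit}, with no further generation needed, and on a formal-sum equality in the colimit being realized at a finite stage. Band null sets are not determined by the underlying pointed monoid (Remark~\ref{rmk:null-ideal-not-unique}), so one must check that \(\bigcup_j\iota_j^+(N_{D_j})\) is already a null ideal. It is, because the system is directed and any finitely many sums lie in a common \(D_k\). We would verify this first, since the whole equivalence depends on it.
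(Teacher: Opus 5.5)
\textbf{There is a gap in the last step of $(i)\Rightarrow(ii)$; it is easy to repair.}

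Your $(ii)\Rightarrow(i)$ is the paper's argument, and you are more careful than the paper about realizing membership in $N_{\varinjlim\DD}$ at a finite stage. In $(i)\Rightarrow(ii)$, everything up to the factorization $\id_C=\iota_{J,T}\circ\psi$ is fine.

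The gap is the final sentence. Injectivity of $\Psi$ for $D_{J,T}$ gives $(J',T')\ge (J,T)$ with $g\circ\psi\circ\iota_{J,T}=g$, where $g\colon D_{J,T}\to D_{J',T'}$ is the transition map. Put $\psi'=g\circ\psi$ and $\iota'=\iota_{J',T'}$. Then $\iota'\circ\psi'=\iota_{J,T}\circ\psi=\id_C$ holds. The other equation, however, only gives $\psi'\circ\iota'\circ g=g$, i.e.\ $\psi'\circ\iota'$ is the identity on the \emph{image} of $g$.

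When $J'\supsetneq J$, the generators $x_c$ with $c\in J'\setminus J$ need not lie in that image. Nothing in your argument controls $J'$, since the equality persists to every larger index. For instance, suppose no sum in $T'$ involves some $x_c$ with $c\in J'\setminus J$. Then $x_c$ is a free variable in $D_{J',T'}$, so it differs from $g(\psi(c))$ (for $C\neq 0$), yet both map to $c$. Hence $\iota'$ is not even injective, and ``$\iota'$ is an isomorphism with inverse $\psi'$'' does not follow.

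\textbf{Repair.} It uses the fact that your $S$ contains every formal sum mapping into $N_C$.
\begin{enumerate}
\item For each $c\in J'\setminus J$, choose a monomial $y_c$ in the variables $x_{c'}$, $c'\in J$, representing $\psi(c)$. Then $x_c+(-1)y_c$ maps to $c+(-c)\in N_C$, so it lies in $S$.
\item Adjoin these finitely many sums to $T'$. At the resulting index, the transported $\psi$ fixes every generator, so it is the identity, and $\iota$ becomes an isomorphism.
\end{enumerate}

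The same trick actually makes your compactness step unnecessary. Set $T''\coloneqq T\cup\{x_c+(-1)y_c \mid c\in J\}$ with $[y_c]=\psi(c)$. Then $\iota_{J,T''}$ is already an isomorphism, with inverse the transported $\psi$.

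This is essentially the paper's route. The paper starts from a presentation with $I$ minimal in order to force $J=I$, and then adjoins the relations $x_l-y_l$ with $[y_l]=\varphi([x_l])$. Your tautological presentation by all elements of $C$ avoids the minimality argument, but it works only once these extra relations for the generators are added.
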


    We follow, with some modifications, the idea of the proof of \cite[Proposition 5.1]{Lorscheid1}. We will use the following easy observation during the proof.

\begin{lemma}\label{lemma: minimal}
    Let \(B\) be a band, \(I\) an index set and \(S \subseteq (B[x_i]_{i \in I})^+\). If \(a\prod_i x_i^{e_i} \in B[x_i]_{i \in I}\) is such that \(e_n = 0\) for some \(n \in I\), then we have the following isomorphism of \(B\)-algebras
    \begin{equation*}
        B[x_i]_{i \in I}\sslash \left<x_n -  a\prod_{i \in I} x_i^{e_i}, S\right> \overset{\sim}{\longrightarrow} B[x_i]_{i \in I \setminus \{n\}}\sslash \left<S'\right>,
    \end{equation*}
    where \(S'\) is the set of formal sums of \(S\) with all \(x_n\) substituted by \(a\prod_i x_i^{e_i}\).
\end{lemma}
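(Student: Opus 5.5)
The plan is to write down mutually inverse morphisms of \(B\)-algebras using the universal properties of free algebras and of quotients. Write \(P \coloneqq a\prod_{i\in I} x_i^{e_i}\). Since \(e_n=0\), the element \(P\) lies in the image of the canonical inclusion \(B[x_i]_{i\in I\setminus\{n\}} \hookrightarrow B[x_i]_{i\in I}\). For any formal sum \(\sigma\) in \((B[x_i]_{i\in I})^+\), write \(\sigma'\) for the sum obtained by substituting \(P\) for every occurrence of \(x_n\). Then \(S'=\{\sigma' \mid \sigma\in S\}\). Let \(L\) be the left-hand band and \(R\) the right-hand band.

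First I construct \(\Phi\colon L\to R\). By \cite[Proposition 1.8]{Lorscheid3}, there is a unique \(B\)-algebra morphism \(\tilde\Phi\colon B[x_i]_{i\in I}\to R\) that sends \(x_i\mapsto [x_i]\) for \(i\neq n\) and \(x_n\mapsto [P]\). On monomials it is substitution, so \(\tilde\Phi^+(\sigma)=[\sigma']\). Consequently \(\tilde\Phi^+\) sends each element of \(S\) into \(N_R\). It also sends the generator \(x_n - P\), i.e. \(x_n+(-P)\), to \([P]+[-P]\), which lies in \(N_R\). Morphisms of bands preserve null sets, and a null ideal generated by a set is the smallest null ideal containing it. Therefore the preimage \((\tilde\Phi^+)^{-1}(N_R)\) is a null ideal containing the generators, hence it contains \(\langle x_n-P,S\rangle\). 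The universal property of quotients of bands (\cite[Proposition 1.13]{Lorscheid3}) then yields the factorization \(\Phi\).

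Next I construct \(\Psi\colon R\to L\) in the same way. The free-algebra universal property gives a morphism \(B[x_i]_{i\ne n}\to B[x_i]_{i\in I}\to L\). To see that it factors through \(R\), I must check that each \(\sigma'\) maps into \(N_L\). This is the one step that is not purely formal. In \(L\) we have \([x_n]=[P]\) because \(x_n-P\) lies in the null ideal, so \(\sim\) identifies \(x_n\) with \(P\). Since \(\sim\) is a congruence, every monomial \(c\,x_n^{k}\prod_{i\neq n}x_i^{f_i}\) is identified with its substituted version. Hence \(\sigma\) and \(\sigma'\) have the same image in \(L^+\). The image of \(\sigma\) lies in \(N_L=\pi^+(N)\) because \(\sigma\) is a generator, so the image of \(\sigma'\) lies in \(N_L\) as well.

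Finally, \(\Psi\circ\Phi\) and \(\Phi\circ\Psi\) fix the classes of the generators, since \([x_n]\mapsto[P]\mapsto[P]=[x_n]\) in \(L\). By uniqueness in the universal properties of free algebras and quotients, both composites are identities. Because both maps are morphisms of bands, the null sets correspond, and there is no bijective-but-not-iso issue as in Remark \ref{rmk:null-ideal-not-unique}.
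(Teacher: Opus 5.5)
Your proposal is correct and follows the same route as the paper. The paper's proof only says to apply the universal property of free \(B\)-algebras (Proposition 1.8 of \cite{Lorscheid3}) to both sides; you have made explicit the two factorizations through the quotients and the check that both composites fix the generators.

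Your identification of \(N_L\) with the image of the null ideal \(\langle x_n-P,S\rangle\) uses the quotient null set \(N_{B\sslash I}=\pi_I^+(I)\) of \cite{Lorscheid3}. That is the intended definition; the paper's displayed \(\pi_I^+(N_B)\) is evidently a typo.
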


\begin{proof}
    Apply \cite[Proposition 1.8]{Lorscheid1} for both sides.
\end{proof}

     \begin{proof}[Proof of Proposition {\ref{prop: characterization of fp}}]
        Suppose \(f\) satisfies \((ii)\). Let \(\DD = (D_i, g_{i,j})_{i,j \in \Lambda}\) be a directed system of \(B\)-algebras \(D_i\), with morphisms of \(B\)-algebras \(g_{i,j} : D_i \to D_j\) for every \(i \leq j\), and let \(\Psi_{\DD}\) be the canonical map \eqref{canonical map2}. First, we show that \(\Psi_{\DD}\) is injective. If \(\Psi_{\DD}([\varphi_i]) = \Psi_{\DD}([\varphi_j])\) for some \(\varphi_i \in \Hom_B(C,D_i)\) and \(\varphi_j \in \Hom_B(C,D_j)\), then, by the representation \eqref{colimit of algebras} of directed colimit of algebras, there exists \(k \geq i,j\) such that 
        \begin{equation*}
            (g_{i,k} \circ \varphi_i \circ \xi)([T_l]) = (g_{j,k} \circ \varphi_j \circ \xi) ([T_l])\text{ for every } l \in I.
        \end{equation*}
        Now, for every \(c = b\prod_{l \in I} T_l^{e_l} \in B[T_l]_{l \in I}\sslash \left<S\right>\), we have 
        \begin{equation*}
            (g_{i,k} \circ \varphi_i)(\xi(c)) = b\prod_{l \in I}(g_{i,k} \circ \varphi_i \circ \xi)([T_l])^{e_l} = b\prod_{l \in I}(g_{j,k} \circ \varphi_j \circ \xi)([T_l])^{e_l} = (g_{j,k} \circ \varphi_j)(\xi(c)).
        \end{equation*}
        Since \(\xi\) is surjective, this proves that, in \(\varinjlim_{i \in \Lambda} \Hom_B(C,D_i)\), we have
        \begin{equation*}
            [\varphi_i]=[g_{i,k} \circ \varphi_i] = [g_{j,k} \circ \varphi_j] = [\varphi_j].
        \end{equation*}

        Next, we show that \(\Psi_{\DD}\) is surjective. Let \(\varphi : C \rightarrow \varinjlim \DD\) be a morphism of \(B\)-algebras. Then, since \(I\)  and \(S\) are finite sets, there exist \(k \in \Lambda\) and a list \((y_l)_{l \in I}\) in \(D_k\) such that \(\iota_k(y_l) = (\varphi \circ \xi)[x_l]\) for every \(l \in I\), and \((\varphi^+\circ \xi^+)(S) \subseteq N_{D_k}\). Hence, by \cite[Proposition 1.8 and 1.13]{Lorscheid3}, there exists a morphism \(\psi : B[x_l]_{l \in I}\sslash \left<S\right> \to D_k\) such that \(\iota_k \circ (\psi \circ \xi^{-1}) = \varphi\), that is, \(\Psi_{\DD}([\psi \circ \xi^{-1}]) = \varphi\). Therefore, \(\Psi_{\DD}\) is a bijection.

        Conversely, suppose that \(f\) is of finite presentation. We know, by \cite[Propositions 1.8 and 1.13]{Lorscheid3}, that there exists an isomorphism of \(B\)-algebras of the form 
        \begin{equation*}
            B[x_i]_{i \in I}\sslash \left<S \right> \overset{\sim}{\longrightarrow} C.
        \end{equation*}
        Consider an isomorphism as above with \(I\) minimal and for this minimal \(I\) let \(S\) be minimal. We claim that both \(I\) and \(S\) are finite sets. Let \(\DD = (D_{J,T})_{J,T}\) be the directed system of \(B\)-algebras constructed as in Remark \ref{rmk: colimit of free algebras} for \(B[x_i]_{i \in I}\sslash \left<S\right>\). Since \(\Phi_{\DD}\) in bijective, the identity map \(\id_C\) is the image of the class of some morphism of \(B\)-algebras \(\varphi_{J,T} : C \rightarrow D_{J,T}\). That is, we have 
        \begin{equation}\label{eq: 1}
            \id_C = \iota_{J,T} \circ \varphi_{J,T},
        \end{equation}
        where \(\iota_{J,T} : D_{J,T} \to \varinjlim \DD \cong C\) is the canonical map. First, observe that \(J = I\). Indeed, if there were an \(n \in I \setminus J\), then, since \(\iota_{J,T}\) is surjective, there is an element \(a\prod_{i \in J}x_i^{e_i}\) such that \(x_n - a\prod_{i \in J}x_i^{e_i} \in \left<S\right>\), which by Lemma \ref{lemma: minimal} contradicts the minimality of \(I\). Next, notice that \eqref{eq: 1} holds for every finite subset \(T' \subseteq S\) containing \(T\), hence we may choose \(T\) containing, for every \(l \in I\), the sum of the form \(x_l - y_l\), where \(y_l \in B[x_i]_{i \in I}\) is such that \([y_l] = \varphi([x_l])\). We claim that \(\iota_{J,T}\) is an isomorphism. To prove this, observe that by the hypothesis on \(\left<T\right>\) above, we have \(\varphi([x_l]) = [x_l]\) for every \(l \in I\). Hence, \(\varphi\) and \(\iota_{I,T}\) are inverse morphisms to each other. Therefore, \(\iota_{J,T}\) is an ismorphism. 
    \end{proof}

    \begin{corollary}\label{corollary main}
    Let \(f: B \to C\) be a morphism of bands. Then, \(f\) is a flat epimorphism of finite presentation if and only if \(f\) is a finite localization map (see Definition \ref{def: localization map})
    \end{corollary}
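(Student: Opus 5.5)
The plan is to combine Proposition \ref{prop: localization = flat} and Proposition \ref{prop: characterization of fp}, and to add an epimorphism check. Throughout I use that every localization map is an epimorphism of bands. Indeed, if \(g,h\colon S^{-1}B\to D\) agree after composing with \(\iota_S\), then for every \(a/s\) we have \(g(a/s)\,g(s/1)=g(a/1)=h(a/1)=h(a/s)\,h(s/1)\). Here \(g(s/1)=h(s/1)\) is a unit, so cancelling gives \(g(a/s)=h(a/s)\).

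For the direction ``\(\Leftarrow\)'', let \(f\) be a finite localization map, so \(C\cong S^{-1}B\) with \(S\) generated by finitely many elements \(s_1,\dots,s_n\). Then \(S^{-1}B\cong B[s^{-1}]\) with \(s=s_1\cdots s_n\).
\begin{enumerate}
\item Flatness follows from Lemma \ref{localization is flat}.
\item The epimorphism property follows from the remark above.
\item For finite presentation, I write \(B[s^{-1}]\cong B[x]\sslash\langle sx-1\rangle\), where \(sx-1\) means the formal sum \(sx+(-1)\). This is a quotient of a free algebra on one variable by a single relation, so Proposition \ref{prop: characterization of fp} applies.
\end{enumerate}
To verify the isomorphism \(B[s^{-1}]\cong B[x]\sslash\langle sx-1\rangle\), I compare universal properties. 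A \(B\)-algebra morphism from \(B[x]\sslash\langle sx-1\rangle\) to \(D\) is the same as an element \(y\in D\) with \(sy+(-1)\in N_D\). This condition forces \(sy=1\), because additive inverses in a band are unique. So such morphisms correspond exactly to the morphisms \(B[s^{-1}]\to D\), using the universal property of localization \cite[Proposition 1.22]{Lorscheid3}.

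For the direction ``\(\Rightarrow\)'', Proposition \ref{prop: localization = flat} gives \(C\cong S^{-1}B\) with \(S=f^{-1}(C^\times)\). The epimorphism hypothesis is automatic here, so the real task is to show that finite presentation forces \(S\) to be generated, up to saturation, by finitely many elements. I write \(S^{-1}B\) as a directed colimit of the finite localizations \(B[s^{-1}]\) for \(s\in S\), ordered by divisibility. The index set is directed because \(B[s^{-1}]\) and \(B[t^{-1}]\) both map to \(B[(st)^{-1}]\). The colimit description \eqref{colimit of algebras}, with null set \eqref{null set colimit}, shows that this colimit is \(S^{-1}B\), since both the underlying monoid and the null set of \(S^{-1}B\) are unions of the images of the pieces.

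By finite presentation, \(\id_C\) factors through some \(B[s^{-1}]\to C\). The composite \(B[s^{-1}]\to S^{-1}B\to B[s^{-1}]\) is then a \(B\)-algebra endomorphism of a localization of \(B\), so it is the identity. Hence \(S^{-1}B\cong B[s^{-1}]\) as \(B\)-algebras, and \(f\) is a finite localization map.

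I expect the main obstacle to be the careful identification of the directed colimit with \(S^{-1}B\), in particular the compatibility of null sets. The null set of \(S^{-1}B\) is the ideal generated by \(\iota_S^+(N_B)\). I need every element of it to already lie in the image of some \(N_{B[s^{-1}]}\). I would handle this by clearing the finitely many denominators that occur in any given null sum into a single \(s\in S\).
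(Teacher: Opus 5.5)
Your proof is correct. For ``$\Leftarrow$'' you follow the paper, except that you invert the single product $s=s_1\cdots s_n$ through $B[x]\sslash\langle sx-1\rangle$ rather than using one variable and one relation per generator. You also supply two things the paper only asserts: that localization maps are epimorphisms, which Lemma~\ref{localization is flat} does not cover, and why the presentation is really isomorphic to the localization.

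For ``$\Rightarrow$'' your route is genuinely different. The paper uses Proposition~\ref{prop: characterization of fp} to write $C\cong B[x_1,\dots,x_n]\sslash\langle T\rangle$ with $T$ finite, and then reads off finite generation of $S=f^{-1}(C^\times)$ from the generators $x_i$. You never pass through a presentation. Instead you apply the defining colimit property of finite presentation to the directed system $(B[s^{-1}])_{s\in S}$ with colimit $S^{-1}B\cong C$, lift the identity to some $B[s^{-1}]$, and finish by rigidity of localizations.

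What your route buys:
\begin{itemize}
\item It needs only surjectivity of $\Psi_{\DD}$ and the universal property of localization.
\item It yields an honest isomorphism $B[s^{-1}]\cong C$ in $\Band$, null sets included, since the lift is a band morphism.
\item Generators of $C$ alone only give surjectivity of some $B[s^{-1}]\to C$. The finiteness of $T$ has to enter to get injectivity, and the paper's sketch does not say where it does.
\item The paper also claims that the saturated set $f^{-1}(C^\times)$ is itself finitely generated. This is more than the statement needs and can fail even for finite localizations. Your argument only produces some $s\in S$, which is exactly what Definition~\ref{def: localization map} asks for.
\end{itemize}

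Your null-set check for $\varinjlim_s B[s^{-1}]\cong S^{-1}B$ is the right one. Every element of $N_{S^{-1}B}$ is a finite sum $\sum_k (c_k/u_k)\,\iota_S^+(\sigma_k)$ with $\sigma_k\in N_B$, so it already comes from $N_{B[u^{-1}]}$ for $u=\prod_k u_k$.
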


    \begin{proof}
    Let \(f: B \to C\) be a finite localization map. Then, there exists a finitely generated multiplicative subset \(S = \left<s_1, \cdots, s_n \right>\) of \(B\) and an isomorphism \(\phi: S^{-1}B \to C\) such that \(\phi \circ \iota_S = f\). Since the following map
    \begin{equation*}
        \varphi: S^{-1}B \overset{\sim}{\longrightarrow} B[x_1, \cdots, x_n]\sslash \left<s_1x_1 - 1, \cdots, s_nx_n - 1 \right>, 
    \end{equation*}
    is an isomorphism, and every localization map is a flat epimorphism by Lemma \ref{localization is flat}, we conclude that \(f\) is a flat epimorphism of finite presentation.

    Conversely, suppose that \(f: B \to C\) is a flat epimorphism of finite presentation. Since \(f\) is of finite presentation, we may assume that \(C = B[x_1, \cdots, x_n]\sslash \left<T\right>\), where \(T\) is a finite set. Now, since \(f\) is flat, there exist a multiplicative subset  \(S \subseteq B\) and an isomorphism \(\phi: S^{-1}B \to C\) making the following diagram commutative:
    \begin{equation*}
        \begin{tikzcd}
            B \arrow[r, "f"] \arrow[d, "\iota_S"] & B[x_1, \cdots, x_n]\sslash \left<T \right>\\
            S^{-1}B \arrow[ru, "\phi"].
        \end{tikzcd}
    \end{equation*}
    By the proof of Proposition \ref{prop: localization = flat}, we know that
    \begin{align*}
        S &= f^{-1}\left\{\left( B[x_1, \cdots, x_n]\sslash \left<T\right> \right)^\times\right\}\\
        &= \left<  f^{-1}([x_i]) \;\middle|\; [x_i] \in \left(B[x_1, \cdots, x_n]\sslash \left<T\right>\right)^\times \right>.
    \end{align*}
    Therefore, \(S\) is finitely generated and \(f\) is a finite localization map.
   \end{proof}

\subsection{Equivalence between relative and geometric band schemes}

The proof of Theorem \ref{main theorem 2} follows the same ideas of the proof of \cite[Theorem 36]{Vezzani}.

\begin{proposition}\label{lemma: zariski covering isomorphism}
    Let \(B\) be a band and \(\{\varphi_i: \Spec B_i \to \Spec B\}_{i \in I}\) be a Zariski covering. Then there exists a \(j \in I\) such that \(\varphi_j: \Spec B_j \to \Spec B\) is an isomorphism.
\end{proposition}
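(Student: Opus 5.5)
By Corollary \ref{corollary main}, each $\varphi_i$ comes from a finite localization map. So we may write $B_i \cong S_i^{-1}B$ with $S_i$ finitely generated by $s_{i,1},\dots,s_{i,n_i}$. Putting $h_i \coloneqq s_{i,1}\cdots s_{i,n_i}$, we get $S_i^{-1}B \cong B[h_i^{-1}]$, since inverting a finite set of generators is the same as inverting their product. The plan is then in two steps. First, show that the fpqc condition (conservativity of $\Phi = \prod_{j\in J}(-\otimes_B B_j)$ for some finite $J\subseteq I$) forces some $h_j$ to be a unit of $B$. Second, conclude that $\iota_{S_j}\colon B\to B[h_j^{-1}]$ is an isomorphism, so that $\varphi_j$ is an isomorphism.

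For the first step I argue by contradiction. Suppose no $h_j$ with $j\in J$ is a unit, so every $h_j$ lies in the maximal $m$-ideal $\mathfrak{m}=B\setminus B^\times$ of Remark \ref{rmk: band are local}. I want a morphism of $B$-modules that is not an isomorphism but becomes one after localizing at each $h_j$. The natural test object is the quotient by the strict submodule $\mathfrak{m}$ (Remark \ref{inclusion strict}, Definition \ref{def:quotient submod}), namely $k\coloneqq B/\mathfrak{m}$, together with the zero morphism $0\to k$. In $k$ every element of $\mathfrak{m}$ acts by sending everything to $*$. By Lemma \ref{localization tensor}, $k\otimes_B B_j\cong S_j^{-1}k$. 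In $S_j^{-1}k$ we have $x/s = h_j x/(h_j s)$, and $h_j x = *$, so $S_j^{-1}k=0$. Hence $\Phi(0\to k)$ is the identity of $0$ in each factor, which is an isomorphism. Conservativity then forces $k=0$. But $k\neq 0$: the class of $1$ differs from $*$ because $1\notin \mathfrak{m}$, assuming $B$ is not the zero band. This contradiction shows that some $h_j$ is a unit. If $B=0$, every $B_i$ is $0$ and the statement is trivial.

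For the second step, once $h_j\in B^\times$, every element of $S_j$ is a unit. The map $B\to S_j^{-1}B$ is then bijective, with inverse $b/s\mapsto s^{-1}b$. The null sets also agree: $N_{S_j^{-1}B}$ is generated by $\iota_{S_j}^+(N_B)$ as an ideal, and multiplying by $s^{-1}\in B$ keeps us inside $N_B$. Alternatively, we can just invoke the universal property of localization (\cite[Proposition 1.22]{Lorscheid3}) to get an inverse morphism of bands. So $\varphi_j$, being the composite with an isomorphism, is an isomorphism.

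I expect the main obstacle to be the first step, specifically verifying that $B/\mathfrak{m}$ is a genuine $B$-module whose localizations vanish compatibly with the null-set structure, and that the definition of conservativity is correctly applied to the zero morphism. If the quotient module causes trouble with null sets, my fallback is to use the geometric fact from Remark \ref{rmk: band are local}: the point $\mathfrak{m}\in\Spec^{\mathrm{geo}}B$ lies in no $U_{h_j}$ unless some $h_j$ is a unit. I would then show, via stalks and localizations, that conservativity implies $\{U_{h_j}\}_{j\in J}$ covers $\Spec^{\mathrm{geo}} B$.
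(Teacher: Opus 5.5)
Your proposal is correct and is essentially the paper's own argument. You reduce to $B_j\cong B[h_j^{-1}]$ via Corollary~\ref{corollary main}, and then, if no $h_j$ is a unit, the module $B/(B\setminus B^\times)$ is killed by every $-\otimes_B B_j$, which contradicts conservativity; testing with $0\to k$ and computing through Lemma~\ref{localization tensor}, rather than directly in the tensor product, is only a cosmetic variant. One small correction to your aside: for $B=0$ the empty family is itself a Zariski covering, so that degenerate case is a defect of the statement (which the paper also silently excludes, since $B\setminus B^\times$ is then not a submodule) rather than being trivially true.
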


\begin{proof}
    For every \(i \in I\), let \(f_i: B \to B_i\) be the map on \(\Comm(\BMod_{\FF_1^\pm})\) corresponding to \(\varphi_i\). Let \(J \subseteq I\) be a finite subset such that the functor \(\prod_{j \in J} (- \otimes_B B_j)\) is conservative. By definition of Zariski covering, each \(f_j\) is a flat epimorphisms of finite presentation, hence, by Corollary \ref{corollary main}, we have \(B_j \cong B[h_j^{-1}]\) for some \(h_j \in B\) for every \(j \in J\). Suppose that \(h_j \not\in B^\times\) for every \(j \in J\), and consider the \(B\)-module \(M\coloneqq  B/(B\setminus B^\times)\) where \(B\setminus B^\times\) is the \(B\)-submodule of non-invertible elements of \(B\). Then, for every \(j \in J\), we have an isomorphism \(M \otimes_B B_j \cong 0\) since 
    \begin{equation*}
        [b] \otimes \frac{c}{h_j} = [b] \otimes \frac{ch_j}{h_j^2} = [h_j]\otimes \frac{bc}{h_j^2} = 0.
    \end{equation*}
    This implies that \(M \to 0\) is an isomorphism, hence \(M = 0\) and \(B^\times = \emptyset\), which is not possible since \(B^\times\) always contains \(1\). Therefore, there exists a \(j \in J\) such that \(h_j \in B^\times\), which implies that the morphism \(f_j: B \to B_j\) is an isomorphism.
\end{proof}

We already know from the results of Section~\ref{sec:Modules over Bands} that \(\Aff_{\BMod_{\FF_1^\pm}}\) is equivalent to \(\BAff^{\text{geom}}\). Proposition \ref{lemma: zariski covering isomorphism} implies a correspondece between \emph{geometrical} Zariski coverings and \emph{relative} Zariski coverings. 

\begin{corollary}
    The Zariski topology on \(\BAff^{\text{geom}}\) in the sense of Definition~\ref{Zariski topology geom} is equivalent to the Zariski topology on \(\Aff_{\BMod_{\FF_1^\pm}}\) in the sense of Definition~\ref{definition2.10}.
\end{corollary}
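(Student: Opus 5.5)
The plan is to compare the two notions of covering family on the common category \(\Aff_{\BMod_{\FF_1^\pm}} \simeq \BAff^{\mathrm{geom}}\) and show they coincide. The identification comes from Theorem \ref{thm:eqofcategories} together with the anti-equivalence \(\Spec:\Band\to\BAff^{\mathrm{geo}}\). Since both topologies are Grothendieck pretopologies defined by explicit covering families, it suffices to prove two things. First, the morphisms used in each are the same. Second, a family is a covering in one sense exactly when it is a covering in the other.

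\textbf{Step 1: open immersions.} By Corollary \ref{corollary main}, a morphism \(B\to C\) defines a relative Zariski open immersion if and only if it is a finite localization map \(B\to S^{-1}B\) with \(S=\langle s_1,\dots,s_n\rangle\). Such an \(S^{-1}B\) equals \(B[h^{-1}]\) for \(h=s_1\cdots s_n\), so \(\Spec B[h^{-1}]\to\Spec B\) is the geometric open immersion onto \(U_h\).

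Conversely, let \(\Spec C\to\Spec B\) be a geometric open immersion with affine source. Its image is an open set \(U\), and \(U\) is covered by principal opens \(U_h\). Applying Remark \ref{rmk: band are local} to \(\Spec C\cong U\), a single principal open of \(\Spec C\) already covers it. I then want to argue that \(U=U_h\) for a single \(h\in B\), so that \(C\cong B[h^{-1}]\). This is the step I expect to be the main obstacle. Concretely, \(U\) has a unique closed point (the maximal ideal of \(C\)), and any \(U_h\subseteq U\) containing that point must be all of \(U\), because \(U_h\) is stable under generization inside \(U\). I would try this argument first. If needed, I would fall back on the description of open immersions in Proposition \ref{lemma 34} and the correspondence of primes of \(S^{-1}B\) with primes of \(B\) avoiding \(S\).

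\textbf{Step 2: coverings.} Suppose a relative Zariski covering \(\{\Spec B_i\to\Spec B\}\) is given. Proposition \ref{lemma: zariski covering isomorphism} shows that some member is an isomorphism, so the family is jointly surjective and hence a geometric Zariski covering.

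Conversely, suppose a geometric covering \(\{\Spec B[h_i^{-1}]\to\Spec B\}\) is given. The maximal \(m\)-ideal \(B\setminus B^\times\) lies in some \(U_{h_j}\), which forces \(h_j\in B^\times\), so \(B\to B[h_j^{-1}]\) is an isomorphism. Taking \(J=\{j\}\), the functor \(-\otimes_B B_j\) is an equivalence and in particular conservative. Each member is flat by Lemma \ref{localization is flat}, and is a flat epimorphism of finite presentation by Corollary \ref{corollary main}, so the family is an fpqc covering consisting of Zariski open immersions. Hence the covering families agree, and the two pretopologies are the same.
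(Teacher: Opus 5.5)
Your proposal is correct and follows the route the paper indicates: relative Zariski open immersions are principal localizations by Corollary~\ref{corollary main}, and Proposition~\ref{lemma: zariski covering isomorphism} together with the locality of bands (Remark~\ref{rmk: band are local}) matches the covering families in both directions. The one difference is that you prove directly, via the unique closed point of \(\Spec C\), that an affine geometric open immersion into \(\Spec B\) has image a single principal open \(U_h\) (so \(C \cong B[h^{-1}]\)), whereas the paper takes this from \cite[Section~2.2.3]{Lorscheid3}; your closed-point argument is already complete, so the fallback you mention is not needed.
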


\begin{proof}
     Follows from  \cite[Section~2.2.3]{Lorscheid3}.
\end{proof}

From now on, we denote the equivalent sites \(\Aff_{\BMod_{\FF_1^\pm}}\) and  \(\BAff^{\text{geom}}\) simply by \(\BAff\). 

\begin{lemma}\label{proposition 35}
    Let \(\mathcal{F}\) be a sheaf in \(\Sh(\Aff_{\BMod_{\FF_1^\pm}})\), and \(h_{\Spec B}\) an affine scheme in \(\Aff_{\BMod_{\FF_1^\pm}}\). Then, a morphism \(f: \mathcal{F} \to h_{\Spec B}\) is an open immersion if and only if \(\mathcal{F}\) is isomorphic over \(h_{\Spec B}\) to \(h_U \coloneqq \Hom(-, U)\), where \(U\) is an open geometrical band subscheme of \(\Spec B\).
\end{lemma}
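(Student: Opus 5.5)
We prove the two directions separately, following the argument of \cite[Proposition 35]{Vezzani} for monoid schemes. Throughout we identify $\Aff_{\BMod_{\FF_1^\pm}}$ with $\BAff$ via the equivalence induced by Theorem~\ref{thm:eqofcategories}, and we use that the two Zariski topologies agree (the corollary above). This means the Yoneda functor $h_{-}$ can be extended to all geometric band schemes by $X \mapsto \Hom_{\BSch^{\mathrm{geo}}}(\Spec(-), X)$. By Proposition~\ref{proposition 9}, this extension lands in $\Sh(\BAff)$.

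\emph{($\Leftarrow$).} Let $U \subseteq \Spec B$ be an open subscheme. Since the principal opens $U_h$ form a basis, we can cover $U$ by $U_{h_i} \cong \Spec B[h_i^{-1}]$ with $i \in I$.
\begin{itemize}
\item Each $B \to B[h_i^{-1}]$ is a finite localization map. By Corollary~\ref{corollary main}, it is therefore a flat epimorphism of finite presentation, so $\Spec B[h_i^{-1}] \to \Spec B$ is a Zariski open immersion in the sense of Definition~\ref{definition2.9}.
\item The morphism $h_U \to h_{\Spec B}$ is a monomorphism, because $U \to \Spec B$ is an open immersion of band spaces and hence a monomorphism of band schemes.
\item Its image is the image of $\coprod_i h_{\Spec B[h_i^{-1}]} \to h_{\Spec B}$. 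To see this, take a map $\Spec C \to U$. Pulling back the cover $\{U_{h_i}\}$ gives an open cover of $\Spec C$ by principal opens. By Remark~\ref{rmk: band are local}, one of these opens is all of $\Spec C$. Hence every map $\Spec C \to U$ factors through some $U_{h_i}$.
\end{itemize}
So $h_U$ is a Zariski open of $h_{\Spec B}$. To get the full condition of Definition~\ref{definition2.12}(2), take any affine $\Spec C \to \Spec B$. The pullback $h_U \times_{h_{\Spec B}} h_{\Spec C}$ equals $h_{U \times_{\Spec B} \Spec C}$, and $U \times_{\Spec B} \Spec C$ is an open subscheme of $\Spec C$ by Proposition~\ref{lemma 34}. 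The argument above then applies to it.

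\emph{($\Rightarrow$).} Let $f: \mathcal F \to h_{\Spec B}$ be an open immersion. Applying the definition to the identity of $\Spec B$ shows that $\mathcal F \to h_{\Spec B}$ is a monomorphism whose image is the image of $\coprod_i h_{\Spec B_i} \to h_{\Spec B}$, for Zariski open immersions $B \to B_i$. By Corollary~\ref{corollary main}, $B_i \cong B[h_i^{-1}]$ for some $h_i \in B$. Set
\[
U \coloneqq \bigcup_i U_{h_i} \subseteq \Spec B .
\]
By the first direction, $h_U$ is a subsheaf of $h_{\Spec B}$. Both $\mathcal F$ and $h_U$ are subsheaves of $h_{\Spec B}$, so it suffices to show they have the same sections on every affine $\Spec C$.

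\emph{Main obstacle.} The key point is that sheaf images in this site can be computed without sheafification. A section of the image sheaf over $\Spec C$ is, by definition, locally in the presheaf image. Take a Zariski covering $\{\Spec C_k \to \Spec C\}$ witnessing this. By Proposition~\ref{lemma: zariski covering isomorphism}, one of its members $\Spec C_j \to \Spec C$ is an isomorphism. Hence the section already lies in the presheaf image, i.e.\ it factors through some $\Spec B[h_i^{-1}]$.

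It follows that the sections of $\mathcal F$ over $\Spec C$ are exactly the maps $\Spec C \to \Spec B$ factoring through some $U_{h_i}$. By the argument in ($\Leftarrow$), these are exactly $h_U(\Spec C)$. Since $\mathcal F \to h_{\Spec B}$ is a monomorphism, $\mathcal F \cong h_U$ over $h_{\Spec B}$, which completes the proof.
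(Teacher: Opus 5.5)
Your proposal is correct and follows the same route as the paper, which simply transports the argument of \cite[Proposition 35]{Vezzani}. In both, Zariski open immersions of affines are principal localizations by Corollary~\ref{corollary main}, the locality of bands (Remark~\ref{rmk: band are local}) identifies the image of $\coprod_i h_{\Spec B[h_i^{-1}]}\to h_{\Spec B}$ with $h_U$, and Proposition~\ref{lemma 34} handles base change. Your separate sheafification argument in ($\Rightarrow$) is harmless but redundant, since your ($\Leftarrow$) computation already shows that the presheaf image equals the sheaf $h_U$.
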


\begin{proof}
    Same proof as \cite[Proposition 35]{Vezzani}.
\end{proof}

\begin{theorem}\label{thm:bandschemes=relativeschemes}
    The category of geometric band schemes is equivalent to the category of schemes relative to the cosmos \(\BMod_{\FF_1^\pm}\).
\end{theorem}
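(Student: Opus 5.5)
We follow the strategy of \cite[Theorem 36]{Vezzani} and define a functor
\[
\Phi\colon \BSch^{\mathrm{geo}} \longrightarrow \Sh(\BAff), \qquad X \longmapsto h_X \coloneqq \Hom_{\BSch^{\mathrm{geo}}}(-,X)\big|_{\BAff}.
\]
We then show that \(\Phi\) is fully faithful and that its essential image is exactly \(\Sch^{\mathrm{rel}}_{\BMod_{\FF_1^\pm}}\). The plan has four steps.

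\emph{Step 1: \(h_X\) is a sheaf.} Since the Zariski topology of geometric band schemes is subcanonical (Proposition~\ref{proposition 9}), and the two Zariski topologies on \(\BAff\) coincide (the preceding Corollary), \(h_X\) is a sheaf on \(\BAff\).

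\emph{Step 2: \(\Phi\) is fully faithful.} Faithfulness and fullness follow from the density of affine band schemes (Proposition~\ref{proposition 9}). Every \(X\) is the coequalizer
\[
\coprod_{i,j} U_{ij} \rightrightarrows \coprod_i U_i \longrightarrow X
\]
for an affine open cover \(\{U_i\}\), and each \(U_{ij}\) is itself covered by affines. A morphism \(h_X \to h_Y\) therefore yields compatible morphisms \(U_i \to Y\), by Yoneda on the affines. These glue to a unique morphism \(X \to Y\), using that morphisms of band spaces glue along open covers.

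\emph{Step 3: \(h_X\) is a relative scheme.} Take an affine open cover \(\{U_i = \Spec B_i \to X\}\). The morphism \(\coprod h_{U_i} \to h_X\) is an epimorphism of sheaves: any \(\Spec C \to X\) becomes locally affine-factoring after pulling back the cover, and this pullback is a Zariski covering of \(\Spec C\). To see that each \(h_{U_i} \to h_X\) is a Zariski open immersion, pull back along \(h_{\Spec C} \to h_X\). The fibre product \(U_i \times_X \Spec C\) is an open subscheme of \(\Spec C\), since open immersions are stable under base change (Proposition~\ref{lemma 34}). Finite limits are preserved by \(\Phi\), because \(\BSch^{\mathrm{geo}}\) has fibre products and Yoneda preserves limits. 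Lemma~\ref{proposition 35} then identifies this fibre product with a Zariski open of \(h_{\Spec C}\).

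\emph{Step 4: essential surjectivity.} This is the main obstacle. Let \(F\) be a relative scheme with covering \(p\colon \coprod h_{X_i} \to F\), where \(X_i = \Spec B_i\). The fibre products \(h_{X_i} \times_F h_{X_j}\) are open subfunctors of \(h_{X_i}\), being base changes of open immersions. By Lemma~\ref{proposition 35}, applied affine-locally and then glued, they are of the form \(h_{U_{ij}}\) for geometric open subschemes \(U_{ij} \subseteq X_i\). The cocycle conditions hold automatically, since these are fibre products inside \(F\).

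Next we glue the geometric affine schemes \(X_i\) along the \(U_{ij}\), forming the colimit in band spaces (Proposition~\ref{proposition 5}). The result \(X\) is a geometric band scheme, because the \(U_{ij}\to X_i\) are open immersions and gluing of locally affine band spaces stays locally affine. Applying \(\Phi\) and using Step 3, \(h_X\) is the coequalizer of \(\coprod h_{U_{ij}} \rightrightarrows \coprod h_{X_i}\) in sheaves. Since \(p\) is an epimorphism of sheaves, \(F\) is also this coequalizer, and therefore \(F \cong h_X\).

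The delicate points all sit in Step 4. First, one must check that \(\Phi\) sends this gluing colimit to the sheaf-theoretic coequalizer. This uses that the Zariski coverings on both sides agree (Proposition~\ref{lemma: zariski covering isomorphism}: relative coverings of an affine contain an isomorphism, matching the locality of bands). Second, one must show that \(h_{X_i}\times_F h_{X_j}\) is representable by a possibly non-affine open. This is handled by covering it by the affine opens \(U_h\) and invoking Lemma~\ref{proposition 35} on each piece.
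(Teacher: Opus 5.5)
Your proposal is correct and is essentially the paper's own argument. The paper runs the proof of \cite[Proposition 36]{Vezzani} verbatim, substituting Proposition~\ref{proposition 5}, Proposition~\ref{proposition 9}, Proposition~\ref{lemma 34} and Lemma~\ref{proposition 35} for their monoid-scheme counterparts, and you use exactly these ingredients in the same roles (sheaf via subcanonicity, full faithfulness via density, open immersions via base change, gluing via cocompleteness of band spaces); the only small redundancy is that, since $X_i$ is affine, Lemma~\ref{proposition 35} already gives $h_{X_i}\times_F h_{X_j}\cong h_{U_{ij}}$ directly, with no separate affine-local covering step.
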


\begin{proof}
    The proof is verbatim the same as that of \cite[Proposition 36]{Vezzani}, except by the following modifications: 
    \begin{itemize}
        \item \cite[Proposition 5]{Vezzani} is replaced by Proposition \ref{proposition 5}.
        \item \cite[Proposition 9]{Vezzani} is replaced by Proposition \ref{proposition 9}.
        \item \cite[Lemma 34]{Vezzani} is replaced by Proposition \ref{lemma 34}.
        \item \cite[Definition 17]{Vezzani} is replaced by Proposition \ref{definition2.12}.
        \item \cite[Proposition 35]{Vezzani} is replaced by Proposition \ref{proposition 35}.
    \end{itemize}
\end{proof}

%% file: HyperringasBand.tex
\section{Hyperring as Bands}\label{sec: hyperring schemes}

In this section, we first characterize hyperrings within the category of bands. We study the relation between the algebraic theory of hyperrings and affine hyperring schemes to the setting of bands. In particular, we show that hyperideals correspond to \(k\)-ideals of bands, which leads to fundamental differences between hyperring schemes and band schemes.

 \subsection{Characterization of hyperrings as bands}

\begin{proposition}[Hyperring as bands]\label{hyperrings as bands}
    The category of hyperrings is equivalent to the full subcategory of bands consisting of those bands \(B\) that satisfy the following conditions. Let \(c_i, d_j\) be nonzero elements of \(B\):
    \begin{enumerate}
        \item[\((H1)\)] For every finite sequence \(x_1, \cdots, x_n \in B\), there exists some \(y \in B\) such that \(x_1 + \cdots + x_n + y \in N_B\).
        \item[\((H2)\)] If \(y \in B\) satisfies \(x + (-x) + y \in N_B\) for every \(x \in B\), then \(y = 0\).
        \item[\((H3)\)] For every sequence \(x_1, \cdots, x_n, z \in B\), if \(\mu \in B\) is such that \(zx_1 + \cdots + zx_n + \mu \in N_B\), then there exists \(\phi \in B\) such that \(\mu = z\phi\) and \(x_1 +  \cdots + x_n + \phi \in N_B\).
        \item[\((H4)\)] \(\sum_i c_i + \sum_j d_j \in N_B\) if and only if there exists \(\alpha \in B\) such that \(\sum_i c_i + \alpha \in N_B\) and \(\sum_j d_j - \alpha \in N_B\).
    \end{enumerate}
\end{proposition}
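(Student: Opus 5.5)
The plan is to use the fully faithful functor \(\mathbf{HypRings}\to\Band\) of Example~\ref{Ex: hyperring as band}, which sends \(H\) to \((H,N_H)\) with \(N_H=\{\sum_i a_i \mid 0\in\op_i a_i\}\). Because this functor is fully faithful, it is enough to show two things. First, its essential image is contained in the full subcategory of bands satisfying \((H1)\)–\((H4)\). Second, every band satisfying \((H1)\)–\((H4)\) is isomorphic to \((H,N_H)\) for some hyperring \(H\).

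\textbf{Image satisfies the axioms.} Take \(B=(H,N_H)\).
\begin{itemize}
\item \((H1)\): the set \(\op_i x_i\) is nonempty. Choose \(w\) in it. Hyperring axiom (5) gives \(0\in w\boxplus(-w)\), so \(y=-w\) works.
\item \((H2)\): suppose \(0\in x\boxplus(-x)\boxplus y\) for every \(x\). Taking \(x=0\) gives \(0\in y\), i.e. \(y=0\).
\item \((H3)\): the sum \(zx_1+\cdots+zx_n+\mu\) is null, so \(-\mu\in z(\op_i x_i)\) by distributivity and reversibility. Write \(-\mu=zw\) with \(w\in\op_i x_i\), and put \(\phi=-w\). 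Then \(\mu=z\phi\) and \(\sum_i x_i+\phi\in N_H\).
\item \((H4)\): this is associativity of \(\boxplus\) on subsets. We have \(0\in(\op c_i)\boxplus(\op d_j)\) iff there is \(\alpha\in\op c_i\) with \(-\alpha\in\op d_j\), using reversibility.
\end{itemize}

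\textbf{Building a hyperring from a band.} Conversely, let \(B\) satisfy \((H1)\)–\((H4)\). Define
\[ x\boxplus y\coloneqq\{z\in B \mid x+y+(-z)\in N_B\}. \]
I would check the hyperring axioms in the following order.
\begin{itemize}
\item \(x\boxplus y\) is nonempty: by \((H1)\) there is \(u\) with \(x+y+u\in N_B\), so \(-u\in x\boxplus y\). This uses \(-(-u)=u\), which follows from uniqueness of negatives.
\item Commutativity is immediate.
\item \(x\boxplus 0=\{x\}\): the sum \(x+0+(-z)=x+(-z)\) is null iff \(z=x\), by uniqueness of negatives.
\item \(0\) is the unique identity: if \(e\) satisfies \(x\boxplus e=\{x\}\) for all \(x\), then \(x+e+(-x)\in N_B\) for all \(x\). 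Applying \((H2)\) with \(y=e\) gives \(e=0\).
\item Inverses: \(0\in x\boxplus y\) iff \(x+y\in N_B\) iff \(y=-x\).
\item Reversibility: both \(x\in y\boxplus z\) and \(z\in x\boxplus(-y)\) reduce to \(y+z+(-x)\in N_B\).
\item Associativity: \(w\in(x\boxplus y)\boxplus z\) iff there is \(\alpha\) with \(x+y+(-\alpha)\in N_B\) and \(\alpha+z+(-w)\in N_B\). By \((H4)\) this is equivalent to \(x+y+z+(-w)\in N_B\), which is symmetric in \(x,y,z\).
\item Distributivity: \(a(x\boxplus y)\subseteq ax\boxplus ay\) holds because \(N_B\) is an ideal. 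The reverse inclusion comes from \((H3)\) with \(z=a\): if \(ax+ay+(-\mu)\in N_B\), then \(-\mu=a\phi\) with \(x+y+\phi\in N_B\), so \(\mu=a(-\phi)\) and \(-\phi\in x\boxplus y\).
\item \(a\cdot 0=0\) holds in any pointed monoid.
\end{itemize}

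\textbf{The two constructions are inverse.} It remains to show \(N_B=N_H\) for the hyperring \(H\) just built, i.e. \(\sum_{i=1}^n a_i\in N_B\) iff \(0\in\op_i a_i\). I would prove this by induction on \(n\). Zero summands can be dropped on both sides, since \(0\) is the empty sum in \(B^+\) and \(a\boxplus 0=\{a\}\). The cases \(n\le 2\) hold by the definition of \(\boxplus\) and by the inverse property. For the inductive step, split a null sum as \(\sum_{i<n}a_i+a_n\) and apply \((H4)\) to get \(\alpha\) with \(\sum_{i<n}a_i+\alpha\in N_B\) and \(a_n+(-\alpha)\in N_B\). The second condition forces \(\alpha=-a_n\). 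By the induction hypothesis, the first condition says \(-a_n\in\op_{i<n}a_i\), which is equivalent to \(0\in\op_i a_i\). Applying \((H4)\) to \(\sum_{i<n}a_i+\alpha\) in the same way shows \(\op_{i<n}a_i=\{w \mid \sum_{i<n}a_i+(-w)\in N_B\}\), which is what makes the induction run. Morphisms need no further work: full faithfulness is already part of Example~\ref{Ex: hyperring as band}.

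\textbf{Main obstacle.} The most delicate step is this identification of \(N_B\) with \(N_H\), together with the bookkeeping in \((H4)\). That axiom is stated only for nonzero \(c_i,d_j\), so every use of it requires first stripping zeros or treating them as a separate case. The case where \(\alpha=0\) also has to be handled, since then \(\sum c_i\) and \(\sum d_j\) are each null.

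A smaller issue is the hyperring axiom \(0\neq 1\). The zero band satisfies \((H1)\)–\((H4)\) but is not a hyperring, so the statement must either exclude it or allow the zero hyperring. I would make this convention explicit at the start of the proof.
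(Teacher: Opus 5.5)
Your proposal follows the paper's route. It uses the same hyperaddition \(x\boxplus y=\{z\mid x+y+(-z)\in N_B\}\), and applies (H1), (H2), (H3), (H4) to nonemptiness, uniqueness of the identity, distributivity and associativity just as the paper does. It is more complete than the paper's proof in two respects:
\begin{itemize}
\item The paper asserts without argument that hyperrings satisfy (H1)--(H4).
\item The paper never checks that the band of the constructed hyperring has null set \(N_B\). Essential surjectivity genuinely requires this check, since a pointed monoid can carry several null sets.
\end{itemize}
Your remark on the zero band is also correct. It satisfies (H1)--(H4), because every sum in it is the empty sum, yet it violates \(0\neq 1\), so the statement needs that extra hypothesis. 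Your reversibility and distributivity checks tacitly use \((-1)a=-a\), which follows from \(a+(-1)a=a(1+(-1))\in N_B\); this is worth stating.

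One step needs repair: the inductive step for \(N_B=N_H\), as written, proves nothing.
\begin{itemize}
\item The condition \(a_n+(-\alpha)\in N_B\) forces \(\alpha=a_n\), not \(-a_n\). So (H4) with \(d=(a_n)\) merely restates \(\sum_{i\le n}a_i\in N_B\).
\item Reading the first condition as \(-a_n\in\op_{i<n}a_i\) already needs the strengthened hypothesis. The hypothesis about membership of \(0\) in sums of length \(n-1\) does not apply to it.
\end{itemize}
Instead, induct directly on the statement \(P_n\colon\ \op_{i\le n}a_i=\{w\mid \sum_{i\le n}a_i+(-w)\in N_B\}\).
\begin{itemize}
\item By \(P_{n-1}\) and the definition of \(\boxplus\), \(w\in(\op_{i<n}a_i)\boxplus a_n\) iff some \(\beta\) satisfies \(\sum_{i<n}a_i+(-\beta)\in N_B\) and \(\beta+a_n+(-w)\in N_B\).
\item By (H4) with \(c=(a_1,\dots,a_{n-1})\), \(d=(a_n,-w)\) and \(\alpha=-\beta\), this is equivalent to \(\sum_{i\le n}a_i+(-w)\in N_B\).
\end{itemize}
This is just your associativity computation with more terms, and \(N_B=N_H\) is the case \(w=0\).

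Finally, the zero bookkeeping you flag as the main obstacle is not one. Since \(0\) is the empty sum in \(B^+\), zero entries drop out of every formal sum appearing in (H4), so the restriction to nonzero \(c_i,d_j\) costs nothing. Likewise, \(\alpha=0\) is not excluded and needs no separate treatment.
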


\begin{proof}
    Let \(B\) be a band satisfying \((H1)\) to \((H4)\). For every \(x, y \in B\), define the hyperoperation as follows:
    \[
        x \boxplus y \coloneqq \left\{ z \in B \;\middle|\; x + y + (-z) \in N_B \right\} \subseteq B.
    \]
    Condition \((H1)\) ensures that \(x \boxplus y \neq \emptyset\). The commutativity \(x \boxplus y = y \boxplus x\) holds in general, while \((H4)\) implies associativity. Indeed, using \((H4)\) we have, for every \(x, y\) and \(z\) in \(B\), the following equalities.
    \begin{align*}
        (x \boxplus y) \boxplus z &= \left\{ \mu \in B\;\middle|\; ^\exists a \in B \text{ such that } x + y + (-a) \in N_B \text{ and } a + z + (-\mu) \in N_B \right\}\\
        &= \left\{\mu \in B \;\middle|\; x + y + z + (-\mu) \in N_B\right\}\\
       &= \left\{ \mu \in B\;\middle|\; ^\exists b \in B \text{ such that } y + z + (-b) \in N_B \text{ and } x + b + (-\mu) \in N_B \right\}\\
       &= x \boxplus( y \boxplus z).
    \end{align*}
    By \((H2)\), we see that \(0\) is the unique element of \(B\) such that
    \[
        0_B \boxplus x = x \boxplus 0_B = \{x\},
    \]
    for all \(x \in B\). The axioms of a band also imply the existence and uniqueness of additive inverses, as well as the reversibility property
    \[
        x \in y \boxplus z \;\Longleftrightarrow\; z \in x \boxplus (-y).
    \]
    Let \(x,y\) and \(z\) be elements of \(B\), then, by \((H3)\), we have 
    \begin{align*}
        (x \boxplus y) z &= \left\{ \mu z \in B \;\middle|\; x + y + (-\mu) \in N_B \right\}\\
        &\overset{(H3)}{=} \left\{ \mu z \in B \;\middle|\; xz + yz + (-\mu z) \in N_B \right\}\\
        &= xz \boxplus yz
    \end{align*}
    Hence, \(B\) is a hyperring.

    Conversely, if \(R\) is a hyperring, then the associated band \(R^{\mathrm{band}}\) satisfies \((H1)\) to \((H4)\).
\end{proof}

\begin{remark}\label{k-ideal generated}
    If \(B\) is a band satisfying \((H1)\) to \((H4)\), then the \(k\)-ideal generated by \(x\), \(\langle x \rangle_k\), is  \(B \cdot x\) for every \(x \in B\).
\end{remark}

 \begin{proposition}[Hypermodules as band modules]
     The category of hypermodules over a hyperring \(R\) is equivalent to the full subcategory of band modules \(M\) over \(R^{\mathrm{band}}\) satisfying the following conditions:
     \begin{enumerate}
        \item[\((M1)\)] For every finite sequence \(m_1, \cdots, m_n \in M\), there exists some \(n \in M\) such that \(m_1 + \cdots + m_n + n \in I_M\).
        \item[\((M2)\)] If \(n \in M\) satisfies \(m + (-m) + n \in I_M\) for every \(m \in M\), then \(n = 0\).
        \item[\((M4)\)] For every sequence \(m_1, \cdots, m_n \in M\) and \(a \in B\), if \(\eta \in M\) is such that \(am_1 + \cdots + am_n + \eta \in I_M\), then there exists \(\xi \in B\) such that \(\eta = a\xi\) and \(m_1 +  \cdots + m_n + \xi \in I_M\).
        \item[\((M4)\)] \(\sum_i m_i + \sum_j n_j \in I_M\) if and only if there exists \(\alpha \in M\) such that \(\sum_i m_i + \alpha \in I_M\) and \(\sum_j n_j - \alpha \in I_M\).
    \end{enumerate}
 \end{proposition}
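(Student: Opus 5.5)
We follow the same strategy as Proposition \ref{hyperrings as bands}, replacing the band \(B\) by the module \(M\) and the null set \(N_B\) by \(I_M\). The proof has two directions.

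\textbf{From hypermodules to band modules.} Given a hypermodule \(M\) over \(R\), take the band module structure with \(I_M=\{\sum m_i \mid 0\in \op_i m_i\}\), as in the earlier example on hypermodules as band modules. We then check that the four conditions hold.
\begin{itemize}
    \item (M1) holds because \(m_1\oplus\cdots\oplus m_n\) is nonempty: any \(z\) in it gives \(n=-z\).
    \item (M2) follows from the uniqueness of the neutral element.
    \item (M4) is associativity of \(\oplus\), namely \(0\in (\op m_i)\oplus(\op n_j)\) if and only if there is some \(\beta\in\op m_i\) with \(-\beta\in \op n_j\). We take \(\alpha=-\beta\).
    \item The condition labelled (M3) (printed as the first (M4)) uses \(a(m\oplus n)=am\oplus an\). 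If \(0\in am_1\oplus\cdots\oplus am_n\oplus\eta\), then \(-\eta\in a(m_1\oplus\cdots\oplus m_n)\). So \(-\eta=a\zeta\) for some \(\zeta\in\op m_i\), and \(\xi=-\zeta\) works.
\end{itemize}
Morphisms of hypermodules give band module morphisms by the earlier example. Conversely, a band morphism \(f\) satisfies \(f(m\oplus n)\subseteq f(m)\oplus f(n)\): indeed \(z\in m\oplus n\) means \(m+n+(-z)\in I_M\), hence \(f(m)+f(n)+f(-z)\in I_N\). We use \(f(-z)=-f(z)\), which holds because \(f(-z)=(-1)f(z)\). Hence the functor is full and faithful.

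\textbf{From band modules to hypermodules.} Given \(M\) satisfying (M1)–(M4), define
\[m\oplus n\coloneqq\{z\mid m+n+(-z)\in I_M\}.\]
\begin{itemize}
    \item Nonemptiness comes from (M1). Commutativity is immediate.
    \item Associativity is the same chain of equalities as in Proposition \ref{hyperrings as bands}, using (M4) twice to identify both bracketings with \(\{\mu\mid m+n+p+(-\mu)\in I_M\}\).
    \item \(*\) is the unique neutral element: \(m\oplus *=\{m\}\) by uniqueness of negatives (NS3), and uniqueness of \(*\) by (M2).
    \item Existence and uniqueness of \(-m\), and the reversibility \(x\in y\oplus z\iff z\in x\oplus(-y)\), come from (NS3) and the fact that \(-(-x)=x\).
    \item The action axioms \(1m=m\), \(0m=*\) and \((ab)m=a(bm)\) are (BA1)–(BA2).
\end{itemize}

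The main obstacle is the two distributivity laws. The inclusion \(a(m\oplus n)\subseteq am\oplus an\) follows from (BA3). For \(am\oplus an\subseteq a(m\oplus n)\), we apply (M3) to \(am+an+(-\mu)\in I_M\). This gives \(-\mu=a\xi\) with \(m+n+\xi\in I_M\), hence \(\mu=a(-\xi)\) with \(-\xi\in m\oplus n\). We must also check that \(-(a\xi)=a(-\xi)\), which follows from (BA3) applied to \(1+(-1)\in N_B\) together with uniqueness in (NS3).

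For \((a\boxplus b)m=am\oplus bm\), the inclusion \(\subseteq\) is again (BA3). The reverse inclusion requires expressing an element \(\mu\in am\oplus bm\) as \(cm\) with \(c\in a\boxplus b\). For this we would need an analogue of (M3) with the roles of ring and module swapped, and it is not implied by the listed axioms in general. So we will attempt to derive it from (M3) combined with (M4); if that fails, we will read the hypermodule axiom in the weak form \((a\boxplus b)m\subseteq am\oplus bm\) that the band structure naturally yields.

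Finally, the two constructions are mutually inverse. Starting from a hypermodule, the hypersum is recovered from \(I_M\) because \(z\in m\oplus n\iff 0\in m\oplus n\oplus(-z)\). Starting from a band module, \(I_M\) is recovered from the hypersum by iterating (M4) to reduce any sum of length \(k\) to sums of length three.
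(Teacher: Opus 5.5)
\textbf{Gap.} Your construction in both directions is what the paper's one-line proof (``same idea as Proposition~\ref{hyperrings as bands}'') intends, and everything goes through except the step you leave open: $am\oplus bm\subseteq(a\boxplus b)m$. That step is a genuine gap. Your first option, deriving it from (M3) and (M4), cannot work, because the inclusion does not follow from the band-module axioms together with (M1)--(M4). Here is a counterexample.
\begin{itemize}
\item Take $R=\KK$ and $M=\{0,x,y\}$ with the ultrametric hyperaddition of the chain $0<x<y$: $x\oplus x=\{0,x\}$, $x\oplus y=\{y\}$, $y\oplus y=M$. Let $1\cdot m=m$ and $0\cdot m=0$.
\item This is a canonical hypergroup with $-m=m$. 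It satisfies $a(m\oplus n)=am\oplus an$ and $(a\boxplus b)m\subseteq am\oplus bm$.
\item Its null set $I_M$ consists of the empty sum together with the sums whose largest nonzero term occurs at least twice.
\item A direct check shows that $(M,I_M)$ is a band $\KK$-module satisfying (M1)--(M4). For (M3) with $a=0$, note that $\eta=0$ and use (M1).
\item The hyperaddition recovered from $I_M$ gives $y\oplus y=M$.
\end{itemize}
By contrast, any hypermodule in the paper's sense (axiom (3) is an equality) must have $y\oplus y=(1\boxplus 1)y=\{0,y\}$. A hypermodule's hyperaddition is determined by its null set via $z\in m\oplus n\iff m+n+(-z)\in I_M$. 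Hence this $M$ is not isomorphic to the band module of any hypermodule, and the statement as written is false.

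The paper's sketch misses this for a structural reason. For hyperrings, commutativity of multiplication lets (H3) cover both distributive laws. For modules, (M3) only mirrors $a(m\oplus n)=am\oplus an$. Nothing controls the law in which the module element is fixed and the scalars are added.

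\textbf{What to do.} Rather than hedge, commit to a corrected statement. There are two options.
\begin{itemize}
\item Weaken axiom (3) of hypermodules to $(a\boxplus b)m\subseteq am\oplus bm$. Then your argument proves the equivalence as it stands.
\item Keep the equality and add a condition: whenever $a,b\in R$, $m,\eta\in M$ and $am+bm+\eta\in I_M$, there is $c\in R$ with $\eta=cm$ and $a+b+c\in N_R$. With this, $\mu\in am\oplus bm$ gives $-\mu=cm$, so $\mu=(-c)m$ with $-c\in a\boxplus b$. Conversely, every hypermodule satisfies the condition, since $0\in am\oplus bm\oplus\eta$ forces $-\eta\in(a\boxplus b)m$.
\end{itemize}

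One minor patch elsewhere: reversibility does not follow from (NS3) and $-(-x)=x$ alone. Use (BA3) with the single coefficient $-1$, together with $(-1)m=-m$ (which itself follows from (BA3) applied to $1+(-1)\in N_R$). This shows that $y+z+(-x)\in I_M$ implies $x+(-y)+(-z)\in I_M$.
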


 \begin{proof}
     Same idea as Proposition \ref{hyperrings as bands}.
 \end{proof}

 \subsection{Affine hyperring schemes as bands}

 \begin{proposition}\label{hyperideal =k ideal}
     Let \(R\) be a hyperring and denote by \(R^{\mathrm{band}}\) its induced band as in Example \ref{Ex: hyperring as band}. Then, there exists a correspondece between (resp. prime, maximal) hyperideals of \(R\) and (resp. prime, maximal) \(k\)-ideals of \(R^{\mathrm{band}}\)
 \end{proposition}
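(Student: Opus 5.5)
We will prove that a subset \(I \subseteq R\) is a hyperideal of \(R\) if and only if it is a \(k\)-ideal of \(R^{\mathrm{band}}\). The underlying set, multiplication and zero of \(R\) and \(R^{\mathrm{band}}\) coincide. Consequently, the correspondence is the identity on subsets, and it preserves inclusions. Primality is the same multiplicative condition in both settings: a proper ideal \(\mathfrak{p}\) whose complement is multiplicatively closed. Maximality is maximality among proper ideals under inclusion. So once the equality of the two posets of ideals is established, the prime and maximal cases follow at once. The whole proof therefore reduces to comparing the additive closure conditions.

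First, let \(I\) be a \(k\)-ideal of \(R^{\mathrm{band}}\); we show it is a hyperideal. Condition (2) of Definition~\ref{hyperideals} is the \(m\)-ideal condition \(R\cdot I\subseteq I\). For condition (1), take \(a,b\in I\) and \(z\in a\boxplus b\). By reversibility, \(0\in a\boxplus b\boxplus(-z)\), so \(a+b+(-z)\in N_{R}\). Since \(-z\) is the additive inverse, we have \((-z)+a+b\in N_R\) with \(a,b\in I\). The \(k\)-ideal axiom (3) then gives \(-z\in I\), and hence \(z=(-1)(-z)\in I\).

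Conversely, let \(I\) be a hyperideal. Then \(0\in I\), because \(0\in a\boxplus(-a)\) and \(-a=(-1)a\in I\). Also \(R\cdot I\subseteq I\), so \(I\) is an \(m\)-ideal. For axiom (3), suppose \(a+\sum_{i=1}^n b_i\in N_R\) with \(b_i\in I\), which means \(0\in a\boxplus b_1\boxplus\cdots\boxplus b_n\). Using associativity, we pick \(w\in b_1\boxplus\cdots\boxplus b_n\) with \(0\in a\boxplus w\). Induction on \(n\), using hyperideal condition (1), shows that every iterated hypersum of elements of \(I\) lies in \(I\), so \(w\in I\). Uniqueness of inverses then gives \(a=-w=(-1)w\in I\).

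The main obstacle is the bookkeeping of iterated hypersums, i.e. identifying the formal sums in \(N_R\) with the condition \(0\in\op_i a_i\). This needs associativity of \(\boxplus\) on sets and the reversibility axiom. We will state this identification as a small preliminary lemma, proved by induction on the length of the sum, before running the two inclusions above. We also note that properness is preserved, since \(I=R\) in both senses, so prime and maximal ideals match exactly.
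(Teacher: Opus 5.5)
Your proposal is correct and follows essentially the same route as the paper: the correspondence is the identity on subsets, and everything reduces to showing that \(I\boxplus I\subseteq I\) is equivalent to axiom (3) for \(k\)-ideals. Your version is somewhat more complete than the paper's. The paper checks axiom (3) only for sums \(a+b+y\) with exactly two terms from \(I\), whereas your induction on iterated hypersums covers finite sums of any length; you also verify \(0\in I\) and the prime and maximal cases explicitly, which the paper leaves implicit.
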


 \begin{proof}
    Let \(I\) be a hyperideal of \(R\). Then it is clearly an \(m\)-ideal of \(R^{\mathrm{band}}\). We claim that the condition \(I \boxplus I \subseteq I\) is equivalent to condition (3) in the definition of \(k\)-ideals (Definition \ref{hyperideals}).

    Suppose first that \(I \boxplus I \subseteq I\). Let \(a, b \in I\) and let \(y \in R^{\mathrm{band}}\) be such that \(a + b + y \in N_{R^{\mathrm{band}}}\).
    By definition of the hyperaddition, this implies that \(-y \in a \boxplus b \subseteq I \boxplus I\). Hence \(-y \in I\), and therefore \(y \in I\). This shows that \(I\) is a \(k\)-ideal.

    Conversely, suppose that \(I\) is a \(k\)-ideal of \(R^{\mathrm{band}}\). Let \(a, b \in I\) and let \(z \in a \boxplus b\). By definition, this means that \(a + b + (-z) \in N_{R^{\mathrm{band}}}\). Since \(I\) is a \(k\)-ideal, it follows that \(z \in I\). Hence \(a \boxplus b \subseteq I\), and therefore \(I \boxplus I \subseteq I\).
\end{proof}

 This implies, in particular, that Proposition 2.11, Proposition 2.14 and Lemma 3.21 of \cite{Jaiung} are already generalized for the case of bands in Proposition 1.35, Proposition 1.30 and Proposition 1.37 of \cite{Lorscheid3}, respectively. Additionally, we also have a correspondence between the important construction of a hyperrings as the quotient of a ring by a subgroup of its group of units.

 \begin{definition}
     Let \(B\) be a band and let \(G\) be a subgroup of the group of units \(B^\times\). Let \(B/G\) be the set of cosets, and consider it as a band with a multiplication defined as:
     \begin{equation*}
         B/G \times B/G \to B/G;\quad (aG,bG) \mapsto abG.
     \end{equation*}
     and with null set \(N_{B/G}\) defined as:
     \begin{equation*}
         N_{B/G} \coloneqq  \left\{\sum_i x_iG\;\middle|\; \text{For every } i \text{ there exists } t_i \text{ such that } \sum_i t_i\cdot x_i \in N_B \right\}.
     \end{equation*}
 \end{definition} 

 \begin{remark}
     \(N_{B/G}\) is indeed a null set for \(B/G\). For, for each \(xG \in B/G\) if \(yG \in B/G\) is such that \(xG + yG \in N_{B/G}\), then there exist \(t,s \in G\) satsifying \(t\cdot x + s\cdot y \in N_B\). This implies that \(y = t^{-1}s \cdot(-x)\), where we conclude that \(yG = t^{-1}s \cdot(-x)G = (-x)G\). Therefore, such \(yG\) is unique. The other axioms are trivial.
 \end{remark}

 \begin{corollary}
     Let \(A\) be a commutative ring and denote by \(A^{\mathrm{band}}\) its induced band as in Example \ref{Ex: ring as band}. Then, for every multiplicative subgroup \(G \subseteq A^\times\), we have the equality
     \begin{equation*}
         \left(A/G \right)^{\mathrm{band}} =  A^{\mathrm{band}}/G.
     \end{equation*}
 \end{corollary}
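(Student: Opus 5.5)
The plan is to compare the two bands element by element. Both have underlying monoid \(A/G\) with the same multiplication \((aG,bG)\mapsto abG\), so the identity map of pointed monoids \(A/G \to A/G\) is the candidate isomorphism. All the work is in showing that the two null sets agree as subsets of \((A/G)^+\).

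By Example \ref{Ex: hyperring as band} and Example \ref{ex: quotient hyperring}, the null set of \((A/G)^{\mathrm{band}}\) is
\[
N_1=\left\{\textstyle\sum_{i=1}^n x_iG \;\middle|\; 0G \in \op_i x_iG\right\}.
\]
By the definition of \(B/G\) together with Example \ref{Ex: ring as band}, the null set of \(A^{\mathrm{band}}/G\) is
\[
N_2=\left\{\textstyle\sum_{i=1}^n x_iG \;\middle|\; \exists\, t_i\in G \text{ with } \textstyle\sum_i t_ix_i=0 \text{ in } A\right\}.
\]
(I read the \(t_i\) in that definition as elements of \(G\).) The central step is a lemma describing iterated hypersums: for all \(n\ge 1\),
\[
\op_{i=1}^n x_iG=\left\{ qG \;\middle|\; q=\textstyle\sum_i t_ix_i,\ t_i\in G\right\}.
\]

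I prove the lemma by induction on \(n\), using associativity of \(\boxplus\) to define the iterated sum.
\begin{itemize}
\item The case \(n=1\) is trivial, and the case \(n=2\) is the definition of \(\boxplus\) in Example \ref{ex: quotient hyperring}.
\item For the inductive step, \(\op_{i\le n+1}x_iG=\bigcup_{yG\in \op_{i\le n}x_iG}\,(yG\boxplus x_{n+1}G)\). An element of the right-hand side has the form \(q=uy+sx_{n+1}\) with \(y=\sum_{i\le n}t_ix_i\) and \(u,s\in G\). Hence \(q=\sum_{i\le n}(ut_i)x_i+sx_{n+1}\), which is a \(G\)-combination because \(G\) is a group.
\item For the converse inclusion, take any \(G\)-combination \(\sum_{i\le n+1}t_ix_i\). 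Set \(y=\sum_{i\le n}t_ix_i\) and choose \(u=1\), \(s=t_{n+1}\).
\end{itemize}

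The main subtlety is well-definedness with respect to coset representatives. Replacing \(y\) by \(gy\) with \(g\in G\) only rescales by units of \(G\), which the group property again absorbs. I expect this bookkeeping to be the only real obstacle.

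With the lemma in hand:
\begin{itemize}
\item \(0G\in\op_i x_iG\) if and only if \(0=\sum t_ix_i\) for some \(t_i\in G\), because \(qG=0G\) forces \(q=0\). So \(N_1=N_2\) for every nonempty sum.
\item For the empty sum, both sets contain \(0\), by the convention \(0_M=\) the empty sum together with (NS1).
\item Sums containing the term \(0G\) are handled consistently, since \(0G=\{0\}\) contributes \(0\) on both sides.
\end{itemize}
The monoids are equal and the null sets coincide, so the identity map is an isomorphism of bands. This gives the equality \((A/G)^{\mathrm{band}}=A^{\mathrm{band}}/G\).
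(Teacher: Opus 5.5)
Your proof is correct and follows essentially the paper's approach. The paper only says the equality follows from the definition of the hyperaddition on \(A/G\). Your induction showing that \(x_1G\boxplus\cdots\boxplus x_nG\) is the set of cosets of \(G\)-combinations \(\sum_i t_ix_i\) is exactly the computation that one-line proof leaves implicit, and it makes the two null sets visibly coincide.
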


 \begin{proof}
     This follows from the definition of the hyperaddition on \(A/G\) as given in Example \ref{ex: quotient hyperring}.
 \end{proof}

 Now, we study the definition of affine hyperring schemes.

\begin{definition}
    Let \(B\) be a band. The \emph{\(k\)-spectrum} of \(B\), denoted by \(\Spec^k(B)\), is the set of prime \(k\)-ideals of \(B\) with the topology generated by the closed subsets 
    \begin{equation*}
        V^k(I) =\left\{ \mathfrak{p} \in \Spec^k(B) \;\middle|\; I \subseteq \mathfrak{p} \right\},
    \end{equation*}
    for every subset \(I\) of \(B\).
\end{definition}

\begin{notation}
    Let \(f \in B\). Denote by \(D^k(f)\) the subset of \(\Spec^k(B)\) of prime \(k\)-ideals not containing \(f\). It form a basis for the topology on \(\Spec^k(B)\).
\end{notation}

\begin{corollary}
    Let \(R\) be a hyperring and denote by \(R^{\mathrm{band}}\) its induced band. Then, there exists a homeomorphism 
    \begin{equation*}
        \Spec^{\textrm{\textbf{HypRings}}}(R) \overset{\sim}{\longrightarrow} \Spec^k(R^{\mathrm{band}}),
    \end{equation*}
    where the left-hand side is the spectrum of a hyperring as defined in Definition \ref{spec hyperring}.
\end{corollary}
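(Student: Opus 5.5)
The plan is to take the bijection of Proposition~\ref{hyperideal =k ideal} and show that it respects the two topologies. By that proposition, a subset \(\mathfrak{p}\subseteq R\) is a prime hyperideal of \(R\) exactly when it is a prime \(k\)-ideal of \(R^{\mathrm{band}}\). The two notions of primeness agree because both are conditions on the underlying multiplicative monoid: \(xy\in\mathfrak{p}\) implies \(x\in\mathfrak{p}\) or \(y\in\mathfrak{p}\), for a proper ideal. Equivalently, \(R\setminus\mathfrak{p}\) is a multiplicative subset. Hence the identity on subsets gives a bijection
\[
\Phi\colon \Spec^{\textrm{\textbf{HypRings}}}(R)\longrightarrow \Spec^k(R^{\mathrm{band}}),\qquad \mathfrak{p}\longmapsto\mathfrak{p}.
\]
The statement is only well posed if \(R^{\mathrm{band}}\) has the same underlying pointed monoid as \(R\). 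This holds by Example~\ref{Ex: hyperring as band}, so \(\Phi\) needs no further construction.

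Next I compare the closed sets. On the hyperring side they are \(V(I)\) for hyperideals \(I\). On the band side the topology is generated by the sets \(V^k(S)\) for arbitrary subsets \(S\subseteq R\). Since a \(k\)-ideal contains \(S\) if and only if it contains the \(k\)-ideal \(\langle S\rangle_k\), we have \(V^k(S)=V^k(\langle S\rangle_k)\). By Proposition~\ref{hyperideal =k ideal}, \(\langle S\rangle_k\) is a hyperideal of \(R\), and \(\Phi(V(\langle S\rangle_k))=V^k(S)\). Conversely, every hyperideal \(I\) is a \(k\)-ideal, so \(\Phi(V(I))=V^k(I)\). Therefore \(\Phi\) carries the closed sets of the Zariski topology on one side exactly onto the closed sets \(V^k(\cdot)\) on the other.

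It remains to note that the family \(\{V^k(S)\}\) is already closed under arbitrary intersections and finite unions, so it is the whole family of closed sets rather than just a subbasis. Intersections are easy: \(\bigcap_\alpha V^k(S_\alpha)=V^k(\bigcup_\alpha S_\alpha)\). Even without this, both topologies are generated by corresponding families, so matching the generators already suffices for a homeomorphism. Alternatively, one can match the basic opens directly, since \(D(f)\) corresponds to \(D^k(f)\) elementwise.

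The only point needing care is the identification \(V^k(S)=V^k(\langle S\rangle_k)\) together with the fact that \(\langle S\rangle_k\) is genuinely a hyperideal. The latter is precisely the equivalence in Proposition~\ref{hyperideal =k ideal}: the condition \(I\boxplus I\subseteq I\) is the same as the \(k\)-ideal closure condition. This is where I expect the main, though mild, obstacle.
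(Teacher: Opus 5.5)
Your proposal is correct and takes the same approach as the paper, which deduces the homeomorphism directly from the correspondence between (prime) hyperideals of \(R\) and (prime) \(k\)-ideals of \(R^{\mathrm{band}}\). You also supply the details the paper leaves implicit: that primeness means the same thing on both sides, that \(V^k(S)=V^k(\langle S\rangle_k)\) with \(\langle S\rangle_k\) a hyperideal, and that matching the generating families of closed sets is enough.
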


\begin{proof}
    Follows from Proposition \ref{hyperideal =k ideal}.
\end{proof}

\begin{definition}[Structure sheaf of \(\Spec^k(B)\)]\label{k-sheaf}
    Let \(B\) be a band, \(X = \Spec^k(B)\) its associate \(k\)-spectrum and \(U \subseteq X\) an open subset. Define the band \(\OO_X(U)\) as the pointed monoid
    \begin{equation*}
        \OO^k_X(U) \coloneqq  \left\{s: U \to \bigsqcup B_{\mathfrak{p}} \;\middle|\;(\star_1),\; (\star_2)\;  \right\}
    \end{equation*}
    where \((\star_1)\) and \((\star_2)\) are the following conditions:
    \begin{enumerate}
        \item[\((\star_1)\)] \(s(\mathfrak{p}) \in B_{\mathfrak{p}}\) for all \(\mathfrak{p} \in U\).
        \item[\((\star_2)\)] For every \(\mathfrak{p} \in U\), there exists an open neighborhood \(\mathfrak{p} \in V_{\mathfrak{p}} \subseteq U\) and \(a, f \in B\) such that for every \(\mathfrak{q} \in V_{\mathfrak{p}}\), \(f \not\in \mathfrak{q}\) and \[s(\mathfrak{q}) = \frac{a}{f} \in B_{\mathfrak{q}}.\]
    \end{enumerate}
    The multiplication of \(s, t \in \OO^k_X(U)\) is defined as 
    \begin{equation*}
        (s\cdot t) (\mathfrak{p}) \coloneqq  s(\mathfrak{p})\cdot t(\mathfrak{p})\; \text{ for every } \mathfrak{p} \in U.
    \end{equation*}
    and the null set is defined as 
    \begin{equation*}
        N_{\OO^k_X(U)} \coloneqq  \left\{\sum_i s_i \in \left(\OO^k_X(U)\right)^+\;\middle|\; \sum_i s(\mathfrak{p}) \in N_{B_{\mathfrak{p}}}\text{ for every }\mathfrak{p} \in U \right\}.
    \end{equation*}
\end{definition}

\begin{proposition}\label{stalk localization}
Let \(B\) be a band, \(X = \Spec^k(B)\), and \(\mathfrak{p} \in X\). Then, the following holds.
\begin{equation*}
    \OO^k_{X,\mathfrak{p}} \coloneqq \varinjlim_{\mathfrak{p} \in D(f)} \OO^k_X(D(f)) \overset{\sim}{\longrightarrow} B_\mathfrak{p}.
\end{equation*}
\end{proposition}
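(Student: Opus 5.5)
We construct the comparison map directly and check that it is a bijection preserving and reflecting null sets. Here \(D(f)\) in the statement means the basic open \(D^k(f)\) of \(\Spec^k(B)\). For each \(f \notin \mathfrak{p}\), evaluation at \(\mathfrak{p}\) gives a map
\[
\mathrm{ev}_{\mathfrak{p}} \colon \OO^k_X(D^k(f)) \longrightarrow B_{\mathfrak{p}}, \qquad s \longmapsto s(\mathfrak{p}).
\]
It is multiplicative by the pointwise definition of multiplication in Definition \ref{k-sheaf}. It sends null sums to null sums, since the null set of \(\OO^k_X(D^k(f))\) consists of the sums that are null at every point. These maps commute with restriction to smaller basic opens, so they induce a band morphism \(\Theta \colon \OO^k_{X,\mathfrak{p}} \to B_{\mathfrak{p}}\). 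We use the description of the directed colimit of bands from \eqref{colimit of algebras} and \eqref{null set colimit}: as pointed monoids it is the quotient of the disjoint union, and its null set is the union of the images of the null sets.

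\textbf{Surjectivity.} Let \(a/g \in B_{\mathfrak{p}}\) with \(g \notin \mathfrak{p}\). The constant section \(\mathfrak{q} \mapsto a/g \in B_{\mathfrak{q}}\) on \(D^k(g)\) satisfies \((\star_1)\) and \((\star_2)\). Its germ maps to \(a/g\).

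\textbf{Injectivity.} Take germs \([s]\) and \([t]\) with \(s(\mathfrak{p}) = t(\mathfrak{p})\). By \((\star_2)\), after shrinking to a common basic open \(D^k(h)\) around \(\mathfrak{p}\) (the sets \(D^k(\cdot)\) form a basis), we may write \(s = a/f\) and \(t = b/g\) on \(D^k(h)\), with \(f, g \notin \mathfrak{q}\) for all \(\mathfrak{q} \in D^k(h)\). Equality in \(B_{\mathfrak{p}}\) gives \(u \notin \mathfrak{p}\) with \(uag = ubf\). Then on \(D^k(h) \cap D^k(u) \supseteq D^k(hu)\) we have \(s(\mathfrak{q}) = t(\mathfrak{q})\) in every \(B_{\mathfrak{q}}\), since \(u \notin \mathfrak{q}\) there. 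Here \(hu \notin \mathfrak{p}\) because \(\mathfrak{p}\) is prime. Hence \([s] = [t]\).

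\textbf{Null sets.} This is the main obstacle. We must show that whenever \(\sum_i s_i(\mathfrak{p}) \in N_{B_{\mathfrak{p}}}\), the sum \(\sum_i s_i\) is null on some neighbourhood \(D^k(h)\) of \(\mathfrak{p}\). Shrink as above so that \(s_i = a_i/f\) on \(D^k(h)\) with a common denominator \(f\); we may take \(f\) to be the product of the individual denominators. By definition, \(N_{B_{\mathfrak{p}}}\) is the null ideal generated by \(\iota^+(N_B)\). As in the analogous statement for localizations of bands in \cite[Proposition 1.22]{Lorscheid3}, membership should amount to the following: there is \(u \notin \mathfrak{p}\) such that \(\sum_i u a_i\) lies in \(N_B\), or at least in the ideal of \(B^+\) generated by \(N_B\).

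Granting this, the sum \(\sum_i a_i/f = (uf)^{-1} \sum_i u a_i\) is null in \(B_{\mathfrak{q}}\) for every \(\mathfrak{q} \in D^k(hu)\), since both \(u\) and \(f\) are invertible there. This gives nullity in \(\OO^k_X(D^k(hu))\), hence of the germ. The delicate point is the description of the generated null set of a localization. If the generated ideal is larger than the saturation \(\{\sum x_i/s \mid \exists\, u \in S,\ \sum u x_i \in N_B\}\), we would instead prove that this saturation is already an ideal of \((S^{-1}B)^+\) containing \(\iota^+(N_B)\), closed under sums and products, and satisfying the band axiom. The only nontrivial check is closure under sums, which uses common denominators, just as in the ring case. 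Once this is settled, \(\Theta\) is a bijective morphism whose inverse also preserves null sets, and so it is an isomorphism.
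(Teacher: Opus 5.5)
Your proposal is correct and follows essentially the same route as the paper: the evaluation map $[s]\mapsto s(\mathfrak{p})$, surjectivity via constant sections, injectivity via a witness $u\notin\mathfrak{p}$ and shrinking to a smaller basic open, and then compatibility of null sets. Your null-set step is more careful than the paper's, which gets $\sum_i s_i\in N_{\OO^k_X(U)}$ ``by definition'' from nullity at $\mathfrak{p}$ alone, and your shrinking to $D^k(hu)$ supplies exactly the missing step; the description you flag as delicate holds directly, because $N_B$ is an ideal of $B^+$ and finitely many terms can be put over a common denominator, so the ideal of $(B_{\mathfrak{p}})^+$ generated by $\iota_S^+(N_B)$ consists exactly of the sums $\tfrac{1}{s}\cdot\iota_S^+(n)$ with $s\in S=B\setminus\mathfrak{p}$ and $n\in N_B$.
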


\begin{proof}
    We claim that the following map is an isomorphism of bands.
    \begin{align*}
        \Phi : \OO^k_{X,\mathfrak{p}} &\longrightarrow B_\mathfrak{p}\\
         [s] &\longmapsto s(\mathfrak{p}).
    \end{align*}
    where \([s]\) is the equivalence class of a section \(s \in \OO^k_X(D(f))\). By the definition of null sets of \(\OO_X^k(D(f))\) and the representation of filtered colimits of bands in Section \ref{subsec: finite presentation}, we have that \(\Phi\) is a well-defined morphism of bands. 

    We show that \(\Phi\) is injective. Let \([s], [t] \in \OO^k_{X,\mathfrak{p}}\) be such that \(s(\mathfrak{p}) = t(\mathfrak{p})\). Then, by property \((\star_2)\) in the definition of \(\OO^k_X\), there exist a open neighborhood \(V\) of \(\mathfrak{p}\) and elements \(a,f, b\) and \(g\) of \(B\) such that \(s(\mathfrak{q}) = a/f\) and \(t(\mathfrak{q}) = b/g\) for all \(\mathfrak{q} \in V\). In particular, \(a/f = b/g\) in \(B_\mathfrak{p}\) implying the existence of an \(h \not\in \mathfrak{p}\) such that \(hag = hbf\) in \(B\). Hence, \(V \cap D(h)\) is an open neighborhood of \(\mathfrak{p}\) such that \(s|_{V \cap D(h)} = t|_{V \cap D(h)}\), and \([s] = [t]\) in \(\OO^k_{X,\mathfrak{p}}\).

    We show that \(\Phi\) is surjective. For every \(a/f \in B_\mathfrak{p}\), consider the open neighborhood \(D^k(f)\) of \(\mathfrak{p}\) and the section \(s\) on \(D^k(f)\) defined by \(s(\mathfrak{q}) = a/f\) for every \(\mathfrak{q} \in D^k(f)\). Then, \(\Phi([s]) = a/f\).

    Finally, we verify the correspondece between their null sets. Let \(\sum_{i=1}^n [s_i]\) be such that \(\sum_{i=1}^n s_i(\mathfrak{p}) \in N_{B_\mathfrak{p}}\). Let \(U\) be an open neighborhood of \(\mathfrak{p}\) such that \(s_i\) is defined on (after a restriction if necessary) \(U\) for every \(i\). By definition, we heve \(\sum_i s_i \in N_{\OO^k_X(U)}\), hence by the representation of \(N_{\OO^k_{X,\mathfrak{p}}}\) as in Section \ref{subsec: finite presentation}, we conclude that \(\sum_i [s_i] \in N_{\OO^k_{X,\mathfrak{p}}}\). This proves that \(\Phi\) is an isomorphism.
\end{proof}

Lastly, we generalize Theorem \ref{main theorem hyperrings} for the case of bands. We will restrict our result for bands \(B\) satisfying the following conditions:

\begin{enumerate}
    \item[\((D1)\)] If \(b\cdot \left(\sum_i a_i \right) \in N_B\) and \(b\neq 0\), then \(\sum_i a_i \in N_B\).
    \item[\((D2)\)] \(\langle x \rangle_k = B \cdot x\) for every \(x \in B\).
\end{enumerate}

This condition implies that if \(c \cdot a = c\cdot b\) and \(c \neq 0\), then \(a = c\), that is, \(B\) has no non-zero divisors.  

\begin{theorem}\label{main theorem hypbands}
    Let \(B\) be a band satisfying \((D1)\) and \((D2)\), \(X = \Spec^k(B)\) and \(\OO^k_X\) be the sheaf of bands as in Definition \ref{k-sheaf}. Then, \(\OO^k_X(D^k(f))\) is isomorphic to the band \(B_f\). In particular, if \(f = 1\), we have
        \begin{equation*}
             \OO^k_X(X) \cong B.
        \end{equation*}
\end{theorem}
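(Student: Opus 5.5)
I would follow Jun's proof of Theorem~4.23 in \cite{Jaiung} (Proposition~\ref{main theorem hyperrings} here), replacing hyperideals by $k$-ideals via Proposition~\ref{hyperideal =k ideal}.

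\textbf{Embeddings into a fraction band.} Condition $(D1)$ implies that $B$ has no zero divisors, so $K \coloneqq (B\setminus\{0\})^{-1}B$ makes sense. I would first check that the canonical maps $B_f \to B_{\mathfrak p} \to K$ are injective. The point is that $t\cdot x=t\cdot y$ with $t\neq 0$ forces $x=y$, since $t(x-y)\in N_B$ and $(D1)$ give $x-y\in N_B$. These maps should also be \emph{strict}, i.e.\ a sum $\sum a_i/f^n$ lies in $N_{B_f}$ exactly when its image lies in $N_K$. Here I would use $(D1)$ again: it lets one cancel the nonzero denominators that appear in the generators of $N_K$.

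\textbf{The comparison map.} Define $\psi\colon B_f\to \OO^k_X(D^k(f))$ by $\psi(a/f^n)(\mathfrak q)=a/f^n\in B_{\mathfrak q}$. By construction $\psi$ is a morphism of bands. It is injective because $B_f\to B_{\mathfrak q}$ is injective for any single $\mathfrak q$. The ideal $\langle 0\rangle_k$ lies in $D^k(f)$ when $f\neq0$, and it is prime because there are no zero divisors. Strictness of $\psi$ on null sets follows from the definition of $N_{\OO^k_X(U)}$ and the strictness of $B_f\to B_{\mathfrak q}$.

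\textbf{Surjectivity (main obstacle).} Let $s$ be a section over $D^k(f)$. By $(\star_2)$ it is locally $a_i/g_i$ on open sets, which I may shrink to basic opens $D^k(h_i)$. Since all $B_{\mathfrak q}$ embed in $K$, these local representatives agree in $K$, so $s$ is a single element $\sigma\in K$. It remains to show $\sigma\in B_f$. Following the ring argument, consider
\[
J=\{\, b\in B \mid b\sigma\in \operatorname{im}(B\to K)\,\}.
\]
This set is an $m$-ideal. Using $(D1)$, which makes the representation of $b\sigma$ in $K$ detect null sums, it is also a $k$-ideal. I then argue that $f\in\sqrt{J}$. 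If not, a prime $k$-ideal $\mathfrak q\supseteq J$ with $f\notin\mathfrak q$ should exist, via the analogue of Davvaz--Salasi for $k$-ideals from \cite{Lorscheid3}, Proposition~1.35. Then $\mathfrak q\in D^k(f)$, so near $\mathfrak q$ we have $s=a/g$ with $g\notin\mathfrak q$. Hence $g\sigma=a\in B$, giving $g\in J\subseteq\mathfrak q$, a contradiction. So $f^n\in J$ for some $n$, and therefore $\sigma=c/f^n\in B_f$.

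The delicate part is the existence of a prime $k$-ideal avoiding $\{f^n\}$ and containing $J$. This is where $(D2)$ should enter: it makes principal $k$-ideals equal to $B\cdot x$, which controls $k$-ideals generated by products and keeps the localization correspondence $\Spec^k(B_f)\cong D^k(f)$ valid. With surjectivity, $\psi$ is a bijective strict morphism, hence an isomorphism. Taking $f=1$ gives $\OO^k_X(X)\cong B$.
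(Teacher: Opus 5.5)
Your route differs from the paper's. The paper runs the Hartshorne-style argument. It covers \(D^k(f)\) by basic opens \(D^k(h_i)\) with \(s=a_i/h_i\), extracts a finite subcover via Lemma~\ref{k-ideal fg}, and gets \(a_ih_j=a_jh_i\) by cancellation. It then uses Lemma~\ref{technical lemma 2}, which is where \((D2)\) does its main work, to write \(f^na_j=\beta_jh_j\) with a common \(\beta\). You instead glue \(s\) to a single \(\sigma\in K\) and use the ideal of denominators \(J\). This avoids the finite subcover, the induction over \(\langle\cdot\rangle_k^{(n)}\), and the \(\beta_i=\beta_j\) bookkeeping. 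It also gives the description of sections inside \(K\) analogous to Proposition~\ref{main theorem hyperrings}(2). The strategy works, but you have attached the hypotheses to the wrong steps, and one step fails as justified.

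\textbf{Where the argument breaks.} The failing step is ``\(J\) is a \(k\)-ideal by \((D1)\)''; \((D1)\) alone does not give this. Take \(\FF_1^\pm[x,z]\) with \(z+x+x\) added to the null set. There \((D1)\) holds: the only two-term null sums are \(m+(-m)\), and multiplication by a nonzero monomial is injective and reflects divisibility. Yet the section \(z/x\) over \(D^k(x)\) gives \(J=B\cdot x\). This is not \(k\)-closed, since \(z+x+x\in N_B\) and \(x\in J\) but \(z\notin J\).

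\textbf{The fix.} What you need is \((D2)\). Write \(\sigma=u/y\) with \(y\neq 0\). Suppose \(a+\sum_i b_i\in N_B\) with \(b_iu=c_iy\). Then \(au+\sum_i c_iy\in N_B\), so \(au\in\langle y\rangle_k=B\cdot y\), hence \(a\in J\). This is the same mechanism as in Lemma~\ref{technical lemma 2}.

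\textbf{The step you called delicate needs no extra hypothesis.} For any \(k\)-ideal \(J\) of any band, if \(f^n\notin J\) for all \(n\), there is a prime \(k\)-ideal containing \(J\) and missing \(f\). This is \cite[Proposition~1.37]{Lorscheid3}, which the paper uses in the form \(V^k(f)=V^k(I)\Rightarrow f^n\in I\). So \((D2)\) plays no role there.

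\textbf{A smaller point.} ``The local representatives agree in \(K\)'' does not follow from the embeddings alone. Use that every nonempty open set contains the prime \(\{0\}\), since a nonempty \(D^k(h)\) forces \(h\neq 0\). Then each neighbourhood in \((\star_2)\) contains \(\{0\}\), and \(\sigma\coloneqq s(\{0\})\in B_{\{0\}}=K\) is the common image of all \(s(\mathfrak q)\).

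With these repairs your argument is complete.
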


We adapt the proof of \cite[Theorem 4.23]{Jaiung}. First, we prove the following technical lemmas. 

\begin{lemma}\label{k-ideal fg}
    Let \(B\) be a band and \(S\) a subset of \(B\). If \(f \in \langle S \rangle_k\), then there exists a finite subset \(F \subseteq S\) such that \(f \in \langle F \rangle_k\).
\end{lemma}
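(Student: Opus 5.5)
The plan is to give an explicit description of \(\langle S\rangle_k\) as an increasing union of sets, each obtained from the previous one by a single ``finitary'' closure step, and then argue by induction on the stage at which \(f\) appears. Let \(J_0 \coloneqq B\cdot S\cup\{0\}\), the \(m\)-ideal generated by \(S\). For \(n\ge 0\), let \(J_{n+1}\) be the \(m\)-ideal generated by \(J_n\) together with all \(a\in B\) for which there exist \(b_1,\dots,b_r\in J_n\) with \(a+\sum_i b_i\in N_B\). Put \(J_\infty\coloneqq\bigcup_n J_n\).

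The first step is to show that \(J_\infty=\langle S\rangle_k\). Each \(J_n\) is contained in every \(k\)-ideal containing \(S\), by induction on \(n\), so \(J_\infty \subseteq \langle S\rangle_k\). For the reverse inclusion it suffices to see that \(J_\infty\) is itself a \(k\)-ideal containing \(S\).
\begin{enumerate}
\item It contains \(0\) and is closed under multiplication by \(B\), being an increasing union of \(m\)-ideals.
\item If \(a+\sum_{i=1}^r b_i\in N_B\) with all \(b_i\in J_\infty\), then, since there are finitely many \(b_i\) and the chain is increasing, all \(b_i\) lie in a single \(J_n\). Hence \(a\in J_{n+1}\subseteq J_\infty\).
\end{enumerate}
Note that \(J_{n+1}\) is simply \(B\cdot(\text{the new elements})\cup J_n\), because multiplying a null relation by \(c\in B\) keeps it in \(N_B\) (\(N_B\) is an ideal of \(B^+\)). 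This remark is not essential to the argument.

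The second step is to prove, by induction on \(n\), that every \(x\in J_n\) lies in \(\langle F\rangle_k\) for some finite \(F\subseteq S\).
\begin{enumerate}
\item For \(n=0\): \(x=c\cdot s\) with \(s \in S\), or \(x=0\), so \(F=\{s\}\) or \(F=\emptyset\) works.
\item For the inductive step: an element of \(J_{n+1}\) either lies in \(J_n\), or has the form \(c\cdot a\) with \(a+\sum_{i=1}^r b_i\in N_B\) and \(b_i\in J_n\). By the inductive hypothesis choose finite \(F_i\subseteq S\) with \(b_i\in\langle F_i\rangle_k\), and set \(F\coloneqq\bigcup_i F_i\), which is finite. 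Then all \(b_i\in\langle F\rangle_k\), so \(a\in\langle F\rangle_k\) by the \(k\)-ideal axiom, and hence \(c\cdot a\in\langle F\rangle_k\).
\end{enumerate}
Since \(f\in\langle S\rangle_k=J_\infty\), \(f\) lies in some \(J_n\), and the lemma follows.

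The only delicate point is the first step: verifying that the union is closed under the \(k\)-condition. This rests entirely on the finiteness of formal sums in \(B^+\) and the directedness of the chain. If that part is arranged correctly, the rest is routine bookkeeping.
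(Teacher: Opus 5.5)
Your proof is correct and is essentially the paper's argument: write \(\langle S\rangle_k\) as an increasing union of stages, each obtained by one finitary closure step through a relation in \(N_B\), and induct on the stage, taking a finite union of the finite subsets of \(S\) at each step. The only difference is that you construct this stratification yourself and check that its union is the smallest \(k\)-ideal containing \(S\), whereas the paper takes it from \cite[Lemma~1.26]{Lorscheid3} and phrases the induction as a descending construction of finite sets \(F^{l-1}, F^{l-2}, \dots, F^{0}\).
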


\begin{proof}
    By \cite[Lemma 1.26]{Lorscheid3}, we have \( \langle S \rangle_k = \bigcup_{n=0}^{\infty} \langle S \rangle_k^{(n)}\), where \(\langle S \rangle_k^{(0)} \coloneqq S\) and, for every \(n \ge 1\),
    \begin{equation}\label{recurrence}
        \langle S \rangle_k^{(n)}
        \coloneqq
        \left\{
            a \in B
            \;\middle|\;
            a-\sum_i b_i s_i \in N_B
            \text{ for some } b_i \in B
            \text{ and } s_i \in \langle S \rangle_k^{(n-1)}
        \right\}.
    \end{equation}

    Let \(l \ge 0\) be such that \(f \in \langle S \rangle_k^{(l)}\).
    If \(l=0\), then \(f \in S\), and there is nothing to prove.
    Thus, assume that \(l \ge 1\). By \eqref{recurrence}, there exists a finite subset
    \(F^{l-1} \subseteq \langle S \rangle_k^{(l-1)}\)
    such that \(f \in \langle F^{l-1} \rangle_k\).
    If \(l-1=0\), we may take \(F=F^{l-1}\). Otherwise, for each \(g \in F^{l-1}\), applying \eqref{recurrence} again yields a finite subset
    \(F_g^{\,l-2} \subseteq \langle S \rangle_k^{(l-2)}\)
    such that \(g \in \langle F_g^{\,l-2} \rangle_k\).
    Since \(F^{l-1}\) is finite, the union
    \begin{equation*}
        F^{l-2}
        \coloneqq
        \bigcup_{g \in F^{l-1}} F_g^{\,l-2}
    \end{equation*}
    is finite, and \(h \in \langle F^{l-2} \rangle_k\) for every
    \(h \in F^{l-1}\).

    Repeating this construction, we eventually obtain a finite subset
    \(F^{0} \subseteq \langle S \rangle_k^{(0)} = S\).
    A straightforward descending induction on \(i\) shows that
    \begin{equation*}
        f \in \langle F^{0} \rangle_k.
    \end{equation*}
    Setting \(F=F^{0}\) completes the proof.
\end{proof}

\begin{lemma}\label{technical lemma 2}
    Let \(B\) be a band satisfying \((D2)\), and let \(h_1, \cdots, h_n\) and \(a_1, \cdots, a_n\) be two lists of elements of \(B\) satisfying \(a_i h_j = a_j h_i\) for every pair \(i,j\). Then, for every element \(f \in \left<h_1, \cdots, h_n \right>_k\) and every \(j\), there exists \(\beta_j \in B\) such that \(f a_j = \beta_j h_j\).
\end{lemma}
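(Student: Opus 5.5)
The plan is to avoid the explicit recursive description of \(\langle h_1, \cdots, h_n\rangle_k\) from the proof of Lemma \ref{k-ideal fg}. Instead, I would show that the set of elements with the desired property is itself a \(k\)-ideal containing the generators. Concretely, define
\begin{equation*}
    J \coloneqq \left\{ f \in B \;\middle|\; \text{for every } j \in \{1, \cdots, n\} \text{ there exists } \beta_j \in B \text{ with } f a_j = \beta_j h_j \right\}.
\end{equation*}
If \(J\) is a \(k\)-ideal and contains every \(h_i\), then minimality of \(\langle h_1, \cdots, h_n \rangle_k\) gives \(\langle h_1, \cdots, h_n \rangle_k \subseteq J\), which is exactly the statement.

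The first step is to check that the generators lie in \(J\). This follows from the hypothesis: for each \(i\) and \(j\) we have \(h_i a_j = a_i h_j\), so we can take \(\beta_j = a_i\). The second step is to check that \(J\) is an \(m\)-ideal. We have \(0 \in J\), since \(0 \cdot a_j = 0 \cdot h_j\). Also, if \(f \in J\) and \(c \in B\), then \((cf) a_j = (c\beta_j) h_j\), so \(cf \in J\).

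The third step, which I expect to be the only real point, is condition \((3)\) in the definition of a \(k\)-ideal. Suppose \(a + \sum_i b_i \in N_B\) with every \(b_i \in J\), and fix \(j\). Since \(N_B\) is an ideal of \(B^+\), multiplying by \(a_j\) gives
\begin{equation*}
    a a_j + \sum_i b_i a_j \in N_B.
\end{equation*}
Because each \(b_i \in J\), we can write \(b_i a_j = \beta_{i,j} h_j\), so \(a a_j + \sum_i \beta_{i,j} h_j \in N_B\). Each term \(\beta_{i,j} h_j\) lies in \(B \cdot h_j \subseteq \langle h_j \rangle_k\). Since \(\langle h_j \rangle_k\) is a \(k\)-ideal, its condition \((3)\) forces \(a a_j \in \langle h_j \rangle_k\).

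This is where \((D2)\) enters: it gives \(\langle h_j \rangle_k = B \cdot h_j\). Hence \(a a_j = \beta_j h_j\) for some \(\beta_j \in B\). Since \(j\) was arbitrary, \(a \in J\), so \(J\) is a \(k\)-ideal, which completes the argument.
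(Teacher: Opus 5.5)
Your proof is correct and is essentially the paper's argument. The key step is identical: multiply the null relation by \(a_j\), rewrite each \(b_i a_j\) as \(\beta_{i,j}h_j \in \langle h_j\rangle_k\), use condition \((3)\) for \(\langle h_j\rangle_k\), and then apply \((D2)\).

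The difference is only packaging. The paper inducts over the levels \(\langle h_1,\dots,h_n\rangle_k^{(m)}\) of the recursive description from \cite[Lemma 1.26]{Lorscheid3}. You instead show that the set of good elements is a \(k\)-ideal containing the generators and use minimality. This avoids that lemma and makes explicit two points the paper leaves implicit: the base case for the generators, and the deduction \(f a_j \in \langle h_j\rangle_k\).
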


\begin{proof}
    By \cite[Lemma~1.26]{Lorscheid3}, we have \(\left<h_1, \cdots, h_n \right>_k = \bigcup_{n = 0}^\infty \left<h_1, \cdots, h_n \right>_k^{(n)}\). We proceed by induction on \(n\). Suppose that for every \(s \in \left<h_1, \cdots, h_n \right>_k^{(n-1)}\) and every \(j\), there exists \(\beta_{s,j} \in B\) such that \(s a_j = \beta_{s,j} h_j\). Let \(f \in \left<h_1, \cdots, h_n \right>_k^{(n)}\). By definition, there exist \(s_i \in \left<h_1, \cdots, h_n \right>_k^{(n-1)}\) and \(b_i \in B\) such that \(f - \sum_i b_i s_i \in N_B\). Taking the product with \(a_j\), we obtain
    \begin{equation*}
        f a_j - \sum_i b_i (s_i a_j) = f a_j - \left(\sum_i b_i \beta_{s_i,j}\right) h_j.
    \end{equation*}
    That is, \(f a_j \in \left< h_j \right>_k\). By property \((D2)\), there exists \(\beta_j \in B\) such that \(f a_j = \beta_j h_j\).
\end{proof}

\begin{proof}[Proof of Theorem~\ref{main theorem hypbands}]
    We follow the proof of \cite[Theorem~4.23]{Jaiung1}, and indicate only the necessary modifications.

    Consider the following well-defined morphism of bands:
    \begin{equation*}
        \Phi \colon R_f \longrightarrow \OO^k_X(D^k(f)),\quad 
        \frac{a}{f^n} \longmapsto s,\quad \text{where } s(\mathfrak{p}) = \frac{a}{f^n} \text{ in } R_{\mathfrak{p}}.
    \end{equation*}
    
    The injectivity of \(\Phi\) follows as in \textit{loc.\,cit.}, the only difference being the use of property \((D1)\) to show that the implication
    \[
        h f^m a - h f^n b \in N_D \;\Longrightarrow\; f^m a = f^n b \in N_D
    \]
    holds.

    We now prove that \(\Phi\) is surjective. Let \(s \in \OO^k_X(D^k(f))\). Using \cite[Proposition~1.37]{Lorscheid3} and property \((D2)\), one shows, as in the classical case, that \(D^k(f)\) admits a covering by basic open subsets \(\{D^k(h_i)\}_{i \in I}\) such that for each \(i \in I\), there exists \(a_i \in B\) satisfying
    \begin{equation*}
        s(\mathfrak{q}) = \frac{a_i}{h_i} \quad \text{for all } \mathfrak{q} \in D^k(h_i).
    \end{equation*}
    Let \(I\) be the \(k\)-ideal generated by the elements \(h_i\). Since \(D^k(f) = \bigcup_{i \in I} D^k(h_i)\), we obtain
    \[
        V^k((f)) = \bigcap_{i \in I} V^k((h_i)) = V^k(I).
    \]
    By \cite[Proposition~1.37]{Lorscheid3}, it follows that \(f^n \in I\) for some \(n\). By Lemma~\ref{k-ideal fg}, there exists a finite subset \(J \subseteq I\) such that
    \begin{equation}\label{1432}
        f^n \in \langle h_j \mid j \in J \rangle_k.
    \end{equation}
    Hence \(D^k(f)\) is covered by the finite family \(\{D^k(h_j)\}_{j \in J}\). Write this family as \(D^k(h_1),\) \(\dots, D^k(h_n)\).

    Using property \((D1)\), one shows, as in \cite[Theorem~4.23]{Jaiung1}, that
    \[
        a_i h_j = a_j h_i \quad \text{for all } i,j.
    \]
     By \eqref{1432} and Lemma~\ref{technical lemma 2}, for each \(j\) there exists \(\beta_j \in B\) such that
    \[
        f^n a_j = \beta_j h_j.
    \]
    Applying again property \((D1)\), one deduces, as in \cite[Theorem~4.23]{Jaiung1}, that \(\beta_i = \beta_j\) for all \(i,j\). Denoting this common value by \(\beta\), we obtain
    \[
        \Phi\!\left(\frac{\beta}{f^n}\right) = s,
    \]
    which proves that \(\Phi\) is surjective.

    Finally, we verify the compatibility of the null sets. Let \(\sum_i \frac{a_i}{f^n} \in (B_f)^+\) (after taking a common denominator) be such that \(\sum_i s_i \in N_{\OO^k_X(D^k(f))}\), where \(\Phi\!\left(\frac{a_i}{f^n}\right) = s_i\) for every \(i\). By definition, for every \(\mathfrak{p} \in D^k(f)\), we have
\[
    \sum_i s_i(\mathfrak{p}) = \sum_i \frac{a_i}{f^n} \in N_{B_{\mathfrak{p}}} = \left\langle \sum_j \frac{b_j}{1} \;\middle|\; \sum_j b_j \in N_B \right\rangle_{B_\mathfrak{p}}.
\]
Hence, there exist \(b_i \in B\) and \(h \notin \mathfrak{p}\) such that \(a_i/f^n = b_i/h\) for every \(i\), and \(\sum_i b_i \in N_B\). By property \((D1)\), we have \(a_i h = b_i f^n\) for every \(i\). Taking the sum, we obtain
\begin{equation*}
    h \cdot \left(\sum_i a_i\right) = f^n \cdot \left(\sum_i b_i\right) \in N_B.
\end{equation*}
Again, by property \((D1)\), we have \(\sum_i a_i \in N_B\), and therefore
\[
    \sum_i \frac{a_i}{f^n} \in N_{B_f}.
\]
This shows that \(\Phi\) is an isomorphism of bands.
\end{proof}

%% file: protoexact.tex
\section{Proto-Exactness of Band Modules}\label{sec: protoexact}

As presented in Section \ref{sec:preliminaries}, proto-exact categories requires two classes of morphisms. Here, we define the notion of \textit{strict} morphisms for band modules. The idea is to find the right generalization of the case of stric morphism of hypermodules. Using this we define the classes of admissible monomorphisms (resp. adimissible epimorphism) as the class of strict monomorphisms (resp. strict epimorphisms) and prove the proto-exactness of \(\BMod_B\).

\subsection{Admissible monomorphisms and epimorphisms} 

\begin{definition}[Strict morphism]
    A morphism of \(B\)-modules \(f : M \to N\) is called \emph{strict} if for every sum \(\sum m_i \in M^{+}\) such that \(\sum f(m_i) \in I_N\), there exists a finite sequence \((c_j)_j\) of elements of \(\ker(f)\) for which \(\sum m_i + \sum c_j \in I_M\).
\end{definition}

\begin{remark}\label{remnark:monomorphism}
    For monomorphisms, the condition above admits a simpler formulation. A monomorphism \(f : N \to M\) is strict if and only if the following equivalence holds:
    \[
        \sum_i m_i \in I_M \quad\text{if and only if}\quad \sum_i f(m_i) \in I_N .
    \]
    In particular, every isomorphism of \(B\)-modules is strict.
\end{remark}

\begin{remark}
    By Remark \ref{inclusion strict} and Definition \ref{def: quotient}, we know that both the inclusion morphisms from a submodule and the canonical projections to the quotient modules by an equivalence relation, hence also by a submodule, are strict.
\end{remark}

\begin{remark}
    Let \(f : N \to M\) be a morphism of hypermodules over a hyperring \(H\), regarded as a morphism in \(\BMod_H\).
    Then \(f\) is strict in the sense above if and only if it is strict as a morphism of hypermodules; that is,
    \[
        f(n_1 + n_2) = f(n_1) + f(n_2) \qquad \text{for all } n_1, n_2 \in N.
    \]
\end{remark}

Strictness are also closely related to kernels and cokernels.

\begin{definition}
    A monomorphism (resp. epimorphism) is said to be \textit{normal} if it is the kernel (resp. cokernel) of a morphism.
\end{definition}

\begin{lemma}
    A monomorphism is normal if and only if it is strict.
\end{lemma}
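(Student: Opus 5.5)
The plan is to prove both implications by identifying kernels with strict submodules, using the explicit description of equalizers and kernels from Section~\ref{sec:Modules over Bands}.

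\emph{Normal $\Rightarrow$ strict.} Suppose $i\colon N\to M$ is a kernel of some $g\colon M\to P$. The kernel was constructed as the equalizer of $g$ and the zero morphism. That construction is the subset $\ker(g)=\{m\mid g(m)=*\}$, made into a submodule in the sense of Remark~\ref{inclusion strict}, with null set $I_M\cap \ker(g)^+$. Since limits are unique up to unique isomorphism, $i$ factors as an isomorphism $N\cong \ker(g)$ followed by this inclusion. An isomorphism satisfies the equivalence of Remark~\ref{remnark:monomorphism}, and so does the inclusion, because its null set is the restriction of $I_M$. The composite therefore satisfies it as well, so $i$ is a strict monomorphism.

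\emph{Strict $\Rightarrow$ normal.} Let $i\colon N\to M$ be a strict monomorphism. By Remark~\ref{remnark:monomorphism} and injectivity, we may identify $N$ with the subset $i(N)\subseteq M$ carrying the null set $I_M\cap N^+$; this is a strict submodule in the sense of Remark~\ref{inclusion strict}. Let $\pi\colon M\to M/N$ be the quotient of Definition~\ref{def:quotient submod}, which by Lemma~\ref{lemma:cokernel} is the cokernel of $i$. The class $[*]$ is the base point of $M/N$, and the equivalence relation collapses exactly $N$ to that class while leaving every element outside $N$ alone. Hence $\pi^{-1}([*])=N$ as sets.

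It follows that the kernel of $\pi$ is the submodule $N\subseteq M$ with null set $I_M\cap N^+$. This is exactly the module structure that strictness of $i$ puts on $N$. Therefore $i$ is the kernel of $\pi$, and in particular $i$ is normal.

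The only delicate point is the null-set bookkeeping: one has to check that the kernel, as an equalizer, carries the restricted null set $I_M\cap \ker^+$ rather than something coarser. The paper's equalizer construction takes the strict submodule structure, so this holds. I would state it explicitly, and also verify the universal property directly: any $\phi\colon Q\to M$ with $\pi\phi=0$ lands in $N$, and the corestriction preserves null sets precisely because $N$ carries $I_M\cap N^+$.
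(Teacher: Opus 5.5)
Your proposal is correct and follows essentially the same route as the paper: you identify a strict monomorphism with the strict submodule $i(N)\subseteq M$ carrying $I_M\cap i(N)^+$, and observe that this is exactly the kernel (the equalizer with $0$) of the projection $\pi\colon M\to M/i(N)$, whose fiber over the base point is $i(N)$. Your converse is slightly more careful than the paper's, which argues as if the given morphism were $\pi$; you instead use that the kernel of an arbitrary $g$ is the submodule $\ker(g)$ with the restricted null set, so every kernel is strict.
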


\begin{proof}
    Let \(f : N \hookrightarrow M\) be a monomorphism, and let 
    \(\pi : M \to M/\im(f)\) be the canonical projection. 
    As sets, we have 
    \(
        N = \eq(\pi, 0).
    \)
    It remains to verify that the band–pre-additions agree.

    Since \(\im(f)\) is a sub-\(B\)-module of \(M\), the equalizer 
    \(\eq(\pi,0)\) is the sub-\(B\)-module whose underlying pointed set is 
    \(\{m \in M \mid \pi(m) = \star\} = \im(f)\), hence identified with \(N\).
    Therefore the question is whether 
    \(I_N = I_{\eq(\pi,0)}\).

    If \(f\) is strict, then for any sum \(\sum N_i \in N^{+}\), we have
    \[
        \sum n_i \in I_N 
        \quad\Longleftrightarrow\quad
        \sum f(n_i) \in I_M,
    \]
    which exactly means \(I_N = I_{\eq(\pi,0)}\). 
    Hence \(f\) is the kernel of \(\pi\), so \(f\) is normal.

    Conversely, if \(f\) is the kernel of some morphism, then by definition it is the 
    equalizer \(\eq(\pi,0)\), so the pre-additions agree and \(f\) is strict.
\end{proof}

\begin{lemma}
    An epimorphism is normal if and only if it is strict and all of its fibers 
    contain at most one element, except possibly the fiber over \(\star\).
\end{lemma}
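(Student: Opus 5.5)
The plan is to identify a normal epimorphism with the cokernel of its kernel and then compare null sets. By Lemma~\ref{lemma:cokernel}, any cokernel of some \(g : L \to M\) is, up to isomorphism, the canonical projection \(\pi : M \to M/\im(g)\). This identifies all of \(\im(g)\) to the base point and leaves every other element alone, and its null set is \(\pi^+(I_M)\) as in Definition~\ref{def: quotient}. So a normal epimorphism \(f : M \to N\) is, up to isomorphism with \(N\), a map \(M \to M/K\) with \(K = \ker(f)\) (one checks \(\ker \pi = \im(g)\)). The proof therefore reduces to two steps. First, show that \(\pi : M \to M/K\) has the stated properties. Second, show that any epimorphism with those properties is isomorphic, under \(M\), to \(\pi : M \to M/\ker(f)\).

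\textbf{Normal \(\Rightarrow\) strict with singleton fibres.} The fibre condition is immediate, since \(\pi\) is injective on \(M \setminus K\). For strictness, suppose \(\sum \pi(m_i) \in I_{M/K} = \pi^+(I_M)\). Then there is some \(\sum m'_k \in I_M\) with \(\pi^+(\sum m'_k) = \sum \pi(m_i)\) in \((M/K)^+\). Because \(*\) is the empty sum in \((M/K)^+\), the terms of \(\sum m'_k\) are the \(m_i\) with \(m_i \notin K\), each appearing with the same multiplicity, together with some extra terms lying in \(K\). The \(m_i\) lying in \(K\) also disappear under \(\pi\). To get \(\sum m_i\) plus kernel elements into \(I_M\), I will add to \(\sum m'_k\) the sums \(c + (-c)\) for each \(m_i \in K\). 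Here \(-c \in K\), since \(f(-c) = -f(c) = -*\) and \(-* = *\) by (NS1) and (NS3). By (NS2), \(\sum m_i + (\text{extra kernel terms}) \in I_M\), which is exactly the strictness condition.

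\textbf{Strict with singleton fibres \(\Rightarrow\) normal.} Let \(K = \ker(f)\). Since \(f \circ \iota_K = 0\), the universal property of the cokernel gives \(\bar f : M/K \to N\) with \(\bar f \circ \pi = f\).
\begin{itemize}
\item \(\bar f\) is surjective because \(f\) is.
\item \(\bar f\) is injective because the fibres over points other than \(*\) are singletons and the fibre over \(*\) is \(K\), which is collapsed to \(*\) in \(M/K\).
\item For the null sets, take \(\sum \bar f([m_i]) \in I_N\). Strictness gives \(c_j \in K\) with \(\sum m_i + \sum c_j \in I_M\). Applying \(\pi^+\) kills the \(c_j\), so \(\sum [m_i] \in \pi^+(I_M) = I_{M/K}\).
\end{itemize}
Hence \(\bar f\) is an isomorphism, by Remark~\ref{remnark:monomorphism} applied to the bijection \(\bar f\). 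It follows that \(f\) is the cokernel of \(K \hookrightarrow M\), so \(f\) is normal.

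The main obstacle is the bookkeeping with formal sums in \((M/K)^+\), where \(*\) equals the empty sum. A preimage in \(I_M\) of \(\sum \pi(m_i)\) need not have the \(m_i\) as its terms: terms lying in \(K\) can be added or dropped freely. The argument has to match the non-kernel terms exactly, which uses injectivity of \(\pi\) off \(K\), and absorb everything else into the kernel elements \(c_j\). That absorption uses \(-K \subseteq K\) and the fact that \(c + (-c) \in I_M\).
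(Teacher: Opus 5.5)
Your proof is correct and follows the same route as the paper: the canonical projection \(M \to M/\ker(f)\) has both properties, and conversely the induced map \(\bar f : M/\ker(f) \to N\) is a bijection that reflects null sets, hence an isomorphism exhibiting \(f\) as the cokernel of \(\ker(f) \hookrightarrow M\). You in fact supply details the paper leaves out: the reduction of an arbitrary cokernel to the canonical projection, and the proof (via the \(c + (-c)\) absorption) that this projection is strict, which the paper only asserts. You also obtain that \(\bar f\) is a morphism directly from Lemma~\ref{lemma:cokernel}, rather than checking it by hand as the paper does.
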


\begin{proof}
    First observe that the projection 
    \(\pi : M \to M/\im(f)\) satisfies all the 
    conditions in the statement.

    Conversely, suppose \(f : N \to M\) is an epimorphism satisfying the two 
    conditions in the statement. We show that \(f\) is the cokernel of the strict 
    monomorphism 
    \[
        f^{-1}(\star) \hookrightarrow N.
    \]

    Consider the map
    \[
        \phi : N / f^{-1}(\star) \to M,
        \qquad
        \phi([n]) = f(n).
    \]
    This map is well defined because if \(n \sim n'\), then either \(n = n'\) or 
    both lie in \(f^{-1}(\star)\), and in both cases we have \(f(n) = f(n')\). 
    Moreover, \(\phi\) preserves the \(B\)-action.

    To show that \(\phi\) is strict, let \(\sum [n_i]\) be a sum in 
    \(I_{N/f^{-1}(\star)}\). Then there exist \(c_j \in f^{-1}(\star)\) such that
    \[
        \sum n_i + \sum c_j \in I_N.
    \]
    Since \(f\) is strict, this is equivalent to
    \[
        \sum f(n_i) \in I_M,
    \]
    which means exactly that \(\sum \phi([n_i]) \in I_M\). Thus \(\phi\) is strict.

    We now show that \(\phi\) is bijective. Surjectivity follows from the 
    surjectivity of \(f\). For injectivity, suppose \(\phi([n]) = \phi([n'])\), 
    i.e.\ \(f(n) = f(n')\). By hypothesis on the fibers, either \(n = n'\) or both 
    lie in \(f^{-1}(\star)\). In both cases we obtain \([n] = [n']\). Hence 
    \(\phi\) is injective.

    Therefore \(\phi\) is an isomorphism of \(B\)-modules, satisfying \(\phi \circ \pi = f\). This shows that 
    \(f\) is the cokernel of the monomorphism \(f^{-1}(\star) \to N\), and hence 
    \(f\) is normal.
\end{proof}

When restricted to strict morphisms, the isomorphism theorem holds.

\begin{proposition}[Isomorphism theorem]\label{thm:isomorphism}
    Let \(f: M \rightarrow N\) be a strict morphism of \(B\)-modules. Then, \(f\) induces an isomorphism of \(B\)-modules 
    \begin{align*}
        \overline{f} : M/\ker(f) &\longrightarrow \im(f)\\
          [m] &\longmapsto f(m).
    \end{align*}
\end{proposition}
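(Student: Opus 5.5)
The plan is to check, in order, that \(\overline{f}\) is well defined and \(B\)-linear, that it is bijective onto \(\im(f)\), and that it identifies the two null sets. Throughout, \(M/\ker(f)\) is the quotient by the submodule \(\ker(f)\) in the sense of Definition~\ref{def:quotient submod}, with null set \(\pi^+(I_M)\) as in Definition~\ref{def: quotient}. Similarly, \(\im(f)\) carries the strict-submodule structure of Remark~\ref{inclusion strict}, with null set \(I_N\cap \im(f)^+\).

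\emph{Well-definedness, linearity, surjectivity.} If \([m]=[m']\), then either \(m=m'\) or \(m,m'\in\ker(f)\); in both cases \(f(m)=f(m')\). Linearity follows from \(b\cdot[m]=[b m]\) and the linearity of \(f\). Surjectivity onto \(\im(f)\) holds by definition of the image.

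\emph{Injectivity.} This is the step I expect to be the main obstacle, because \(f\) itself need not be injective outside its kernel; strictness must do the work. Suppose \(f(m)=f(m')\). Then \(f(m)+f(-m')=f(m)+(-f(m'))\in I_N\), using \(f(-m')=-f(m')\). The latter identity holds because \(m'+(-m')\in I_M\) gives \(f(m')+f(-m')\in I_N\), and inverses in a null set are unique.

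By strictness there are \(c_j\in\ker(f)\) with \(m+(-m')+\sum_j c_j\in I_M\). Applying \(\pi^+\) and using \([c_j]=*\), we get \([m]+[-m']\in I_{M/\ker(f)}\). Since \(\pi\) is a morphism, \([m']+[-m']\) also lies in this null set. Uniqueness of inverses in (NS3) then gives \(-[m]=[-m']=-[m']\). Applying (NS3) once more, together with commutativity of formal sums (which yields \(-(-x)=x\)), we conclude \([m]=[m']\).

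\emph{Null sets.} The inclusion \(\overline{f}^+(I_{M/\ker(f)})\subseteq I_N\cap\im(f)^+\) holds because every element of \(I_{M/\ker(f)}\) has the form \(\sum[m_i]\) with \(\sum m_i\in I_M\), and \(f\) preserves null sets.

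For the converse, let \(\sum f(m_i)\in I_N\). Strictness gives \(c_j\in\ker(f)\) with \(\sum m_i+\sum c_j\in I_M\). Projecting with \(\pi^+\) and again using \([c_j]=*\) yields \(\sum[m_i]\in\pi^+(I_M)=I_{M/\ker(f)}\).

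Hence \(\overline{f}\) is a bijective morphism whose inverse also preserves null sets. It is therefore an isomorphism of \(B\)-modules.
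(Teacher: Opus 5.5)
Your argument follows the paper's proof essentially line by line. Well-definedness, linearity, surjectivity and both null-set inclusions are fine.

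\textbf{The gap is the injectivity step.} There you apply uniqueness of inverses (NS3) in \(M/\ker(f)\) with null set \(\pi^+(I_M)\), but this set need not satisfy (NS3). The very feature you exploit, namely that \(\pi^+\) deletes every summand lying in \(\ker(f)\), is what destroys uniqueness. In fact the statement itself fails, as the following example shows.

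\emph{Counterexample.} Take \(B=\FF_1^{\pm}\) and the reduction map \(f\colon \ZZ\to\FF_3\). Regard both as \(\FF_1^{\pm}\)-modules whose null sets consist of the formal sums that vanish.
\begin{itemize}
\item \(f\) is strict: if \(\sum f(m_i)=0\), then \(\sum m_i=3k\), and \(c=-3k\in\ker(f)=3\ZZ\) does the job.
\item The quotient \(\ZZ/\ker(f)\) collapses \(3\ZZ\) to the base point and keeps every other element separate. So it is infinite, while \(\im(f)=\FF_3\) has three elements. For instance, \([1]\neq[4]\) but \(\overline{f}([1])=\overline{f}([4])\).
\item Run your argument with \(m=1\), \(m'=4\). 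The sum \(1+(-4)+3\in I_{\ZZ}\) gives \([1]+[-4]\in\pi^+(I_{\ZZ})\). Also \([4]+[-4]\in\pi^+(I_{\ZZ})\). So \([-4]\) has the two distinct inverses \([1]\) and \([4]\), and \(\ZZ/\ker(f)\) is not even a \(B\)-module.
\end{itemize}

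The paper's proof makes the same appeal (``by the uniqueness of \(-[m]\)''). So this is a defect of the statement with the collapsing quotient \(M/\ker(f)\), not something you can repair inside your argument. Strictness controls null sums only modulo kernel elements. It cannot separate two non-kernel elements with the same image.

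\emph{Two ways to make it true.}
\begin{itemize}
\item Add the hypothesis that every fiber of \(f\) other than \(f^{-1}(*)\) has at most one element; this is the condition in the paper's lemma on normal epimorphisms. Then injectivity is immediate, (NS3) for \(\pi^+(I_M)\) follows by pushing sums forward along \(f\), and the rest of your proof goes through.
\item For arbitrary strict \(f\), replace \(M/\ker(f)\) by \(M\sslash J\) with \(J=(f^+)^{-1}(I_N)\). By strictness, \(J\) consists of the sums \(\sum m_i\) such that \(\sum m_i+\sum c_j\in I_M\) for some \(c_j\in\ker(f)\), and \(M\sslash J\cong\im(f)\).
\end{itemize}
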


\begin{proof}
    First, we prove that \(\overline{f}\) is well defined. Indeed, if \(m \sim m'\), then \(m = m'\) or \(m, m' \in \ker(f)\), and both of the cases imply that \(f(m) = f(m')\).

    It is easy to see that \(\overline{f}\) preserves the action of \(B\). Hence, to prove that \(\overline{f}\) is a morphism of \(B\)-modules consider a sum \(\sum [m_i] \in I_{M/\ker(f)}\). By the definition of quotient, we have \(\sum m_i' \in I_M\) for some \(m_i' \sim m_i\) for every \(i\). Since \(f\) is a morphism of \(B\)-modules we conclude that \[\sum f(m_i) = \sum f(m_i') \in  I_N \cap \im(f)^+ = I_{\im(f)}.\] 

    \(\overline{f}\) is clearly surjective. To prove the injectivity, let \(\overline{f}([m]) = \overline{f}([m'])\). Then, \(f(m) + f(-m') \in I_N\). Since \(f\) is strict, there exists \(c_i \in \ker(f)\) such that \(m - m' + \sum c_i \in I_M\). Taking the projection to \(M/\ker(f)\) and by the uniqueness of \(-[m]\), we conclude that \([m] = [m']\). 

    Finally, we prove that \(\overline{f}\) is strict. Let \(\sum [m_i] \in (M/\ker(f))^+\)be such that \(\sum f(m_i) \in I_{\im(f)} \subseteq I_N\). By the strictness of \(f\), there exist \(c_j \in \ker(f)\) such that \(\sum m_i + \sum c_j \in I_M\). Taking the projection to \(M/\ker(f)\) we get that \(\sum [m_i] \in I_{M/\ker(f)}\).
\end{proof}

\subsection{proto-exactness of \(\BMod_B\)}

Let \(B\) be a fixed band and let \(\mathscr{E}\) be a full subcategory of \(\BMod_B\) satisfying the following properties.
\begin{enumerate}
        \item If \(M \in \mathscr{E}\) and \(N\) is a sub \(B\)-module of \(M\), then \(N \in \mathscr{E}\)
        \item If \(M \in \mathscr{E}\) and \(N\) is a sub \(B\)-module of \(M\), then \(M/N \in \mathscr{E}\).
    \end{enumerate}

Denote by \(\mathfrak{M}_\mathscr{E}\) (resp. \(\mathfrak{E}_\mathscr{E}\)) the class of strict monomorphisms (resp. strict epimorphisms) in \(\mathscr{E}\).

\begin{theorem}\label{Last theorem}
    The triple \((\mathscr{E}, \mathfrak{M}_\mathscr{E}, \mathfrak{E}_\mathscr{E})\) is proto-exact. 
\end{theorem}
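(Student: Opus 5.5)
We verify the axioms (PE1)--(PE5) in turn, following the strategy of J.~Jun for hypermodules and exploiting the fact that, by Remark~\ref{inclusion strict} and Proposition~\ref{thm:isomorphism}, every strict monomorphism is isomorphic to the inclusion of a subset \(N \subseteq M\) closed under the \(B\)-action, with the induced null set \(I_M \cap N^+\). Likewise, every strict epimorphism \(M \twoheadrightarrow M'\) is isomorphic to a quotient \(M/\ker\) in the sense of Definition~\ref{def: quotient}. The two closure properties of \(\mathscr{E}\) ensure that all the modules we construct stay in \(\mathscr{E}\).

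For (PE1), the zero module is the zero object. The map \(0 \to M\) is the inclusion of the submodule \(\{*\}\), and \(M \to 0\) is strict because every sum in \(M^+\) can be completed to a null sum by adding elements of \(\ker = M\): add \(-m_i\) for each \(m_i\), using (NS3) and (NS2). For (PE2), isomorphisms are strict by Remark~\ref{remnark:monomorphism}. Composites of strict monomorphisms are strict by the two-sided characterization in that remark. For strict epimorphisms \(f\colon M \to N\) and \(g\colon N \to P\), take a sum \(\sum m_i\) with \(\sum gf(m_i) \in I_P\). Strictness of \(g\) gives \(d_j \in \ker g\) with \(\sum f(m_i) + \sum d_j \in I_N\). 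Lifting \(d_j = f(e_j)\) by surjectivity, strictness of \(f\) gives \(c_k \in \ker f\) with \(\sum m_i + \sum e_j + \sum c_k \in I_M\). Since \(e_j, c_k \in \ker(gf)\), this proves strictness of \(gf\).

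For (PE4), given \(i'\colon N' \hookrightarrow M'\) and \(j'\colon M \twoheadrightarrow M'\), we take \(N \coloneqq j'^{-1}(N')\) as a strict submodule of \(M\) and \(j = j'|_N\). We first check that \(N\) is the fibre product: the underlying set is the set-theoretic pullback since \(i'\) is injective, and the null set \(I_M \cap N^+\) matches the product null set because \(i'\) is strict. Strictness of \(j\) follows from that of \(j'\): the kernel elements \(c_j\) lie in \(\ker j' \subseteq N\). For (PE5), given \(i\colon N \hookrightarrow M\) and \(j\colon N \twoheadrightarrow N'\), we form the pushout \(M' \coloneqq M \vee N'/\!\sim\), where \(\sim\) identifies \(i(n)\) with \(j(n)\). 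Concretely, \(M'\) is \(M\) with each fibre \(i(j^{-1}(x))\) collapsed. One must check that the resulting maps \(N' \to M'\) and \(M \to M'\) are strict mono and strict epi respectively.

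The main obstacle is (PE3): showing that a commutative square with \(i,i'\) strict monic and \(j,j'\) strict epic is Cartesian if and only if it is coCartesian. For Cartesian \(\Rightarrow\) coCartesian, we plan to identify \(N = j'^{-1}(N')\) and show that \(j'\) induces a bijection \(M \setminus N \to M' \setminus N'\). This bijection should come from injectivity of the fibres of \(j'\) outside \(N'\), using that \(\ker j' \subseteq N\) and the isomorphism theorem (Proposition~\ref{thm:isomorphism}). Once we have it, the pushout computed in (PE5) maps isomorphically onto \(M'\). For the converse, we will compare \(M'\) with the explicit pushout from (PE5) and compute the preimage of \(N'\) directly. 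The delicate point throughout is the null sets rather than the underlying sets. The key tool here will be the strictness condition in its "add kernel elements" form, which lets us transfer null sums between \(M\), \(N\) and \(M'\). We expect the verification that the pushout null set (generated by images, as in Definition~\ref{def: quotient}) coincides with \(I_{M'}\) to be where the real work lies.
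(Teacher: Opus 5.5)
Your proposal is correct and follows essentially the paper's route. You take $N=(j')^{-1}(i'(N'))$ for (PE4) and $M'=M/\ker(j)$ for (PE5), and use Proposition~\ref{thm:isomorphism} to identify every admissible square with the standard square $N\hookrightarrow M$, $N/P\hookrightarrow M/P$ of Lemma~\ref{lemma:bicartesian}. In addition, you verify (PE1)--(PE3), which the paper leaves implicit, and you give a cleaner strictness argument for $j$ in (PE4) via $\ker(j')\subseteq N$.

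One point to make explicit: the injectivity of $j'$ away from $\ker(j')$, and hence the uniqueness-of-negatives step of Proposition~\ref{thm:isomorphism}, is exactly where closure property (2) of $\mathscr{E}$ is used, since it is what makes $M/\ker(j')$ a genuine band module. Without it the claim fails: over $B=\mathbb{Z}$ the reduction $\mathbb{Z}\to\mathbb{Z}/2\mathbb{Z}$ is strict, yet all odd integers lie in a single fibre.
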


\begin{corollary}\label{main thm protoexactness}
    The triples
    \begin{enumerate}
        \item \((\BMod_B, \mathfrak{M}_{\BMod_B}, \mathfrak{E}_{\BMod_B})\), and
        \item \((\BMod_F^{\mathrm{fin,proj}}, \mathfrak{M}_{\BMod_F^{\mathrm{fin,proj}}}, \mathfrak{E}_{\BMod_F^{\mathrm{fin,proj}}})\)
    \end{enumerate} 
    are proto-exact.
\end{corollary}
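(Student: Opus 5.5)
The plan is to deduce both items from Theorem~\ref{Last theorem}. In each case I only need to check that the category is a full subcategory of \(\BMod_B\) satisfying the two closure hypotheses stated before that theorem: closure under sub-\(B\)-modules and closure under quotients \(M/N\) by sub-\(B\)-modules.

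For item (1), take \(\mathscr{E} = \BMod_B\) itself. Both closure properties hold trivially, since every sub-\(B\)-module of a \(B\)-module is a \(B\)-module and every quotient \(M/N\) of Definition~\ref{def:quotient submod} is again a \(B\)-module. Theorem~\ref{Last theorem} then gives proto-exactness of \((\BMod_B,\mathfrak{M}_{\BMod_B},\mathfrak{E}_{\BMod_B})\) at once.

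For item (2), take \(B = F\) an idyll and \(\mathscr{E} = \BMod_F^{\mathrm{fin,proj}}\), which is full by definition. I will use the lemma stating that \(\BMod_F^{\mathrm{fin,proj}}\) is closed under coproducts, kernels and cokernels. That proof shows more: via Proposition~\ref{prop: idyll proj = free}, every finitely generated projective \(M\) is isomorphic to \(\bigvee_{i=1}^n F\cdot e_i\). For a strict submodule \(N\subseteq M\), the idyll property forces \(N \cong \bigvee_{i\in I'} F\cdot e_i\) and \(M/N \cong \bigvee_{i\notin I'} F\cdot e_i\). Both are finite free, hence finitely generated projective.

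The main obstacle is the mismatch in what "sub \(B\)-module" means. The hypotheses of Theorem~\ref{Last theorem} speak of sub-\(B\)-modules, which by Definition~\ref{def:submodules} are merely injective morphisms, whereas the lemma handles strict submodules. There are two ways to resolve this:
\begin{enumerate}
\item[(a)] Read the hypotheses of Theorem~\ref{Last theorem} as referring to strict submodules. This is what its proof actually uses, since admissible monomorphisms are strict and kernels and cokernels are taken along strict subobjects. In that reading the lemma suffices.
\item[(b)] Handle non-strict injections \(N\hookrightarrow M\) directly. Here the underlying pointed \(F\)-set of \(N\) is still a union of orbits \(F\cdot e_i\) with \(i\in I'\), because \(F^\times\) acts freely and transitively on each nonzero orbit. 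The quotient \(M/N\) carries the pushed-forward null set, so it is still \(\bigvee_{i\notin I'}F\cdot e_i\).
\end{enumerate}
In (b) the remaining issue is whether \(N\), with its possibly smaller null set, is free. I would attempt to show this, or otherwise fall back on (a), noting that the theorem's proof only ever invokes strict subobjects. With either resolution, applying Theorem~\ref{Last theorem} finishes the proof.
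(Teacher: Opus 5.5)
Your proposal is correct and is essentially the paper's own (implicit) argument. The paper obtains the corollary by applying Theorem~\ref{Last theorem} to \(\mathscr{E}=\BMod_B\) and to \(\mathscr{E}=\BMod_F^{\mathrm{fin,proj}}\), using the closure lemma for kernels and cokernels in the second case, with ``sub \(B\)-module'' read as a strict submodule exactly as in your reading (a), which is how Lemmas~\ref{lemma:bicartesian}--\ref{lemma:PE5} use it.

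Your open point in (b) also closes immediately, so the two readings of the hypothesis agree. If \(N\) injects into \(M=\bigvee_{i=1}^n F\cdot e_i\), then (BA3) and (NS2) force \(I_N\) to contain the free null set on the basis orbits contained in \(N\). Being a morphism forces \(I_N\subseteq I_M\cap N^+\), which is that same set, so every such submodule is strict and free.
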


We devide the proof of Theorem \ref{Last theorem} into several lemmas.

\begin{lemma}\label{lemma:bicartesian}
    Let \(M \in \mathscr{E}\), \(N\) be a sub \(B\)-module of \(M\), and let \(i: N \hookrightarrow M\) be the inclusion. If \(P\) is a submodule of \(N\), then it is also a submodule of \(M\) and we have the following biCartesian commutative diagram of strict morphisms:
    \begin{equation}\label{diag:bicartesian1}
        \begin{tikzcd}
        N \arrow[r, hook, "i"] \arrow[d, two heads, "j"] & M \arrow[d, two heads, "j'"]\\
        N/P \arrow[r, hook, "i'"]   & M/P
    \end{tikzcd}
    \end{equation}
\end{lemma}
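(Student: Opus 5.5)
We will work with the explicit models from the paper. A submodule \(N \subseteq M\) is taken with the induced null set \(I_N = I_M \cap N^+\), as in Remark \ref{inclusion strict}. The quotients \(N/P\) and \(M/P\) are those of Definition \ref{def:quotient submod}: \(P\) is collapsed to the base point, and the null set is the image \(\pi^+(I)\). The plan has three steps: verify that the square consists of strict morphisms, then check the Cartesian property, then check the coCartesian property.

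\textbf{Step 1: the square and strictness.} Since \(P \subseteq N \subseteq M\) is closed under the \(B\)-action, \(P\) is a strict submodule of \(M\). Define \(i'([n]) = [n]\). This is well defined and injective: two elements of \(N\) are identified in \(M/P\) exactly when they are equal or both lie in \(P\), which is the same as being identified in \(N/P\). Commutativity \(j' \circ i = i' \circ j\) is immediate. The maps \(i\), \(j\) and \(j'\) are strict by the remark following Remark \ref{remnark:monomorphism}.

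For \(i'\), Remark \ref{remnark:monomorphism} says we must show \(\sum [n_k] \in I_{N/P}\) if and only if \(\sum [n_k] \in I_{M/P}\). The forward direction holds because \(i'\) is a morphism. For the converse, suppose \(\sum [n_k] \in I_{M/P}\). Then there exist representatives \(m_k \sim_P n_k\) in \(M\) with \(\sum m_k \in I_M\). Each \(m_k\) either equals \(n_k \in N\), or lies in \(P \subseteq N\). So every \(m_k \in N\), hence \(\sum m_k \in I_M \cap N^+ = I_N\), and therefore \(\sum [n_k] = \sum [m_k] \in I_{N/P}\).

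\textbf{Step 2: the square is Cartesian.} Let \(\alpha : X \to M\) and \(\beta : X \to N/P\) satisfy \(j'\alpha = i'\beta\). We want a unique \(\gamma : X \to N\) with \(i\gamma = \alpha\) and \(j\gamma = \beta\). Uniqueness is clear because \(i\) is injective. For existence, it suffices to show that \(\alpha\) lands in \(N\). If \(\alpha(x) \notin P\), then \([\alpha(x)] = i'\beta(x)\) is the class of an element of \(N\) outside \(P\), so \(\alpha(x) \in N\). If \(\alpha(x) \in P\), then \(\alpha(x) \in N\) directly. Thus \(\gamma\) is the corestriction of \(\alpha\), and it is a morphism because \(I_N = I_M \cap N^+\). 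The identity \(j\gamma = \beta\) then follows from the injectivity of \(i'\).

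\textbf{Step 3: the square is coCartesian.} This is the step I expect to be the main obstacle, since pushouts involve generated null sets. Let \(u : M \to Y\) and \(v : N/P \to Y\) satisfy \(u \circ i = v \circ j\). Then \(u(P) = v([*]) = *\), so by Lemma \ref{lemma:cokernel} \(u\) factors uniquely as \(u = w \circ j'\) with \(w : M/P \to Y\). Uniqueness of the induced map follows from the surjectivity of \(j'\). Moreover \(w \circ i' \circ j = w \circ j' \circ i = u \circ i = v \circ j\), and since \(j\) is surjective this gives \(w \circ i' = v\).

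The point to double-check is that \(w\) is a morphism of \(B\)-modules, that is, that it respects the null set \(\pi^+(I_M)\). This holds because every element of that null set is the image of some sum in \(I_M\), and \(u\) is a morphism.

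Finally, the objects \(N\), \(N/P\) and \(M/P\) all lie in \(\mathscr{E}\) by the closure hypotheses (1) and (2) on \(\mathscr{E}\). Hence the universal properties computed in \(\BMod_B\) also hold in the full subcategory \(\mathscr{E}\).
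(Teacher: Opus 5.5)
Your proof is correct and follows essentially the same route as the paper: a direct check with the explicit models, showing that the map into \(M\) lands in \(N\) for the Cartesian property, and factoring the map out of \(M\) through \(M/P\) for the coCartesian property. Two points in your version are slightly more careful than the paper's write-up: for the strictness of \(i'\) you observe that every representative in \(M\) of a class coming from \(N/P\) already lies in \(N\), and you note that the square remains bi-Cartesian in the full subcategory \(\mathscr{E}\).
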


\begin{proof} 
    First, we prove that \(i'\) is well defined. Denote by \(\sim_P\) the equivalence relation in \(N\) defined by the sub \(B\)-module \(P\) (see Definition \ref{def:quotient submod}). If \(a \sim_P a'\) in \(N\), then \(i(a) \sim_P i(a')\) in \(M\), hence \(i'([a]) = [i(a)]\) is well defined. Since \(i\) is injective, and we are taking the quotient of both \(M\) and \(N\) by \(P\), \(i'\) is also injective. Now, let \(\sum [n_i] \in (N/P)^+\). Then, it belongs to \(I_{N/P}\) if and only if \(\sum n_i' \in I_N\) for some \(n_i' \sim n_i\). Since \(i\) is a strict monomorphism, this is equivalent to \(\sum i(n_i') \in I_M\), which in turn is equivalent to \(\sum i'([n_i]) = \sum [i(n_i')] \in I_{M/P}\). This concludes that \(i'\) is a strict monomorphism. The commutativity of the diagram is immediate by the definition of \(i'\). 

    We prove that the diagram is coCartesian. Let \(Q\) be a \(B\)-module with two morphisms \(\alpha : N/P \rightarrow Q\) and \(\beta: M/P \rightarrow Q\) such that \(\alpha \circ j = \beta \circ i\).
    \[
    \begin{tikzcd}
        N \arrow[r, hook, "i"] \arrow[d, two heads, "j"] & M \arrow[d, two heads, "j'"] \arrow[rdd, bend left, "\beta"]& \\
        N/P \arrow[r, hook, "i'"] \arrow[rrd, bend right, "\alpha"]  & M/P \arrow[rd, dotted, "\gamma"] & \\
         & & D
    \end{tikzcd}
    \]
    Define the map \(\gamma : M/P \rightarrow D\) as \(\gamma([m]) = \beta(m)\). This is well defined since \(m \sim m'\) implies \(m = m'\) or \(m, m' \in P\). The former case is obvious. Suppose the latter cases occur, that is, both \(m,m' \in P\). Then, since \(P \subseteq N\), \(\beta(m) = (\beta \circ i)(m) = (\alpha \circ j)(m) = \alpha([m]) = \star_D\). In the same way we get \(\beta(m') = \star_D\), hence \(\beta(m) = \beta(m')\). 

    The morphism \(\gamma\) clearly preserves the action of \(B\). Now, let \(\sum [m_i] \in I_{M/P}\). This implies that \(\sum m_i' \in I_M\) for some \(m'_i \sim m_i\), from which we get that \(\sum \gamma([m_i]) =\sum \beta(m_i') \in I_D\). Hence \(\gamma\) is a morphism of \(B\)-modules.

    Finally, we prove that \(\gamma \circ i' = \alpha\): for every \(n \in N\), we have 
    \begin{equation*}
        (\gamma \circ i')([n]) = (\gamma \circ i' \circ j)(n) = (\gamma \circ j' \circ i)(n) = (\beta \circ i)(n) = (\alpha \circ j)(n) = \alpha([n]).
    \end{equation*}

    We prove that the diagram is Cartesian. Let \(R\) be a \(B\)-module with two morphisms \(\alpha: R \rightarrow N/P\) and \(\beta : R \rightarrow M\) such that \(i' \circ \alpha = j' \circ \beta\).
    \[
    \begin{tikzcd}
        R \arrow[rrd, bend left, "\beta"] \arrow[rdd, bend right, "\alpha"] \arrow[rd, dotted, "\gamma"] & & \\
        & N \arrow[r, hook, "i"] \arrow[d, two heads, "j"] & M \arrow[d, two heads, "j'"] \\
        & N/P \arrow[r, hook, "i'"] & M/P
         & & 
    \end{tikzcd}
    \]
    We claim that \(\im(\beta) \subseteq \im(i)\). Indeed, for every \(r \in R\), \((j' \circ \beta)(r) = (i' \circ \alpha)(r)\). Since \(j\) is surjective, there exists an \(n \in N\) such that \(j(n) = \alpha(r)\), hence \(j'(\beta(r)) = (i' \circ j)(n) = j'(i(n))\) which implies that \(\beta(r) \sim i(n)\). This means that either \(\beta(r) = i(n)\) or \(\beta(r), i(n) \in P\), and since \(P \subseteq N\), both cases implie that \(\beta(r) \in \im(i)\). Hence, \(\beta\) induces a morphism of \(B\)-modules \(\gamma: R \to N\) such that \(i \circ \gamma = \beta\). 

    Finally, we prove that \(j \circ \gamma = \alpha\): For every \(r \in R\), we have 
    \[(i' \circ j \circ \gamma)(r) = (j' \circ i \circ \gamma)(r) = (j' \circ \beta)(r) = (i' \circ \alpha)(r),\]
    and by the injectivity of \(i'\), we conclude that \((j \circ \gamma)(r) = \alpha(r)\).
\end{proof}

\begin{lemma}\label{lemma:PE4}
    Every diagram in \(\mathscr{E}\) of the form 
    \[
    \begin{tikzcd}
        & M \arrow[d, two heads ,"j'"]\\
        N' \arrow[r, hook, "i'"] & M'
    \end{tikzcd}
    \]
    where \(i' \in \mathfrak{M}_\mathscr{E}\) and \(j' \in \mathfrak{E}_\mathscr{E}\), can be completed to a biCartesian square 
    \begin{equation}\label{diag:bicartesian2}
    \begin{tikzcd}
        N \arrow[r, hook, "i"] \arrow[d, two heads, "j"] & M \arrow[d, two heads , "j'"]\\
        N' \arrow[r, hook, "i'"] & M'
    \end{tikzcd}
    \end{equation}
    with \(i \in \mathfrak{M}_\mathscr{E}\) and \(j \in \mathfrak{E}_\mathscr{E}\).
\end{lemma}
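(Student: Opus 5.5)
The natural candidate for the completion is the pullback. Set $N \coloneqq j'^{-1}(i'(N'))$, and regard it as a strict submodule of $M$ in the sense of Remark \ref{inclusion strict}, with null set $I_N = I_M \cap N^+$. Let $i \colon N \hookrightarrow M$ be the inclusion and $j \coloneqq i'^{-1}\circ j'|_N \colon N \to N'$, which makes sense because $i'$ is injective. Then $N \in \mathscr{E}$ by property (1) of $\mathscr{E}$, and $i$ is a strict monomorphism by the remark following Remark \ref{remnark:monomorphism}. Commutativity of the square is immediate.

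First, $j$ is surjective: for $n' \in N'$, surjectivity of $j'$ gives $m$ with $j'(m) = i'(n')$, and such an $m$ lies in $N$ by definition. Next, $j$ is a morphism of $B$-modules: if $\sum n_k \in I_N \subseteq I_M$, then $\sum j'(n_k) \in I_{M'}$. Since $i'$ is a strict monomorphism, Remark \ref{remnark:monomorphism} gives $\sum j(n_k) \in I_{N'}$.

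For strictness of $j$, take $\sum n_k$ with $\sum j(n_k) \in I_{N'}$. Then $\sum j'(n_k) = \sum i'(j(n_k)) \in I_{M'}$, and strictness of $j'$ yields $c_l \in \ker(j')$ with $\sum n_k + \sum c_l \in I_M$. Each $c_l$ satisfies $j'(c_l) = \star = i'(\star)$, so $c_l \in N$ and in fact $c_l \in \ker(j)$. All terms of the sum lie in $N$, so it belongs to $I_M \cap N^+ = I_N$. Hence $j \in \mathfrak{E}_\mathscr{E}$.

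Cartesianness is the usual set-theoretic argument, as in the Cartesian half of Lemma \ref{lemma:bicartesian}. Given $\alpha \colon R \to N'$ and $\beta \colon R \to M$ with $i' \circ \alpha = j' \circ \beta$, the image of $\beta$ lies in $N$. So $\beta$ factors through $N$ as a set map $\gamma$, and $\gamma$ is a morphism because $I_N = I_M \cap N^+$ and $\beta$ preserves null sets. The identity $j \circ \gamma = \alpha$ follows from injectivity of $i'$.

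The main obstacle is coCartesianness, since a pushout in $\BMod_B$ is a quotient of a wedge sum and its null set is not obviously the one on $M'$. My plan is to reduce to Lemma \ref{lemma:bicartesian}. Put $P \coloneqq \ker(j') = \ker(j)$; this is a submodule of $N$ and hence of $M$. By the Isomorphism theorem (Proposition \ref{thm:isomorphism}), $j'$ induces an isomorphism $M/P \cong M'$, and $j$ induces an isomorphism $N/P \cong N'$, both as strict maps. Under these isomorphisms $i'$ corresponds to the map $N/P \to M/P$ of \eqref{diag:bicartesian1}: on classes $[n]$ both send $[n]$ to the class of $n$, using that $j'(n) = i'(j(n))$.

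The subtle point is that these identifications must respect the null sets. Here the strictness of $j'$ and of $j$, just established, is exactly what Proposition \ref{thm:isomorphism} requires. Granting this, our square is isomorphic to the square \eqref{diag:bicartesian1} for $P \subseteq N \subseteq M$, and that square is biCartesian by Lemma \ref{lemma:bicartesian}. This completes the proof.
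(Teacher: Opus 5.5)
Your construction of $N$, $i$ and $j$ is correct, and so are the check that $j$ is a strict epimorphism and the Cartesian half. Your strictness argument for $j$, using kernel elements $c_l\in\ker(j')=\ker(j)\subseteq N$, is in fact more careful than the paper's chain of equivalences, whose middle step would need $j'$ to reflect null sets.

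\textbf{The gap is in the coCartesian half, where you follow the paper's route exactly.} Proposition~\ref{thm:isomorphism} fails for strict epimorphisms in general. Strictness does not force $j'$ to be injective away from $\ker(j')$, whereas $M/P$ only collapses $P$ to $\star$.

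For a counterexample, take $B=\mathbb{F}_1^{\pm}$ and let $j'\colon\mathbb{Z}\to\mathbb{F}_2$ be reduction mod $2$, viewing rings as bands and hence as modules. This map is strict. Indeed, if $\sum j'(m_k)\in I_{\mathbb{F}_2}$, then $\sum m_k$ is even, and $c=-\sum m_k$ lies in $2\mathbb{Z}=\ker(j')$ with $\sum m_k+c\in I_{\mathbb{Z}}$. Take $N'=0$. Then $N=P=2\mathbb{Z}$, but $M/P$ has a separate class for every odd integer, so it is not even in bijection with $M'=\mathbb{F}_2$. Its induced null set also violates (NS3): both $[1]+[1]$ (the image of $1+1+(-2)$) and $[1]+[-1]$ lie in it. So $j'$ is not isomorphic to any projection $M\to M/P$. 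Your square therefore cannot be identified with one of the form \eqref{diag:bicartesian1}, and Lemma~\ref{lemma:bicartesian} cannot be invoked. The paper's proof of this lemma has the same gap.

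\textbf{The statement is still true; prove coCartesianness directly, without quotients.}
\begin{enumerate}
\item Let $\alpha\colon N'\to Q$ and $\beta\colon M\to Q$ satisfy $\alpha\circ j=\beta\circ i$. Since $\ker(j')=\ker(j)\subseteq N$, the map $\beta$ sends $\ker(j')$ to $\star$.
\item Suppose $j'(m_1)=j'(m_2)$. Then $j'(m_1)+j'(-m_2)\in I_{M'}$, and strictness gives $c_l\in\ker(j')$ with $m_1+(-m_2)+\sum c_l\in I_M$.
\item Applying $\beta$ gives $\beta(m_1)+\beta(-m_2)\in I_Q$. Since $\beta(-m_2)=-\beta(m_2)$, (NS3) in $Q$ forces $\beta(m_1)=\beta(m_2)$.
\item Hence $\gamma(j'(m))\coloneqq\beta(m)$ is well defined and $B$-equivariant.
\item $\gamma$ preserves null sets by the same argument: lift a null sum in $M'$ to $M$, add kernel elements via strictness of $j'$, and apply $\beta$.
\item $\gamma\circ i'=\alpha$: write $n'=j(n)$ and use $i'\circ j=j'\circ i$.
\item Uniqueness of $\gamma$ follows from surjectivity of $j'$.
\end{enumerate}
Together with your Cartesian argument, this proves the lemma. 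It also does not use closure of $\mathscr{E}$ under quotients.
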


\begin{proof}
    Define \(N\) as \((j')^{-1}(\im(i')) \in \mathscr{E}\). Then, \(N\) is a \(B\)-submodule of \(M\) and the inclusion morphism \(i: N \hookrightarrow M\) is a strict monomorphism. 

    Observe that by the definition of \(N\) we can define a map \(j : N \rightarrow N'\) as \(j(n) = (i')^{-1}(j'(i(n)))\) for every \(n \in N\). We claim that \(j\) is a strict epimorphism of \(B\)-modules. Indeed, by definition, it is surjective and it clearly preserves the action of \(B\). Let \(\sum n_i \in N^+\). By the strictness of \(i, j'\) and \(i'\), we get that 
    \begin{equation*}
        \sum n_i \in I_N \iff \sum i(n_i) \in I_M \iff \sum (j'\circ i)(n_i) \in I_{M'} \iff \sum j(n_i) \in I_N',
    \end{equation*}
    hence, \(j\) is a strict epimorphism.

    Finally, we prove that the square \eqref{diag:bicartesian2} is isomorphic to a square as in \eqref{diag:bicartesian1} which we know is bi-cartesian. Define \(P\) as the \(B\)-submodule \(i^{-1}(\ker(j'))\) of \(N\). Since \(i\) is injective, we know that \(P \cong \ker(j')\) and by the Proposition \ref{thm:isomorphism}, we get that \(N/P \cong N'\) and \(M/P \cong M'\) and the following diagram commutes
     \[
    \begin{tikzcd}[row sep=scriptsize, column sep=scriptsize]
    N \arrow[dd, two heads, "j"] \arrow[rr, hook, "i"] \arrow[dr, equal] & & M \arrow[dd, two heads,near start, "j'"] \arrow[rd, equal] & \\
    & N \arrow[rr, crossing over, hook, near start, "i"] & & M \arrow[dd, two heads, "\pi'"] \\
    N' \arrow[rr, hook, near end, "i'"] \arrow[dr, "\sim"] & & M' \arrow[rd, "\sim"] & \\
    & N/P \arrow[from=uu, two heads, crossing over, near start, "\pi"] \arrow[rr, hook] & & M/P 
    \end{tikzcd}
    \]
    This proves that the diagram (\ref{diag:bicartesian2}) is bi-cartesian.
\end{proof}

\begin{lemma}\label{lemma:PE5}
    Every diagram in \(\mathscr{E}\) of the form 
    \begin{equation}
        \begin{tikzcd}\label{diag:bicartesian3}
        N \arrow[r, hook, "i"] \arrow[d, two heads, "j"] & M \\
        N' & 
    \end{tikzcd}
    \end{equation}
    where \(i \in \mathfrak{M}_\mathscr{E}\) and \(j \in \mathfrak{E}_\mathscr{E}\), can be completed to a biCartesian square 
    \begin{equation*}
    \begin{tikzcd}
        N \arrow[r, hook, "i"] \arrow[d, two heads, "j"] & M \arrow[d, two heads , "j'"]\\
        N' \arrow[r, hook, "i'"] & M'
    \end{tikzcd}
    \end{equation*}
    with \(i' \in \mathfrak{M}_\mathscr{E}\) and \(j' \in \mathfrak{E}_\mathscr{E}\).
\end{lemma}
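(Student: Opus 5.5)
The plan is to reduce to Lemma~\ref{lemma:bicartesian}, dually to Lemma~\ref{lemma:PE4}. Since \(i\) is a strict monomorphism, Remark~\ref{inclusion strict} lets us identify \(N\) with a submodule of \(M\) carrying the induced null set \(I_M \cap N^+\). Set \(P \coloneqq \ker(j) \subseteq N\). As a submodule of \(N\), it is also a submodule of \(M\), and by the closure properties of \(\mathscr{E}\) both \(N/P\) and \(M' \coloneqq M/P\) lie in \(\mathscr{E}\).

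Take \(j' \colon M \to M/P\) to be the canonical projection. It is a strict epimorphism by the remark on quotients by submodules. Lemma~\ref{lemma:bicartesian} then supplies a biCartesian square with top row \(i\), left column \(j_P \colon N \to N/P\), right column \(j'\), and bottom row a strict monomorphism \(i_P \colon N/P \hookrightarrow M/P\).

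Next we identify \(N/P\) with \(N'\). Since \(j\) is strict and surjective, Proposition~\ref{thm:isomorphism} gives an isomorphism \(\overline{j} \colon N/P \to \im(j) = N'\) with \(\overline{j} \circ j_P = j\). Define
\[
i' \coloneqq i_P \circ \overline{j}^{-1} \colon N' \to M'.
\]
This is a composite of a strict monomorphism with an isomorphism, and isomorphisms are strict by Remark~\ref{remnark:monomorphism}. Hence \(i'\) is injective, and the null-set equivalence of Remark~\ref{remnark:monomorphism} is preserved under composition with an isomorphism, so \(i' \in \mathfrak{M}_\mathscr{E}\). Commutativity of the new square follows from
\[
i' \circ j = i_P \circ \overline{j}^{-1} \circ \overline{j} \circ j_P = i_P \circ j_P = j' \circ i.
\]
Being biCartesian is invariant under replacing a corner by an isomorphic object compatibly, so the completed square is biCartesian.

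The main obstacle is the application of Proposition~\ref{thm:isomorphism}. It needs \(\ker(j)\) to be exactly the submodule used in the quotient \(N/\ker(j)\), and it needs the identification \(\im(j) = N'\) to carry the right null set. Surjectivity of \(j\) gives \(\im(j) = N'\) as sets, but the null set of \(\im(j)\) is \(I_{N'} \cap (N')^+ = I_{N'}\), so no subtlety arises there. The point I would check most carefully is that the strictness of \(j\) is what makes \(\overline{j}^{-1}\) a morphism. I would verify this directly: if \(\sum j(n_k) \in I_{N'}\), then there exist \(c_l \in P\) with \(\sum n_k + \sum c_l \in I_N\), and projecting this to \(N/P\) kills the \(c_l\) terms, giving \(\sum [n_k] \in I_{N/P}\).
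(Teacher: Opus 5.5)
Your construction is the paper's own: $P=\ker(j)$, $M'=M/P$ with $j'$ the projection, the monomorphism $N/P\hookrightarrow M/P$ from Lemma~\ref{lemma:bicartesian}, and $i'$ obtained by composing it with the inverse of $N/P\cong N'$ from Proposition~\ref{thm:isomorphism}. The step you single out is where the argument breaks, in the paper's proof as well as in yours. The failure is not where you propose to check. The issue is not whether $\overline{j}^{-1}$ respects null sets. It is that $\overline{j}$ need not be injective, and that the collapsed quotient of Definition~\ref{def:quotient submod} need not satisfy (NS3) at all.

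Take $B=\FF_1^\pm$, $N=M=\ZZ$ with null set the formal sums whose integer sum is $0$, $i=\id$, $N'=\FF_2$ (null set: sums with an even number of $1$'s), and $j$ reduction mod $2$. Then $j$ is a surjective morphism, and it is strict: if $\sum n_k$ has an even number of odd terms, then $c=-\sum n_k\in 2\ZZ=\ker(j)$ satisfies $\sum n_k+c\in I_{\ZZ}$. But in the collapse of $2\ZZ$ one has $[1]\neq[3]$ although $\overline{j}([1])=\overline{j}([3])$. Moreover, both $[1]+[1]$ and $[1]+[-1]$ lie in $\pi^+(I_{\ZZ})$, as images of $1+1+(-2)$ and $1+(-1)$, so negatives are not unique. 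The injectivity argument in Proposition~\ref{thm:isomorphism} relies on exactly this uniqueness in $M/\ker(f)$.

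The same defect affects $M'=M/P$ even when $\overline{j}$ is harmless. Take $2\ZZ\hookrightarrow\ZZ$ (induced null set) and the strict epimorphism $j\colon 2\ZZ\to 0$. Then $N/P=0$, but $M/P$ is again the collapse of $2\ZZ$ in $\ZZ$, which is not an object of $\BMod_B$. The actual completion is the cokernel $j'\colon\ZZ\to\FF_2$.

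To get a proof you must replace the collapse by the quotient $M\sslash J$, where $J$ is the module null ideal generated by $I_M$ and the one-term sums $p$ with $p\in P$. For this quotient the isomorphism theorem does hold. If $f$ is strict and $P=\ker(f)$, then strictness lets you write $\sum m_k+\sum c_l\in I_M$ with $c_l\in P$. Axiom (4) of module null ideals, applied with $a\in P$ and $c=*$, then deletes the kernel terms one at a time. Hence $J=\{\sum m_k\mid \sum f(m_k)\in I_N\}$ and $M\sslash J\cong\im(f)$. What remains, and is no longer formal, is to re-prove Lemma~\ref{lemma:bicartesian} for these quotients: that $N\sslash J_N\to M\sslash J_M$ is a strict monomorphism and that the resulting square is Cartesian.
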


\begin{proof}
    Let \(P \coloneq  \ker(j)\), \(M' \coloneq  M/P\), and \(j' : M \rightarrow M/P\) be the canonical projection. By Theorem \ref{thm:isomorphism} we have an isomorphism \( \phi: N' \overset{\sim}{\rightarrow} N/P\). Put \(i' \coloneqq \psi \circ \phi\). By Lemma \ref{lemma:bicartesian}, there exists a morphism \(\psi : N/P \hookrightarrow M/P\) making the following diagram commutate
    \[
    \begin{tikzcd}
        M \arrow[r, hook, "i"] \arrow[d, two heads, "j"] & M \arrow[dd, two heads, "j'"] \\
        N' \arrow[d, "\phi"] \arrow[dr, "i'"] & \\
        N/P \arrow[r, hook, "\psi"] &  M/P\; (= M').
    \end{tikzcd}
    \]
\end{proof}

Lastly, we prove a simple corollary.

\begin{corollary}
    \begin{equation*}
        K_0(\BMod_F^{\mathrm{fin,proj}}) \cong \ZZ.
    \end{equation*}
\end{corollary}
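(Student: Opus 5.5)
The plan is to compute $K_0$ through the explicit presentation recalled in Section~\ref{sec:preliminaries}, namely generators $[M]$ for $M\in\Iso(\BMod_F^{\mathrm{fin,proj}})$ modulo the relations coming from admissible sequences. The category is proto-exact by Corollary~\ref{main thm protoexactness}. So it suffices to (a) describe the isomorphism classes, (b) describe all admissible sequences, and (c) read off the group.

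For (a), let $M$ be finitely generated projective over the idyll $F$. By Proposition~\ref{prop: idyll proj = free}, $M$ is free, and since it is finitely generated it has the form $M\cong\bigvee_{i=1}^n F\cdot e_i$. By Corollary~\ref{cor: idyll dim}, the integer $n$ is uniquely determined; call it the rank, $\mathrm{rk}(M)$. Hence $\Iso(\BMod_F^{\mathrm{fin,proj}})\cong\NN$, with $n\mapsto F^{\vee n}$.

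For (b), take an admissible sequence $N\hookrightarrow M\twoheadrightarrow M'$. Since the square with $0$ is coCartesian, $M'$ is the cokernel of $i$. By Lemma~\ref{lemma:cokernel}, $M'\cong M/\im(i)$. Because $i$ is a strict monomorphism, $N\cong\im(i)$ is a strict submodule of $M\cong\bigvee_{i=1}^n F e_i$.

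The key step, and the one I expect to need the most care, is to reuse the argument of the lemma on closedness of $\BMod_F^{\mathrm{fin,proj}}$ under kernels and cokernels. Suppose $b e_i\in\im(i)$ with $b\neq 0$. Since $b$ is invertible in the idyll $F$, we get $e_i=b^{-1}(be_i)\in\im(i)$. Hence $\im(i)$ is the span of the basis vectors indexed by a subset $I'\subseteq\{1,\dots,n\}$, and the null sets match because the inclusion is strict. Moreover $M/\im(i)\cong\bigvee_{i\notin I'}F e_i$. It follows that
\[
\mathrm{rk}(M)=\mathrm{rk}(N)+\mathrm{rk}(M')
\]
for every admissible sequence.

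Conversely, for every $a,b$ the split sequence $F^{\vee a}\hookrightarrow F^{\vee a}\vee F^{\vee b}\twoheadrightarrow F^{\vee b}$ is admissible. Indeed, it is the square of Lemma~\ref{lemma:bicartesian} with $P=N=F^{\vee a}$, noting $N/N=0$. Therefore $K_0$ is abelian and equals $\ZZ[\NN]/\sim$, where $\sim$ is generated by $[a+b]\sim[a]+[b]$.

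For (c), define $\ZZ[\NN]/\sim\to\ZZ$ by $[M]\mapsto\mathrm{rk}(M)$. This map is well defined by the rank additivity in (b), and it is surjective since $[F]\mapsto 1$. For injectivity, the split relations give $[F^{\vee n}]=n[F]$ and $[0]=0$. Hence $[F]$ generates the group, and a generator sent to $1\in\ZZ$ forces the map to be an isomorphism, so $K_0(\BMod_F^{\mathrm{fin,proj}})\cong\ZZ$.
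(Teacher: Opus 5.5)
Your proposal is correct and follows the same route as the paper. You classify objects by rank via Proposition~\ref{prop: idyll proj = free} and Corollary~\ref{cor: idyll dim}, show that every admissible sequence is a splitting off of a subset of basis vectors (so rank is additive), and then read off $K_0\cong\ZZ$ from the standard presentation for categories with finite coproducts and split admissible sequences. You also make explicit two points the paper only asserts: that a strict submodule of $\bigvee_{i=1}^n F\cdot e_i$ is spanned by a subset of the $e_i$, and (via Lemma~\ref{lemma:bicartesian} with $P=N$) that split sequences are admissible.
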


\begin{proof}
    Observe that every admissible short exact sequence of \(\BMod_F^{\mathrm{fin,proj}}\) is of the following form 
    \begin{equation*}
    \begin{tikzcd}
        \bigvee_{i = 1}^nF\cdot e_i \arrow[r, hook] & \bigvee_{i = 1}^{n + m}F\cdot e_i \arrow[r, two heads] & \bigvee_{i = 1}^mF\cdot e_i.
    \end{tikzcd} 
    \end{equation*}
    Hence for every \(M = \bigvee_{i = 1}^n F\cdot e_i\), we have 
    \begin{equation*}
        [M] = n[F],
    \end{equation*}
     and we define \(\dim_F(M) \coloneqq n\). Since \(\BMod_F^{\mathrm{fin,proj}}\) admits coproducts and split admissible sequences, the standard description of the \(K_0\)-group of such a proto-exact category yields an isomorphism of groups
    \begin{equation*}
        K_0(\BMod_F^{\mathrm{fin,proj}}) \longrightarrow \ZZ,\quad [M] \longmapsto \dim_F(M).
    \end{equation*}
\end{proof}

%% file: main.bbl
\begin{thebibliography}{99}

\bibitem[1]{Baker1}
    \textsc{M. Baker and N. Bowler},
    Matroids over partial hyperstructures,
    Adv. Math. 343 (2019), 821--863.

\bibitem[2]{Baker2}
    \textsc{M. Baker and O. Lorscheid},
    The moduli space of matroids,
    Adv. math. 390 (2022), 107883.

\bibitem[3]{Lorscheid3}
    \textsc{M. Baker, T. Jin, O. Lorscheid},
    New building blocks for \(\FF_1\)-geometry: Bands and band schemes,
    J. Lond. Math. Soc. (2) 2025;111:e70125.

\bibitem[4]{Bassat}
    \textsc{O. Ben-Bassat and K. Kremnizer},
    Non-Archimedean analytic geometry as relative algebraic geometry,
    Annales de la Faculté des sciences de Toulouse : Mathématiques, Série 6, Tome 26 (2017) no. 1, pp. 49--126.

\bibitem[5]{Lorscheid2}
    \textsc{C. Chu, O. Lorscheid, R. Santhanam},
    Sheaves and \(K\)-theory of \(\FF_1\)-Schemes,
    Adv. Math. 229 (2012), 2239--2286.

\bibitem[6]{CC}
    \textsc{A. Connes and C. Consani},
    The hyperring of adele classes,
    J.\ Number Theory 131 (2) (2011), 159--194.

\bibitem[7]{Connes1}
    \textsc{A. Connes and C. Consani},
    The hyperring of ad\`ele classes,
    J. Number Theory 131 (2011), 159--194.

\bibitem[8]{Connes2}
    \textsc{A. Connes and C. Consani},
    From monoids to hyperstructures: in search of an absolute arithmetic,
    Casimir Force, Casimir Operators and the Riemann Hypothesis, de Gruyter (2010), 147--198.

\bibitem[9]{Davvaz}
    \textsc{B. Davvaz and A. Salasi},
    A realization of hyperrings,
    Comm. Algebra 34 (12) (2006), 4389--4400.

\bibitem[10]{Deitmar}
    \textsc{A. Deitmar},
    Schemes over \(\FF_1\),
    Number fields and function fields—two parallel worlds, Progr. Math., vol. 239, 2005.

\bibitem[11]{Demazure}
    \textsc{M. Demazure and P. gabriel},
    Groupes alg\'ebriques. Tome I: G\'eom\'etrie alg\'ebrique, g\'en\'eralit\'es, groupes commutatifs. Masson \& Cie \'Editeur, Paris (1970). (Avec un appendice Corps de classes local par Michiel Hazewinkel)

\bibitem[12]{Kapranov}
    \textsc{T. Dyckerhoff and M. Kapranov},
    Higher Segal Spaces,
    Lecture Notes in mathematics, Springer, 2019.

\bibitem[13]{Jaiung3}
    \textsc{C. Eppolito, J. Jun, and M. Szczesny},
    Proto-exact categories of matroids, Hall algebras, and K-theory,
    Math.\ Z., 296 (2020), 147--167.

\bibitem[14]{Timothy}
    \textsc{T. Hosgood},
    Under Spec\(\mathbb{Z}\): A reader's companion,
    Dissertation, Oxford University, 2016, \url{https://thosgood.net/assets/files/under-spec-z.pdf}.

\bibitem[15]{Jaiung}
    \textsc{J. Jun},
    Algebraic geometry of hyperrings,
    Adv. Math., 323 (2018), 142--192.

\bibitem[16]{Jaiung1}
    \textsc{J. Jun, M. Szczesny, J. Tolliver},
    Proto-exact categories of modules over semirings and hyperrings
    J. Algebra 631 (2023), 517--557.

\bibitem[17]{Jaiung2}
    \textsc{J. Jun, A. Sistko, and C. Wright},
    Proto-exact Categories of Matroids over Idylls and Tropical Toric Reflexive Sheaves,
    arXiv:2509.08144.

\bibitem[18]{Krasner}
    \textsc{M. Krasner},
    Quelques m\'ethodes nouvelles dans la th\'eorie des corps valu\'es complets. Coll. Int. du C.N.R.S. \textbf{24} (1949).

\bibitem[19]{Krasner2}
    \textsc{M. Krasner},
    A class of hyperrings and hyperfields,
    Internat.\ J.\ Math.\ Math.\ Sci.\ 6 (2) (1983), 307--311.

\bibitem[20]{Lorscheid4}
    \textsc{M. Jarra, O. Lorscheid, and E. Vital},
    Quiver matroids: Matroid morphisms, quiver Grassmannians, their Euler characteristics and F1-points, 
    arXiv: 2404.09255v1.

\bibitem[21]{Lorscheid1}
    \textsc{O. Lorscheid},
    Blue schemes, semiring schemes, and relative schemes after To\"en and Vaqui\'e,
    J. Algebra 482 (2017), 264--302.

\bibitem[22]{Manin}
    \textsc{Y. Manin},
     Lectures on zeta functions and motives (according to Deninger and Kurokawa),
     Ast\'erisque, 228 (4), 121--163.

\bibitem[23]{LorscheidSurvey}
    \textsc{J. L. Pe\~na and O. Lorscheid},
    Mapping \(\FF_1\)-land: an overview of geometries over the field with one element,
    Noncommutative geometry, arithmetic, and related topics, Johns Hopkins Univ. Press, Baltimore, MD (2011), 241--265.
     
\bibitem[24]{Rota}
    \textsc{R. Procesi Ciampi and R. Rota},
    The hyperring spectrum,
    Riv.\ Mat.\ Pura Appl., 1 (1987), 71--80.

\bibitem[25]{Tits}
    \textsc{M. J. Tits},
    Sur les analogues algébriques des groupes semi-simples complexes,
    Colloque d'algèbre supérieure, tenu à Bruxelles du 19 au 22 décembre 1956, Centre Belge de Recherches Mathématiques Établissements Ceuterick, Louvain, Paris: Librairie Gauthier-Villars, 261--289.

\bibitem[26]{ToenVaquie}
    \textsc{B. To\"en and M. Vaqui\'e},
    Au-dessous de Spec\(\mathbb{Z}\),
    Journal of K-theory 3(3) (2009), 437--500.

\bibitem[27]{Tolliver}
    \textsc{J. Tolliver},
    An equivalence between two approaches to limits of local fields,
    J. Number Theory 166 (2016), 473--492.

\bibitem[28]{Vezzani}
    \textsc{A. Vezzani},
    Deitmar's versus To\"en-Vaqui\'e's schemes over \(\mathbb{F}_1\),
    Math. Z. 271 (2012), 911--926.

\end{thebibliography}
